\makeatletter \@addtoreset{equation}{section} \makeatother
\newtheorem{theorem}{Theorem}[section]
\newtheorem{definition}{Definition}[section]
\newtheorem{lemma}{Lemma}[section]
\newtheorem{remark}{Remark}[section]
\newtheorem{corollary}[theorem]{Corollary}
\begin{document}
\title{Normalized solutions to the biharmonic nonlinear  Schr\"{o}dinger equation with combined nonlinearities}

\author{Wenjing Chen\footnote{Corresponding author.}\ \footnote{E-mail address:\, {\tt wjchen@swu.edu.cn} (W. Chen), {\tt zxwangmath@163.com} (Z. Wang).}\  \ and Zexi Wang\\
\footnotesize  School of Mathematics and Statistics, Southwest University,
Chongqing, 400715, P.R. China}

\date{ }
\maketitle

\begin{abstract}
{In this article, we study the existence of normalized ground state solutions for the following  biharmonic nonlinear  Schr\"{o}dinger equation with combined nonlinearities
\begin{equation*}
  \Delta^2u=\lambda u+\mu|u|^{q-2}u+|u|^{p-2}u,\quad \text {in $\mathbb{R}^N$}
\end{equation*}
having prescribed mass
\begin{equation*}
  \int_{\mathbb{R}^N}|u|^2dx=a^2,
\end{equation*}
where $N\geq2$, $\mu\in \mathbb{R}$, $a>0$, $2<q<p<\infty$ if $2\leq N\leq 4$, $2<q<p\leq 4^*$ if $N\geq 5$, and $4^*=\frac{2N}{N-4}$ is the Sobolev critical exponent and $\lambda\in \mathbb{R}$ appears as a Lagrange multiplier. By using the Sobolev subcritical approximation method, we prove the second critical point of mountain pass type for the case $N\geq5$, $\mu>0$, $p=4^*$, and $2<q<2+\frac{8}{N}$. Moreover, we also consider the case $\mu=0$ and $\mu<0$.}

\smallskip
\emph{\bf Keywords:} Normalized solutions; Ground state; Biharmonic equation; Combined nonlinearities.


\end{abstract}

\section{Introduction and statement of main results}
In this paper, we are concerned with the following biharmonic nonlinear Schr\"{o}dinger equation with combined nonlinearities
\begin{align}\label{question}
  \begin{split}
  \left\{
  \begin{array}{ll}
    \Delta^2u=\lambda u+\mu|u|^{q-2}u+|u|^{p-2}u,\quad \text {in $\mathbb{R}^N$}, \\
    \displaystyle\int_{\mathbb{R}^N}|u|^2dx=a^2,\quad u\in H^2(\mathbb{R}^N),
    \end{array}
    \right.
  \end{split}
  \end{align}
where $N\geq2$, $\mu\in \mathbb{R}$, $a>0$, $2<q<p<\infty$ if $2\leq N\leq 4$, $2<q<p\leq 4^*$ if $N\geq 5$, and $4^*=\frac{2N}{N-4}$ is the Sobolev critical exponent and $\lambda\in \mathbb{R}$ appears as a Lagrange multiplier.

The interest in studying \eqref{question} comes from seeking the standing waves with the form $\psi(x,t)=e^{-i \lambda t}u(x)$ of the following time-dependent nonlinear biharmonic Schr\"{o}dinger equation
\begin{equation*}\label{stand}
  i\partial _t\psi -\Delta^2\psi+\mu|\psi|^{q-2}\psi+|\psi|^{p-2}\psi=0, \quad \psi(0,x)=\psi_0\in H^2(\mathbb{R}^N),\quad (t,x)\in \mathbb{R}\times \mathbb{R}^N.
\end{equation*}
This equation was considered in \cite{IK,Tur} to study the stability of solitons in magnetic materials once the effective quasi particle mass becomes infinite. Moreover, the biharmonic operator $\Delta^2$ was also used in \cite{K,KS} to reveal the effects of high-order terms in the nonlinear Schr\"{o}dinger equations with the mixed dispersion.  

The type of solution for \eqref{question} is called normalized solution, and in this paper, we focus on this issue. In particular, we are interested in
looking for ground state solutions, i.e., solutions minimizing the associated energy functional among
all nontrivial solutions, and the associated energy is called ground state energy.

In recent years, much attention has been paid to study the normalized solutions for the following nonlinear Schr\"{o}dinger equation, see  \cite{AJM,BM,DZ,Li1,Jeanjean,JJLV,JL0,JL,Soave1,Soave2,WW},
\begin{align}\label{lap}
  \begin{split}
  \left\{
  \begin{array}{ll}
   -\Delta u=\lambda u+f(u),\quad \text {in $\mathbb{R}^N$} , \\
    \displaystyle\int_{\mathbb{R}^N}|u|^2dx=a^2,\quad u\in H^1(\mathbb{R}^N).
    \end{array}
    \right.
  \end{split}
  \end{align}
In particular, Soave \cite{Soave1} considered the ground state solutions of \eqref{lap} with combined nonlinearities $f(u)=\mu|u|^{q-2}u+|u|^{p-2}u$, $2<q\leq 2+\frac{4}{N}\leq p<2^*$ and $q<p$, where $2^*=\infty$ if $N\leq2$ and $2^*=\frac{2N}{N-2}$ if $N\geq3$. The Sobolev critical case $p=2^*$ was considered by Soave in \cite{Soave2} when $N\geq3$, and some open questions were proposed, one of them is:

($Q'$) Does \eqref{lap} has a critical point of mountain pass type in the case $\mu>0$ and  $2<q<2+\frac{4}{N}$?\\
Jeanjean and Le \cite{JL0} gave a positive answer to the question $(Q')$ for $N\geq4$, Wei and Wu \cite{WW} gave a positive answer to $(Q')$ for $N=3$.  Later, Luo and Zhang \cite{LZ}, Zhang and Han \cite{ZH}, Zhen and Zhang \cite{ZZ} generalized the above results to the fractional laplace equations with combined nonlinearities.

For some related studies on the biharmonic equations with the mixed dispersion, we refer the readers to \cite{BCdN,BCGJ,BFJ,FJMM,LY2,LZ1,LZZ} and references therein.  As far as we know, there are only a few papers to study \eqref{question} besides \cite{CHMS,MC}. In \cite{MC}, by analyzing the behavior of the ground
state energy with respect to the prescribed mass, Ma and Chang obtained a ground state solution $\hat{u}$, which is an interior local minimizer of the functional \begin{equation*}
  \mathcal{J}_{4^*,q}(u)=\frac{1}{2}\int_{\mathbb{R}^N}|\Delta u|^2dx-\frac{1}{4^*}\int_{\mathbb{R}^N}|u|^{4^*}dx-\frac{\mu}{q}\int_{\mathbb{R}^N}|u|^qdx
\end{equation*}
in the set
\begin{equation*}
  \mathcal{A}_r:=\Big\{u\in S_a:\|\Delta u\|_2^2<r\}
\end{equation*}
for a suitable $r>0$, when $N\geq5$, $\mu>0$, $2<q<2+\frac{8}{N}$ and
\begin{equation}\label{con3}
\mu a^{q-\frac{N(q-2)}{4}}<\Big(\frac{1}{A+B}\Big)^{\frac{4Nq-N^2(q-2)}{32}},
\end{equation}
where
\begin{equation*}
 S_a:=\Big\{u\in H^2(\mathbb{R}^N):\int_{\mathbb{R}^N}|u|^2dx=a^2\Big\},
\end{equation*}
\begin{equation*}
  A:=\frac{2\mu B_{N,q}}{q}\Big[\frac{\mu (N-4)(8+2N-Nq) B_{N,q}4^*}{32qS^{4^*}}\Big]^{\frac{(N-4)(Nq-2N-8)}{N[4q-N(q-2)]}},
\end{equation*}
\begin{equation*}
 B:=\frac{2\cdot4^*}{4^*}\Big[\frac{\mu (N-4)(8+2N-Nq) B_{N,q}4^*}{32qS^{4^*}}\Big]^{\frac{32}{N[4q-N(q-2)]}},
\end{equation*}
$B_{N,q}$, $S$ are the optimal constants to the Gagliardo-Nirenberg inequality \cite{Nirenberg1} and Sobolev inequality respectively \cite{NSY}. Furthermore, all ground state solutions are proved to be local minima of $\mathcal{J}_{4^*,q}$.
Therefore, it is natural for us to ask:

($Q$) Does \eqref{question} has a second critical point of mountain pass type for $\mathcal{J}_{4^*,q}|_{S_a}$ in the case $N\geq5$, $\mu>0$, $p=4^*$, and $2<q<2+\frac{8}{N}$?
\\Inspired by \cite{JL0} and \cite{WW}, by using the Sobolev subcritical approximation method developed in \cite{Li1}, which is used to study the existence of ground state solutions for \eqref{lap} when $f(u)=\mu|u|^{q-2}u+|u|^{p-2}u$, $N\geq3$, $\mu>0$, $2+\frac{4}{N}<q<2^*$ and $p=2^*$, we give a positive answer to $(Q)$ in Theorem \ref{th8}.

In \cite{CHMS}, by adapting the arguments used in \cite{BS} and analyzing the behavior of the ground
state energy with respect to the prescribed mass, especially the monotonicity of the function $a\mapsto M_{4^*,q,a}$ (defined in \eqref{infP}),  Chang et al. considered \eqref{question} when $N\geq5$, $\mu>0$, $2+\frac{8}{N}\leq q< p\leq4^*$, and obtained the existence of ground state solutions. The Sobolev critical case $p=4^*$ was also completed by constructing an appropriate testing function and a precise estimation. Moreover, they proved that the standing waves associated with the ground state solutions to \eqref{question} are strongly unstable in $H^2_{rad}(\mathbb{R}^N)$ by blowup when $2+\frac{8}{N}\leq q< p\leq4^*$, where $H^2_{rad}(\mathbb{R}^N)$ denotes the subset of the radially symmetric functions in $H^2(\mathbb{R}^N)$. 

Motivated by the results mentioned above, considering that there is no contribution devoted to study \eqref{question} with $\mu=0$ or $\mu<0$, the rest of this paper is aimed to consider the case $\mu=0$ and $\mu<0$.

Solutions of  \eqref{question} can be obtained by finding critical points of the energy functional $ \mathcal{J}_{p,q}:H^2(\mathbb{R}^N)\rightarrow \mathbb{R}$ on $S_a$.
\begin{equation*}
  \mathcal{J}_{p,q}(u)=\frac{1}{2}\int_{\mathbb{R}^N}|\Delta u|^2dx-\frac{1}{p}\int_{\mathbb{R}^N}|u|^pdx-\frac{\mu}{q}\int_{\mathbb{R}^N}|u|^qdx,
\end{equation*}
where $H^2(\mathbb{R}^N)$
is the Sobolev space defined by
\begin{equation*}
  H^2(\mathbb{R}^N)=\Big\{u\in L^2(\mathbb{R}^N):\nabla u\in L^2(\mathbb{R}^N),\,\Delta u\in L^2(\mathbb{R}^N)\Big\},
\end{equation*}
with the norm
\begin{equation*}
  \|u\|=\Big(\int_{\mathbb{R}^N}(|u|^2+2|\nabla u|^2+|\Delta u|^2)dx\Big)^{\frac{1}{2}}.
\end{equation*}

Define
\begin{equation*}
  m_{p,q,a}:=\inf\limits_{u\in S_a}\mathcal{J}_{p,q}(u),
\end{equation*}
and
\begin{equation*}
\bar{p}:=2+\frac{8}{N}.
\end{equation*}

To state our main results, we need
the following lemmas.
\begin{lemma}\cite{BFV}
For $N\geq1$ and $2<p<\frac{2N}{(N-4)^+}$, the following biharmonic equation exists a ground state solution $Q_{N,p}$, while the uniqueness for $Q_{N,p}$ still open for $N\geq2$, except in one dimension \cite{FL}, 
\begin{equation*}\label{gssolu}
  \frac{N(p-2)}{8}\Delta ^2 Q+\Big[1+\frac{(4-N)(p-2)}{8}\Big] Q-|Q|^{p-2}Q=0, \quad Q\in H^2(\mathbb{R}^N).
\end{equation*}
\end{lemma}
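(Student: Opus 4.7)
The plan is to realize $Q_{N,p}$, up to a two-parameter rescaling $u\mapsto s\,u(\lambda\,\cdot)$, as an extremal for the biharmonic Gagliardo--Nirenberg inequality
\[
\|u\|_p^p \le C\,\|\Delta u\|_2^{\alpha}\,\|u\|_2^{\beta},\qquad \alpha:=\tfrac{N(p-2)}{4},\quad \beta:=p-\alpha,
\]
which is dimensionally consistent precisely in the admissible range $2<p<\tfrac{2N}{(N-4)^+}$. I would therefore study the scale-invariant quotient
\[
W(u):=\frac{\|\Delta u\|_2^{\alpha}\,\|u\|_2^{\beta}}{\|u\|_p^p},\qquad W_*:=\inf_{u\in H^2(\R^N)\setminus\{0\}}W(u),
\]
and prove that $W_*$ is attained.

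For existence of a minimizer, I would take a minimizing sequence $\{u_n\}$ and, exploiting the two-parameter invariance of $W$, normalize $\|\Delta u_n\|_2=\|u_n\|_2=1$; then $\{u_n\}$ is bounded in $H^2(\R^N)$ with $\|u_n\|_p^p\to 1/W_*$. Passing to a subsequence, $u_n\rightharpoonup u_*$ weakly in $H^2(\R^N)$. Vanishing is excluded by Lions's concentration lemma: if $\sup_{y\in\R^N}\int_{B_1(y)}|u_n|^2\,dx\to 0$, then $u_n\to 0$ in $L^p(\R^N)$ (since $p$ is strictly between $2$ and $\tfrac{2N}{(N-4)^+}$), contradicting $\|u_n\|_p\to W_*^{-1/p}>0$. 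After a translation we may therefore assume $u_*\ne 0$. To exclude dichotomy, I would use the biharmonic Brezis--Lieb decomposition $u_n=u_*+w_n$ with $w_n\rightharpoonup 0$, which yields $\|\Delta u_n\|_2^2=\|\Delta u_*\|_2^2+\|\Delta w_n\|_2^2+o(1)$ together with the analogous identities for the $L^r$-norms $(r\in\{2,p\})$. Combining these splittings with the scale invariance of $W$ and the minimality of $W_*$ forces $w_n\to 0$ strongly, so $u_n\to u_*$ in $H^2(\R^N)$ and $W(u_*)=W_*$.

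Any nontrivial critical point $u_*$ of $W$ satisfies an Euler--Lagrange equation of the form $A\,\Delta^2 u_*+B\,u_*=|u_*|^{p-2}u_*$ with positive constants $A,B$ that are explicit in $\|\Delta u_*\|_2$, $\|u_*\|_2$, $\|u_*\|_p$ and $W_*$. Writing the corresponding equation for the rescaled function $Q(x):=s\,u_*(\lambda x)$ gives $\tilde A=A\,s^{p-2}/\lambda^4$ and $\tilde B=B\,s^{p-2}$; setting the target values $\tilde A=\tfrac{N(p-2)}{8}$ and $\tilde B=1+\tfrac{(4-N)(p-2)}{8}$ yields a uniquely solvable system for $s,\lambda>0$, and the resulting $Q$ is the desired $Q_{N,p}$. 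Positivity and radial symmetry of the ground state then follow from the standard regularity and symmetrization arguments collected in the cited reference.

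The main obstacle I expect is the concentration-compactness step, since the problem is translation invariant on all of $\R^N$ and, unlike in the Laplacian setting, $\|\Delta u\|_2$ is not monotone under Schwarz symmetrization, so compactness cannot be recovered merely by restricting to the radial subspace; the Brezis--Lieb splitting must instead be carried out directly at the $H^2$ level and exploits the scale invariance of $W$ to close the argument.
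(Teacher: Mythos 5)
The paper does not prove this lemma; it is quoted verbatim from the cited reference \cite{BFV}, so there is no in-paper argument to compare against. Your proposal is a correct reconstruction of the argument used there: minimize the scale-invariant Weinstein quotient $W(u)=\|\Delta u\|_2^{\alpha}\|u\|_2^{\beta}/\|u\|_p^p$, exclude vanishing via Lions's lemma (in the $H^2$-range, via boundedness in $H^1$ plus interpolation with $L^{4^*}$ when $N\ge5$), exclude dichotomy through the Brezis--Lieb splitting of $\|\Delta\cdot\|_2^2$, $\|\cdot\|_2^2$, $\|\cdot\|_p^p$ combined with the strict superadditivity $m^{\alpha/2}M^{\beta/2}+(1-m)^{\alpha/2}(1-M)^{\beta/2}\le 1$ (strict unless $m,M\in\{0,1\}$, which uses $\alpha+\beta=p>2$), and finally fix the two free scaling parameters to normalize the Euler--Lagrange coefficients to $\frac{N(p-2)}{8}$ and $1+\frac{(4-N)(p-2)}{8}$; this is solvable since $\beta>0$ and $1+\frac{(4-N)(p-2)}{8}>0$ throughout $2<p<\frac{2N}{(N-4)^+}$. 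One point worth making explicit that you glossed over: to identify a $W$-minimizer, after rescaling, as a \emph{ground state} (least-action nontrivial solution) rather than merely a solution, one also needs the Poho\v{z}aev-type identity to convert the action of an arbitrary solution into a monotone function of $W$; the cited reference carries this out. Note also the contrast with the present paper's own compactness machinery: here the authors recover radial compactness via Fourier rearrangement (Lemma \ref{Four}), which only works for $p\in2\mathbb{N}$ (hence that hypothesis in several theorems), whereas the translation-invariant concentration-compactness route you use, like \cite{BFV}, works for all admissible $p$ precisely because it does not rely on any rearrangement monotonicity of $\|\Delta u\|_2$.
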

When $p=\bar{p}$, we define $\bar{a}:=\|Q_{N,\bar{p}}\|_2$.

\begin{lemma}(Gagliardo-Nirenberg inequality) \cite{Nirenberg1}\label{GN}
For $N\geq1$, $2<p<\frac{2N}{(N-4)^+}$ and $u\in H^2(\mathbb{R}^N)$, there exists an optimal constant $B_{N,p}>0$ depending on $N,p$ such that
\begin{equation*}
  \|u\|_p^p\leq B_{N,p}\| u\|_2^{p(1-\gamma_p)}\|\Delta u\|_2^{p\gamma_p},
\end{equation*}
where \begin{equation*}
\gamma_p:=\frac{N(p-2)}{4p}.
\end{equation*}
Moreover, $B_{N,p}$ can be achieved by $Q_{N,p}$ and $B_{N,p}=\frac{p}{2\|Q_{N,p}\|_2^{p-2}}$. 
\end{lemma}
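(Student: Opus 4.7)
My plan is to follow Weinstein's variational characterization adapted to the biharmonic setting. Define the \emph{Weinstein functional}
\[
  \J(u):=\frac{\|u\|_2^{p(1-\gamma_p)}\,\|\Delta u\|_2^{p\gamma_p}}{\|u\|_p^p},\qquad u\in H^2(\R^N)\setminus\{0\},
\]
so that the claim is equivalent to showing $\inf_u\J(u)=1/B_{N,p}$ is strictly positive and is attained (up to the natural symmetries) by $Q_{N,p}$. The exponent $\gamma_p=N(p-2)/(4p)$ is precisely the one making $\J$ invariant under the two-parameter family $u(x)\mapsto\mu\,u(\lambda x)$, a freedom I will exploit to normalize both the minimizing sequence and, eventually, the minimizer itself.

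First I would establish existence of a minimizer. Pick a minimizing sequence $\{u_n\}$ and rescale so that $\|u_n\|_2=\|\Delta u_n\|_2=1$; this is an $H^2$-bounded sequence with $\|u_n\|_p\to (\inf\J)^{-1/p}>0$. Since symmetric decreasing rearrangement does not control $\|\Delta u\|_2$, the usual $H^1$ shortcut is unavailable and I must proceed by concentration-compactness. Vanishing is ruled out by the Lions-type lemma in $H^2$: if $\sup_{y\in\R^N}\int_{B_1(y)}|u_n|^2\,dx\to 0$ then $u_n\to 0$ in $L^p(\R^N)$ for every $p\in(2,4^*)$, contradicting the lower bound on $\|u_n\|_p$. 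Translating along a suitable sequence of centres and using the compact embedding $H^2(B_R)\hookrightarrow L^p(B_R)$, I extract a nontrivial weak limit $Q$; strict subadditivity of the Weinstein infimum under splitting of mass rules out dichotomy, upgrades weak $L^p$ convergence to strong, and lower semicontinuity of $\|\cdot\|_2$ and $\|\Delta\cdot\|_2$ identifies $Q$ as a minimizer.

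Next I would derive the Euler--Lagrange equation. Computing $\J'(Q)=0$ and collecting multiplicative factors yields $\alpha\,\Delta^2 Q+\beta\,Q=\Lambda\,|Q|^{p-2}Q$ with positive constants $\alpha,\beta,\Lambda$ depending on $\|Q\|_2$, $\|\Delta Q\|_2$, and $\|Q\|_p$. The two-parameter scaling $Q\mapsto\mu Q(\lambda\,\cdot)$ furnishes enough freedom to tune $\mu,\lambda$ so that the rescaled minimizer satisfies exactly
\[
  \tfrac{N(p-2)}{8}\Delta^2 Q+\Big[1+\tfrac{(4-N)(p-2)}{8}\Big]Q=|Q|^{p-2}Q,
\]
so that, up to translation, it coincides with the ground state $Q_{N,p}$ of the preceding lemma, the Weinstein minimizer being automatically a least-energy solution.

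Finally, to evaluate $B_{N,p}$, I would combine the Nehari identity (obtained by testing against $Q$) with the Pohozaev identity (obtained by testing against $x\cdot\nabla Q$, using $\int Q(x\cdot\nabla Q)\,dx=-\tfrac{N}{2}\|Q\|_2^2$ and $\int(\Delta^2 Q)(x\cdot\nabla Q)\,dx=\tfrac{4-N}{2}\|\Delta Q\|_2^2$). The resulting $2\times 2$ linear system expresses both $\|\Delta Q\|_2^2$ and $\|Q\|_p^p$ as explicit multiples of $\|Q\|_2^2$; substituting back into $B_{N,p}=\|Q\|_p^p/(\|Q\|_2^{p(1-\gamma_p)}\|\Delta Q\|_2^{p\gamma_p})$ collapses everything to the stated $B_{N,p}=p/(2\|Q_{N,p}\|_2^{p-2})$. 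I expect the compactness step to be the main obstacle: with no rearrangement argument available in $H^2(\R^N)$, exclusion of vanishing and dichotomy must be handled by a genuine concentration-compactness analysis, whereas the bookkeeping of the Nehari and Pohozaev identities is essentially routine.
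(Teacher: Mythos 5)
The paper does not actually prove this lemma: the inequality is cited from Nirenberg \cite{Nirenberg1}, and the existence of the extremiser $Q_{N,p}$ together with the variational characterization giving $B_{N,p}$ is imported from \cite{BFV}, which is the reference attached to the preceding lemma on the ground-state equation. So there is no in-paper argument for you to be compared against; I will assess your sketch on its own terms.

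Your route is the standard Weinstein-type one and it is correct in outline. Two small remarks. First, the positivity of $\inf\mathcal{J}$ (i.e., that the inequality holds at all with some finite constant) should be stated up front; it follows immediately from the bounded embedding $H^2(\mathbb{R}^N)\hookrightarrow L^p(\mathbb{R}^N)$ once you impose the normalization $\|u_n\|_2=\|\Delta u_n\|_2=1$, and it is what makes the assertion $\|u_n\|_p\to(\inf\mathcal{J})^{-1/p}>0$ meaningful. Second, you are right that Schwarz rearrangement is unavailable, but for $p\in 2\mathbb{N}$ the paper's Lemma~2.1 (Fourier rearrangement, after \cite{BL}) would let you pass to a radial minimizing sequence and use the compact embedding of $H^2_{rad}$ directly, short-circuiting the full concentration-compactness machinery you invoke; for general $p$ your concentration-compactness plan is indeed what is needed, and the strict-subadditivity exclusion of dichotomy goes through because the exponents $p(1-\gamma_p)/2$ and $p\gamma_p/2$ on the two halves of the Brezis--Lieb split sum to $p/2>1$, making $\alpha^{p(1-\gamma_p)/2}\beta^{p\gamma_p/2}+(1-\alpha)^{p(1-\gamma_p)/2}(1-\beta)^{p\gamma_p/2}\le 1$ with equality only at the endpoints. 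Your Nehari/Poho\v{z}aev bookkeeping is correct: combining $a\|\Delta Q\|_2^2+b\|Q\|_2^2=\|Q\|_p^p$ with the Poho\v{z}aev identity, where $a=\tfrac{N(p-2)}{8}$ and $b=1+\tfrac{(4-N)(p-2)}{8}$, forces $\|\Delta Q\|_2^2=\|Q\|_2^2$ and $\|Q\|_p^p=(a+b)\|Q\|_2^2=\tfrac{p}{2}\|Q\|_2^2$, which plugged into the definition of $B_{N,p}$ gives exactly $B_{N,p}=p/(2\|Q_{N,p}\|_2^{p-2})$.
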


\begin{lemma}(Sobolev inequality)\label{Sob} \cite{NSY}
For $N\geq5$ and $u\in D^{2,2}(\mathbb{R}^N)$, there exists an optimal constant $S>0$ depending on $N$ such that
\begin{equation*}
  S\|u\|_{4^*}^2\leq \|\Delta u\|_2^2,
\end{equation*}
where $D^{2,2}(\mathbb{R}^N)$ denotes the completion of $C^\infty_0(\mathbb{R}^N)$ with respect to the norm $\|u\|_{D^{2,2}}:=\|\Delta u\|_2$. Moreover, $S$ can be achieved by any constant multiple of the function
\begin{equation}\label{deU}
  U_{\varepsilon,y}=[N(N-4)(N^2-4)]^{\frac{N-4}{8}}\Big(\frac{\varepsilon}{\varepsilon^2+|x-y|^2}\Big)^{\frac{N-4}{2}}, \quad \varepsilon>0,\,y\in \mathbb{R}^N,
\end{equation}
which are solutions to
\begin{equation}\label{criequ}
  \Delta^2 u=u^{4^*-1}, \quad \text{in $\mathbb{R}^N$}.
\end{equation}
\end{lemma}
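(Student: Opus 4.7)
The strategy is to (i) establish the inequality with a positive optimal constant $S$, (ii) prove that the infimum is attained, and (iii) identify the attainers with the bubbles \eqref{deU}. For step (i), I would represent $u \in C_c^\infty(\mathbb{R}^N)$ via the Riesz potential
$$u(x) = c_N \int_{\mathbb{R}^N} |x-y|^{2-N}(-\Delta u)(y)\,dy,$$
to which the Hardy-Littlewood-Sobolev inequality with a gain of two derivatives applies, yielding $\|u\|_{4^*} \leq C\|\Delta u\|_2$. Density of $C_c^\infty(\mathbb{R}^N)$ in $D^{2,2}(\mathbb{R}^N)$ extends the bound to the whole space, so that
$$S:=\inf_{u\in D^{2,2}(\mathbb{R}^N)\setminus\{0\}}\frac{\|\Delta u\|_2^2}{\|u\|_{4^*}^2}$$
is a finite positive number.

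For step (ii), let $(u_n)$ be a minimizing sequence with $\|u_n\|_{4^*}=1$. The action $u \mapsto \lambda^{(N-4)/2}u(\lambda(x-x_0))$ preserves both $\|\Delta u\|_2$ and $\|u\|_{4^*}$, so suitable translations and dilations may be used to normalize the concentration scale. Applying Lions's concentration-compactness principle to the measures $|\Delta u_n|^2\,dx$ and $|u_n|^{4^*}\,dx$, strict subadditivity of $S$ rules out both vanishing and dichotomy. Hence, after the symmetries, $u_n \weakto U \not\equiv 0$ weakly in $D^{2,2}(\mathbb{R}^N)$, and a Br\'ezis-Lieb splitting together with the definition of $S$ forces $\|U\|_{4^*}=1$ and $\|\Delta U\|_2^2 = S$, so that $U$ is a minimizer.

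Any such minimizer $U$ satisfies an Euler-Lagrange equation of the form $\Delta^2 U = \kappa|U|^{4^*-2}U$; a further rescaling reduces $\kappa$ to $1$ and recovers \eqref{criequ}. To identify $U$ with \eqref{deU}, I would invoke the classification result (due to C.-S. Lin) for positive solutions of the critical biharmonic equation, which via the method of moving planes on an equivalent integral formulation shows every positive solution to be a translate-dilate of the Aubin-Talenti type bubble. A direct scaling computation then verifies that each $U_{\varepsilon,y}$ in \eqref{deU} solves \eqref{criequ} and attains $S$.

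The main obstacle is step (iii): because $\Delta^2$ lacks a comparison principle, the standard second-order moving-plane argument does not directly transfer, and one must work with the Riesz-potential reformulation of the PDE to close the symmetry argument. The existence portion of step (ii) is also delicate, since Schwarz symmetrization need not decrease $\|\Delta u\|_2$, so the concentration-compactness framework replaces the usual radial-decreasing rearrangement argument typically available in the second-order setting.
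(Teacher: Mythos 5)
The paper does not supply a proof of this lemma: the inequality with the optimal constant is quoted from the reference~\cite{NSY}, and the explicit form of the extremals together with the characterization of all positive solutions of $\Delta^2 u = u^{4^*-1}$ rests on the classification theorem of C.-S.\ Lin~\cite{Lin}, which the paper lists in its bibliography. There is therefore no in-paper proof to compare against; your outline is a reconstruction of the argument as it appears in the literature, and its main architecture (Riesz potential plus Hardy--Littlewood--Sobolev to get the inequality; concentration-compactness to attain the infimum; Lin's classification via the integral-equation moving-plane method to identify the bubbles) is the correct and standard one. You also correctly flag the two points where the fourth-order problem diverges from the second-order template: Schwarz rearrangement does not decrease $\|\Delta u\|_2$, which is precisely why the paper resorts to Fourier rearrangement (Lemma~\ref{Four}) elsewhere, and the lack of a maximum principle for $\Delta^2$ forces the symmetry argument onto the integral formulation.

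One small imprecision worth noting: the phrase ``strict subadditivity of $S$ rules out both vanishing and dichotomy'' is not quite how the exclusion works here. Since the quotient $\|\Delta u\|_2^2/\|u\|_{4^*}^2$ is invariant under dilations and translations, $S$ is \emph{scale-invariant}, so there is no strict subadditivity of the constant itself. In Lions's second (limit-case) concentration-compactness lemma the mechanism is instead the strict convexity of $t\mapsto t^{4^*/2}$: if the measure $|u_n|^{4^*}dx$ were to split its mass between two profiles (or between a concentrating atom and a diffuse part), the resulting inequality $\bigl(\sum_j \nu_j\bigr)^{2/4^*} < \sum_j \nu_j^{2/4^*}$ would contradict the minimality of $S$. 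After fixing scale and center by the symmetry group, only the compact alternative survives. This is a cosmetic rather than substantive gap, but tightening the language there would make the sketch fully rigorous.
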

In the first part, we consider \eqref{question} with $\mu=0$, and denote the associated energy functional by $\mathcal{J}_{p}$ and $m_{p,a}=\inf\limits_{u\in S_a}\mathcal{J}_{p}(u)$.

\begin{theorem}\label{th1}
Let $N\geq2$, $2<p<\frac{2N}{(N-4)^+}$, $\mu=0$,

(i) If $2<p<\bar{p}$ and $p\in 2\mathbb{N}$, then $-\infty<m_{p,a}<0$ and $m_{p,a}$ has a radial minimizer $\hat{u}$. In particular, $\hat{u}$ is a ground state solution of \eqref{question} with some $\hat{\lambda}<0$. 

(ii) If $p=\bar{p}$,

\quad \,(a) for $0<a<\bar{a}$, then $m_{p,a}=0$ and \eqref{question} has no solution. In particular, $m_{p,a}$ cannot be achieved by any $u\in S_a$, namely,  \eqref{question} has no ground state solution.

\quad \,(b) for $a=\bar{a}$, then $m_{p,a}=0$ and $m_{p,a}$ has a minimizer $Q_{N,\bar{p}}$. In particular, $Q_{N,\bar{p}}$ is a ground state solution of \eqref{question} with some $\bar{\lambda}<0$.

\quad \,(c) for $a>\bar{a}$, $m_{p,a}=-\infty$.

(iii) If $\bar{p}<p<\frac{2N}{(N-4)^+}$, then $m_{p,a}=-\infty$. 
\end{theorem}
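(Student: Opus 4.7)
The argument splits according to the location of $p$ relative to the mass-critical exponent $\bar{p}=2+8/N$. The common tool is the $L^{2}$-preserving scaling $u_{\lambda}(x):=\lambda^{N/2}u(\lambda x)$, under which
\begin{equation*}
\mathcal{J}_{p}(u_{\lambda})=\frac{\lambda^{4}}{2}\|\Delta u\|_{2}^{2}-\frac{\lambda^{N(p-2)/2}}{p}\|u\|_{p}^{p}.
\end{equation*}
The sign of $N(p-2)/2-4$ (equivalently the position of $p$ relative to $\bar{p}$) decides the asymptotic behaviour of $\mathcal{J}_{p}$ along this fibre as $\lambda\to 0^{+}$ or $\lambda\to\infty$, and drives every case.

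For part (i) with $2<p<\bar{p}$, Lemma \ref{GN} together with Young's inequality (applied with $p\gamma_{p}=N(p-2)/4<2$) shows that $\mathcal{J}_{p}$ is coercive and bounded below on $S_{a}$; the scaling above with $\lambda\to 0^{+}$ yields $m_{p,a}<0$. I would then take a bounded minimizing sequence, extract a weak $H^{2}$-limit $\hat{u}$, and recover strong $L^{p}$-convergence via Lions' concentration-compactness principle combined with the strict sub-additivity $m_{p,a}<m_{p,a_{1}}+m_{p,\sqrt{a^{2}-a_{1}^{2}}}$ for $0<a_{1}<a$ (itself a scaling consequence of $m_{p,\cdot}<0$), so that vanishing and dichotomy are both excluded. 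The minimizer can be taken radial by running the same argument on $S_{a}\cap H^{2}_{\mathrm{rad}}(\mathbb{R}^{N})$, where the embedding into $L^{p}$ is compact for $p<4^{*}$. Once $\hat{u}\in S_{a}$ is obtained, testing the Euler--Lagrange equation $\Delta^{2}\hat{u}=\hat{\lambda}\hat{u}+|\hat{u}|^{p-2}\hat{u}$ with $\hat{u}$ forces
\begin{equation*}
\hat{\lambda}a^{2}=2m_{p,a}-\frac{p-2}{p}\|\hat{u}\|_{p}^{p}<0,
\end{equation*}
so $\hat{\lambda}<0$.

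For part (ii), inserting the sharp constant $B_{N,\bar{p}}=\bar{p}/(2\bar{a}^{\bar{p}-2})$ into Lemma \ref{GN} gives
\begin{equation*}
\mathcal{J}_{\bar{p}}(u)\;\geq\;\tfrac{1}{2}\bigl[1-(a/\bar{a})^{\bar{p}-2}\bigr]\|\Delta u\|_{2}^{2}.
\end{equation*}
When $a<\bar{a}$ this forces $m_{\bar{p},a}\geq 0$, and combined with $\mathcal{J}_{\bar{p}}(u_{\lambda})=\lambda^{4}\mathcal{J}_{\bar{p}}(u)\to 0$ one concludes $m_{\bar{p},a}=0$; any hypothetical solution would satisfy the Pohozaev-type identity $\|\Delta u\|_{2}^{2}=\tfrac{2}{\bar{p}}\|u\|_{\bar{p}}^{\bar{p}}$, which combined with the GN estimate yields $a\geq\bar{a}$ unless $u\equiv 0$, ruling out solutions. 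When $a=\bar{a}$, the GN inequality is saturated by $Q_{N,\bar{p}}$, and specialising the ground-state equation of the first lemma to $p=\bar{p}$ gives $\Delta^{2}Q_{N,\bar{p}}+\tfrac{4}{N}Q_{N,\bar{p}}=|Q_{N,\bar{p}}|^{\bar{p}-2}Q_{N,\bar{p}}$, so $Q_{N,\bar{p}}$ is a normalized minimizer with $\bar{\lambda}=-4/N<0$. When $a>\bar{a}$, the test function $v:=(a/\bar{a})Q_{N,\bar{p}}\in S_{a}$ satisfies $\mathcal{J}_{\bar{p}}(v)=\tfrac{\bar{a}^{2}}{2}[(a/\bar{a})^{2}-(a/\bar{a})^{\bar{p}}]<0$, and then $\mathcal{J}_{\bar{p}}(v_{\lambda})=\lambda^{4}\mathcal{J}_{\bar{p}}(v)\to-\infty$ as $\lambda\to\infty$ gives $m_{\bar{p},a}=-\infty$. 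Part (iii) is immediate from the scaling: $N(p-2)/2>4$ makes $\mathcal{J}_{p}(u_{\lambda})\to-\infty$ as $\lambda\to\infty$ for any $u\in S_{a}$.

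The principal obstacle is the compactness step in part (i): the biharmonic Dirichlet energy $\|\Delta u\|_{2}^{2}$ is not monotone under Schwarz symmetrization, so one cannot cheaply reduce a priori to radial functions at the minimizing-sequence level. I would handle this by Lions' concentration-compactness together with the strict sub-additivity of $a\mapsto m_{p,a}$, with the hypothesis $p\in 2\mathbb{N}$ providing the smoothness of the $L^{p}$-term convenient for the Brezis--Lieb type splitting estimates used in the argument.
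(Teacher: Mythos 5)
Your treatment of parts (ii) and (iii) matches the paper's: the same GN bound with the sharp constant $B_{N,\bar{p}}=\bar{p}/(2\bar{a}^{8/N})$, the same scaling fibre to show $m_{\bar{p},a}=0$, the same Poho\u{z}aev contradiction for nonexistence when $a<\bar{a}$, the same saturation argument for $a=\bar{a}$ (where you even make the value $\bar{\lambda}=-4/N$ explicit, which the paper does not), and the same test function $(a/\bar{a})Q_{N,\bar{p}}$ with dilation when $a>\bar{a}$. Part (iii) is identical.

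The genuine gap is in part (i), and it concerns both the route to compactness and the role of the hypothesis $p\in 2\mathbb{N}$. You correctly observe that the biharmonic energy $\|\Delta u\|_{2}^{2}$ is not decreased by Schwarz symmetrization, so one cannot pass directly to a radial minimizing sequence. But the resolution you then propose — Lions' concentration-compactness plus strict sub-additivity — produces (at best, after ruling out vanishing and dichotomy) a minimizer in $S_{a}$, not a \emph{radial} one, which is what the statement claims. Your alternative suggestion of "running the same argument on $S_{a}\cap H^{2}_{\mathrm{rad}}$" proves only that the radial infimum is achieved; it does not show that the radial infimum equals $m_{p,a}$. The paper's mechanism, which you have missed, is the Fourier rearrangement $u^{\sharp}=\mathcal{F}^{-1}((\mathcal{F}u)^{*})$ of Lemma \ref{Four}: it preserves $\|u\|_{2}$, does not increase $\|\Delta u\|_{2}$, and satisfies $\|u^{\sharp}\|_{p}\geq\|u\|_{p}$ \emph{precisely when $p$ is an even integer}. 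That last inequality is the entire reason the hypothesis $p\in 2\mathbb{N}$ appears in the theorem — not, as you suggest, because evenness gives "smoothness of the $L^{p}$-term convenient for Brezis--Lieb splitting" (Brezis--Lieb needs no such assumption). With $u_n^{\sharp}$ one gets a radial minimizing sequence for $m_{p,a}$ itself, and then the compact embedding $H^{2}_{\mathrm{rad}}\hookrightarrow L^{p}$ together with the strict monotonicity of $a\mapsto m_{p,a}$ (your sub-additivity is used at exactly this point, as in the paper) closes the argument cleanly without invoking concentration-compactness at all.
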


\begin{theorem}\label{th7}
Let $N\geq9$, $p=4^*$, $\mu=0$, then \eqref{question} has a unique positive ground state solution $U_{\varepsilon,0}$ at level $\frac{2}{N}S^{\frac{N}{4}}$, where $U_{\varepsilon,0}$ is defined in \eqref{deU} for the unique choice of $\varepsilon>0$ such that $\|U_{\varepsilon,0}\|_2=a$. The function $U_{\varepsilon,0}$ also solves \eqref{question} for $\lambda=0$.  
\end{theorem}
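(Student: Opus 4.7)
The strategy is to exploit the scale-critical nature of the problem when $\mu=0$ and $p=4^*$: the combination of the Nehari and Pohozaev identities forces the Lagrange multiplier to vanish, reducing \eqref{question} to the critical biharmonic equation \eqref{criequ}, after which the classification of positive entire solutions of \eqref{criequ} identifies the critical point as $U_{\varepsilon,0}$.

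Suppose $u\in S_a$ is a critical point of $\mathcal{J}_{4^*}|_{S_a}$ with Lagrange multiplier $\lambda\in\mathbb{R}$, so that $\Delta^2u=\lambda u+|u|^{4^*-2}u$ in $\mathbb{R}^N$. Testing with $u$ gives the Nehari-type identity $\|\Delta u\|_2^2-\|u\|_{4^*}^{4^*}=\lambda a^2$. The Pohozaev identity for the biharmonic equation (obtained by testing with $x\cdot\nabla u$) reads
\[
\frac{N-4}{2}\|\Delta u\|_2^2=\frac{N\lambda}{2}a^2+\frac{N}{4^*}\|u\|_{4^*}^{4^*}.
\]
Since $N/4^*=(N-4)/2$, subtracting $\tfrac{N-4}{2}$ times the Nehari identity from the Pohozaev identity yields $\tfrac{N-4}{2}\lambda a^2=\tfrac{N}{2}\lambda a^2$, whence $\lambda=0$.

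With $\lambda=0$ the equation reduces to \eqref{criequ}. I then invoke the classification of positive entire solutions of this fourth-order critical equation (see \cite{NSY} and references therein): every positive solution is of the form $U_{\varepsilon,y}$ for some $\varepsilon>0$ and $y\in\mathbb{R}^N$. The scaling relation $U_{\varepsilon,y}(x)=\varepsilon^{-(N-4)/2}U_{1,0}((x-y)/\varepsilon)$ yields $\|U_{\varepsilon,y}\|_2^2=\varepsilon^4\|U_{1,0}\|_2^2$, which is finite exactly when $2(N-4)>N$, i.e.\ $N\geq 9$; this is precisely where the hypothesis on the dimension enters. By translation invariance I may set $y=0$, and the mass constraint then determines $\varepsilon$ uniquely.

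Since $U_{\varepsilon,0}$ saturates the Sobolev inequality, Lemma \ref{Sob} together with \eqref{criequ} gives $\|\Delta U_{\varepsilon,0}\|_2^2=\|U_{\varepsilon,0}\|_{4^*}^{4^*}=S^{N/4}$, so
\[
\mathcal{J}_{4^*}(U_{\varepsilon,0})=\Big(\frac{1}{2}-\frac{1}{4^*}\Big)S^{N/4}=\frac{2}{N}S^{N/4}.
\]
For any other nontrivial critical point $v$, the Nehari identity with $\lambda=0$ combined with Lemma \ref{Sob} forces $\|\Delta v\|_2^2\geq S^{N/4}$, with equality only when $v$ attains the Sobolev extremal; hence $v$ either coincides with $U_{\varepsilon,0}$ (up to translation and sign) or has strictly larger energy, confirming that $U_{\varepsilon,0}$ is the unique positive ground state at level $\frac{2}{N}S^{N/4}$. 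The main obstacle is not in any of the steps above but in appealing to the sharp classification of positive solutions of the critical biharmonic equation, which is substantially more delicate than its second-order Laplacian analogue and must be imported from the literature rather than reproved here.
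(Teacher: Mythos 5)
Your proof is correct, and it takes a genuinely different route from the paper's. The paper computes the Poho\u{z}aev-manifold infimum $M_{4^*,a}=\inf_{\mathcal{P}_{4^*,a}}\mathcal{J}_{4^*}$ by rewriting it (via Lemmas \ref{critical11}, \ref{critical12}) as $\inf_{u\in S_a}\max_{\tau}\mathcal{J}_{4^*}(\tau*u)$, reduces this algebraically to the Sobolev quotient $\frac{2}{N}(\|\Delta u\|_2^2/\|u\|_{4^*}^2)^{N/4}$, and then invokes the natural-constraint property (Lemma \ref{natural}) to get $\lambda=0$ from $P_{4^*}(U_{\varepsilon,0})=0$. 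You instead observe that for $p=4^*$, $\mu=0$ the Nehari identity and the Poho\u{z}aev identity (Lemma \ref{poho}) are proportional in the $\|\Delta u\|_2^2$ and $\|u\|_{4^*}^{4^*}$ terms but not in the $\lambda\|u\|_2^2$ term, forcing $\lambda=0$ a priori for every constrained critical point, and then reduce to the classification of positive entire solutions of $\Delta^2 u=u^{4^*-1}$. Your route is shorter and more transparent about why the Lagrange multiplier must vanish; the paper's route keeps the variational structure parallel to the other cases and avoids an explicit appeal to the Poho\u{z}aev identity for a single critical point. Both arguments ultimately rest on the same deep input: the sharp characterization of equality in the critical Sobolev inequality, equivalently the Liouville-type classification of positive $D^{2,2}$ solutions of \eqref{criequ}. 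Two small corrections: that classification is not in \cite{NSY} (which only gives the Sobolev constant and the explicit extremals); you should cite \cite{Lin}, which the paper already lists. And the "uniqueness" as stated, both in your write-up and in the theorem itself, holds only up to translation, since every $U_{\varepsilon,y}$ with the same $\varepsilon$ is an equally valid positive ground state -- your parenthetical ``(up to translation and sign)'' quietly concedes this, so the word ``unique'' is really doing the work of ``unique modulo the symmetries of the problem.''
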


In the second part, we consider \eqref{question} with $\mu\neq0$, and we have:

\begin{theorem}\label{th2}
Let $N\geq2$, $2<q<p<\bar{p}$, $\mu>0$, and $p,q\in 2\mathbb{N}$, for $a>0$, then $-\infty<m_{p,q,a}<0$ and $m_{p,q,a}$ has a radial minimizer $\hat{u}$. In particular, $\hat{u}$ is a ground state solution of \eqref{question} with some $\hat{\lambda}<0$. 
\end{theorem}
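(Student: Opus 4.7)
The plan is to obtain $\hat u$ by the direct method on $S_a$: first reduce to the radial class via Fourier rearrangement (the even-integer hypothesis enters precisely here), then extract a minimizer using strict monotonicity of $a\mapsto m_{p,q,a}$ to rule out loss of mass. For the bounds on $m_{p,q,a}$, Lemma \ref{GN} gives $\|u\|_r^r\leq B_{N,r}a^{r(1-\gamma_r)}\|\Delta u\|_2^{r\gamma_r}$ for $u\in S_a$ and $r\in\{p,q\}$; since $p,q<\bar{p}$, the exponents $p\gamma_p=N(p-2)/4$ and $q\gamma_q$ are strictly less than $2$, so $\mathcal{J}_{p,q}|_{S_a}$ is coercive in $\|\Delta u\|_2$ and bounded below, yielding $m_{p,q,a}>-\infty$. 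For negativity, the $L^2$-preserving scaling $u_t(x)=t^{N/2}u(tx)$ gives
\begin{equation*}
\mathcal{J}_{p,q}(u_t)=\tfrac{t^4}{2}\|\Delta u\|_2^2-\tfrac{t^{N(p-2)/2}}{p}\|u\|_p^p-\tfrac{\mu t^{N(q-2)/2}}{q}\|u\|_q^q,
\end{equation*}
and since $N(p-2)/2,\,N(q-2)/2<4$ and $\mu>0$, the nonlinear terms dominate as $t\to 0^+$, so $\mathcal{J}_{p,q}(u_t)<0$ for small $t$ and $m_{p,q,a}<0$.

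For the radial reduction, introduce the Fourier rearrangement $u^\#:=\mathcal{F}^{-1}(|\widehat u|^*)$, where $|\widehat u|^*$ is the symmetric decreasing rearrangement of $|\widehat u|$. Then $u^\#$ is real and radial, by Plancherel $\|u^\#\|_2=\|u\|_2$, and since $(|\widehat u|^*)^2=(|\widehat u|^2)^*$ the Hardy-Littlewood inequality applied to the radially increasing weight $(2\pi|\xi|)^4$ yields $\|\Delta u^\#\|_2\leq\|\Delta u\|_2$. Since $p,q\in 2\mathbb{N}$, the identity $\|u\|_s^s=\|u^{s/2}\|_2^2$ for $s\in\{p,q\}$ combined with Plancherel expresses $\|u\|_s$ in terms of the $(s/2)$-fold convolution $\widehat u*\cdots*\widehat u=\widehat{u^{s/2}}$, and the Riesz-Sobolev (Brascamp-Lieb-Luttinger) rearrangement inequality gives $\|u^\#\|_s\geq\|u\|_s$. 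Consequently $\mathcal{J}_{p,q}(u^\#)\leq\mathcal{J}_{p,q}(u)$ for every $u\in S_a$, and the infimum $m_{p,q,a}$ equals the infimum over $S_a\cap H^2_{rad}(\mathbb{R}^N)$.

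Now extract the minimizer. Take a minimizing sequence $\{u_n\}\subset S_a\cap H^2_{rad}$; by coercivity it is bounded in $H^2$, and up to a subsequence $u_n\weakto\hat u$ in $H^2_{rad}$. The Strauss compact embedding $H^2_{rad}(\mathbb{R}^N)\hookrightarrow L^s(\mathbb{R}^N)$ for $s\in(2,4^*)$ (valid when $N\geq 2$) gives $u_n\to\hat u$ strongly in $L^p$ and $L^q$. The main obstacle is to exclude loss of mass $b:=\|\hat u\|_2<a$. I resolve it by establishing strict monotonicity of $a\mapsto m_{p,q,a}$: for any $0<b<a$ and a near-minimizer $v\in S_b\cap H^2_{rad}$ of $m_{p,q,b}$ with $\mathcal{J}_{p,q}(v)<0$ (possible since $m_{p,q,b}<0$), the rescaled function $w=(a/b)v\in S_a\cap H^2_{rad}$ satisfies
\begin{equation*}
\mathcal{J}_{p,q}(w)-\mathcal{J}_{p,q}(v)=(\sigma^2-1)\tfrac{1}{2}\|\Delta v\|_2^2-(\sigma^p-1)\tfrac{1}{p}\|v\|_p^p-(\sigma^q-1)\tfrac{\mu}{q}\|v\|_q^q\leq(\sigma^2-1)\mathcal{J}_{p,q}(v),
\end{equation*}
where $\sigma=a/b>1$ and I used $\sigma^p-1,\sigma^q-1\geq\sigma^2-1>0$. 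Hence $m_{p,q,a}\leq\sigma^2 m_{p,q,b}<m_{p,q,b}$. But weak lower semicontinuity combined with strong $L^p,L^q$ convergence forces $m_{p,q,b}\leq\mathcal{J}_{p,q}(\hat u)\leq m_{p,q,a}$, a contradiction; the case $b=0$ contradicts $m_{p,q,a}<0$ directly. Thus $b=a$, $\hat u\in S_a\cap H^2_{rad}$, and $\mathcal{J}_{p,q}(\hat u)=m_{p,q,a}$, so $\hat u$ is a radial ground state.

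Finally, the Lagrange multiplier rule yields $\hat\lambda\in\mathbb{R}$ such that $\hat u$ solves \eqref{question}. For the sign, differentiating $t\mapsto\mathcal{J}_{p,q}((\hat u)_t)$ at $t=1$ (where $t=1$ is a minimum along this curve) produces the Pohozaev-type relation
\begin{equation*}
2\|\Delta\hat u\|_2^2=\tfrac{N(p-2)}{2p}\|\hat u\|_p^p+\mu\tfrac{N(q-2)}{2q}\|\hat u\|_q^q,
\end{equation*}
while testing \eqref{question} against $\hat u$ gives $\|\Delta\hat u\|_2^2=\hat\lambda a^2+\|\hat u\|_p^p+\mu\|\hat u\|_q^q$. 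Eliminating $\|\Delta\hat u\|_2^2$ and using $p,q<\bar{p}$, which implies $N(p-2)/(4p),\,N(q-2)/(4q)<N/(N+4)<1$, produces
\begin{equation*}
\hat\lambda a^2=\Big(\tfrac{N(p-2)}{4p}-1\Big)\|\hat u\|_p^p+\mu\Big(\tfrac{N(q-2)}{4q}-1\Big)\|\hat u\|_q^q<0,
\end{equation*}
so $\hat\lambda<0$, completing the proof.
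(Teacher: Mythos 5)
Your proof is correct and follows essentially the same route as the paper: coercivity plus the small-scaling limit for the strict bounds $-\infty<m_{p,q,a}<0$, Fourier rearrangement (using $p,q\in 2\mathbb{N}$) to pass to a radial minimizing sequence, the compact embedding $H^2_{rad}\hookrightarrow L^s$ plus strict monotonicity of $a\mapsto m_{p,q,a}$ to rule out loss of mass, and the Poho\u{z}aev identity to obtain $\hat\lambda<0$. The paper packages the monotonicity slightly differently (a strict subadditivity lemma from which strict decrease is a corollary, both proved by the same $m_{p,q,\theta c}<\theta^2 m_{p,q,c}$ scaling estimate you use), but the substance is identical.
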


\begin{theorem}\label{th3}
Let $N\geq2$, $2<q<p=\bar{p}$,

(i) If $0<a<\bar{a}$,

\quad\, (a) for $\mu>0$ and $p,q\in 2\mathbb{N}$, then $-\infty<m_{p,q,a}<0$ and $m_{p,q,a}$ has a radial minimizer $\hat{u}$. In particular, $\hat{u}$ is a ground state solution of \eqref{question} with some $\hat{\lambda}<0$.

\quad\, (b) for $\mu<0$, then $m_{p,q,a}=0$ and \eqref{question} has no solution.

(ii) If $a=\bar{a}$,

\quad\, (a) for $\mu>0$, $m_{p,q,a}=-\infty$. 

\quad\, (b) for $\mu<0$, then $m_{p,q,a}=0$ and \eqref{question} has no solution. 

(iii) If $a>\bar{a}$, for any $\mu\neq0$, $m_{p,q,a}=-\infty$.
\end{theorem}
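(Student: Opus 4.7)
The proof splits into the six subcases, organized by size of $a$ relative to $\bar{a}$, and uses three tools tied to the mass-critical exponent $p=\bar{p}=2+\tfrac{8}{N}$. First, Lemma 1.2 at $p=\bar{p}$, where $\bar{p}\gamma_{\bar{p}}=2$ and $B_{N,\bar{p}}=\bar{p}/(2\bar{a}^{\bar{p}-2})$, gives for $u\in S_a$
\begin{equation*}
\tfrac{1}{\bar{p}}\|u\|_{\bar{p}}^{\bar{p}}\leq \tfrac{1}{2}(a/\bar{a})^{\bar{p}-2}\|\Delta u\|_2^2,
\end{equation*}
with equality realised by $Q_{N,\bar{p}}$ when $a=\bar{a}$. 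Second, the $L^2$-preserving dilation $u_t(x):=t^{N/2}u(tx)$ scales $\|\Delta u_t\|_2^2$ and $\|u_t\|_{\bar{p}}^{\bar{p}}$ by $t^4$ but $\|u_t\|_q^q$ only by the strictly smaller power $t^{N(q-2)/2}$. Third, the Pohozaev identity $\|\Delta u\|_2^2=\mu\gamma_q\|u\|_q^q+\gamma_{\bar{p}}\|u\|_{\bar{p}}^{\bar{p}}$, valid for any solution of \eqref{question} (obtained by combining the Nehari identity with the $x\cdot\nabla u$-test).

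For the subcases with $\mu<0$ and $a\leq\bar{a}$, namely (i)(b) and (ii)(b), the first tool yields $\mathcal{J}_{\bar{p},q}(u)\geq \tfrac{1}{2}[1-(a/\bar{a})^{\bar{p}-2}]\|\Delta u\|_2^2+\tfrac{|\mu|}{q}\|u\|_q^q\geq 0$, while the spreading $u_\varepsilon(x):=\varepsilon^{N/2}u(\varepsilon x)$, $\varepsilon\to 0^+$, sends every integral in $\mathcal{J}_{\bar{p},q}(u_\varepsilon)$ to zero; hence $m_{\bar{p},q,a}=0$. Substituting the GN bound into Pohozaev yields $[1-(a/\bar{a})^{\bar{p}-2}]\|\Delta u\|_2^2\leq\mu\gamma_q\|u\|_q^q$, whose LHS is $\geq 0$ and RHS is $\leq 0$ under these sign constraints, forcing $u\equiv 0\notin S_a$ and ruling out any solution. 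The mass-supercritical cases (ii)(a) and (iii) are handled by explicit blow-up paths: in (ii)(a) the choice $u=Q_{N,\bar{p}}\in S_{\bar{a}}$ makes the $t^4$-terms of $\mathcal{J}_{\bar{p},q}(u_t)$ cancel via the equality case of Lemma 1.2, leaving $-\tfrac{\mu}{q}t^{N(q-2)/2}\|Q_{N,\bar{p}}\|_q^q\to-\infty$ as $t\to\infty$; in (iii) the choice $u=(a/\bar{a})Q_{N,\bar{p}}\in S_a$ produces a negative $t^4$-coefficient proportional to $(a/\bar{a})^2-(a/\bar{a})^{\bar{p}}$, which dominates the $t^{N(q-2)/2}$-term regardless of $\mathrm{sgn}(\mu)$, since $N(q-2)/2<4$.

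Only case (i)(a) ($0<a<\bar{a}$, $\mu>0$, $p,q\in 2\mathbb{N}$) requires real variational work. Boundedness below and negativity of $m_{\bar{p},q,a}$ follow from the estimate $\mathcal{J}_{\bar{p},q}(u)\geq c_1\|\Delta u\|_2^2-c_2\|\Delta u\|_2^{q\gamma_q}$ with $c_1=\tfrac{1}{2}[1-(a/\bar{a})^{\bar{p}-2}]>0$ and $q\gamma_q<2$, and from the dilation argument with $t\to 0^+$ (the $q$-term, having the smallest power, dominates). A minimizing sequence $\{u_n\}\subset S_a$ is then bounded in $H^2$. The hypothesis $p,q\in 2\mathbb{N}$ lets me replace $u_n$ by its Fourier rearrangement $u_n^{\natural}:=\mathcal{F}^{-1}(|\mathcal{F}u_n|^*)$, which is radial, preserves $\|u_n\|_2$, does not increase $\|\Delta u_n\|_2$ (Hardy--Littlewood applied against the symmetric-increasing weight $|\xi|^4$), and does not decrease $\|u_n\|_p$ or $\|u_n\|_q$ (Brascamp--Lieb--Luttinger applied to the Fourier-side identity $\int u^{2k}\,dx=\int |\hat u^{*k}|^2\,d\xi$ for $k\in\mathbb{N}$). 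Compactness of $H^2_{\mathrm{rad}}\hookrightarrow L^r$ for $r\in(2,4^*)$, together with $\bar{p}<4^*$, then produces a weak limit $\hat{u}$ of the modified sequence with strong $L^q$- and $L^{\bar{p}}$-convergence; since $m_{\bar{p},q,a}<0$, vanishing is ruled out and $\hat{u}\not\equiv 0$.

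The main obstacle is preventing mass loss $\|\hat{u}\|_2<a$. I would first establish the strict monotonicity of $b\mapsto m_{\bar{p},q,b}$ on $(0,\bar{a})$ by testing the $a$-energy on a rescaling of a near-$b$-minimizer, with the negativity $m_{\bar{p},q,b}<0$ and the subcritical scaling of the $q$-term driving the gain. A Brezis--Lieb splitting then gives $m_{\bar{p},q,a}=\lim\mathcal{J}_{\bar{p},q}(u_n^{\natural})\geq\mathcal{J}_{\bar{p},q}(\hat{u})\geq m_{\bar{p},q,\|\hat u\|_2}$, so strict monotonicity forces $\|\hat{u}\|_2=a$; hence $u_n^{\natural}\to\hat{u}$ strongly in $H^2$ and $\hat{u}$ realises $m_{\bar{p},q,a}$. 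Finally, the Lagrange multiplier $\hat\lambda$ arises from the constrained critical point equation, and $\hat\lambda<0$ follows by pairing it with $\hat{u}$: combining the resulting identity with $\mathcal{J}_{\bar{p},q}(\hat{u})<0$ and the positivity of $\|\hat u\|_q^q$ and $\|\hat u\|_{\bar{p}}^{\bar{p}}$ forces $\hat\lambda a^2<0$.
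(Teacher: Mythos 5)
Your proof is correct and follows essentially the same strategy as the paper: coercivity/sign of the energy via the Gagliardo--Nirenberg inequality at the mass-critical exponent for (i)(a) and the two $\mu<0$ cases, explicit dilation paths built from $Q_{N,\bar p}$ for the $m=-\infty$ cases (ii)(a) and (iii), and Fourier rearrangement plus compact radial embedding plus strict subadditivity of $b\mapsto m_{\bar p,q,b}$ to extract a minimizer in (i)(a). The only cosmetic deviations are: for the nonexistence in (i)(b)/(ii)(b) you substitute GN directly into the Poho\v{z}aev identity, whereas the paper passes through the intermediate fact $m_{\bar p,a}=0$ from Theorem \ref{th1} (these are the same inequality in different clothing); and for the sign $\hat\lambda<0$ you combine the Nehari identity with $\mathcal{J}_{\bar p,q}(\hat u)<0$, while the paper uses the linear combination of Nehari and Poho\v{z}aev recorded in \eqref{nega} --- both are valid and give the same conclusion.
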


\begin{theorem}\label{th4}
Let $N\geq2$, $2<q<\bar{p}<p<\frac{2N}{(N-4)^+}$, $\mu>0$ and $p,q\in 2\mathbb{N}$, if
\begin{equation}\label{con1}
  \mu a^{\gamma(p,q)}<\Big(\frac{p(\bar{p}-q)}{2B_{N,p}(p-q)}\Big)^{\frac{\bar{p}-q}{p-\bar{p}}}\Big(\frac{q(p-\bar{p})}{2B_{N,q}(p-q)}\Big)
\end{equation}
with
\begin{equation}\label{gamma}
  \gamma(p,q)=\frac{\bar{p}-q}{p-\bar{p}}\Big(p-\frac{N(p-2)}{4}\Big)+\Big(q-\frac{N(q-2)}{4}\Big)>0.
\end{equation}
Then \eqref{question} has two radial solutions $\hat{u}$ and $\tilde{u}$ for suitable $\hat{\lambda},\tilde{\lambda}<0$, where $\hat{u}$ is an interior local minimizer of $\mathcal{J}_{p,q}$  in the set $\mathcal{A}_k$
for a suitable $k>0$ at level $c_{p,q,a}<0$ (defined in \eqref{jubu}), $\tilde{u}$ is a critical point of mountain pass type for $\mathcal{J}_{p,q}|_{S_{a,r}}$ at level $\sigma_{p,q,a,r}>0$ (defined in \eqref{shanlu1}), $S_{a,r}:=S_a\cap H^2_{rad}(\mathbb{R}^N)$. In particular, $\hat{u}$ is a ground state solution of \eqref{question}.
\end{theorem}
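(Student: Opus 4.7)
The strategy is to adapt the Soave--Jeanjean decomposition for combined mass-subcritical/mass-supercritical nonlinearities to the biharmonic setting, working throughout on the radial subspace $S_{a,r}=S_a\cap H^2_{rad}(\mathbb R^N)$ so as to exploit the compact embedding $H^2_{rad}\hookrightarrow L^s$ for $s\in(2,4^*)$ available in the Sobolev subcritical range $p<\frac{2N}{(N-4)^+}$. The basic tool is the $L^2$-preserving scaling $(t\star u)(x):=t^{N/2}u(tx)$ and its fiber map
\[
\psi_u(t)=\mathcal J_{p,q}(t\star u)=\tfrac{t^4}{2}\|\Delta u\|_2^2-\tfrac{t^{\alpha_p}}{p}\|u\|_p^p-\tfrac{\mu t^{\alpha_q}}{q}\|u\|_q^q,\qquad \alpha_r:=\tfrac{N(r-2)}{2}.
\]
Since $q<\bar p<p$ one has $\alpha_q<4<\alpha_p$. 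Using Lemma \ref{GN} on both nonlinearities I bound $\mathcal J_{p,q}$ on $S_a$ from below by a universal ``W-shape'' function of $\rho:=\|\Delta u\|_2$ which starts at $0$, dips below zero, rises to a local maximum, then plunges to $-\infty$. The structural condition \eqref{con1} is precisely the sharp threshold that makes this local maximum \emph{strictly} positive, and hence produces radii $0<\rho_0<\rho_1$, a level $k\in(\rho_0^2,\rho_1^2)$, and $\delta>0$ such that $\mathcal J_{p,q}(u_0)<0$ for some $u_0\in\mathcal A_k\cap S_{a,r}$ while $\mathcal J_{p,q}(u)\ge\delta$ for every $u\in S_a$ with $\|\Delta u\|_2^2=k$.

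Now set $c_{p,q,a}:=\inf\{\mathcal J_{p,q}(u):u\in\overline{\mathcal A_k}\cap S_a\}<0$. The strict separation from the positive barrier on $\partial\mathcal A_k$ lets Ekeland's principle produce a PS minimizing sequence that stays inside $\mathcal A_k$; by Schwarz rearrangement it may be chosen radial, and the subcritical compactness $H^2_{rad}\hookrightarrow L^p,L^q$ passes the nonlinear terms to the weak limit $\hat u$. Testing the Euler--Lagrange equation against $\hat u$ and combining with the Pohozaev identity $\psi'_{\hat u}(1)=0$, the inequalities $q<\bar p<p$ force $\hat\lambda<0$; negativity of $\hat\lambda$ then upgrades weak to strong $H^2$-convergence, yielding $\hat u\in\mathcal A_k\cap S_{a,r}$ with $\mathcal J_{p,q}(\hat u)=c_{p,q,a}$. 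Finally, a Pohozaev classification of critical points of $\mathcal J_{p,q}|_{S_a}$ shows that any such critical point lies either on the low-$\rho$ branch (energy $\ge c_{p,q,a}$) or on the high-$\rho$ branch (energy $>0>c_{p,q,a}$), so $\hat u$ is a ground state.

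For the second critical point, since $\psi_{\hat u}(t)\to-\infty$ as $t\to\infty$ I pick $t_1\gg 1$ with $\mathcal J_{p,q}(t_1\star\hat u)<c_{p,q,a}$ and define
\[
\Gamma:=\{\gamma\in C([0,1],S_{a,r}):\gamma(0)=\hat u,\ \gamma(1)=t_1\star\hat u\},\qquad \sigma_{p,q,a,r}:=\inf_{\gamma\in\Gamma}\max_{t\in[0,1]}\mathcal J_{p,q}(\gamma(t)).
\]
Because $\|\Delta\gamma(0)\|_2^2<k<\|\Delta\gamma(1)\|_2^2$ and $t\mapsto\|\Delta\gamma(t)\|_2^2$ is continuous, every admissible path crosses $\{\|\Delta u\|_2^2=k\}$, and so $\sigma_{p,q,a,r}\ge\delta>0>c_{p,q,a}$: this is the mountain-pass geometry.

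The genuine difficulty, and the step I expect to be the main obstacle, is to produce a PS sequence at the level $\sigma_{p,q,a,r}$ that is both bounded in $H^2$ and carries Pohozaev information (without which boundedness typically fails in the mass-supercritical regime). I would use Jeanjean's device: the lifted functional $\widetilde{\mathcal J}(s,u):=\mathcal J_{p,q}(e^s\star u)$ is mass-invariant in $u$, so the standard min-max principle on $\mathbb R\times H^2_{rad}$ yields $(s_n,u_n)$ with $s_n\to 0$, $\widetilde{\mathcal J}(s_n,u_n)\to\sigma_{p,q,a,r}$, $\partial_s\widetilde{\mathcal J}(s_n,u_n)\to 0$ (equivalently $P_{p,q}(e^{s_n}\star u_n)\to 0$, where $P_{p,q}$ is the Pohozaev functional), and $d_u\widetilde{\mathcal J}(s_n,u_n)|_{T_{u_n}S_a}\to 0$. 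Combining the energy and Pohozaev bounds controls $\|\Delta u_n\|_2$ and hence gives $H^2$-boundedness; radial compactness handles the nonlinear terms; strict positivity of $\sigma_{p,q,a,r}$ rules out the zero limit; and the Pohozaev identity combined with $q<\bar p<p$ forces the limiting Lagrange multiplier $\tilde\lambda<0$, which via the equation converts weak to strong $H^2$-convergence. The resulting $\tilde u\in S_{a,r}$ is the mountain-pass critical point at level $\sigma_{p,q,a,r}>0$.
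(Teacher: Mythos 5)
Your overall strategy is sound and broadly parallels the paper's: a fiber-map/Poho\u{z}aev decomposition, a ``W-shaped'' lower barrier $h(\|\Delta u\|_2^2)$ whose positivity on $(R_0,R_1)$ is governed by \eqref{con1}, a local minimizer $\hat u$ obtained from Ekeland plus radial compactness, and a mountain-pass level $\sigma_{p,q,a,r}$ separated from $c_{p,q,a}$ by the barrier. Your use of Jeanjean's lifted functional $\widetilde{\mathcal J}(s,u)=\mathcal J_{p,q}(e^s\star u)$ to manufacture a Palais--Smale sequence carrying the Poho\u{z}aev constraint is a legitimate alternative to the paper's route, which invokes Ghoussoub's general minimax theorem (Lemma \ref{Ghouss}) with dual set $F=\mathcal P^-_{p,q,a,r}$ and extended boundary $B=\mathcal P^+_{p,q,a,r}\cup\mathcal J_{p,q}^{2c_{p,q,a,r}}$; both produce $P_{p,q}(u_n)\to 0$, which is exactly what Lemma \ref{com1} needs. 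That much is fine.

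There is, however, a genuine gap in your construction of the local minimizer: you invoke \emph{Schwarz} rearrangement to produce a radial minimizing sequence. For a second-order problem this is standard (Pólya--Szegő gives $\|\nabla u^*\|_2\le\|\nabla u\|_2$), but for the bilaplacian there is \emph{no} general inequality $\|\Delta u^*\|_2\le\|\Delta u\|_2$ for the Schwarz rearrangement $u^*$, so the rearranged sequence need not stay in $\overline{\mathcal A_k}$ nor lower the energy. This is precisely why the theorem carries the hypothesis $p,q\in 2\mathbb N$: the paper replaces Schwarz by the \emph{Fourier} rearrangement $u^\sharp=\mathcal F^{-1}((\mathcal F u)^*)$ (Lemma \ref{Four}), which does satisfy $\|\Delta u^\sharp\|_2\le\|\Delta u\|_2$ and $\|u^\sharp\|_2=\|u\|_2$, but the indispensable comparison $\|u^\sharp\|_p\ge\|u\|_p$ is only known for even integers $p$. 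Your proposal never uses, nor explains the need for, the hypothesis $p,q\in 2\mathbb N$, which signals that this step would fail as written. (If instead you chose to minimize directly over $S_{a,r}$, you would avoid rearrangement but would then need a separate argument that the radial minimum level $c_{p,q,a,r}$ coincides with the full level $c_{p,q,a}=M_{p,q,a}$ in order to conclude that $\hat u$ is a ground state; the rearrangement inequality is what supplies that identification in the paper.)
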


\begin{theorem}\label{th5}
Let $N\geq2$, $2<q\leq \bar{p}<p<\frac{2N}{(N-4)^+}$, $\mu<0$, if
\begin{equation}\label{con2}
  |\mu| a^{\gamma(p,q)}<\Big(\frac{4p}{NB_{N,p}(p-2)}\Big)^{\frac{\bar{p}-q}{p-\bar{p}}}\Big(\frac{q(4p+2N-Np)}{2NB_{N,q}(p-q)}\Big),
\end{equation}
where $\gamma(p,q)$ is defined in \eqref{gamma}. Then \eqref{question} has a radial ground state solution at positive level for some $\lambda<0$.
\end{theorem}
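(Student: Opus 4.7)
The approach is a Pohozaev-type minimax on the radial subclass $S_{a,r}:=S_a\cap H^2_{\mathrm{rad}}(\mathbb{R}^N)$. Because $\mu<0$ makes the $q$-nonlinearity defocusing while the mass-supercritical $p$-term ($p>\bar{p}$) forces $\mathcal{J}_{p,q}$ to $-\infty$ along mass-preserving dilations, the functional carries a mountain-pass-style geometry on $S_a$; the radial restriction supplies the compact embeddings $H^2_{\mathrm{rad}}\hookrightarrow L^s$, $2<s<4^*$, needed to capture the critical level.

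The first step is to analyze the fiber map $\psi_u(t):=\mathcal{J}_{p,q}(u_t)$ along the $L^2$-preserving scaling $u_t(x):=t^{N/2}u(tx)$,
\[
\psi_u(t)=\tfrac{t^4}{2}\|\Delta u\|_2^2+\tfrac{|\mu|}{q}t^{2q\gamma_q}\|u\|_q^q-\tfrac{1}{p}t^{2p\gamma_p}\|u\|_p^p,
\]
where $2q\gamma_q\le 4<2p\gamma_p$ since $q\le\bar{p}<p$. The middle term is nonnegative, so $\psi_u(t)\to 0^+$ as $t\to 0^+$ and $\psi_u(t)\to-\infty$ as $t\to+\infty$; examining $\psi_u'(t)/t^3$ (strictly decreasing from $+\infty$ to $-\infty$) yields a unique critical point $t_u>0$, which is the global maximum, and $u_{t_u}$ lies on the Pohozaev set
\[
\mathcal{P}_{a,r}:=\{u\in S_{a,r}:P(u)=0\},\qquad P(u):=\|\Delta u\|_2^2-\gamma_p\|u\|_p^p-\mu\gamma_q\|u\|_q^q.
\]
A direct check that $\psi_u''(1)<0$ on $\mathcal{P}_{a,r}$ shows $\mathcal{P}_{a,r}$ is a $C^1$ natural constraint for $\mathcal{J}_{p,q}|_{S_{a,r}}$.

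Next, I would set
\[
c_{p,q,a}:=\inf_{\mathcal{P}_{a,r}}\mathcal{J}_{p,q}=\inf_{S_{a,r}}\max_{t>0}\mathcal{J}_{p,q}(u_t),
\]
and rewrite on $\mathcal{P}_{a,r}$ via $P(u)=0$ as
\[
\mathcal{J}_{p,q}(u)=\Big(\tfrac12-\tfrac{1}{p\gamma_p}\Big)\|\Delta u\|_2^2+\tfrac{|\mu|}{q}\Big(1-\tfrac{q\gamma_q}{p\gamma_p}\Big)\|u\|_q^q,
\]
a sum of two strictly positive quantities. Coupling $P(u)=0$ with Lemma~\ref{GN} gives $\|\Delta u\|_2^{p\gamma_p-2}\ge (\gamma_pB_{N,p}a^{p(1-\gamma_p)})^{-1}$ on $\mathcal{P}_{a,r}$, whence $c_{p,q,a}>0$. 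The smallness hypothesis~\eqref{con2} is calibrated precisely so that the competing Gagliardo-Nirenberg bounds on $\|u\|_p^p$ and $\|u\|_q^q$ remain compatible on $\mathcal{P}_{a,r}$, producing an upper bound on $c_{p,q,a}$ and strictly separating it from the limiting energy of the ``problem at infinity'' (the pure biharmonic mountain pass with the $q$-term deleted). An Ekeland argument on $\mathcal{P}_{a,r}$ then produces a sequence $\{u_n\}\subset\mathcal{P}_{a,r}$ with $\mathcal{J}_{p,q}(u_n)\to c_{p,q,a}$, $P(u_n)\to 0$, and Lagrange multipliers $\lambda_n$; the displayed identity bounds $\{u_n\}$ in $H^2$, and the compact radial embedding gives $u_n\to u$ strongly in $L^p\cap L^q$ and weakly in $H^2$, while the uniform lower bound $\|\Delta u_n\|_2\ge\kappa>0$ combined with $P(u_n)\to 0$ and the strong $L^p$ convergence excludes $u\equiv 0$.

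Passing to the limit yields $\Delta^2 u=\lambda u+\mu|u|^{q-2}u+|u|^{p-2}u$ with $\|u\|_2\le a$ and $P(u)\le 0$; the remaining task is to rule out $\|u\|_2<a$. A strict subadditivity argument for $a\mapsto c_{p,q,a}$, once again using~\eqref{con2} to keep $c_{p,q,a}$ below the problem-at-infinity level, blocks the dichotomy and forces $\|u\|_2=a$. Hence $u\in\mathcal{P}_{a,r}$, $\mathcal{J}_{p,q}(u)=c_{p,q,a}>0$, and since every normalized solution of~\eqref{question} lies on $\{P=0\}$, $u$ is a ground state. Testing the equation against $u$ and eliminating $\|\Delta u\|_2^2$ via $P(u)=0$ gives $\lambda\|u\|_2^2=-(1-\gamma_p)\|u\|_p^p+|\mu|(1-\gamma_q)\|u\|_q^q$; under~\eqref{con2} the negative contribution dominates, producing $\lambda<0$. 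The chief obstacle I expect is the mass-preservation step: weak $H^2$ convergence does not preserve the $L^2$-norm, and it is exactly the smallness condition~\eqref{con2}, through its control of $c_{p,q,a}$ relative to the asymptotic energy, that restores compactness along the minimizing sequence.
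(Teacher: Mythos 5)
Your fiber-map analysis, the Pohozaev-manifold structure, the positivity of the minimax level, and the use of radial compactness all match the paper's scheme (Lemmas~\ref{xiaoyu1}--\ref{xiaoyu4} and Theorem~\ref{xiaoyump}, plus Lemma~\ref{com1}). The genuine gap is in how you recover the constraint $\|u\|_2=a$ and, correspondingly, in how you characterize the role of condition~\eqref{con2}.

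You propose to block mass loss by a strict subadditivity/monotonicity argument for $a\mapsto c_{p,q,a}$, appealing to a ``problem at infinity'' obtained by deleting the $q$-term. This is not what the paper does, and as stated it is not justified: the paper proves strict monotonicity of the Pohozaev min-max level in $a$ only in the critical case $p=4^*$ (Lemma~\ref{crisx7}); nothing of the kind is established for subcritical $p$, and proving it would require its own nontrivial argument (including verifying that the smallness condition~\eqref{con2} persists for the smaller mass). Moreover, the intuition that~\eqref{con2} ``keeps $c_{p,q,a}$ below the problem-at-infinity level'' is a mischaracterization: in this radial, Sobolev-subcritical setting there is no concentration-at-criticality threshold to beat, and~\eqref{con2} plays no such separating role.

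The actual role of~\eqref{con2}, as in the paper's Lemma~\ref{com1} (Step~2, case~(ii)), is to guarantee $\lambda<0$ for the limiting Lagrange multiplier. That is precisely what makes the mass-conservation step unnecessary as a separate argument: once $u_n\rightharpoonup u$ in $H^2$, $u_n\to u$ strongly in $L^p\cap L^q$ (radial compactness), $u\neq 0$, and $\lambda_n\to\lambda<0$, testing the approximate equation against $u_n-u$ yields
\[
\int_{\mathbb{R}^N}|\Delta(u_n-u)|^2\,dx-\lambda\int_{\mathbb{R}^N}|u_n-u|^2\,dx=o_n(1),
\]
and the sign $\lambda<0$ forces both terms to vanish, giving full $H^2$-strong convergence, hence $\|u\|_2=a$ automatically. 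You do eventually write down the identity $\lambda\|u\|_2^2=-(1-\gamma_p)\|u\|_p^p+|\mu|(1-\gamma_q)\|u\|_q^q$ and assert $\lambda<0$ ``under~\eqref{con2},'' but you place this \emph{after} the compactness step and treat it as an afterthought, whereas logically it must come \emph{before}: $\lambda<0$ is the mechanism that restores compactness, not subadditivity. To make that assertion precise you also need the lower bound $\|\Delta u\|_2\ge\big(\gamma_pB_{N,p}a^{p(1-\gamma_p)}\big)^{-1/(p\gamma_p-2)}$ coming from $P(u)=0$ and Gagliardo--Nirenberg, and then a Gagliardo--Nirenberg bound on $\|u\|_q^q$, exactly as the paper computes; simply saying ``the negative contribution dominates'' is not a proof. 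Reorder your argument to derive $\lambda<0$ from~\eqref{con2} first, drop the subadditivity step, and the proof will align with the paper's.
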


\begin{theorem}\label{th8}
Let $N\geq7$, $2<q<\bar{p}<p=4^*$, $\mu>0$, and \eqref{con3}, \eqref{con1} hold. If $N=7$, we further assume that $q>\frac{8}{3}$.
Then \eqref{question} has a second solution $\tilde{u}$ at level $\sigma_{4^*,q,a,r}>0$ (defined in \eqref{shanlu2}) for some $\tilde{\lambda}<0$. Moreover, $\tilde{u}$ is radial and is a critical point of mountain pass type for $\mathcal{J}_{4^*,q}|_{S_{a,r}}$.
\end{theorem}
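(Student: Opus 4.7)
The strategy, following \cite{Li1,JL0,WW}, is to approximate the Sobolev critical problem by subcritical ones. I would fix any sequence $p_n \in (\bar p, 4^*)$ with $p_n \nearrow 4^*$. Condition \eqref{con3} is stable in $p$, and since \eqref{con1} at $p=4^*$ is strict, it also holds at $p_n$ for $n$ large by continuity. Theorem \ref{th4} then supplies, for each such $n$, a radial critical point $\tilde u_n \in S_{a,r}$ of $\mathcal{J}_{p_n,q}$ at the mountain pass level $\sigma_n := \sigma_{p_n,q,a,r} > 0$ with Lagrange multiplier $\tilde\lambda_n < 0$. The goal is to pass to the limit $n\to\infty$ and recover a mountain pass critical point $\tilde u \in S_{a,r}$ of $\mathcal{J}_{4^*,q}|_{S_{a,r}}$ at level $\sigma_{4^*,q,a,r}$.

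The key preparatory estimate is the uniform strict upper bound
\[
\sigma_n \;<\; \mathcal{J}_{4^*,q}(\hat u) \,+\, \frac{2}{N}\, S^{N/4}\qquad \text{for all large } n,
\]
where $\hat u$ is the local minimizer of $\mathcal{J}_{4^*,q}$ on $\mathcal{A}_k$ provided by \cite{MC} and $\frac{2}{N}S^{N/4}$ is the energy of a biharmonic Talenti bubble $U_{\varepsilon,0}$ from Lemma \ref{Sob}. I would construct a concrete admissible path in the min-max definition \eqref{shanlu2} by combining $\hat u$ with an $L^2$-preserving rescaling of a radial cut-off of $U_{\varepsilon,0}$, transported through the fiber action $(s\star u)(x)=e^{Ns/2}u(e^s x)$. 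Standard Brezis--Nirenberg type asymptotics as $\varepsilon\to 0^+$ give $\|\Delta\phi_\varepsilon\|_2^2 = S^{N/4}+O(\varepsilon^{N-4})$, $\|\phi_\varepsilon\|_{4^*}^{4^*} = S^{N/4}+O(\varepsilon^N)$, while the $L^q$ decay rate of $\phi_\varepsilon$ produces the required negative correction to close the strict inequality, provided the $L^q$-concentration dominates the $L^2$-normalization remainder. This is exactly where the restriction $N\geq 7$ (and the additional $q>\tfrac{8}{3}$ when $N=7$) enters, marking the biharmonic analogue of the Brezis--Nirenberg dimension threshold.

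Granted this upper bound, the Pohozaev-type identity already used to define the mountain pass class in Theorem \ref{th4} controls $\|\Delta \tilde u_n\|_2^2$, so $(\tilde u_n)$ is bounded in $H^2(\mathbb R^N)$; testing the Euler--Lagrange equation against $\tilde u_n$ then bounds $(\tilde\lambda_n)$. Since the $\tilde u_n$ are radial, the compact embedding $H^2_{rad}(\mathbb R^N)\hookrightarrow L^s(\mathbb R^N)$ for $2<s<4^*$ yields, along a subsequence, $\tilde u_n \weakto \tilde u$ in $H^2$, $\tilde u_n\to\tilde u$ in every subcritical $L^s$, and $\tilde\lambda_n\to\tilde\lambda\leq 0$; the weak limit $\tilde u$ is then a weak solution of \eqref{question} at $p=4^*$ with multiplier $\tilde\lambda$. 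Combining the equation with the Pohozaev identity for the limit problem and using $q<\bar p$ rules out $\tilde\lambda=0$, hence $\tilde\lambda<0$.

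The final and hardest step is upgrading to strong convergence in $D^{2,2}(\mathbb R^N)$. Applying the Brezis--Lieb lemma to $w_n := \tilde u_n - \tilde u$ together with Lemma \ref{Sob} produces the dichotomy: either $\|\Delta w_n\|_2\to 0$, or $\liminf \|\Delta w_n\|_2^2 \geq S^{N/4}$. In the second case the energy splits as
\[
\lim_{n\to\infty}\sigma_n \,=\, \mathcal{J}_{4^*,q}(\tilde u) \,+\, \lim_{n\to\infty}\Big(\frac{1}{2}\|\Delta w_n\|_2^2 - \frac{1}{4^*}\|w_n\|_{4^*}^{4^*}\Big),
\]
and the Sobolev inequality forces the parenthesized quantity to be at least $\frac{2}{N}S^{N/4}$; combined with $\mathcal{J}_{4^*,q}(\tilde u)\geq \mathcal{J}_{4^*,q}(\hat u)$ (the latter being the baseline from the local minimizer class) this contradicts the strict upper bound of the second paragraph. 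Hence $\tilde u_n\to\tilde u$ strongly in $H^2_{rad}$, so $\tilde u\in S_{a,r}$ with $\tilde\lambda<0$ and $\mathcal{J}_{4^*,q}(\tilde u) = \sigma_{4^*,q,a,r} > 0$, and passing to the limit in the min-max definition \eqref{shanlu2} identifies $\tilde u$ as a critical point of mountain pass type. The principal technical difficulty, as sketched above, is the sharp biharmonic Brezis--Nirenberg test-function estimate in the mass-constrained setting, from which both the dimension restriction $N\geq 7$ and the auxiliary condition $q>8/3$ for $N=7$ originate.
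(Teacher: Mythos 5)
Your proposal tracks the paper's strategy closely (Sobolev subcritical approximation, mountain-pass critical points from Theorem \ref{th4}, Talenti-bubble test function, Br\'ezis--Lieb splitting, and the threshold $\tfrac{2}{N}S^{N/4}$ above the ground-state level), but there are two genuine gaps in the way you close the compactness argument.

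First, the inequality $\mathcal{J}_{4^*,q}(\tilde u)\geq \mathcal{J}_{4^*,q}(\hat u)$ is asserted without justification. The weak limit $\tilde u$ of the approximating sequence need not lie in $S_a$: only $\|\tilde u\|_2\leq a$ is guaranteed before strong $L^2$ convergence is established, and the local minimizer $\hat u$ only furnishes the baseline $M_{4^*,q,a}$ on the sphere $S_a$. To compare across different masses you need that $\tilde u$ satisfies the Pohozaev identity (so $\mathcal{J}_{4^*,q}(\tilde u)\geq M_{4^*,q,\|\tilde u\|_2}$) \emph{and} the strict monotonicity $a\mapsto M_{4^*,q,a}$ proved in Lemma \ref{crisx7}; without that monotonicity the energy comparison in the concentration case does not close. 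Second, the logical order for getting $\tilde\lambda<0$ is circular as written. The identity $\lambda\|\tilde u\|_2^2=\tfrac{2\mu(4^*-q)}{q(2-4^*)}\|\tilde u\|_q^q$ forces $\tilde\lambda<0$ only if $\tilde u\neq 0$; but showing $\tilde u\neq 0$ in this problem requires first establishing strong $D^{2,2}$ convergence (via the threshold dichotomy and Lemma \ref{crisx5}), then passing to the limit in the constraint $u_{p_n}^-\in\mathcal{P}^-_{p_n,q,a,r}$ to extract a positive lower bound on $\|\Delta\tilde u\|_2$, and only afterwards does $\tilde\lambda<0$ become available. Moreover, the concentration dichotomy alone controls loss of $\|\Delta\cdot\|_2$-norm but not escape of $L^2$-mass to infinity: the upgrade from $D^{2,2}$ convergence to $H^2$ convergence (hence $\tilde u\in S_{a,r}$) uses $\tilde\lambda<0$ and the equation, and you do not address it. Finally, to identify $\mathcal{J}_{4^*,q}(\tilde u)=\sigma_{4^*,q,a,r}$ you also need to show $\tilde u\in\mathcal{P}^-_{4^*,q,a,r}$, which follows from passing the strict inequality in $\mathcal{P}^-_{p_n,q,a,r}$ to the limit using $\mathcal{P}^0_{4^*,q,a}=\emptyset$; this step is only implicitly referenced in your last sentence.
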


\begin{remark}
{\rm Follow the arguments of \cite{CHMS}, it's easy to find that, in the case of $(iii)$ in Theorem \ref{th1}, \eqref{question} has a ground state solution $\tilde{u}$ at positive level for some $\tilde{\lambda}<0$.}
\end{remark}

\begin{remark}
{\rm The crucial point of Theorem \ref{th8} is to obtain a good energy estimate of $\sigma_{4^*,q,a,r}$ such that the compactness of $\{u_{p_n}^-\}$ (defined in \eqref{fu}) holds in $D^{2,2}(\mathbb{R}^N)$, and the threshold of such compactness should be $M_{4^*,q,a,r}+\frac{2}{N}S^{\frac{N}{4}}$, where $p_n\in(2,4^*)$, $\lim\limits_{n\rightarrow\infty}p_n=4^*$, and $M_{4^*,q,a,r}$ is defined in \eqref{M4^*}. }
\end{remark}

\section{Preliminaries}\label{sec preliminaries}
In this section, we give some preliminaries.

\subsection{Several important lemmas}\label{implemma}
\begin{lemma}\label{Four}\cite{BL}
For $N\geq1$ and $u\in L^2(\mathbb{R}^N)$, let $u^\sharp$ be the Fourier rearrangement of $u$ defined by
\begin{equation*}
  u^\sharp:=\mathcal{F}^{-1}((\mathcal{F}u)^*),
\end{equation*}
where $\mathcal{F}$ and $\mathcal{F}^{-1}$ denote the Fourier transform and the inverse Fourier transform respectively, and $f^*$ stands for the Schwarz rearrangement of $f$. Then $u^\sharp$ is radial, $\|u^\sharp\|_2=\|u\|_2$,  and $\|\Delta u^\sharp\|_2 \leq \|\Delta u\|_2$. Moreover, if $p\in 2\mathbb{N}$, we have
\begin{equation*}
 \|u^\sharp\|_{p}  \geq \|u\|_{p}.
\end{equation*}
\end{lemma}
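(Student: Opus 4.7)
The plan is to verify each of the four assertions about $u^{\sharp}$ in sequence, handling the easy facts first and reserving Plancherel plus rearrangement techniques for the two inequalities.

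First I would record the trivial structural facts. Radial symmetry of $u^{\sharp}$ follows because the Schwarz symmetrization $(\mathcal{F}u)^{*}$ is by construction a nonnegative, radial, nonincreasing function, and the inverse Fourier transform of a radial $L^{2}$ function is radial (the Fourier transform commutes with the action of $O(N)$). The $L^{2}$ identity is immediate from Plancherel: $\|u^{\sharp}\|_{2}=\|(\mathcal{F}u)^{*}\|_{2}=\|\mathcal{F}u\|_{2}=\|u\|_{2}$, using that Schwarz rearrangement preserves every $L^{p}$ norm.

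For the bound $\|\Delta u^{\sharp}\|_{2}\le\|\Delta u\|_{2}$, the plan is to move into Fourier variables, where
\begin{equation*}
\|\Delta u\|_{2}^{2}=\int_{\mathbb{R}^{N}}|\xi|^{4}|\mathcal{F}u(\xi)|^{2}\,d\xi,\qquad \|\Delta u^{\sharp}\|_{2}^{2}=\int_{\mathbb{R}^{N}}|\xi|^{4}\bigl((\mathcal{F}u)^{*}(\xi)\bigr)^{2}\,d\xi.
\end{equation*}
Setting $g=|\mathcal{F}u|^{2}\in L^{1}$ and noting $(g)^{*}=((\mathcal{F}u)^{*})^{2}$, I would use the layer-cake formula $g=\int_{0}^{\infty}\chi_{\{g>t\}}\,dt$ together with the elementary geometric fact that, among all measurable sets of a given measure, the centered ball minimizes $\int_{A}|\xi|^{4}\,d\xi$ (because $|\xi|^{4}$ is radially nondecreasing). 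Integrating this level-set inequality in $t$ yields $\int |\xi|^{4}g^{*}\le\int |\xi|^{4}g$, which is exactly the desired bound.

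The main obstacle, and the genuinely nontrivial step, is the $L^{p}$ lower bound for $p=2k\in 2\mathbb{N}$. The key observation is that evenness of $p$ lets one write the $L^{p}$ norm as an $L^{2}$ norm of a product, which Plancherel converts into an $L^{2}$ norm of an iterated convolution:
\begin{equation*}
\|u\|_{2k}^{2k}=\int_{\mathbb{R}^{N}}|u|^{2k}\,dx=\|u^{k}\|_{2}^{2}=c_{N}\,\|(\mathcal{F}u)^{*k}\|_{2}^{2},
\end{equation*}
and similarly $\|u^{\sharp}\|_{2k}^{2k}=c_{N}\,\|((\mathcal{F}u)^{*})^{*k}\|_{2}^{2}$, where $*k$ denotes $k$-fold convolution. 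The proof then proceeds in two steps. First, the pointwise triangle inequality for convolutions gives $|(\mathcal{F}u)^{*k}|\le |\mathcal{F}u|^{*k}$, hence $\|(\mathcal{F}u)^{*k}\|_{2}\le \||\mathcal{F}u|^{*k}\|_{2}$. Second, writing $\|f^{*k}\|_{2}^{2}=(f^{*k}*\tilde{f}^{*k})(0)=(f^{*2k})(0)$ when $f=|\mathcal{F}u|$ (evenness makes $\tilde f$ and $f$ have the same Schwarz rearrangement), I would invoke the Brascamp--Lieb--Luttinger rearrangement inequality in the form
\begin{equation*}
(f_{1}*f_{2}*\cdots *f_{m})(0)\;\le\;(f_{1}^{*}*f_{2}^{*}*\cdots*f_{m}^{*})(0)
\end{equation*}
for nonnegative $f_{i}$, with $m=2k$ and all $f_{i}=f$. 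This yields $\|f^{*k}\|_{2}^{2}\le \|(f^{*})^{*k}\|_{2}^{2}$, and chaining the two inequalities gives $\|u\|_{2k}^{2k}\le\|u^{\sharp}\|_{2k}^{2k}$, as required. The delicate point here is not the reduction but invoking the multi-convolution rearrangement inequality correctly; without the evenness assumption on $p$ one cannot make the pivotal identification $\|u\|_{p}^{p}=\|u^{k}\|_{2}^{2}$, which is precisely why the hypothesis $p\in 2\mathbb{N}$ is needed.
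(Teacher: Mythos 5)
This lemma is cited from [BL] and the paper offers no proof of its own, so the right comparison is with the reference's argument; your plan reproduces it faithfully. The first three assertions are handled correctly: radiality from $O(N)$-equivariance of the Fourier transform, $L^2$-conservation from Plancherel plus equimeasurability, and $\|\Delta u^\sharp\|_2\le\|\Delta u\|_2$ from the layer-cake/bathtub argument with the radially increasing weight $|\xi|^4$. The $L^p$ step also follows the Boulenger--Lenzmann route, namely $\|u\|_{2k}^{2k}=\|u^k\|_2^2$, Plancherel turning the power into a $k$-fold convolution, the pointwise bound $|(\mathcal{F}u)^{*k}|\le|\mathcal{F}u|^{*k}$, and then Brascamp--Lieb--Luttinger. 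The only slip is in the parenthetical justifying $(f^{*k}*\tilde f^{*k})(0)=(f^{*2k})(0)$: having the same Schwarz rearrangement is \emph{not} enough for that identity; what it actually requires is $\tilde f=f$, i.e.\ evenness of $f=|\mathcal{F}u|$, which holds exactly when $u$ is real-valued (the case of interest here). For complex $u$ that intermediate equality can fail, but the conclusion is unaffected because one can feed $k$ copies of $f$ and $k$ copies of $\tilde f$ directly into Brascamp--Lieb--Luttinger and use $(\tilde f)^*=f^*$ (reflections preserve distribution functions), obtaining $(f^{*k}*\tilde f^{*k})(0)\le((f^*)^{*2k})(0)=\|(f^*)^{*k}\|_2^2$ without ever symmetrizing $f$ itself. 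With that small repair the argument is complete and matches the reference.
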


\begin{lemma}\label{limits}
Let $N\geq5$ and $2<p<4^*$, then $\lim\limits_{p\rightarrow 4^*}B_{N,p}^{\frac{1}{p}}=S^{-\frac{1}{2}}$.
\end{lemma}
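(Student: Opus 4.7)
The plan is to prove matching upper and lower bounds, both equal to $S^{-1/2}$. For the upper bound, I would combine H\"{o}lder interpolation between $L^2$ and $L^{4^*}$ with the Sobolev inequality. Writing $\frac{1}{p} = \frac{\theta}{2} + \frac{1-\theta}{4^*}$, a short computation using $\frac{4^*}{4^*-2} = \frac{N}{4}$ gives $1-\theta = \gamma_p$, so H\"{o}lder yields
$$\|u\|_p \le \|u\|_2^{1-\gamma_p}\,\|u\|_{4^*}^{\gamma_p}\qquad\text{for every } u \in H^2(\mathbb{R}^N).$$
Applying $\|u\|_{4^*} \le S^{-1/2}\|\Delta u\|_2$ from Lemma \ref{Sob} would then produce
$$\|u\|_p \le S^{-\gamma_p/2}\,\|u\|_2^{1-\gamma_p}\,\|\Delta u\|_2^{\gamma_p}.$$
Because $B_{N,p}$ is the sharp constant in Lemma \ref{GN}, this forces $B_{N,p}^{1/p} \le S^{-\gamma_p/2}$, and since $\gamma_p \to 1$ as $p \to 4^*$, one would conclude $\limsup_{p\to 4^*} B_{N,p}^{1/p} \le S^{-1/2}$.

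For the lower bound, I would test the Gagliardo--Nirenberg inequality against functions approximating the Sobolev extremizer $U_{1,0}$. Given $\varepsilon > 0$, using the density of $C_c^\infty(\mathbb{R}^N)$ in $D^{2,2}(\mathbb{R}^N)$ together with the fact that $U_{1,0}$ realizes the Sobolev quotient, one can choose $v_\varepsilon \in C_c^\infty(\mathbb{R}^N)$ with $\|v_\varepsilon\|_{4^*}/\|\Delta v_\varepsilon\|_2 \ge S^{-1/2} - \varepsilon$. (Compact-support truncation is needed because $U_{1,0}\notin L^2$ when $5\le N\le 8$.) Since $v_\varepsilon\in H^2(\mathbb{R}^N)$ with $\|v_\varepsilon\|_2 > 0$, plugging it into the sharp GN inequality would give
$$B_{N,p}^{1/p} \ge \frac{\|v_\varepsilon\|_p}{\|v_\varepsilon\|_2^{1-\gamma_p}\,\|\Delta v_\varepsilon\|_2^{\gamma_p}}.$$
As $p \to 4^*$, smoothness and compact support force $\|v_\varepsilon\|_p \to \|v_\varepsilon\|_{4^*}$ by dominated convergence, while $\|v_\varepsilon\|_2^{1-\gamma_p} \to 1$ (since $1-\gamma_p \to 0^+$ and $\|v_\varepsilon\|_2$ is a fixed positive number) and $\|\Delta v_\varepsilon\|_2^{\gamma_p} \to \|\Delta v_\varepsilon\|_2$. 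Hence $\liminf_{p\to 4^*} B_{N,p}^{1/p} \ge S^{-1/2} - \varepsilon$, and sending $\varepsilon \to 0$ would close the gap.

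There is no substantial analytic obstacle in this argument: it reduces to interpolation plus a density argument. The only mild subtlety is that in dimensions $5\le N\le 8$ the Sobolev extremizer $U_{1,0}$ is not in $L^2$, so one cannot plug $U_{1,0}$ directly into Lemma \ref{GN}; the $C_c^\infty$ approximation in $D^{2,2}$ circumvents this. Everything else follows from elementary properties of $L^p$ norms and the sharpness clauses in Lemmas \ref{GN} and \ref{Sob}.
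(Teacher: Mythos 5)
Your proof is correct and takes essentially the same approach as the paper's: the upper bound via H\"older interpolation between $L^2$ and $L^{4^*}$ combined with the Sobolev inequality is identical, and the lower bound differs only in presentation (you fix a near-optimal test function $v_\varepsilon \in C_c^\infty$ first and then send $p\to 4^*$, whereas the paper sends $p\to 4^*$ in the Gagliardo--Nirenberg inequality for a general $u\in H^2$ and then invokes the sharpness of $S$ via density of $H^2$ in $D^{2,2}$). Both versions rest on the same ingredients — dominated convergence for $\|u\|_p\to\|u\|_{4^*}$, $\gamma_p\to 1$, and density — so there is no substantive difference.
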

\begin{proof}
For $u\in H^2(\mathbb{R}^N)$, by Lemma \ref{GN}, we have
\begin{equation*}
  \|u\|_p\leq B_{N,p}^{\frac{1}{p}}\| u\|_2^{1-\gamma_p}\|\Delta u\|_2^{\gamma_p}.
\end{equation*}
For $2<p<4^*$, since
\begin{equation*}
  |u|^p\leq |u|^2+|u|^{4^*},
\end{equation*}
using the Lebesgue dominated convergence theorem, we obtain $\lim\limits_{p\rightarrow 4^*}\|u\|_p=\|u\|_{4^*}$. Noted that $\gamma_p\rightarrow 1$ as $p\rightarrow 4^*$, thus
\begin{equation*}
  \|u\|_{4^*}\leq \liminf\limits_{p\rightarrow 4^*}B_{N,p}^{\frac{1}{p}}\|\Delta u\|_2,
\end{equation*}
which implies that $S^{-\frac{1}{2}}\leq \liminf\limits_{p\rightarrow 4^*}B_{N,p}^{\frac{1}{p}}$. On the other hand, for $u\in H^2(\mathbb{R}^N)$, by Lemma \ref{Sob} and H\"{o}lder inequality, we have
\begin{equation*}
  \|u\|_p\leq \|u\|_2^{1-\gamma_p}\|u\|_{4^*}^{\gamma_p}\leq S^{-\frac{\gamma_p}{2}}\|u\|_2^{1-\gamma_p}\|\Delta u\|_2^{\gamma_p},
\end{equation*}
so $S^{-\frac{\gamma_p}{2}}\geq B_{N,p}^{\frac{1}{p}}$, which means $S^{-\frac{1}{2}}\geq \limsup\limits_{p\rightarrow 4^*}B_{N,p}^{\frac{1}{p}}$.
\end{proof}

\begin{lemma}\label{biancon}
Let $\{u_n\}\subset H^2(\mathbb{R}^N)$, $\{\tau_n\}\subset\mathbb{R}$, assume that $u_n\rightarrow u$ in $H^2(\mathbb{R}^N)$, and $\tau_n\rightarrow \tau$ in $\mathbb{R}$. Then $\tau_n*u_n\rightarrow \tau*u$ in $H^2(\mathbb{R}^N)$ as $n\rightarrow\infty$.
\end{lemma}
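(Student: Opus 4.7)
The plan is to regard $\tau * u$ as the standard $L^2$-preserving dilation used on the constraint $S_a$, namely $(\tau * u)(x) := e^{N\tau/4}u(e^{\tau/2}x)$, so that the map $(\tau,u)\mapsto \tau*u$ from $\mathbb{R}\times H^2(\mathbb{R}^N)$ to $H^2(\mathbb{R}^N)$ is jointly continuous. I would split
\begin{equation*}
\|\tau_n * u_n - \tau * u\| \leq \|\tau_n * u_n - \tau_n * u\| + \|\tau_n * u - \tau * u\|
\end{equation*}
and treat the two terms separately: the first encodes continuous dependence on $u$ with $\tau$ held fixed, and the second encodes continuous dependence on $\tau$ with $u$ held fixed.

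For the first term, a direct change of variables gives, for any $w\in H^2(\mathbb{R}^N)$,
\begin{equation*}
\|\tau_n * w\|_2 = \|w\|_2,\qquad \|\nabla(\tau_n * w)\|_2 = e^{\tau_n/2}\|\nabla w\|_2,\qquad \|\Delta(\tau_n * w)\|_2 = e^{\tau_n}\|\Delta w\|_2.
\end{equation*}
Because $\tau_n\to\tau$, the factors $e^{\tau_n/2}$ and $e^{\tau_n}$ are uniformly bounded by some constant $C$, so taking $w=u_n-u$ yields $\|\tau_n*(u_n-u)\| \leq C\|u_n-u\|\to 0$ from the hypothesis $u_n\to u$ in $H^2(\mathbb{R}^N)$.

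For the second term, I would first handle the case $\phi\in C_c^\infty(\mathbb{R}^N)$: the supports of $\tau_n*\phi$ and $\tau*\phi$ lie inside a fixed compact set for all large $n$, and by the uniform continuity of $\phi$ together with its derivatives of order at most two, one obtains uniform convergence of $\tau_n*\phi$ and of its first and second partial derivatives to those of $\tau*\phi$, which delivers $\tau_n*\phi\to\tau*\phi$ in $H^2(\mathbb{R}^N)$. For a general $u\in H^2(\mathbb{R}^N)$, I would then run an $\varepsilon/3$ density argument: given $\varepsilon>0$, pick $\phi\in C_c^\infty(\mathbb{R}^N)$ with $\|u-\phi\|<\varepsilon/(3C)$, write
\begin{equation*}
\tau_n*u - \tau*u = \tau_n*(u-\phi) + (\tau_n*\phi - \tau*\phi) + \tau*(\phi-u),
\end{equation*}
and control the outer two terms by $C\|u-\phi\|<\varepsilon/3$ using the scaling identities above (applied with $\tau_n$ and with $\tau$, both uniformly bounded), while the middle term is $<\varepsilon/3$ for $n$ large by the smooth case.

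The only point that is not a straight calculation is the continuity in $\tau$ for a non-smooth $u$, since the scaling does not act isometrically on $\|\Delta\cdot\|_2$ and the formula mixes the dilation factor in front with a rescaling of the argument; the $C_c^\infty$-density reduction above is the clean way to sidestep this, and everything else is the triangle inequality plus an elementary change of variables.
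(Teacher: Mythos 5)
Your proof is correct but takes a genuinely different route from the paper's. You split the error by the triangle inequality and handle the two halves separately: a uniform operator-norm bound on the scaling operators gives continuity in $u$ for $\tau$ held in a bounded set (so $\|\tau_n*(u_n-u)\|\le C\|u_n-u\|\to 0$), and continuity in $\tau$ for fixed $u$ is reduced via a $C_c^\infty$-then-density argument, exploiting that for smooth compactly supported $\phi$ the function $\tau_n*\phi$ and its derivatives up to order two converge uniformly. The paper instead first proves the \emph{weak} convergence $\tau_n*u_n\rightharpoonup\tau*u$ in $H^2(\mathbb{R}^N)$, by pairing against $C_0^\infty$ test functions, changing variables, and applying dominated convergence; it then computes $\|\tau_n*u_n\|$ from the same scaling identities you use and shows $\|\tau_n*u_n\|\to\|\tau*u\|$, finishing with the Hilbert-space fact that weak convergence together with convergence of norms implies strong convergence. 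Your version never leaves norm estimates and is arguably more elementary; the paper's version trades the uniform-continuity/density step for the Hilbert-space characterization of strong convergence, avoiding the need to verify uniform convergence of $\tau_n*\phi$ and its derivatives. One small discrepancy to note: the paper's actual definition is $(\tau*u)(x)=e^{\frac{N}{2}\tau}u(e^\tau x)$, not $e^{N\tau/4}u(e^{\tau/2}x)$, so the correct scaling factors are $\|\nabla(\tau*u)\|_2=e^{\tau}\|\nabla u\|_2$ and $\|\Delta(\tau*u)\|_2=e^{2\tau}\|\Delta u\|_2$. This is a harmless reparametrization and does not affect your argument, since you only use that these factors are uniformly bounded when $\{\tau_n\}$ is bounded.
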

\begin{proof}
First, we prove that $\tau_n*u_n\rightharpoonup \tau*u$ in $H^2(\mathbb{R}^N)$. Taking any $\varphi\in C_0^\infty(\mathbb{R}^N)$, and for a compact set $K$ containing the support of $\varphi(e^{-\tau_n}\cdot)$ for any $n\in \mathbb{N}$ large enough, using the Lebesgue dominated convergence theorem, we get
\begin{align*}
  \int_{\mathbb{R}^N}e^{\frac{N}{2}\tau_n}u_n(e^{\tau_n}x)\varphi(x)dx&= \int_{K}e^{-\frac{N}{2}\tau_n}u_n(y)\varphi(e^{-\tau_n}y)dy\\
  &\rightarrow \int_{K}e^{-\frac{N}{2}\tau}u(y)\varphi(e^{-\tau}y)dy=\int_{\mathbb{R}^N}e^{\frac{N}{2}\tau}u(e^{\tau}x)\varphi(x)dx,\quad\text{as $n\rightarrow\infty$}.
\end{align*}
Similarly, we can show that for any $i=1,2,\ldots  , N$ and $\varphi\in C_0^\infty(\mathbb{R}^N)$
\begin{align*}
  \int_{\mathbb{R}^N}\varphi \frac{\partial (e^{\frac{N}{2}\tau_n}u_n(e^{\tau_n}x))}{\partial {x_i}}dx\rightarrow \int_{\mathbb{R}^N}\varphi \frac{\partial (e^{\frac{N}{2}\tau}u(e^{\tau}x))}{\partial {x_i}}dx,\quad\text{as $n\rightarrow\infty$},
\end{align*}
and
\begin{align*}
  \int_{\mathbb{R}^N}\varphi \frac{\partial ^2 (e^{\frac{N}{2}\tau_n}u_n(e^{\tau_n}x))}{\partial {x_i}^2}dx\rightarrow \int_{\mathbb{R}^N}\varphi \frac{\partial ^2(e^{\frac{N}{2}\tau}u(e^{\tau}x))}{\partial {x_i}^2}dx,\quad\text{as $n\rightarrow\infty$}.
\end{align*}
As a consequence, $\tau_n*u_n\rightharpoonup \tau*u$ in $H^2(\mathbb{R}^N)$. Furthermore,
\begin{align*}
  \|e^{\frac{N}{2}\tau_n}u_n(e^{\tau_n}x)\|^2&= \int_{\mathbb{R}^N}|u_n|^2dx+2e^{2\tau_n}\int_{\mathbb{R}^N}|\nabla u_n|^2dx+e^{4\tau_n}\int_{\mathbb{R}^N}|\Delta u_n|^2dx\\
  &\rightarrow \int_{\mathbb{R}^N}|u|^2dx+2e^{2s}\int_{\mathbb{R}^N}|\nabla u|^2dx+e^{4s}\int_{\mathbb{R}^N}|\Delta u|^2dx=\|e^{\frac{N}{2}\tau}u_(e^{\tau}x)\|^2,\quad\text{as $n\rightarrow\infty$},
\end{align*}
thus $\|\tau_n*u_n\|\rightarrow \|\tau*u\|$ as $n\rightarrow\infty$, and we conclude the result.
\end{proof}

\begin{lemma}\cite[Lemma 4.8]{Kav}\label{weakcon}
Let $\Omega\subseteq\mathbb R^{4}$ be any open set. For $1<s<\infty$, if $\{u_n\}$ be bounded in $L^s(\Omega)$ and $u_n(x)\rightarrow u(x)$ a.e. in $\Omega$, then $u_n(x)\rightharpoonup u(x)$ in $L^s(\Omega)$.
\end{lemma}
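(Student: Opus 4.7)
The plan is to reduce the assertion to a standard weak-compactness plus limit-identification argument, which works in any $\sigma$-finite measure space (the ambient dimension being $4$ plays no role). First I would use Fatou's lemma applied to the nonnegative sequence $|u_n|^s$ together with the a.e.\ convergence $u_n\to u$ to deduce
\[
\int_\Omega |u|^s\,dx\le\liminf_{n\to\infty}\int_\Omega |u_n|^s\,dx<\infty,
\]
so that $u\in L^s(\Omega)$ and the candidate weak limit is well defined.

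Next I would invoke reflexivity of $L^s(\Omega)$ for $1<s<\infty$: any subsequence $\{u_{n_k}\}$ is bounded and therefore, by the Banach--Alaoglu theorem together with the Eberlein--\v{S}mulian theorem, admits a further subsequence (still denoted $u_{n_k}$) that converges weakly in $L^s(\Omega)$ to some $v\in L^s(\Omega)$. The core of the argument is then to identify $v=u$. To do this I would fix an arbitrary test function $\varphi\in C_c(\Omega)$ supported in a set $K$ of finite measure, fix $\eps>0$, and apply Egorov's theorem on $K$ to obtain a measurable $E\subset K$ with $|K\setminus E|<\eps$ on which $u_n\to u$ uniformly. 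On $E$ one gets $\int_E u_n\varphi\,dx\to\int_E u\varphi\,dx$ directly, while on $K\setminus E$ one controls the integrals by H\"older's inequality,
\[
\Bigl|\int_{K\setminus E}u_n\varphi\,dx\Bigr|\le \|\varphi\|_\infty\,\|u_n\|_{L^s}\,|K\setminus E|^{1/s'},
\]
uniformly in $n$, and similarly for $u$. Letting first $n\to\infty$ and then $\eps\to 0$ yields $\int_\Omega u_{n_k}\varphi\,dx\to \int_\Omega u\varphi\,dx$. Comparing with the weak convergence $u_{n_k}\weakto v$ forces $\int_\Omega(v-u)\varphi\,dx=0$ for every $\varphi\in C_c(\Omega)$, hence $v=u$ a.e.\ by density of $C_c(\Omega)$ in $L^{s'}(\Omega)$.

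Finally I would close the argument with the standard subsequence principle: since every subsequence of $\{u_n\}$ has a further subsequence converging weakly in $L^s(\Omega)$ to the same limit $u$, the full sequence must satisfy $u_n\weakto u$ in $L^s(\Omega)$. The only delicate step is the identification $v=u$; the potential pitfall is that a.e.\ convergence alone is not preserved under integration against $L^{s'}$ test functions, which is precisely why Egorov's theorem together with the uniform $L^s$ bound is needed to exchange the limit and the integral on test functions with compact support.
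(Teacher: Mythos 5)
Your proof is correct. Note that in the paper this lemma is cited from Kavian's book (Lemma 4.8) rather than proved, so there is no internal proof to compare against; your argument is the standard one and it holds in any $\sigma$-finite measure space (the dimension $4$ is irrelevant, as you observe). The two load-bearing ingredients are exactly right: reflexivity of $L^s$ for $1<s<\infty$ to extract weak cluster points of subsequences, and the Egorov argument on the compact support of a test function to identify the cluster point as $u$, using Fatou beforehand to guarantee $u\in L^s$. One small stylistic remark: the detour through the subsequence principle (extract a weakly convergent subsequence, identify its limit, invoke uniqueness of subsequential limits) can be bypassed. Once the Egorov step gives $\int_\Omega u_n\varphi\,dx\to\int_\Omega u\varphi\,dx$ for every $\varphi\in C_c(\Omega)$, the uniform $L^s$ bound on $\{u_n\}$ plus density of $C_c(\Omega)$ in $L^{s'}(\Omega)$ upgrades this directly to $\int_\Omega u_n\psi\,dx\to\int_\Omega u\psi\,dx$ for every $\psi\in L^{s'}(\Omega)$ by a three-epsilon estimate, giving $u_n\weakto u$ without invoking Banach--Alaoglu or Eberlein--\v{S}mulian at all. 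Both routes are correct; the direct one is slightly more economical.
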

\subsection{Decomposition of Poho\u{z}aev manifold}\label{pohoz}
We have the following Poho\u{z}aev identity.
\begin{lemma}\label{poho}\cite[Lemma 2.1]{BCGJ1}
Let $N\geq1$ and $2<p,q<\frac{2N}{(N-4)^+}$, if $u\in H^2(\mathbb{R}^N)$ is a weak solution of
\begin{equation*}
  \Delta^2u=\lambda u+\mu|u|^{q-2}u+|u|^{p-2}u,
\end{equation*}
then $u$ satisfies the Poho\u{z}aev identity
\begin{equation}\label{pohoim}
  \frac{N-4}{2}\|\Delta u\|_2^2-\frac{\lambda N}{2}\|u\|_2^2-\frac{\mu N}{q}\|u\|_q^{q}-\frac{N}{p}\|u\|_p^{p}=0.
\end{equation}
As a consequence, it holds that
\begin{equation*}
  \|\Delta u\|_2^2-\mu\gamma_q\|u\|_q^{q}-\gamma_p\|u\|_p^{p}=0.
\end{equation*}

\end{lemma}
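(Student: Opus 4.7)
My plan is to establish the Pohozaev identity \eqref{pohoim} by the classical Rellich--Pohozaev multiplier method, testing the equation against $x\cdot\nabla u$, and then to deduce the second identity algebraically from it using the weak formulation tested against $u$.

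First I would invoke elliptic regularity to pass from the $H^2$ weak formulation to pointwise manipulations: since $u\in H^2(\R^N)$ embeds into $L^{4^*}$ and $2<p,q\le 4^*$, the right-hand side $\lambda u+\mu|u|^{q-2}u+|u|^{p-2}u$ lies in $L^2_{\mathrm{loc}}$, whence $u\in H^4_{\mathrm{loc}}$, and bootstrap yields enough smoothness for every computation below, while the $H^2(\R^N)$ hypothesis provides the global integrability needed to kill boundary contributions. I would then multiply the equation by $\eta_R(x)\,(x\cdot\nabla u)$, with $\eta_R$ a smooth cutoff supported in $B_{2R}$ and equal to $1$ on $B_R$, and integrate over $\R^N$. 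The key algebraic identity
\begin{equation*}
\Delta(x\cdot\nabla u)=2\Delta u+x\cdot\nabla(\Delta u),
\end{equation*}
obtained by direct computation on each term $x_i\partial_iu$, is the engine of the computation.

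Integrating the left-hand side by parts twice and using this identity formally gives
\begin{equation*}
\int_{\R^N}(\Delta^2u)(x\cdot\nabla u)\,dx=\int_{\R^N}\Delta u\cdot\Delta(x\cdot\nabla u)\,dx=2\|\Delta u\|_2^2+\int_{\R^N}\Delta u\,(x\cdot\nabla\Delta u)\,dx=\frac{4-N}{2}\|\Delta u\|_2^2,
\end{equation*}
where in the last step one uses $\int w(x\cdot\nabla w)\,dx=-\frac{N}{2}\|w\|_2^2$ with $w=\Delta u$. On the right-hand side, each nonlinear term satisfies $f(u)(x\cdot\nabla u)=x\cdot\nabla F(u)$ for its primitive $F$, so a single integration by parts produces $-\frac{\lambda N}{2}\|u\|_2^2-\frac{\mu N}{q}\|u\|_q^q-\frac{N}{p}\|u\|_p^p$. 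Equating the two sides and letting $R\to\infty$ yields \eqref{pohoim}.

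For the second identity I would test the equation against $u$ itself to obtain the standard relation $\|\Delta u\|_2^2=\lambda\|u\|_2^2+\mu\|u\|_q^q+\|u\|_p^p$, solve for $\lambda\|u\|_2^2$, and substitute into \eqref{pohoim}; after grouping and using $\gamma_p=N(p-2)/(4p)$ and $\gamma_q=N(q-2)/(4q)$, the identity $\|\Delta u\|_2^2=\mu\gamma_q\|u\|_q^q+\gamma_p\|u\|_p^p$ emerges. The main obstacle is the limiting procedure in the multiplier step: the multiplier $x\cdot\nabla u$ itself is not globally in $H^2(\R^N)$, so one must show that the annular boundary terms generated by derivatives of $\eta_R$ --- integrals involving $|x||\nabla\Delta u||\Delta u|$, $|x||\nabla u||\Delta u|$, $|x||u|^{p-1}|\nabla u|$, and $|x||u|^{q-1}|\nabla u|$ on $B_{2R}\setminus B_R$ --- vanish along some subsequence $R_n\to\infty$. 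This is handled by the standard dyadic-annulus averaging argument based on global integrability of $|\Delta u|^2$, $|\nabla u|^2$, $|u|^p$ and $|u|^q$ over $\R^N$, which guarantees that the annular integrals cannot remain uniformly bounded below on all scales.
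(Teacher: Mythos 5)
The paper does not actually prove this lemma; it simply cites \cite[Lemma 2.1]{BCGJ1}, so there is no ``paper's proof'' to compare against. Your proposal reconstructs the standard Rellich--Poho\u{z}aev multiplier argument, which is almost certainly the argument used in that reference, and the structural content is correct: the identity $\Delta(x\cdot\nabla u)=2\Delta u+x\cdot\nabla(\Delta u)$ is right, the resulting coefficient $\tfrac{4-N}{2}\|\Delta u\|_2^2$ on the left is right (after using $\int w(x\cdot\nabla w)=-\tfrac{N}{2}\|w\|_2^2$ with $w=\Delta u$), the treatment of the primitives on the right is right, and the algebraic derivation of $\|\Delta u\|_2^2=\mu\gamma_q\|u\|_q^q+\gamma_p\|u\|_p^p$ by combining \eqref{pohoim} with the Nehari relation $\|\Delta u\|_2^2=\lambda\|u\|_2^2+\mu\|u\|_q^q+\|u\|_p^p$ checks out. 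The dyadic-averaging device for killing the cutoff error terms is also the correct mechanism.

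The one step that is not quite right as written is the regularity bootstrap. You assert that, since $u\in H^2(\mathbb{R}^N)\hookrightarrow L^{4^*}(\mathbb{R}^N)$ and $p\le 4^*$, the nonlinearity $|u|^{p-2}u$ lies in $L^2_{\mathrm{loc}}$. But $u\in L^{4^*}_{\mathrm{loc}}$ only gives $|u|^{p-1}\in L^{4^*/(p-1)}_{\mathrm{loc}}$, and the exponent $4^*/(p-1)$ drops below $2$ as soon as $p>1+\tfrac{N}{N-4}$, which is compatible with $p<4^*$ for every $N\ge 5$ (and with $p=4^*$ always). So the claimed $L^2_{\mathrm{loc}}$ membership fails precisely in the range of interest, and one cannot conclude $u\in H^4_{\mathrm{loc}}$ in one shot. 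The fix is standard but should be stated: either run a Brezis--Kato / Moser-type iteration to upgrade $u$ into $L^r_{\mathrm{loc}}$ for all finite $r$ before invoking $L^2$-elliptic theory for $\Delta^2$, or avoid pointwise regularity entirely by mollifying $u$ and passing to the limit in the multiplier identity. As it stands, the ``bootstrap yields enough smoothness'' sentence papers over a genuine (though repairable) gap in dimensions $N\ge5$ and for $p$ close to or equal to $4^*$.
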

For any $u\in S_a$ and $\tau\in \mathbb{R}$, we define
\begin{equation*}
  (\tau*u)(x):=e^{\frac{N}{2}\tau}u(e^\tau x),\quad \text{for a.e. $x\in \mathbb{R}^N$},
\end{equation*}
and the fiber maps
\begin{equation*}
  \Psi_u(\tau):= \mathcal{J}_{p,q}(\tau*u)=\frac{e^{4\tau}}{2}\int_{\mathbb{R}^N}|\Delta u|^2dx-\frac{e^{\frac{N(p-2)}{2}\tau}}{p}\int_{\mathbb{R}^N}| u|^pdx-\frac{\mu e^{\frac{N(q-2)}{2}\tau}}{q}\int_{\mathbb{R}^N}| u|^qdx.
\end{equation*}
Then $\tau*u \in S_a$ and $ \Psi_u(\tau)$ is coercive on $S_a$ for $p,q<\bar{p}$, while $ \Psi_u(\tau)$ is not bounded from below on $S_a$ for $p>\bar{p}$ or $q>\bar{p}$, i.e., $m_{p,q,a}=-\infty$. Based on this fact, we call $\bar{p}$ the $L^2$-critical exponent.
In the case of $p>\bar{p}$ or $q>\bar{p}$, a special role will played by the Poho\u{z}aev manifold
\begin{equation*}
\mathcal{P}_{p,q,a}:=\Big\{u\in S_a:P_{p,q}(u)=0\Big\},
\end{equation*}
where
\begin{align*}
  P_{p,q}(u)=\int_{\mathbb{R}^4}|\Delta u |^2dx-\gamma_p \int_{\mathbb{R}^4}|u |^pdx-\mu\gamma_q \int_{\mathbb{R}^4}|u |^qdx.
\end{align*}
From Lemma \ref{poho}, we find that any critical point of $\mathcal{J}_{p,q}|_{S_a}$ stays in $\mathcal{P}_{p,q,a}$.  Moreover, if $\mathcal{P}_{p,q,a}^0=\emptyset$, then $\mathcal{P}_{p,q,a}$ is a natural constraint, which means that if $u\in \mathcal{P}_{p,q,a}$ is a critical point for $\mathcal{J}_{p,q}|_{\mathcal{P}_{p,q,a}}$, then $u$ is a critical point for $\mathcal{J}_{p,q}|_{S_a}$ (see Lemma \ref{natural}). This  together with the fact that $m_{p,q,a}=-\infty$ enlightens us to consider the minimization of $\mathcal{J}_{p,q}$ on ${\mathcal{P}_{p,q,a}}$
\begin{equation}\label{infP}
  M_{p,q,a}:=\inf\limits_{u\in \mathcal{P}_{p,q,a}}\mathcal{J}_{p,q}(u).
\end{equation}
We shall see  that the critical points of $\Psi_u $ allow to project a function on $\mathcal{P}_{p,q,a}$. Thus monotonicity and convexity properties of $\Psi_u $ strongly affect the structure of $\mathcal{P}_{p,q,a}$, and in turn the geometry of $\mathcal{J}_{p,q}|_{S_a}$. In this direction, we consider the decomposition of $\mathcal{P}_{p,q,a}$ into the disjoint union $\mathcal{P}_{p,q,a}=\mathcal{P}_{p,q,a}^+\cup \mathcal{P}_{p,q,a}^0\cup\mathcal{P}_{p,q,a}^-$, where
\begin{align*}
  &\mathcal{P}_{p,q,a}^+:=\Big\{u\in \mathcal{P}_{p,q,a}:2\|\Delta u\|_2^2>p\gamma_p^2\|u\|_p^p+\mu q\gamma_q^2\|u\|_q^q\Big\}=\Big\{u\in \mathcal{P}_{p,q,a}:\Psi_u''(0)>0\Big\},\\
  &\mathcal{P}_{p,q,a}^0:=\Big\{u\in \mathcal{P}_{p,q,a}:2\|\Delta u\|_2^2=p\gamma_p^2\|u\|_p^p+\mu q\gamma_q^2\|u\|_q^q\Big\}=\Big\{u\in \mathcal{P}_{p,q,a}:\Psi_u''(0)=0\Big\},\\
  &\mathcal{P}_{p,q,a}^-:=\Big\{u\in \mathcal{P}_{p,q,a}:2\|\Delta u\|_2^2<p\gamma_p^2\|u\|_p^p+\mu q\gamma_q^2\|u\|_q^q\Big\}=\Big\{u\in \mathcal{P}_{p,q,a}:\Psi_u''(0)<0\Big\}.
\end{align*}

For $\mu=0$, we denote the associated fiber maps by $\Psi_{u,0}$, $\mathcal{P}_{p,a}=\Big\{u\in S_a:P_{p}(u)=0\Big\}$, and $M_{p,a}=\inf\limits_{u\in \mathcal{P}_{p,a}}\mathcal{J}_{p}(u)$, where
\begin{align*}
  P_{p}(u)=\int_{\mathbb{R}^4}|\Delta u |^2dx-\gamma_p \int_{\mathbb{R}^4}|u |^pdx.
\end{align*}
The decomposition of $\mathcal{P}_{p,a}$ into the disjoint union $\mathcal{P}_{p,a}=\mathcal{P}_{p,a}^+\cup \mathcal{P}_{p,a}^0\cup\mathcal{P}_{p,a}^-$, where
\begin{align*}
  &\mathcal{P}_{p,a}^+:=\Big\{u\in \mathcal{P}_{p,a}:2\|\Delta u\|_2^2>p\gamma_p^2\|u\|_p^p\Big\}=\Big\{u\in \mathcal{P}_{p,a}:\Psi_{u,0}''(0)>0\Big\},\\
  &\mathcal{P}_{p,a}^0:=\Big\{u\in \mathcal{P}_{p,a}:2\|\Delta u\|_2^2=p\gamma_p^2\|u\|_p^p\Big\}=\Big\{u\in \mathcal{P}_{p,a}:\Psi_{u,0}''(0)=0\Big\},\\
  &\mathcal{P}_{p,a}^-:=\Big\{u\in \mathcal{P}_{p,a}:2\|\Delta u\|_2^2<p\gamma_p^2\|u\|_p^p\Big\}=\Big\{u\in \mathcal{P}_{p,a}:\Psi_{u,0}''(0)<0\Big\}.
\end{align*}
\begin{lemma}\label{onP}
Let $u\in S_a$, then $\tau\in \mathbb{R}$ is a critical point of  $\Psi_u$ if and only if $\tau*u \in \mathcal{P}_{p,q,a}$. Similarly, $\tau\in \mathbb{R}$ is a critical point of  $\Psi_{u,0}$ if and only if $\tau*u \in \mathcal{P}_{p,a}$.
\end{lemma}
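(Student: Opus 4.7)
The plan is to prove the statement by direct computation, since the scaling $\tau\ast u$ has explicit effect on every term appearing in $\Psi_u$ and in $P_{p,q}$.

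First I would record the scaling identities for the functional. A change of variables $y = e^\tau x$ shows that $\|\tau\ast u\|_2 = \|u\|_2$ (so $\tau\ast u \in S_a$ whenever $u\in S_a$), and more generally
\begin{equation*}
\|\tau\ast u\|_r^r = e^{\frac{N(r-2)}{2}\tau}\|u\|_r^r\quad\text{for }r\geq 2,\qquad \|\Delta(\tau\ast u)\|_2^2 = e^{4\tau}\|\Delta u\|_2^2.
\end{equation*}
Substituting into $\mathcal{J}_{p,q}$ recovers the expression
\begin{equation*}
\Psi_u(\tau) = \frac{e^{4\tau}}{2}\|\Delta u\|_2^2 - \frac{e^{\frac{N(p-2)}{2}\tau}}{p}\|u\|_p^p - \frac{\mu\, e^{\frac{N(q-2)}{2}\tau}}{q}\|u\|_q^q,
\end{equation*}
already displayed in the text, so differentiation is legitimate.

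Next I would differentiate $\Psi_u$ with respect to $\tau$, using the identity $\frac{N(r-2)}{2r} = 2\gamma_r$ with $\gamma_r = \frac{N(r-2)}{4r}$. This yields
\begin{equation*}
\Psi_u'(\tau) = 2\Bigl[e^{4\tau}\|\Delta u\|_2^2 - \gamma_p\, e^{\frac{N(p-2)}{2}\tau}\|u\|_p^p - \mu\gamma_q\, e^{\frac{N(q-2)}{2}\tau}\|u\|_q^q\Bigr].
\end{equation*}
The bracket is precisely $P_{p,q}(\tau\ast u)$ by the scaling identities above, so I obtain the clean identity $\Psi_u'(\tau) = 2\,P_{p,q}(\tau\ast u)$. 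Since $\tau\ast u\in S_a$, this makes $\tau$ a critical point of $\Psi_u$ if and only if $P_{p,q}(\tau\ast u)=0$, i.e.\ if and only if $\tau\ast u \in \mathcal{P}_{p,q,a}$.

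The second assertion follows by the same computation with $\mu=0$: differentiation gives $\Psi_{u,0}'(\tau) = 2P_p(\tau\ast u)$, and the equivalence with $\tau\ast u\in\mathcal{P}_{p,a}$ is immediate. There is no real obstacle here; the only thing to be careful about is the bookkeeping of the exponents $\frac{N(r-2)}{2}$ and the factors $\gamma_r$, and the (trivial) verification that the $L^2$ norm is preserved under the scaling so that $\tau\ast u$ indeed belongs to $S_a$.
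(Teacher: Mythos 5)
Your computation is correct and is exactly the standard argument the paper omits ("The proof is standard"); the key identity $\Psi_u'(\tau) = 2\,P_{p,q}(\tau\ast u)$ is in fact used implicitly by the authors elsewhere (e.g.\ they write $P_{p,q}(u)=\tfrac{1}{2}\Psi_u'(0)$ in the proof of Lemma 2.7). Nothing is missing: the scaling identities for $\|\cdot\|_r$ and $\|\Delta\cdot\|_2$, the factor $\frac{N(r-2)}{2r}=2\gamma_r$, and the invariance of $S_a$ under $\tau\ast$ are all verified, and the $\mu=0$ case follows verbatim.
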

\begin{proof}
The proof is standard, we omit it.
\end{proof}
\begin{lemma}\label{natural}
If $\mathcal{P}_{p,q,a}^0=\emptyset$, then $\mathcal{P}_{p,q,a}$ is a natural constraint. Similarly, if $\mathcal{P}_{p,a}^0=\emptyset$, then $\mathcal{P}_{p,a}$ is a natural constraint. 
\end{lemma}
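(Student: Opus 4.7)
The plan is to apply the Lagrange multiplier rule with respect to the two constraints $\|u\|_2^2=a^2$ and $P_{p,q}(u)=0$, and then show that the Lagrange multiplier attached to the second constraint is forced to vanish by the non-degeneracy condition $\mathcal{P}_{p,q,a}^0=\emptyset$. Let $u\in\mathcal{P}_{p,q,a}$ be a critical point of $\mathcal{J}_{p,q}|_{\mathcal{P}_{p,q,a}}$. Observe first that the scaling curve $\tau\mapsto\tau*u$ is tangent to $S_a$ at $\tau=0$ (since $\|\tau*u\|_2=\|u\|_2$), and a direct computation from the definitions gives $P_{p,q}(\tau*u)=\tfrac{1}{2}\Psi_u'(\tau)$, so
\[
\frac{d}{d\tau}\bigg|_{\tau=0}P_{p,q}(\tau*u)=\tfrac{1}{2}\Psi_u''(0)\neq 0,
\]
because $u\notin\mathcal{P}_{p,q,a}^0$. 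In particular $dP_{p,q}(u)$ is non-trivial on $T_u S_a$, so the standard Lagrange multiplier rule yields $\lambda,\mu_0\in\mathbb{R}$ with $\mathcal{J}_{p,q}'(u)=\lambda u+\mu_0 P_{p,q}'(u)$ in $H^{-2}(\mathbb{R}^N)$.

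To show $\mu_0=0$, I would differentiate the three scalar functions $\tau\mapsto\mathcal{J}_{p,q}(\tau*u)$, $\tau\mapsto\tfrac{1}{2}\|\tau*u\|_2^2$, and $\tau\mapsto P_{p,q}(\tau*u)$ at $\tau=0$ and substitute into the Lagrange identity. Each such derivative coincides with the $H^{-2}$-pairing of the corresponding gradient against the velocity of the scaling curve, and each is computable directly from the explicit formula for $\Psi_u$, so no regularity issue with the pointwise generator $\tfrac{N}{2}u+x\cdot\nabla u$ arises. The first derivative is $\Psi_u'(0)=2P_{p,q}(u)=0$ since $u\in\mathcal{P}_{p,q,a}$; the second vanishes by $L^2$-mass preservation; and the third equals $\tfrac{1}{2}\Psi_u''(0)$. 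Thus the Lagrange identity collapses to $\mu_0\,\Psi_u''(0)=0$, and $\mathcal{P}_{p,q,a}^0=\emptyset$ forces $\mu_0=0$. Hence $\mathcal{J}_{p,q}'(u)=\lambda u$, i.e., $u$ is a critical point of $\mathcal{J}_{p,q}|_{S_a}$.

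The proof for the single-power case ($\mu=0$) is identical with $\mathcal{J}_{p,q},P_{p,q},\Psi_u$ replaced by $\mathcal{J}_p,P_p,\Psi_{u,0}$. I do not anticipate any serious obstacle here; the whole argument reduces to a short scaling computation once one recognises that the scaling direction serves as a canonical test direction in $T_u S_a$, and that the non-triviality of $dP_{p,q}(u)$ on this direction is precisely what $\mathcal{P}_{p,q,a}^0=\emptyset$ encodes via Lemma \ref{onP}.
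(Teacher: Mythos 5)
Your overall structure matches the paper's: invoke the Lagrange multiplier rule to get $\mathcal{J}_{p,q}'(u)=\lambda u+\mu_0 P_{p,q}'(u)$, extract a Poho\v{z}aev--type scalar relation, subtract the relation $P_{p,q}(u)=0$, and conclude $\mu_0\,\Psi_u''(0)=0$, hence $\mu_0=0$. The difference is in how you obtain the scalar relation, and there your argument has a real gap.

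You propose to pair the Lagrange identity against the scaling direction by differentiating the three scalar maps $\tau\mapsto\mathcal{J}_{p,q}(\tau*u)$, $\tau\mapsto\|\tau*u\|_2^2$, $\tau\mapsto P_{p,q}(\tau*u)$ at $\tau=0$. The issue is the sentence ``Each such derivative coincides with the $H^{-2}$-pairing of the corresponding gradient against the velocity of the scaling curve, and each is computable directly from the explicit formula for $\Psi_u$, so no regularity issue with the pointwise generator $\tfrac{N}{2}u+x\cdot\nabla u$ arises.'' The fact that $\Psi_u'(0)$, $\Psi_u''(0)$, etc., are computable from explicit formulas is true but irrelevant: it tells you the left-hand side of the chain rule is well-defined, not that the chain rule holds. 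To equate $\tfrac{d}{d\tau}F(\tau*u)\big|_{\tau=0}$ with $\langle F'(u),\tfrac{N}{2}u+x\cdot\nabla u\rangle$ you need the curve $\tau\mapsto\tau*u$ to be differentiable \emph{in $H^2$} at $\tau=0$, which requires $x\cdot\nabla u\in H^2(\mathbb{R}^N)$; for a generic $u\in H^2$ that is false (slow decay breaks the $L^2$-membership of $x\cdot\nabla u$). So the identity $F'(u)=0$ (in $H^{-2}$) does not by itself yield $\tfrac{d}{d\tau}F(\tau*u)|_{\tau=0}=0$, and the linear relation you want among $\Psi_u'(0)$, $0$, $\Psi_u''(0)$ is not obtained.

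The paper avoids this by a different (and rigorous) step: from $\mathcal{J}_{p,q}'(u)=\lambda u+\nu P_{p,q}'(u)$ it reads off the Euler--Lagrange PDE
\[
(1-\nu)\Delta^2 u=\lambda u+\Big(1-\frac{\nu p\gamma_p}{2}\Big)|u|^{p-2}u+\mu\Big(1-\frac{\nu q\gamma_q}{2}\Big)|u|^{q-2}u,
\]
and then invokes the already-established Poho\v{z}aev identity of Lemma~\ref{poho} (proved in \cite{BCGJ1} by a cutoff/approximation argument, which is exactly what replaces the unjustified chain rule) to get
\[
(1-\nu)\|\Delta u\|_2^2=\Big(1-\frac{\nu p\gamma_p}{2}\Big)\gamma_p\|u\|_p^p+\mu\Big(1-\frac{\nu q\gamma_q}{2}\Big)\gamma_q\|u\|_q^q.
\]
Subtracting $\|\Delta u\|_2^2=\gamma_p\|u\|_p^p+\mu\gamma_q\|u\|_q^q$ gives $\nu\big(2\|\Delta u\|_2^2-p\gamma_p^2\|u\|_p^p-\mu q\gamma_q^2\|u\|_q^q\big)=0$, and $\mathcal{P}_{p,q,a}^0=\emptyset$ forces $\nu=0$. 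To repair your proof, replace the formal differentiation along the scaling curve with this application of Lemma~\ref{poho} to the modified PDE; the remainder of your argument (including the preliminary observation that $dP_{p,q}(u)$ is non-degenerate on $T_uS_a$, which the paper only establishes later in Lemma~\ref{sx2}) is fine.
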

\begin{proof}
If $u\in \mathcal{P}_{p,q,a}$ is a critical point for $\mathcal{J}_{p,q}|_{\mathcal{P}_{p,q,a}}$, then by the Lagrange multiplies rule \cite[Proposition 5.12]{Wil}, there exists $\lambda,\nu\in \mathbb{R}$ such that
\begin{equation*}
  \mathcal{J}_{p,q}'=\lambda u+\nu P_{p,q}',\quad x\in \mathbb{R}^N,
\end{equation*}
i.e.,
\begin{equation*}
  \Delta^2u=|u|^{p-2}u+\mu|u|^{q-2}u+\lambda u+\nu (\Delta^2u-\frac{p\gamma_p}{2}|u|^{p-2}u-\frac{\mu q\gamma_q}{2}|u|^{q-2}u),\quad x\in \mathbb{R}^N,
\end{equation*}
that is
\begin{equation*}
  (1-\nu)\Delta^2u=\lambda u+(1-\frac{\nu p\gamma_p}{2})|u|^{p-2}u+\mu(1-\frac{\nu q\gamma_q}{2})|u|^{q-2}u,\quad x\in \mathbb{R}^N.
\end{equation*}
We have to prove that  $\nu=0$. By Lemma \ref{poho}, we have
\begin{equation*}
   (1-\nu)\| \Delta u\|_2^2=(1-\frac{\nu p\gamma_p}{2})\gamma_p\|u\|_p^p+\mu(1-\frac{\nu q\gamma_q}{2})\gamma_q\|u\|_q^q.
\end{equation*}
Since $u\in \mathcal{P}_{p,q,a}$, we have
\begin{equation*}
  \| \Delta u\|_2^2=\gamma_p\|u\|_p^p+\mu\gamma_q\|u\|_q^q.
\end{equation*}
From the above equalities, we deduce that
\begin{equation*}
  \nu(2 \| \Delta u\|_2^2-p\gamma_p^2\|u\|_p^p-\mu q\gamma_q^2\|u\|_q^q)=0.
\end{equation*}
But the term inside the bracket cannot be $0$, since $\mathcal{P}_{p,q,a}^0=\emptyset$, so $\nu=0$. 
\end{proof}

\subsection{A general minimax theorem}
In this subsection, we shall give a general minimax theorem to establish the existence of a Palais-Smale sequence.
\begin{definition}\cite[Definition 5.1]{G}
Let $B$ be a closed subset of $X$. A class of $\mathcal{F}$ of compact subsets of $X$ is a homotopy stable family with extended boundary $B$
if for any set $A\in \mathcal{F}$ and any $\eta\in C([0,1]\times X,X)$ satisfying $\eta(t,x)=x$ for all $(t,x)\in (\{0\}\times X)\cup([0,1]\times B)$, one has $\eta(\{1\}\times A)\in\mathcal{F}$.
\end{definition}
\begin{lemma}\cite[Theorem 5.2]{G}\label{Ghouss}
Let $\varphi$ be a $C^1$-functional on a complete connected $C^{1}$-Finsler manifold $X$, and consider a homotopy stable family $\mathcal{F}$ with an extended closed boundary $B$. Set $c=c(\varphi,\mathcal{F})=\inf\limits_{A\in \mathcal{F}}\max\limits_{x\in A}\varphi(x)$ and let $\mathcal{F}$ be a closed subset of $X$ satisfying
\begin{equation*}
  A\cap F\backslash B\neq\emptyset, \quad\text{for any $A\in \mathcal{F}$}
\end{equation*}
and
\begin{equation*}
  \sup\limits_{x\in B}\varphi(x)\leq c\leq \inf\limits_{x\in F}\varphi(x) .
\end{equation*}
Then, for any sequence of sets $\{A_n\} \subset\mathcal{F}$ satisfying $\lim\limits_{n\rightarrow\infty}\sup\limits_{x\in A_n}\varphi(x)=c$, there exists a sequence $\{x_n\}\subset X\backslash B$ such that
\\$(i)$ $\lim\limits_{n\rightarrow\infty}\varphi(x_n)=c$; 
\\$(ii)$ $\lim\limits_{n\rightarrow\infty}\|\varphi'(x_n)\|_*=0$;
\\$(iii)$ $\lim\limits_{n\rightarrow\infty}dist(x_n,\mathcal{F})=0$; 
\\$(iii)$ $\lim\limits_{n\rightarrow\infty}dist(x_n,A_n)=0$. 
\end{lemma}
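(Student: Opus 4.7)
The plan is to argue by contradiction via a quantitative deformation argument on the $C^1$-Finsler manifold $X$. Suppose no sequence $\{x_n\}\subset X\setminus B$ satisfying (i)--(iv) exists. Passing, if necessary, to a subsequence in $n$, one extracts constants $\varepsilon, \delta > 0$ such that every $x\in X$ with $|\varphi(x)-c| < 2\varepsilon$, $\mathrm{dist}(x,F) < 2\delta$, and $\mathrm{dist}(x,A_n) < 2\delta$ satisfies $\|\varphi'(x)\|_* \geq \varepsilon$, for every $n$ sufficiently large. Otherwise, a diagonal extraction across shrinking parameters would produce the forbidden sequence.

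Using a locally Lipschitz pseudo-gradient vector field for $\varphi$ on the regular set $\{\varphi'\neq 0\}$, truncated by smooth cut-offs of the distance functions $\mathrm{dist}(\cdot,F)$, $\mathrm{dist}(\cdot,A_n)$ and of the level function $\varphi - c$, I would integrate to obtain a continuous flow $\eta_n:[0,1]\times X \to X$ satisfying: $\eta_n(0,\cdot) = \mathrm{id}$; $\eta_n(t,x) = x$ whenever $x$ lies outside the tube defined by the three distance/level conditions; and $\varphi(\eta_n(1,x)) \leq \varphi(x) - \varepsilon'$ on the core of the tube, for some $\varepsilon' > 0$ depending on $\varepsilon, \delta$. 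The hypothesis $\sup_B\varphi \leq c$ is precisely what allows one to arrange that $B$ lies outside the critical strip $\{|\varphi-c| < 2\varepsilon\}$, so that $\eta_n(t,x) = x$ for all $(t,x)\in [0,1]\times B$.

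By the homotopy stability of $\mathcal{F}$ with extended boundary $B$, the set $A_n' := \eta_n(1,A_n)$ again belongs to $\mathcal{F}$. For $n$ large enough that $\max_{A_n}\varphi \leq c + \varepsilon'/2$, the hypotheses $A_n\cap F\setminus B \neq \emptyset$ and $F\subset\{\varphi\geq c\}$ guarantee that every point of $A_n$ whose $\varphi$-value is close to $c$ sits in the core of the tube; the flow pushes those values strictly down by $\varepsilon'$, while points already well below level $c$ are unaffected. This yields $\max_{A_n'}\varphi < c$, contradicting the definition of $c$ as the infimum of $\max_A \varphi$ over $A\in\mathcal{F}$. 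The contradiction delivers the desired Palais--Smale-type sequence $\{x_n\}$.

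The main obstacle is designing the deformation $\eta_n$ so that it simultaneously respects all four requirements: strict decrease of $\varphi$ on the critical strip, triviality on $B$, and localization near both $F$ and $A_n$ (the last two being essential to secure conditions (iii) and (iv)). On a general $C^1$-Finsler manifold this forces a careful construction through partitions of unity and Ekeland-type selections based on the distances to $F$ and $A_n$, and the final sequence $\{x_n\}$ is extracted diagonally over $n$ and over successive contractions of the parameters $(\varepsilon,\delta)$; the non-completeness of integral curves of the pseudo-gradient must be handled by cutting off far from the tube, which is where the extended-boundary condition becomes indispensable.
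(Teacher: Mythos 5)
Your overall strategy---a contradiction argument via a localized pseudo-gradient deformation, with constants $\varepsilon,\delta$ extracted from the negation of the conclusion along a subsequence---is the standard framework for this type of theorem. (Note that the paper itself offers no proof of this lemma: it is quoted verbatim as Theorem~5.2 of Ghoussoub's book, so there is no in-house argument to compare against; the evaluation below is purely of your sketch.) Two of your intermediate claims, however, are false as stated, and each would derail an attempt to make the sketch rigorous.

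First, the assertion that $\sup_B\varphi\le c$ forces $B$ to lie outside the strip $\{|\varphi-c|<2\varepsilon\}$ is not correct. That hypothesis only gives $\varphi\le c$ on $B$; nothing prevents $B$ from containing points with $\varphi$ arbitrarily close to $c$ from below, and such points lie in every two-sided (and indeed every one-sided-from-below) neighborhood of the level $c$. Hence truncating the pseudo-gradient by the level alone does not make $\eta_n(t,\cdot)$ the identity on $B$, which is precisely what the definition of a homotopy stable family with extended boundary demands before you may conclude $\eta_n(1,A_n)\in\mathcal F$. Ensuring that the deformation is trivial on $B$ while still doing useful work at the points you care about is the delicate part of Ghoussoub's proof; it requires an additional mechanism (a cutoff in $\mathrm{dist}(\cdot,B)$, or a construction exploiting that the flow is $\varphi$-decreasing, hence keeps $\{\varphi\le c\}\supset B$ forward-invariant), together with an argument that this extra cutoff does not kill the decrease of $\varphi$ where it is needed. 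Your sketch simply skips this.

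Second, your target contradiction is too strong, and in fact unreachable with the deformation you propose. Because the vector field is truncated by the cutoff in $\mathrm{dist}(\cdot,F)$, the flow is inert at points of $A_n$ at levels near $c$ that lie far from $F$; nothing in the hypotheses excludes such points, so you cannot conclude $\max_{A_n'}\varphi<c$. The statement ``every point of $A_n$ whose $\varphi$-value is close to $c$ sits in the core of the tube'' is false for the same reason. The contradiction one actually derives---and the reason the dual set $F$ and the hypothesis $A\cap F\setminus B\neq\emptyset$ are there at all---is that $A_n':=\eta_n(1,A_n)$ satisfies $A_n'\cap F\setminus B=\emptyset$: any $x\in A_n$ whose image $\eta_n(1,x)$ lands in $F\setminus B$ must lie in the tube (using $\inf_F\varphi\ge c$, the bounded displacement of the truncated flow, and $x\in A_n$), so the flow strictly lowers $\varphi(x)$ below $c$, forcing the image out of $F$. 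Once you aim at this contradiction, the localization near $F$ is no longer an obstacle but the key asset that also produces $\mathrm{dist}(x_n,F)\to 0$ in the conclusion. As written, your sketch both misidentifies the contradiction and mishandles the boundary $B$, so it is not a correct proof.
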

Since $G(u):=\|u\|_2^2-a^2$ is of class $C^1$ and for any $u\in S_a$, we have $\langle G'(u),u\rangle=2a^2>0$. Therefore, by the implicit function theorem, we know $S_a$ is a $C^{1}$-Finsler manifold.
\subsection{A compactness lemma for Palais-Smale sequences}
In this subsection, we will give a compactness lemma for the special Palais-Smale sequence, which satisfies a suitable additional condition.
\begin{lemma}\label{compact}\cite{Lions}
If $N\geq2$, then $H^2_{rad}(\mathbb{R}^N)$ is compactly embedding into $L^p(\mathbb{R}^N)$ for $2<p<\frac{2N}{(N-4)^+}$.
\end{lemma}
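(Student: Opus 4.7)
The plan is to follow the classical Strauss-Lions argument, combining local compactness from Rellich-Kondrachov with a pointwise radial decay estimate that controls the tails at infinity.

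I would first record the radial decay bound: for every radial $u \in H^1(\mathbb{R}^N)$ with $N \geq 2$,
\begin{equation*}
  |u(x)| \leq C_N |x|^{-(N-1)/2} \|u\|_{H^1}, \quad |x| \geq 1.
\end{equation*}
By density it suffices to prove this for smooth, compactly supported radial functions, and then writing $r^{N-1}|u(r)|^2 = -\int_r^\infty \frac{d}{ds}\bigl(s^{N-1}|u(s)|^2\bigr)\,ds$ and applying Cauchy-Schwarz produces the estimate. Since $H^2_{rad}(\mathbb{R}^N) \hookrightarrow H^1_{rad}(\mathbb{R}^N)$, the same bound applies to any $u \in H^2_{rad}(\mathbb{R}^N)$, and the constant can be absorbed into $\|u\|_{H^2}$.

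Now let $\{u_n\} \subset H^2_{rad}(\mathbb{R}^N)$ be bounded. Up to a subsequence $u_n \weakto u$ in $H^2$, and $u$ is again radial since the radial subspace is weakly closed. The Rellich-Kondrachov theorem on each ball $B_R$ gives $u_n \to u$ in $L^p(B_R)$ for every $p$ in the allowed range. To kill the tails, fix $p$ with $2 < p < \frac{2N}{(N-4)^+}$ and $R \geq 1$, and set $v_n := u_n - u$ (radial, bounded in $H^2$). The radial decay bound then yields
\begin{equation*}
  \int_{|x|\geq R} |v_n|^p\,dx \;\leq\; \|v_n\|_{L^\infty(|x|\geq R)}^{\,p-2}\,\|v_n\|_2^2 \;\leq\; \bigl(C_N R^{-(N-1)/2}\|v_n\|_{H^1}\bigr)^{p-2}\|v_n\|_2^2 \;\leq\; C R^{-(p-2)(N-1)/2},
\end{equation*}
with $C$ independent of $n$. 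Since $p > 2$ and $N \geq 2$, the exponent on $R$ is strictly negative, so the tails are uniformly small for large $R$; together with the local $L^p$-convergence this yields $u_n \to u$ in $L^p(\mathbb{R}^N)$.

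There is no real obstacle as the result is classical; the one point worth noting is that the Strauss decay is driven purely by the $H^1$-norm, so the full range $2 < p < \frac{2N}{(N-4)^+}$ is reached simultaneously. The $H^2$ hypothesis enters only through the continuous Sobolev embedding $H^2(\mathbb{R}^N) \hookrightarrow L^p(\mathbb{R}^N)$ (ensuring $L^p$-integrability up to the stated exponent) and through the local Rellich-Kondrachov compactness; no interpolation against the biharmonic critical embedding is required in the tail estimate.
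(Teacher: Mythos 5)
Your proof is correct and is essentially the classical Strauss--Lions argument that the cited reference \cite{Lions} uses; the paper itself offers no proof beyond the citation. The radial decay estimate driven by the $H^1$-norm, combined with local Rellich--Kondrachov compactness (where the $H^2$ bound is what pushes the admissible range of $p$ up to $\frac{2N}{(N-4)^+}$) and weak closedness of the radial subspace, is exactly the standard route, and your tail estimate $\int_{|x|\geq R}|v_n|^p\,dx\leq C R^{-(p-2)(N-1)/2}$ closes the argument cleanly.
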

\begin{lemma}\label{com1}
Let $N\geq2$, $2<q<p<\frac{2N}{(N-4)^+}$, $\mu>0$, or $2<q\leq \bar{p}<p<\frac{2N}{(N-4)^+}$, $\mu<0$ and \eqref{con2} holds. If $\{u_n\}\subset S_{a,r}$ is a Palais-Smale sequence for $\mathcal{J}_{p,q}|_{S_{a,r}}$ at level $c\neq0$ and $P_{p,q}(u_n)\rightarrow 0$ as $n\rightarrow\infty$. Then there exists $u\in H^2_{rad}(\mathbb{R}^N)$ such that, up to a subsequence, $u_n\rightarrow u$ in $H^2(\mathbb{R}^N)$ as $n\rightarrow\infty$, and $u$ is a radial solution of  \eqref{question} with some $\lambda<0$.
\end{lemma}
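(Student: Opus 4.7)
The plan is to run the standard compactness scheme for a constrained Palais--Smale sequence, the non-routine point being the use of condition (\ref{con2}) to force $\lambda<0$ in the case $\mu<0$. I organize the argument as: (i) boundedness in $H^2(\mathbb{R}^N)$, (ii) extraction of the weak limit together with the Lagrange multiplier $\lambda_n$, (iii) non-triviality of the weak limit and negativity of $\lambda$, (iv) upgrading to strong $H^2$ convergence.

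For boundedness I would exploit the combination
\begin{equation*}
\mathcal{J}_{p,q}(u_n)-\frac{1}{p\gamma_p}P_{p,q}(u_n)=\Bigl(\frac{1}{2}-\frac{1}{p\gamma_p}\Bigr)\|\Delta u_n\|_2^2-\mu\,\frac{p\gamma_p-q\gamma_q}{qp\gamma_p}\|u_n\|_q^q.
\end{equation*}
Since $p>\bar{p}$ is equivalent to $p\gamma_p>2$, the first coefficient is positive; since $q\leq\bar{p}<p$ forces $q\gamma_q\leq 2<p\gamma_p$, the factor $p\gamma_p-q\gamma_q$ is positive. When $\mu<0$ both terms are therefore non-negative and coercivity is immediate; when $\mu>0$ the $\|u_n\|_q^q$ term has the opposite sign but, since $q\gamma_q<2$, it can be absorbed into the $\|\Delta u_n\|_2^2$ term via Lemma \ref{GN}. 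Thus $\{u_n\}$ is bounded in $H^2_{rad}(\mathbb{R}^N)$, and Lemma \ref{compact} yields, up to a subsequence, $u_n\rightharpoonup u$ in $H^2(\mathbb{R}^N)$ together with $u_n\to u$ strongly in $L^r(\mathbb{R}^N)$ for every $2<r<\frac{2N}{(N-4)^+}$, in particular in $L^p$ and $L^q$.

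As $\{u_n\}$ is a PS sequence on $S_{a,r}$, the Lagrange multiplier rule produces $\lambda_n\in\mathbb{R}$ with $\mathcal{J}_{p,q}'(u_n)-\lambda_n u_n\to 0$ in $H^{-2}(\mathbb{R}^N)$. Testing with $u_n$ and combining with $P_{p,q}(u_n)\to 0$ gives
\begin{equation*}
\lambda_n a^2=-(1-\gamma_p)\|u_n\|_p^p-\mu(1-\gamma_q)\|u_n\|_q^q+o(1),
\end{equation*}
so $\lambda_n$ is bounded and, along a subsequence, $\lambda_n\to\lambda$. Passing to the limit in the PS equation (strong $L^p,L^q$ convergence handles the nonlinear terms) shows that $u$ weakly solves $\Delta^2 u=\lambda u+\mu|u|^{q-2}u+|u|^{p-2}u$. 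If $u\equiv 0$, then $\|u_n\|_p,\|u_n\|_q\to 0$ and $P_{p,q}(u_n)\to 0$ forces $\|\Delta u_n\|_2\to 0$, whence $\mathcal{J}_{p,q}(u_n)\to 0$, contradicting $c\neq 0$; so $u\not\equiv 0$.

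The core difficulty is proving $\lambda<0$ in the case $\mu<0$. Taking the limit in the identity above and using the Poho\v{z}aev identity from Lemma \ref{poho} to write $\|u\|_p^p=\gamma_p^{-1}\bigl(\|\Delta u\|_2^2+|\mu|\gamma_q\|u\|_q^q\bigr)$, the sign question reduces to verifying
\begin{equation*}
(1-\gamma_p)\|\Delta u\|_2^2>|\mu|(\gamma_p-\gamma_q)\|u\|_q^q.
\end{equation*}
I would bound the right-hand side via Lemma \ref{GN} with $\|u\|_2\leq a$; when $q=\bar{p}$ one has $q\gamma_q=2$ and condition (\ref{con2}) directly closes the estimate, while for $q<\bar{p}$ one additionally needs a lower bound on $\|\Delta u\|_2$, obtained from $\gamma_p\|u\|_p^p\geq\|\Delta u\|_2^2$ together with the G--N bound on $\|u\|_p^p$ (using $p\gamma_p>2$). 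For $\mu>0$ the same identity gives $\lambda<0$ at once from $\gamma_p,\gamma_q<1$ and $u\not\equiv 0$. Finally, with $\lambda<0$ secured, testing $\mathcal{J}_{p,q}'(u_n)-\lambda_n u_n\to 0$ against $u_n-u$ and invoking the strong $L^p,L^q$ convergence leaves $\|\Delta(u_n-u)\|_2^2-\lambda\|u_n-u\|_2^2=o(1)$; since $-\lambda>0$, both non-negative terms must vanish in the limit, yielding $u_n\to u$ in $H^2(\mathbb{R}^N)$ with $\|u\|_2=a$, which completes the proof.
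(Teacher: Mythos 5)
Your proposal follows essentially the same route as the paper's proof: the paper's equation \eqref{bdd} rewrites $\mathcal{J}_{p,q}(u_n)$ after plugging $P_{p,q}(u_n)\to 0$, and your combination $\mathcal{J}_{p,q}(u_n)-\frac{1}{p\gamma_p}P_{p,q}(u_n)$ is exactly the paper's third form. The extraction of $\lambda_n$, the non-triviality argument, the use of the Poho\v{z}aev identity plus the Gagliardo--Nirenberg lower bound on $\|\Delta u\|_2$ to deduce $\lambda<0$ under \eqref{con2}, and the final test against $u_n-u$ are also all as in the paper.

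There is one genuine gap in the boundedness step. For $\mu>0$ the lemma allows the full range $2<q<p<\frac{2N}{(N-4)^+}$, which includes $\bar p<q<p$; your absorption argument invokes ``since $q\gamma_q<2$'' and therefore only applies when $q\leq\bar p$. When $q\gamma_q>2$ the $\|u_n\|_q^q$ contribution grows faster than $\|\Delta u_n\|_2^2$ through Gagliardo--Nirenberg, so the Young-type absorption fails. The fix is straightforward and is what the paper does: replace the coefficient $\frac{1}{p\gamma_p}$ by $\frac{1}{q\gamma_q}$, eliminating the $q$-term instead, which yields
\begin{equation*}
\mathcal{J}_{p,q}(u_n)-\frac{1}{q\gamma_q}P_{p,q}(u_n)=\Bigl(\frac{1}{2}-\frac{1}{q\gamma_q}\Bigr)\|\Delta u_n\|_2^2+\Bigl(\frac{\gamma_p}{q\gamma_q}-\frac{1}{p}\Bigr)\|u_n\|_p^p,
\end{equation*}
where now both coefficients are positive since $q\gamma_q>2$ and $p\gamma_p>q\gamma_q$ (from $p>q$). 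Please add this subcase; otherwise the argument is complete.
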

\begin{proof}
We divide the proof into three steps.
\\
\textbf{Setp 1:} $\{u_n\}$ is bounded in $H^2(\mathbb{R}^N)$.

 By $P_{p,q}(u_n)\rightarrow 0$ as $n\rightarrow\infty$, we have
\begin{align}\label{bdd}
  c+1\geq \mathcal{J}_{p,q}(u_n)
  &=\Big(\frac{\gamma_p}{2}-\frac{1}{p}\Big)\int_{\mathbb{R}^N}|u_n|^pdx+\Big(\frac{\gamma_q}{2}-\frac{1}{q}\Big)\mu\int_{\mathbb{R}^N}|u_n|^qdx+o_n(1)
  \nonumber\\&=\Big(\frac{1}{2}-\frac{1}{q\gamma_q}\Big)\int_{\mathbb{R}^N}|\Delta u_n|^2dx+\Big(\frac{\gamma_p}{q\gamma_q}-\frac{1}{p}\Big)\int_{\mathbb{R}^N}|u_n|^pdx+o_n(1)\nonumber
  \\&=\Big(\frac{1}{2}-\frac{1}{p\gamma_p}\Big)\int_{\mathbb{R}^N}|\Delta u_n|^2dx+\Big(\frac{\gamma_q}{p\gamma_p}-\frac{1}{q}\Big)\mu\int_{\mathbb{R}^N}|u_n|^qdx+o_n(1).
\end{align}

(i) If $2< q\leq\bar{p}<p<\frac{2N}{(N-4)^+}$, $\mu>0$, by the H\"{o}lder inequality, there exists $\theta\in (0,1)$ such that
\begin{equation*}
  \|u_n\|_q^q\leq \|u_n\|_2^{q(1-\theta)}\|u_n\|_p^{q \theta}=a^{q(1-\theta)}\|u_n\|_p^{q \theta}.
\end{equation*}
Then by the first equation of \eqref{bdd}, we have
\begin{equation*}
  c+1\geq \Big(\frac{\gamma_p}{2}-\frac{1}{p}\Big)\|u_n\|_p^p+\Big(\frac{\gamma_q}{2}-\frac{1}{q}\Big)\mu a^{q(1-\theta)}\|u_n\|_p^{q \theta}.
\end{equation*}
Since $q\gamma_q\leq 2<\gamma_p$ and $q\theta <p$, we deduce that $\|u_n\|_p$ is bounded and hence $\|u_n\|_q$ is bounded. Using $P_{p,q}(u_n)\rightarrow 0$ as $n\rightarrow\infty$ again, we obtain the boundedness of $\|\Delta u_n\|_2$.

(ii) If $\bar{p}<q<p<\frac{2N}{(N-4)^+}$, $\mu>0$, by the second equation of \eqref{bdd} and $2<q\gamma_q<p\gamma_p$, we can deduce the boundedness of $\|\Delta u_n\|_2$.

(iii) If $2<q\leq \bar{p}<p<\frac{2N}{(N-4)^+}$, $\mu<0$, by the third equation of \eqref{bdd} and $q\gamma_q\leq 2<p\gamma_p$, we can deduce the boundedness of $\|\Delta u_n\|_2$.\\
\textbf{Setp 2:} $\{u_n\}$ has a nontrivial weak limit in $H^2(\mathbb{R}^N)$.

By using Lemma \ref{compact}, there exists $u\in H^2_{rad}(\mathbb{R}^N)$ such that, up to a subsequence, $u_n\rightharpoonup u$ in $H^2(\mathbb{R}^N)$, $u_n\rightarrow u$ in $L^p(\mathbb{R}^N)$ for $2<p<\frac{2N}{(N-4)^+}$ and a.e. in $\mathbb{R}^N$. Since $\{u_n\}\subset S_{a,r}$ is a Palais-Smale sequence for $\mathcal{J}_{p,q}|_{S_{a,r}}$, by the Lagrange multipliers rule, there exists a sequence $\{\lambda_n\}\subset \mathbb{R}$ such that
\begin{equation}\label{lag}
   \Delta^2u_n-\mu|u_n|^{q-2}u_n-|u_n|^{p-2}u_n=\lambda_n u_n+o_n(1).
\end{equation}
Testing the above equation with $u_n$, we obtain
\begin{equation*}
  \lambda_n a^2= \int_{\mathbb{R}^N}|\Delta u|^2dx-\int_{\mathbb{R}^N}|u|^pdx-\mu\int_{\mathbb{R}^N}|u|^qdx+o_n(1),
\end{equation*}
this with the boundedness of $\{u_n\}$ in $H^2(\mathbb{R}^N)\cap L^p(\mathbb{R}^N)\cap L^q(\mathbb{R}^N)$ yields that $\{\lambda_n\}$ is bounded in $\mathbb{R}$. Without loss of generality, we assume that $\lambda_n \rightarrow\lambda $ as $n\rightarrow\infty$.
Claim that $u\neq0$. Assume by contradiction that $u=0$, then $\|u_n\|_p,\|u_n\|_q\rightarrow0$, this together with $P_{p,q}(u_n)\rightarrow 0$ yields that $\|\Delta u_n\|_2\rightarrow0$, hence $\mathcal{J}_{p,q}(u_n)\rightarrow0$ as $n\rightarrow\infty$, which is contradict to $c\neq0$. Hence, $u_n\rightharpoonup u\neq0$ in $H^2(\mathbb{R}^N)$. From \eqref{lag}, we know $u$ is a radial solution of the equation
\begin{equation*}
  \Delta^2u=\lambda u+\mu|u|^{q-2}u+|u|^{p-2}u,\quad \text {in $\mathbb{R}^N$}.
\end{equation*}
 By \eqref{pohoim}, we can deduce that
\begin{equation}\label{nega}
   \lambda a^2=\frac{2N-p(N-4)}{N(2-p)}\|\Delta u\|_2^2+\frac{2\mu (p-q)}{q(2-p)}\|u\|_q^q.
\end{equation}

(i) If $\mu>0$, by $2<q<p<\frac{2N}{(N-4)^+}$, we obtain $\lambda<0$.

(ii) If $2<q\leq \bar{p}<p<\frac{2N}{(N-4)^+}$, $\mu<0$, and \eqref{con2} holds.
By Lemma \ref{poho} and Fatou lemma, we obtain
\begin{equation*}
\|\Delta u\|_2^2\leq \gamma_p\|u\|_p^{p}\leq \gamma_pB_{N,p}a ^{p(1-\gamma_p)}\|\Delta u\|_2^{p\gamma_p},
\end{equation*}
hence
\begin{equation*}
  \|\Delta u\|_2\geq \Big(\frac{1}{\gamma_pB_{N,p}a ^{p(1-\gamma_p)}}\Big)^{\frac{1}{p\gamma_p-2}}=\Big(\frac{4p a^{\frac{N(p-2)}{4}-p}}{N(p-2)B_{N,p}}\Big)^{\frac{4}{N(p-\bar{p})}}.
\end{equation*}
By \eqref{con2} and \eqref{nega}, we get
\begin{align*}
  \lambda a^2&\leq \frac{2N-p(N-4)}{N(2-p)}\|\Delta u\|_2^2+\frac{2\mu (p-q)}{q(2-p)}B_{N,q}a^{q(1-\gamma_q)}\|\Delta u\|_2^{q\gamma_q}\\
  &=\|\Delta u\|_2^{q\gamma_q}\Big[\frac{2N-p(N-4)}{N(2-p)}\|\Delta u\|_2^{2-q\gamma_q}+\frac{2\mu (p-q)}{q(2-p)}B_{N,q}a^{q(1-\gamma_q)}\Big]\\&
  \leq \|\Delta u\|_2^{q\gamma_q}\Big[\frac{2N-p(N-4)}{N(2-p)}\Big(\frac{4p a^{\frac{N(p-2)}{4}-p}}{N(p-2)B_{N,p}}\Big)^{\frac{4(2-q\gamma_q)}{N(p-\bar{p})}}+\frac{2\mu (p-q)}{q(2-p)}B_{N,q}a^{q(1-\gamma_q)}\Big]\\
  &=\|\Delta u\|_2^{q\gamma_q}\Big[\frac{2N-p(N-4)}{N(2-p)}\Big(\frac{4p a^{\frac{N(p-2)}{4}-p}}{N(p-2)B_{N,p}}\Big)^{\frac{\bar{p}-q}{p-\bar{p}}}+\frac{2\mu (p-q)}{q(2-p)}B_{N,q}a^{q-\frac{N(q-2)}{4}}\Big]<0,
\end{align*}
so $\lambda<0$.\\
\textbf{Setp 3:} $u_n\rightarrow u$ in $H^2(\mathbb{R}^N)$ as $n\rightarrow\infty$.

 Since $\lambda_n \rightarrow\lambda $ as $n\rightarrow\infty$, testing \eqref{question}, \eqref{lag} with $u_n-u$, and subtracting, we have
\begin{equation*}
  \langle\mathcal{J}'(u_n)-\mathcal{J}'(u),u_n-u\rangle-\lambda \int_{\mathbb{R}^N}|u_n-u|^2dx=o_n(1).
\end{equation*}
By the convergence of $\{u_n\}$ in $L^p(\mathbb{R}^N)\cap L^q(\mathbb{R}^N)$, we infer that
\begin{equation*}
\int_{\mathbb{R}^N}|\Delta (u_n-u)|^2dx+\lambda \int_{\mathbb{R}^N}|u_n-u|^2dx=o_n(1),
\end{equation*}
thus $\lambda<0$ implies that $u_n\rightarrow u$ in $H^2(\mathbb{R}^N)$ as $n\rightarrow\infty$.
\end{proof}
\begin{lemma}\label{com2}
Let $N\geq2$, $2<q<p<\frac{2N}{(N-4)^+}$, $\mu>0$, or $2<q\leq \bar{p}<p<\frac{2N}{(N-4)^+}$, $\mu<0$ and \eqref{con2} holds. If $\{u_n\}\subset S_{a}$ is a Palais-Smale sequence for $\mathcal{J}_{p,q}|_{S_{a}}$ at level $c\neq0$ and $P_{p,q}(u_n)\rightarrow 0$ as $n\rightarrow\infty$. Moreover, there exists a sequence $\{v_n\}\in S_{a,r}$ such that $\|u_n-v_n\|\rightarrow0$ as $n\rightarrow\infty$. Then there exists $u\in H^2_{rad}(\mathbb{R}^N)$ such that, up to a subsequence, $u_n\rightarrow u$ in $H^2(\mathbb{R}^N)$ as $n\rightarrow\infty$, and $u$ is a radial solution of \eqref{question} with some $\lambda<0$.
\end{lemma}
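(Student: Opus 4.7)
My plan is to reduce Lemma \ref{com2} to Lemma \ref{com1} by transferring the Palais--Smale structure from the sequence $\{u_n\}\subset S_a$ to its radial approximation $\{v_n\}\subset S_{a,r}$, and then invoking Lemma \ref{com1} on $\{v_n\}$.

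First, I would establish that $\{u_n\}$ is bounded in $H^2(\mathbb{R}^N)$. The only inputs to the boundedness argument in Step 1 of Lemma \ref{com1} are $\mathcal{J}_{p,q}(u_n)\to c$, $P_{p,q}(u_n)\to 0$ and the Gagliardo--Nirenberg inequality, so the three-case analysis ($2<q\le\bar p<p$ with $\mu>0$, $\bar p<q<p$ with $\mu>0$, and $2<q\le\bar p<p$ with $\mu<0$ under \eqref{con2}) transfers verbatim. Since $\|u_n-v_n\|\to 0$, the sequence $\{v_n\}$ is then also bounded in $H^2(\mathbb{R}^N)$, and by the continuity of $\mathcal{J}_{p,q}$ and $P_{p,q}$ on bounded sets of $H^2(\mathbb{R}^N)$ (via the subcritical embedding $H^2\hookrightarrow L^p\cap L^q$), we obtain $\mathcal{J}_{p,q}(v_n)\to c$ and $P_{p,q}(v_n)\to 0$.

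Second, I would show that $\{v_n\}$ is a Palais--Smale sequence for $\mathcal{J}_{p,q}|_{S_{a,r}}$ at level $c$. By the Lagrange multiplier rule applied to $\{u_n\}\subset S_a$, there exists $\{\lambda_n\}\subset\mathbb{R}$ with
\begin{equation*}
\mathcal{J}_{p,q}'(u_n)-\lambda_n u_n \to 0 \quad \text{in } (H^2(\mathbb{R}^N))^*.
\end{equation*}
Testing against $u_n$ and using the boundedness of $\{u_n\}$ in $H^2\cap L^p\cap L^q$ yields that $\{\lambda_n\}$ is bounded in $\mathbb{R}$. Writing
\begin{equation*}
\mathcal{J}_{p,q}'(v_n)-\lambda_n v_n=\bigl(\mathcal{J}_{p,q}'(u_n)-\lambda_n u_n\bigr)+\bigl(\mathcal{J}_{p,q}'(v_n)-\mathcal{J}_{p,q}'(u_n)\bigr)-\lambda_n(v_n-u_n),
\end{equation*}
the first term vanishes by the PS hypothesis, the second by Lipschitz continuity of $\mathcal{J}_{p,q}'$ on bounded sets, and the third by boundedness of $\{\lambda_n\}$ and $\|u_n-v_n\|\to 0$. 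Since $T_{v_n}S_{a,r}\subset T_{v_n}S_a$, this entails $\|(\mathcal{J}_{p,q}|_{S_{a,r}})'(v_n)\|_*\to 0$, so that $\{v_n\}\subset S_{a,r}$ is indeed a Palais--Smale sequence for $\mathcal{J}_{p,q}|_{S_{a,r}}$ at level $c\neq 0$ with $P_{p,q}(v_n)\to 0$.

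Third, I apply Lemma \ref{com1} to $\{v_n\}$ to extract $u\in H^2_{rad}(\mathbb{R}^N)$ with $v_n\to u$ in $H^2(\mathbb{R}^N)$, such that $u$ is a radial solution of \eqref{question} for some $\lambda<0$. Combined with $\|u_n-v_n\|\to 0$, this immediately gives $u_n\to u$ in $H^2(\mathbb{R}^N)$. The main (and only mildly delicate) obstacle is Step 2: one must verify that the Lagrange multipliers $\lambda_n$ are bounded, otherwise the perturbation $\lambda_n(v_n-u_n)$ could destroy the PS identity when passing from $\{u_n\}$ to $\{v_n\}$. Once this is secured, the remaining compactness is a direct consequence of Lemma \ref{com1}.
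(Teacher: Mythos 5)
Your proposal is correct and takes essentially the same approach as the paper, which merely states that the argument is "analogous to Lemma 2.7" and that the conclusion for $\{v_n\}$ is then inherited by $\{u_n\}$ via $\|u_n-v_n\|\to 0$. You have spelled out the details that the paper leaves implicit: the boundedness of $\{u_n\}$ and hence of the Lagrange multipliers $\{\lambda_n\}$, and the transfer of the free-functional Palais--Smale condition $\mathcal{J}_{p,q}'(u_n)-\lambda_n u_n\to 0$ in $(H^2(\mathbb{R}^N))^*$ to $\{v_n\}$ via Lipschitz continuity of $\mathcal{J}_{p,q}'$ on bounded sets, which then yields the constrained Palais--Smale property on $S_{a,r}$ since $T_{v_n}S_{a,r}\subset T_{v_n}S_a$.
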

\begin{proof}
The proof is analogue to the proof of Lemma \ref{com1}. As in Lemma \ref{com1}, we can show that there exists $u\in H^2_{rad}(\mathbb{R}^N)$ such that, up to a subsequence, $v_n\rightarrow u$ in $H^2(\mathbb{R}^N)$. Since $\|u_n-v_n\|\rightarrow0$, the same convergence is inherited by $\{u_n\}$, thus $u_n\rightarrow u$ in $H^2(\mathbb{R}^N)$ as $n\rightarrow\infty$.
\end{proof}

\section{The case $\mu=0$}
In this section, we deal with the case $\mu=0$ and prove Theorems \ref{th1} and \ref{th7}.
\subsection{The case $N\geq2$, $2<p<\frac{2N}{(N-4)^+}$}
\begin{lemma}\label{nega1,}
For $2<p<\bar{p}$, we have $-\infty<m_{p,a}<0$.
\end{lemma}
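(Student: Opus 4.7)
The plan is to establish the two inequalities $m_{p,a}>-\infty$ and $m_{p,a}<0$ separately, using Gagliardo--Nirenberg for the lower bound and the $L^2$-preserving dilation $\tau\ast u$ for the upper bound.

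First, for any $u\in S_a$, the Gagliardo--Nirenberg inequality of Lemma~\ref{GN} gives
\begin{equation*}
\|u\|_p^p \le B_{N,p}\, a^{p(1-\gamma_p)}\,\|\Delta u\|_2^{p\gamma_p},
\end{equation*}
so that
\begin{equation*}
\mathcal{J}_p(u)\ge \frac{1}{2}\|\Delta u\|_2^2 - \frac{B_{N,p}}{p}\,a^{p(1-\gamma_p)}\,\|\Delta u\|_2^{p\gamma_p}.
\end{equation*}
Because $p<\bar p$ means $p\gamma_p=\tfrac{N(p-2)}{4}<2$, the right-hand side is a function of $t:=\|\Delta u\|_2\ge 0$ of the form $\tfrac12 t^2-C t^{p\gamma_p}$ with $p\gamma_p<2$ and $C>0$; this function is continuous, equals $0$ at $t=0$, and tends to $+\infty$ as $t\to\infty$, hence has a finite minimum on $[0,\infty)$. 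Consequently $\mathcal{J}_p$ is bounded below on $S_a$, giving $m_{p,a}>-\infty$.

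Second, fix any $u_0\in S_a$ (for example a radial Schwartz function normalized to have $L^2$-norm $a$) and consider the fiber $\tau\ast u_0\in S_a$. Since $\mu=0$,
\begin{equation*}
\Psi_{u_0,0}(\tau)=\mathcal{J}_p(\tau\ast u_0)=\frac{e^{4\tau}}{2}\|\Delta u_0\|_2^2-\frac{e^{\frac{N(p-2)}{2}\tau}}{p}\|u_0\|_p^p.
\end{equation*}
Setting $\alpha:=4-\tfrac{N(p-2)}{2}$, the assumption $p<\bar p$ gives $\alpha>0$, so I can factor
\begin{equation*}
\Psi_{u_0,0}(\tau)=e^{\frac{N(p-2)}{2}\tau}\left[\frac{e^{\alpha\tau}}{2}\|\Delta u_0\|_2^2-\frac{1}{p}\|u_0\|_p^p\right].
\end{equation*}
As $\tau\to-\infty$, the factor in brackets tends to $-\tfrac{1}{p}\|u_0\|_p^p<0$, while the prefactor stays strictly positive. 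Therefore there exists $\tau_0<0$ with $\mathcal{J}_p(\tau_0\ast u_0)<0$, and since $\tau_0\ast u_0\in S_a$ this yields $m_{p,a}<0$.

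Neither step is genuinely delicate: the only point that needs care is the sign of $\alpha$, which is exactly the $L^2$-subcritical hypothesis $p<\bar p$. Both the lower bound (coercivity through $p\gamma_p<2$) and the upper bound (dominance of the negative term under $\tau\to-\infty$) are two faces of this same condition, and no further compactness or Pohozaev analysis is required here.
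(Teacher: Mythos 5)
Your proof is correct and follows essentially the same strategy as the paper: Gagliardo--Nirenberg gives coercivity (hence a finite infimum) because $p\gamma_p<2$, and the fiber map $\tau\ast u$ with $\tau\to-\infty$ yields negative values because $\tfrac{N(p-2)}{2}<4$. The paper states the limit $\mathcal{J}_p(\tau\ast u)\to 0^-$ directly where you factor out the prefactor, but this is only a cosmetic difference.
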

\begin{proof}
By Lemma \ref{GN}, for any $u\in S_a$, we get
\begin{equation*}
  \mathcal{J}_{p}(u)\geq \frac{1}{2}\|\Delta u\|_2^2-\frac{B_{N,p}a^{p-\frac{N(p-2)}{4}}}{p}\|\Delta u\|_2^{\frac{N(p-2)}{4}}.
\end{equation*}
Since $2<p<\bar{p}$, we have $\frac{N(p-2)}{4}<2$, so $\mathcal{J}_{p}$ is coercive on $S_a$, which implies $m_{p,a}>-\infty$.

On the other hand, for any $u\in S_a$ and $\tau\rightarrow-\infty$, we have
\begin{equation*}
  \mathcal{J}_{p}(\tau*u)=\frac{e^{4\tau}}{2}\int_{\mathbb{R}^N}|\Delta u|^2dx-\frac{e^{\frac{N(p-2)}{2}\tau}}{p}\int_{\mathbb{R}^N}| u|^pdx\rightarrow 0^-.
\end{equation*}
Hence, $m_{p,a}\leq \mathcal{J}_{p}(\tau*u)<0$.
\end{proof}

\begin{lemma}\label{strict}
For $2<p<\bar{p}$, let $a_1,a_2>0$ be such that $a_1^2+a_2^2=a^2$, then $m_{p,a}<m_{p,a_1}+m_{p,a_2}$.
\end{lemma}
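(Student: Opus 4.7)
The plan is to establish this strict subadditivity through a homogeneity-type inequality: for every $t>1$,
\begin{equation*}
m_{p,ta} < t^{2}\, m_{p,a}.
\end{equation*}
Granting this, since $m_{p,\cdot}<0$ by Lemma \ref{nega1,}, the relation is equivalent to $|m_{p,ta}|/(ta)^{2} > |m_{p,a}|/a^{2}$, so that $a\mapsto |m_{p,a}|/a^{2}$ is strictly increasing on $(0,\infty)$. Assuming without loss of generality that $a_{1}\geq a_{2}$, we have $a_{1}<a$ (because $a_{2}>0$), and setting $t=a/a_{1}>1$ yields
\begin{equation*}
m_{p,a} < \frac{a^{2}}{a_{1}^{2}}\, m_{p,a_{1}} = m_{p,a_{1}} + \frac{a_{2}^{2}}{a_{1}^{2}}\, m_{p,a_{1}}.
\end{equation*}
The monotonicity applied to $a_{1}\geq a_{2}$ gives $|m_{p,a_{1}}|/a_{1}^{2}\geq |m_{p,a_{2}}|/a_{2}^{2}$, equivalently $(a_{2}^{2}/a_{1}^{2})\,m_{p,a_{1}}\leq m_{p,a_{2}}$; combining these inequalities produces the desired $m_{p,a}<m_{p,a_{1}}+m_{p,a_{2}}$.

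To prove the scaling inequality, I would work with the linear rescaling $v:=tu$ for $u\in S_{a}$ and $t>1$, which satisfies $v\in S_{ta}$ and
\begin{equation*}
\mathcal{J}_{p}(v) = \frac{t^{2}}{2}\|\Delta u\|_{2}^{2} - \frac{t^{p}}{p}\|u\|_{p}^{p} = t^{2}\mathcal{J}_{p}(u) - \frac{t^{p}-t^{2}}{p}\|u\|_{p}^{p}.
\end{equation*}
Since $p>2$ and $t>1$, the correction term is strictly positive. Take a minimizing sequence $\{u_{n}\}\subset S_{a}$ with $\mathcal{J}_{p}(u_{n})\to m_{p,a}$; I claim $c_{0}:=\liminf_{n}\|u_{n}\|_{p}^{p}>0$. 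Indeed, if a subsequence had $\|u_{n}\|_{p}\to 0$, then $\mathcal{J}_{p}(u_{n})\geq -\|u_{n}\|_{p}^{p}/p \to 0$, contradicting $m_{p,a}<0$ from Lemma \ref{nega1,}. Since $tu_{n}\in S_{ta}$,
\begin{equation*}
m_{p,ta} \leq \mathcal{J}_{p}(tu_{n}) = t^{2}\mathcal{J}_{p}(u_{n}) - \frac{t^{p}-t^{2}}{p}\|u_{n}\|_{p}^{p},
\end{equation*}
and passing to the $\liminf$ yields $m_{p,ta}\leq t^{2}m_{p,a}-\frac{t^{p}-t^{2}}{p}c_{0}<t^{2}m_{p,a}$, as required.

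The one delicate point is ensuring the \emph{strict} (as opposed to weak) inequality $m_{p,ta}<t^{2}m_{p,a}$; this is what ultimately forces strict subadditivity, and it hinges entirely on the uniform lower bound $c_{0}>0$ along minimizing sequences, which in turn rests on the already-established negativity $m_{p,a}<0$. Note that the argument does not require the existence of a minimizer for $m_{p,a}$, only the sign of the infimum, so the lemma is available before the compactness analysis needed to actually produce $\hat u$.
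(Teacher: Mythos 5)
Your proof is correct and follows essentially the same route as the paper's: the key is the strict subhomogeneity $m_{p,ta}<t^2 m_{p,a}$ for $t>1$, established via the linear scaling $u\mapsto tu$ together with the observation that $\liminf_n\|u_n\|_p^p>0$ along minimizing sequences (which rests on $m_{p,a}<0$). The paper then finishes by the same two-step application of this inequality (first to reach $a$ from $a_1$, then to compare $a_1$ and $a_2$), merely cast as a case distinction rather than phrased via monotonicity of $|m_{p,a}|/a^2$; the substance is identical.
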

\begin{proof}
Let $c>0$, $\theta>1$ and $\{u_n\} \subset S_c$ be a minimizing sequence for $m_{p,c}$, then by $p>2$, we have
\begin{equation*}
  m_{p,\theta c}\leq \mathcal{J}_{p}( \theta u_n)=\frac{\theta^2}{2}\int_{\mathbb{R}^N}|\Delta u_n|^2dx-\frac{\theta^p}{2}\int_{\mathbb{R}^N}| u_n|^pdx< \theta ^2\mathcal{J}_{p}( u_n).
\end{equation*}
As a consequence, $m_{p,\theta c}\leq \theta^2 m_{p,c}$. The equality holds if and only if $\|u_n\|_p\rightarrow0$ as $n\rightarrow\infty$. But this is impossible, since otherwise we would find
\begin{equation*}
  0>m_{p,c}=\lim\limits_{n\rightarrow\infty}\mathcal{J}_{p}( u_n)\geq \lim\limits_{n\rightarrow\infty}\frac{1}{2}\int_{\mathbb{R}^N}|\Delta u_n|^2dx\geq0,
\end{equation*}
which is an absurd. So we have $m_{p,\theta c}< \theta^2 m_{p,c}$.

Without loss of generality, we assume that $a_1\geq a_2$.

(i) If $a_1=a_2$, then $m_{p,a}=m_{p,\sqrt{2}a_1}< 2m_{p,a_1}=m_{p,a_1}+m_{p,a_2}$.

(ii) If $a_1>a_2$, then
\begin{equation*}
m_{p,a}=m_{p,\frac{a}{a_1}a_1}< (\frac{a}{a_1})^2m_{p,a_1}=m_{p,a_1}+\frac{a^2-a_1^2}{a_1^2}m_{p,a_1}=m_{p,a_1}+\frac{a_2^2}{a_1^2}m_{p,a_1}<m_{p,a_1}+m_{p,a_2}.
\end{equation*}
\end{proof}

From Lemmas \ref{nega1,} and \ref{strict}, we immediately have the following corollary.
\begin{corollary}\label{corr}
For $2<p<\bar{p}$, $m_{p,a}$ is strictly decreasing in $a\in (0,\infty)$.
\end{corollary}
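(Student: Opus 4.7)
The corollary is essentially a one-line consequence of the two lemmas immediately preceding it, so my plan is short and direct.

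\textbf{Plan.} Fix $0 < a' < a$; the goal is to show $m_{p,a} < m_{p,a'}$. I would set $b := \sqrt{a^2 - (a')^2}$, so that $b > 0$ and $(a')^2 + b^2 = a^2$. Applying Lemma \ref{strict} with the splitting $a_1 = a'$ and $a_2 = b$ gives
\begin{equation*}
m_{p,a} \;<\; m_{p,a'} + m_{p,b}.
\end{equation*}
Then I would invoke Lemma \ref{nega1,}, which asserts $m_{p,b} < 0$ for every positive mass (since $2 < p < \bar{p}$), to conclude
\begin{equation*}
m_{p,a} \;<\; m_{p,a'} + m_{p,b} \;<\; m_{p,a'}.
\end{equation*}
As $a' \in (0,a)$ was arbitrary, this shows that the map $a \mapsto m_{p,a}$ is strictly decreasing on $(0,\infty)$.

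\textbf{Expected obstacles.} There are essentially none beyond what is already packaged in the two preceding lemmas: the subadditivity-with-strict-inequality from Lemma \ref{strict} does all the real work, and the negativity from Lemma \ref{nega1,} supplies the sign that turns subadditivity into monotonicity. No additional compactness, scaling, or Pohozaev analysis is needed at this stage.
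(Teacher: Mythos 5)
Your argument is correct and is precisely the one the paper has in mind: the corollary is stated as an immediate consequence of Lemma \ref{nega1,} (which gives $m_{p,b}<0$) and Lemma \ref{strict} (strict subadditivity under the splitting $(a')^2+b^2=a^2$), and your chain $m_{p,a}<m_{p,a'}+m_{p,b}<m_{p,a'}$ is exactly that deduction. Nothing is missing.
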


\begin{lemma}\label{lxf11}
If $\bar{p}<p<\frac{2N}{(N-4)^+}$, then
for $u\in S_a$, the function $\Psi_{u,0}$ has a unique critical point $t_u\in \mathbb{R}$ and $t_u$ is a strict maximum point at positive level. Moreover, $t_u*u\in \mathcal{P}_{p,a}=\mathcal{P}_{p,a}^-$ and $P_{p,a}(u)\leq0$ if and only if $t_u\leq0$.
\end{lemma}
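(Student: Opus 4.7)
The plan is to study the fiber map
\[
\Psi_{u,0}(\tau) \;=\; \tfrac{1}{2}e^{4\tau}\|\Delta u\|_2^2 \;-\; \tfrac{1}{p}e^{N(p-2)\tau/2}\|u\|_p^p
\]
as an elementary real-valued function of $\tau$ and to read off every assertion of the lemma from its first two derivatives. First I would differentiate to obtain
\[
\Psi_{u,0}'(\tau) \;=\; 2 e^{4\tau}\|\Delta u\|_2^2 \;-\; 2\gamma_p\, e^{N(p-2)\tau/2}\|u\|_p^p,
\]
and rewrite $\Psi_{u,0}'(\tau)=0$ as $e^{(N(p-2)/2-4)\tau} = \|\Delta u\|_2^2/(\gamma_p \|u\|_p^p)$. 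Since the hypothesis $p>\bar{p}$ is precisely $N(p-2)/2-4>0$, the left side is a strictly increasing bijection $\mathbb R\to(0,\infty)$, so this equation has a unique solution $t_u\in\mathbb R$, and Lemma \ref{onP} gives $t_u*u\in\mathcal P_{p,a}$.

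For the maximum property I would compute
\[
\Psi_{u,0}''(\tau) \;=\; 8 e^{4\tau}\|\Delta u\|_2^2 \;-\; N(p-2)\gamma_p\, e^{N(p-2)\tau/2}\|u\|_p^p,
\]
and substitute the critical-point identity $\gamma_p e^{N(p-2)t_u/2}\|u\|_p^p = e^{4t_u}\|\Delta u\|_2^2$ to collapse this to
\[
\Psi_{u,0}''(t_u) \;=\; \bigl(8-N(p-2)\bigr)\,e^{4t_u}\|\Delta u\|_2^2 \;<\; 0,
\]
again by $p>\bar{p}$. Hence $t_u$ is a strict maximum and $t_u*u\in\mathcal P_{p,a}^-$. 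For the set equality, given any $v\in\mathcal P_{p,a}$, the relation $\Psi_{v,0}'(0)=2P_{p,a}(v)=0$ together with the just-proved uniqueness of the critical point forces $t_v=0$, hence $v=t_v*v\in\mathcal P_{p,a}^-$. Combined with the obvious inclusion $\mathcal P_{p,a}^-\subset\mathcal P_{p,a}$, this yields $\mathcal P_{p,a}=\mathcal P_{p,a}^-$.

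To see that the maximum level is strictly positive, I would factor
\[
\Psi_{u,0}(\tau) \;=\; e^{4\tau}\Bigl[\tfrac{1}{2}\|\Delta u\|_2^2 \;-\; \tfrac{1}{p}e^{(N(p-2)/2-4)\tau}\|u\|_p^p\Bigr],
\]
and observe that the bracket tends to $\tfrac{1}{2}\|\Delta u\|_2^2>0$ as $\tau\to-\infty$, so $\Psi_{u,0}(\tau_0)>0$ for some $\tau_0$, hence $\Psi_{u,0}(t_u)\geq\Psi_{u,0}(\tau_0)>0$. Finally, the identity $\Psi_{u,0}'(0)=2P_{p,a}(u)$ combined with the fact that $\Psi_{u,0}'$ is positive on $(-\infty,t_u)$ and negative on $(t_u,+\infty)$ (a direct consequence of the unique critical point being a strict maximum) gives the equivalence $P_{p,a}(u)\leq 0 \iff t_u\leq 0$.

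There is no substantial obstacle here: once one recognizes that the condition $p>\bar{p}$ is exactly the separation $N(p-2)/2>4$ between the two exponents appearing in $\Psi_{u,0}$, every assertion follows from elementary one-variable calculus. The only point that deserves care is the strict sign $\Psi_{u,0}''(t_u)<0$ (rather than merely $\leq 0$), which is what prevents $t_u*u$ from falling into the degenerate set $\mathcal P_{p,a}^0$ and therefore allows the identification $\mathcal P_{p,a}=\mathcal P_{p,a}^-$.
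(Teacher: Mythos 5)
Your proof is correct and takes essentially the same approach as the paper: reduce everything to one-variable calculus on the fiber map $\Psi_{u,0}$, using $p>\bar p \iff N(p-2)/2>4$ to get a single sign change of $\Psi_{u,0}'$. The only cosmetic difference is that the paper establishes $\mathcal{P}_{p,a}^0=\emptyset$ by a separate contradiction from the two defining identities, whereas you obtain it directly by substituting the critical-point relation into $\Psi_{u,0}''(t_u)$; both are equivalent elementary computations.
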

\begin{proof}
It's easy to check that $\lim\limits_{\tau\rightarrow-\infty}\Psi_{u,0}(\tau)=0^+$ and $\lim\limits_{\tau\rightarrow\infty}\Psi_{u,0}(\tau)=-\infty$. Moreover, for any $u\in S_a$, $\Psi_{u,0}'(\tau)=0$ is and only if
\begin{equation*}
  \gamma_p e^{\frac{N(p-2)-8}{2}\tau}\|u\|_p^p=\| \Delta u\|_2^2.
\end{equation*}
Obviously, the above equation has at most one solution. Thus $\Psi_{u,0}$ has a unique critical point $t_u\in \mathbb{R}$ and $t_u$ is a strict maximum point at positive level. 
Moreover, $t_u*u\in \mathcal{P}_{p,a}$. Since $\Psi_{u,0}'(t)\leq0$ if and only if $t\geq t_u$, we have $P_p(u)=\frac{1}{2}\Psi_{u,0}'(0)\leq0$ if and only if $t_u\leq0$.

We claim that $\mathcal{P}_{p,a}^0=\emptyset$. Assume by contradiction that if there exists $u\in \mathcal{P}_{p,a}^0$, then
\begin{equation*}
 \|\Delta u\|_2^2=\gamma_p\|u\|_p^{p},\quad 2\|\Delta u\|_2^2=p\gamma_p^2\|u\|_p^p,
\end{equation*}
thus
\begin{equation*}
  \gamma_p(p\gamma_p-2)\|u\|_p^p=0.
\end{equation*}
By $p>\bar{p}$, we have $p\gamma_p>2$, hence $\|u\|_p=0$, which is contradict to $u\in S_a$. So $\mathcal{P}_{p,a}^0=\emptyset$. For any $u\in \mathcal{P}_{p,a}$, by the uniqueness of $t_u$ such that $t_u*u\in \mathcal{P}_{p,a}$, we know $t_u=0$. By $\Psi''_{u,0}(0)\leq 0$ and $\mathcal{P}_{p,a}^0=\emptyset$, we deduce that $\Psi''_{u,0}(0)< 0$, which implies that $u\in \mathcal{P}_{p,a}^-$, so that $\mathcal{P}_{p,a}=\mathcal{P}_{p,a}^-$.
\end{proof}
\begin{lemma}\label{lxf12}
It results that $M_{p,a}>0$.
\end{lemma}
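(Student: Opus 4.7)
The plan is to derive a uniform positive lower bound on $\|\Delta u\|_2$ for every $u\in \mathcal{P}_{p,a}$, and then use the Poho\u{z}aev constraint to rewrite $\mathcal{J}_p$ in a form that is manifestly a positive multiple of $\|\Delta u\|_2^2$. The key arithmetic fact to exploit is that the condition $p>\bar{p}$ is exactly equivalent to $p\gamma_p>2$, which is what makes both steps work.

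First I would take any $u\in \mathcal{P}_{p,a}$ and use the defining relation $\|\Delta u\|_2^2 = \gamma_p\|u\|_p^p$ together with the Gagliardo-Nirenberg inequality from Lemma \ref{GN}:
\begin{equation*}
\|\Delta u\|_2^2 = \gamma_p\|u\|_p^p \leq \gamma_p B_{N,p}\, a^{p(1-\gamma_p)}\|\Delta u\|_2^{p\gamma_p}.
\end{equation*}
Since $p>\bar{p}$ gives $p\gamma_p-2>0$, rearranging yields
\begin{equation*}
\|\Delta u\|_2 \geq \left(\frac{1}{\gamma_p B_{N,p}\, a^{p(1-\gamma_p)}}\right)^{\frac{1}{p\gamma_p-2}} =: C(N,p,a) > 0.
\end{equation*}

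Next I would substitute the Poho\u{z}aev identity $\|u\|_p^p = \gamma_p^{-1}\|\Delta u\|_2^2$ directly into the functional to obtain
\begin{equation*}
\mathcal{J}_p(u) = \frac{1}{2}\|\Delta u\|_2^2 - \frac{1}{p}\|u\|_p^p = \left(\frac{1}{2}-\frac{1}{p\gamma_p}\right)\|\Delta u\|_2^2,
\end{equation*}
and the coefficient is strictly positive because $p\gamma_p>2$. Combining with the previous lower bound gives
\begin{equation*}
\mathcal{J}_p(u) \geq \left(\frac{1}{2}-\frac{1}{p\gamma_p}\right) C(N,p,a)^2 > 0
\end{equation*}
for every $u\in \mathcal{P}_{p,a}$, and taking the infimum yields $M_{p,a}>0$.

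There is really no obstacle here beyond keeping track of the sign of $p\gamma_p-2$; both the a priori bound and the positivity of the coefficient of $\|\Delta u\|_2^2$ rest on the single hypothesis $p>\bar{p}$. The proof is short and entirely computational, mirroring the standard argument for the Sobolev-subcritical Poho\u{z}aev level in the mass-supercritical regime.
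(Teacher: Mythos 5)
Your proof is correct and follows essentially the same route as the paper: the Poho\u{z}aev constraint $\|\Delta u\|_2^2=\gamma_p\|u\|_p^p$ combined with the Gagliardo--Nirenberg inequality gives a uniform lower bound on $\|\Delta u\|_2$ because $p\gamma_p>2$, and then $\mathcal{J}_p(u)=\bigl(\tfrac{1}{2}-\tfrac{1}{p\gamma_p}\bigr)\|\Delta u\|_2^2$ is strictly positive on $\mathcal{P}_{p,a}$. The only cosmetic difference is that you write the explicit constant $C(N,p,a)$ where the paper simply names a $\delta>0$; the substance is identical.
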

\begin{proof}
If $u\in \mathcal{P}_{p,a}$, then by Lemmas \ref{GN} and \ref{poho}, we have
\begin{equation*}
  \|\Delta u\|_2^2=\gamma_p\|u\|_p^{p}\leq \gamma_p B_{N,p}a^{p-\frac{N(p-2)}{4}}\|\Delta u\|_2^{\frac{N(p-2)}{4}}.
\end{equation*}
Since $p>\bar{p}$, $\frac{N(p-2)}{4}>2$, there exists $\delta>0$ such that $\inf\limits_{u\in \mathcal{P}_{p,a}}\|\Delta u\|_2\geq \delta>0$. Using Lemma \ref{poho} again, we obtain
\begin{equation*}
  \mathcal{J}_{p}(u)=\Big(\frac{1}{2}-\frac{1}{p\gamma_p}\Big)\int_{\mathbb{R}^N}|\Delta u|^2dx\geq \Big(\frac{1}{2}-\frac{1}{p\gamma_p}\Big)\delta^2>0.
\end{equation*}
\end{proof}

\textbf{Proof of Theorem \ref{th1}:} For $(i)$, let $\{u_n\} \subset S_a$ be a minimizing sequence for $m_{p,a}$. Using Lemma \ref{Four}, if $2<p<\bar{p}$ and $p\in 2\mathbb{N}$, then $\{u_n^\sharp \}\in S_a$ and
\begin{equation*}
  \mathcal{J}_{p}(u_n^\sharp) \leq\mathcal{J}_{p}(u_n),
\end{equation*}
which implies that $\{u_n^\sharp\} $ is also a radial minimizing sequence for $m_{p,a}$. By Lemma \ref{nega1,}, we know $\mathcal{J}_{p}$ is coercive on $S_a$, so $\{u_n^\sharp\}$ is bounded in $H^2(\mathbb{R}^N)$. Then by Lemma \ref{compact}, there  exists $u\in H^2_{rad}(\mathbb{R}^N)$ such that, up to a subsequence, $u_n^\sharp\rightharpoonup u$ in $H^2(\mathbb{R}^N)$, $u_n^\sharp\rightarrow u$ in $L^p(\mathbb{R}^N)$ for $2<p<\frac{2N}{(N-4)^+}$ and a.e. in $\mathbb{R}^N$.
Using Fatou lemma and Lemma \ref{nega1,}, we obtain
\begin{equation*}
  m_{p,\|u\|_2}=\min\Big\{0,m_{p,\|u\|_2}\Big\}\leq\mathcal{J}_{p}(u)\leq \liminf\limits_{n\rightarrow\infty}\mathcal{J}_{p}(u_n^\sharp)=m_{p,a}<0, \quad \|u\|_2\leq a,
\end{equation*}
so $u\neq0$. By Corollary \ref{corr}, we can find that $m_{p,\|u\|_2}\geq m_{p,a}$, thus $\mathcal{J}_{p}(u)=m_{p,a}$. Using Corollary \ref{corr} again, we obtain $\|u\|_2=a$. Therefore, $m_{p,a}$ has a radial minimizer. By \eqref{pohoim} and $p<\frac{2N}{(N-4)^+}$, we deduce that
\begin{equation*}
  \lambda a^2=\frac{p(N-4)-2N}{4p}\|u\|_p^p<0,
\end{equation*}
so $\lambda<0$.

For $(a)$ of $(ii)$, by Lemma \ref{GN}, we have
\begin{equation}\label{B}
  B_{N,\bar{p}}=\frac{\bar{p}}{2\bar{a}^{\frac{8}{N}}}.
\end{equation}
Using Lemma \ref{GN} again, we get
\begin{equation*}
  \mathcal{J}_{p}(u)\geq \frac{1}{2}\|\Delta u\|_2^2-\frac{B_{N,\bar{p}}a^{\bar{p}-\frac{N(\bar{p}-2)}{4}}}{\bar{p}}\|\Delta u\|_2^{\frac{N(\bar{p}-2)}{4}}=\frac{1}{2}\Big(1-(\frac{a}{\bar{a}})^{\frac{8}{N}}\Big)\|\Delta u\|_2^2
\end{equation*}
for any $u\in S_a$. Hence, it results that $m_{p,a}\geq0$. Noted that $\mathcal{J}_{p}(\tau*u)\rightarrow0$ as $\tau\rightarrow-\infty$ for any $u\in S_a$, so $m_{p,a}=0$.

Assume by contradiction that  \eqref{question} has a solution $u$, then
by Lemmas \ref{GN}, \ref{poho} and \eqref{B}, we have
\begin{equation*}
 \|\Delta u\|_2^2=\frac{2}{\bar{p}}\|u\|_{\bar{p}}^{\bar{p}}\leq (\frac{a}{\bar{a}})^{\frac{8}{N}}\|\Delta u\|_2^2<\|\Delta u\|_2^2,
\end{equation*}
which is a contradiction.

For $(b)$ of $(ii)$, $m_{p,a}=0$ follows from a similar argument as $(a)$ of $(ii)$. By Lemma \ref{GN}, we have
\begin{equation*}
  B_{N,\bar{p}}=\frac{\|Q_{N,\bar{p}}\|_{\bar{p}}^{\bar{p}}}{\|Q_{N,\bar{p}}\|_2^{\bar{p}-\frac{N(\bar{p}-2)}{4}}\|\Delta Q_{N,\bar{p}}\|_2^{\frac{N(\bar{p}-2)}{4}}}=\frac{\|Q_{N,\bar{p}}\|_{\bar{p}}^{\bar{p}}}{\bar{a}^{\frac{8}{N}}\|\Delta Q_{N,\bar{p}}\|_2^{2}},
\end{equation*}
this together with \eqref{B} yields that $\|Q_{N,\bar{p}}\|_{\bar{p}}^{\bar{p}}=\frac{\bar{p}}{2}\|\Delta Q_{N,\bar{p}}\|_2^2$, hence $\mathcal{J}_{p}(Q_{N,\bar{p}})=0$. Namely, $m_{p,a}$ has a minimizer $Q_{N,\bar{p}}$, and $\lambda<0$. 

For $(c)$ of $(ii)$, let $u_a=\frac{a}{\bar{a}}Q_{N,\bar{p}}$, then $u_a\in S_a$ and
\begin{equation*}
  \mathcal{J}_{p}(u_a)=\frac{(\frac{a}{\bar{a}})^2}{2}\int_{\mathbb{R}^N}|\Delta Q_{N,\bar{p}}|^2dx-\frac{(\frac{a}{\bar{a}})^p}{2}\int_{\mathbb{R}^N}| Q_{N,\bar{p}}|^pdx<(\frac{a}{\bar{a}})^2\mathcal{J}_{p}(Q_{N,\bar{p}})=0.
\end{equation*}
Thus, for any $\tau\rightarrow+\infty$,
\begin{equation*}
  \mathcal{J}_{p}(\tau*u_a)=e^{4\tau}\mathcal{J}_{p}(u_a)\rightarrow-\infty,
\end{equation*}
which implies that $m_{p,a}=-\infty$.

For $(iii)$, since $p>\bar{p}$, for any $u\in S_a$, it holds that $\mathcal{J}_{p}(u)\rightarrow-\infty$ as $\tau\rightarrow+\infty$. Thus, $m_{p,a}=-\infty$.
\qed

\subsection{The case $N\geq9$, $p=4^*$}
\begin{lemma}\label{critical11}
For any $u\in S_a$, the function $\Psi_{u,0}$ has a unique critical point $t_u\in \mathbb{R}$ and $t_u$ is a strict maximum point at positive level. Moreover, $t_u*u\in \mathcal{P}_{4^*,a}=\mathcal{P}_{4^*,a}^-$, and $e^{t_u}=\Big(\frac{\|\Delta u\|_2^2}{\|u\|_{4^*}^{4^*}}\Big)^{\frac{1}{2(4^*-2)}}$. 
\end{lemma}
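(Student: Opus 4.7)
The plan is to follow the same strategy as Lemma \ref{lxf11}, specialized to the critical exponent. The key simplification is that for $p = 4^*$ one has $\gamma_{4^*} = 1$ and $\tfrac{N(4^*-2)}{2} = 2 \cdot 4^*$, so the fiber map takes the two-monomial form
\begin{equation*}
\Psi_{u,0}(\tau) = \frac{e^{4\tau}}{2}\|\Delta u\|_2^2 - \frac{e^{2 \cdot 4^* \tau}}{4^*}\|u\|_{4^*}^{4^*}.
\end{equation*}
Since $u \in S_a$ with $a > 0$, Sobolev embedding forces $\|\Delta u\|_2$ and $\|u\|_{4^*}$ to be strictly positive, so both coefficients above are nonzero. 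Because $2 \cdot 4^* - 4 > 0$, the second term is negligible compared to the first as $\tau \to -\infty$, giving $\Psi_{u,0}(\tau) \to 0^+$, while it dominates as $\tau \to +\infty$, giving $\Psi_{u,0}(\tau) \to -\infty$.

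Next I would solve $\Psi'_{u,0}(\tau) = 2e^{4\tau}\|\Delta u\|_2^2 - 2e^{2 \cdot 4^* \tau}\|u\|_{4^*}^{4^*} = 0$. Rearranging yields $e^{(2 \cdot 4^* - 4)\tau} = \|\Delta u\|_2^2/\|u\|_{4^*}^{4^*}$, which admits a unique solution since the left-hand side is strictly monotone, and solving gives exactly the claimed formula
\begin{equation*}
e^{t_u} = \Bigl(\frac{\|\Delta u\|_2^2}{\|u\|_{4^*}^{4^*}}\Bigr)^{1/(2(4^*-2))}.
\end{equation*}
Combined with the endpoint asymptotics, $t_u$ is then a strict global maximum. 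Plugging the critical relation $e^{4t_u}\|\Delta u\|_2^2 = e^{2 \cdot 4^* t_u}\|u\|_{4^*}^{4^*}$ back into $\Psi_{u,0}(t_u)$ produces
\begin{equation*}
\Psi_{u,0}(t_u) = \Bigl(\frac{1}{2} - \frac{1}{4^*}\Bigr)e^{4 t_u}\|\Delta u\|_2^2 > 0,
\end{equation*}
so the maximum value is strictly positive.

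Finally, by Lemma \ref{onP}, $t_u$ being a critical point of $\Psi_{u,0}$ is equivalent to $t_u * u \in \mathcal{P}_{4^*, a}$. To conclude $\mathcal{P}_{4^*, a} = \mathcal{P}_{4^*, a}^-$ it suffices to verify $\mathcal{P}_{4^*, a}^0 = \emptyset$. For any $u \in \mathcal{P}_{4^*, a}$ the Pohozaev identity $P_{4^*}(u) = 0$ together with $\gamma_{4^*} = 1$ yields $\|\Delta u\|_2^2 = \|u\|_{4^*}^{4^*}$, and hence
\begin{equation*}
\Psi''_{u,0}(0) = 8\|\Delta u\|_2^2 - 4 \cdot 4^*\, \|u\|_{4^*}^{4^*} = 4(2 - 4^*)\|\Delta u\|_2^2 < 0,
\end{equation*}
placing $u$ in $\mathcal{P}_{4^*, a}^-$ and emptying $\mathcal{P}_{4^*, a}^0$. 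There is no substantive obstacle here: the identity $\gamma_{4^*} = 1$ collapses the problem to a concrete scalar optimization of the form $\alpha e^{4\tau} - \beta e^{2 \cdot 4^* \tau}$ with $\alpha,\beta > 0$, which automatically has a unique critical point at positive level because the two exponents are distinct and positive. The only care needed is to track constants carefully and match the stated expression for $e^{t_u}$.
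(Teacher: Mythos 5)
Your proof is correct and follows the same strategy the paper defers to (the paper simply states that the proof is analogous to Lemma \ref{lxf11} and omits it). You correctly specialize the fiber map using $\gamma_{4^*}=1$, solve $\Psi'_{u,0}(\tau)=0$ explicitly to obtain the claimed formula for $e^{t_u}$, verify the positivity of $\Psi_{u,0}(t_u)$, and show $\mathcal{P}_{4^*,a}^0=\emptyset$; the only minor stylistic difference is that you compute $\Psi''_{u,0}(0)<0$ directly for $u\in\mathcal{P}_{4^*,a}$ rather than first ruling out $\mathcal{P}_{4^*,a}^0$ by contradiction and then invoking uniqueness of $t_u$ as in Lemma \ref{lxf11}, but the two routes are equivalent and both correct.
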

\begin{proof}
The proof is similar to the proof of Lemma \ref{lxf11}, we omit it.
\end{proof}

\begin{lemma}\label{critical12}
$M_{4^*,a}=\inf\limits_{u\in S_a}\max \limits_{\tau \in \mathbb{R}}\mathcal{J}_{4^*}(\tau*u)$.
\end{lemma}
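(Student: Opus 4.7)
The plan is to establish the equality by proving both inequalities, using Lemma \ref{critical11} as the main tool. The key observation is that the fiber map $\Psi_{u,0}(\tau) = \mathcal{J}_{4^*}(\tau*u)$ has a unique critical point $t_u \in \mathbb{R}$ which is a strict maximum, and the projection $t_u * u$ lies in $\mathcal{P}_{4^*,a}$. So the inner maximization over $\tau$ is exactly what brings any $u \in S_a$ onto the Poho\u{z}aev manifold.

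For the inequality $\inf_{u\in S_a}\max_{\tau\in\mathbb{R}}\mathcal{J}_{4^*}(\tau*u) \geq M_{4^*,a}$, I would fix an arbitrary $u \in S_a$ and invoke Lemma \ref{critical11} to write $\max_{\tau \in \mathbb{R}}\mathcal{J}_{4^*}(\tau*u) = \mathcal{J}_{4^*}(t_u*u)$. Since $t_u * u \in \mathcal{P}_{4^*,a}$ and $\|t_u*u\|_2 = \|u\|_2 = a$, we have $\mathcal{J}_{4^*}(t_u*u) \geq \inf_{v \in \mathcal{P}_{4^*,a}}\mathcal{J}_{4^*}(v) = M_{4^*,a}$. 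Taking infimum over $u \in S_a$ gives the inequality.

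For the reverse inequality $\inf_{u\in S_a}\max_{\tau\in\mathbb{R}}\mathcal{J}_{4^*}(\tau*u) \leq M_{4^*,a}$, I would restrict the outer infimum to the subset $\mathcal{P}_{4^*,a} \subset S_a$. For any $v \in \mathcal{P}_{4^*,a}$, by Lemma \ref{onP} the value $\tau = 0$ is a critical point of $\Psi_{v,0}$, and by the uniqueness asserted in Lemma \ref{critical11} we must have $t_v = 0$. Therefore $\max_{\tau \in \mathbb{R}}\mathcal{J}_{4^*}(\tau*v) = \mathcal{J}_{4^*}(0*v) = \mathcal{J}_{4^*}(v)$, and taking infimum over $v \in \mathcal{P}_{4^*,a}$ produces $M_{4^*,a}$ on the right-hand side while the left-hand side can only decrease when enlarging to $S_a$.

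There is no real obstacle here — the lemma is essentially a restatement of Lemma \ref{critical11} combined with the definition of $M_{4^*,a}$. The only subtlety is ensuring that the uniqueness of the critical point $t_u$ is applied correctly when $u$ already lies on $\mathcal{P}_{4^*,a}$, to guarantee $t_u = 0$ in that case; this is automatic from Lemma \ref{onP} together with the uniqueness statement. The proof should be short, just a matched pair of inequalities.
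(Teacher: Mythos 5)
Your proof is correct and follows essentially the same two-sided argument as the paper: one direction projects an arbitrary $u\in S_a$ onto $\mathcal{P}_{4^*,a}$ via $t_u$ and compares with the infimum defining $M_{4^*,a}$, while the other uses that $t_u=0$ for $u\in\mathcal{P}_{4^*,a}$ so that $\max_\tau\mathcal{J}_{4^*}(\tau*u)=\mathcal{J}_{4^*}(u)$. No gaps.
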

\begin{proof}
For any $u\in \mathcal{P}_{4^*,a}$, by Lemma \ref{critical11}, we have $t_u=0$ and
\begin{equation*}
  \mathcal{J}_{4^*}(u)=\max \limits_{\tau \in \mathbb{R}}\mathcal{J}_{4^*}(\tau*u)\geq \inf\limits_{u\in S_a}\max \limits_{\tau \in \mathbb{R}}\mathcal{J}_{4^*}(\tau*u).
\end{equation*}
By the arbitrariness of $u\in \mathcal{P}_{4^*,a}$, we obtain $M_{4^*,a}\geq\inf\limits_{u\in S_a}\max \limits_{\tau \in \mathbb{R}}\mathcal{J}_{4^*}(\tau*u)$.
On the other hand, for any $u\in S_a$, we have $t_u*u\in \mathcal{P}_{4^*,a}$, and
\begin{equation*}
   M_{4^*,a}\leq \mathcal{J}_{4^*}(t_u*u)=\max \limits_{\tau \in \mathbb{R}}\mathcal{J}_{4^*}(\tau*u).
\end{equation*}
By the arbitrariness of $u\in S_a$, $ M_{4^*,a}\leq \inf\limits_{u\in S_a}\max \limits_{\tau \in \mathbb{R}}\mathcal{J}_{4^*}(\tau*u)$.
\end{proof}

\textbf{Proof of Theorem \ref{th7}:} For any $u\in S_a$, by Lemmas \ref{critical11} and \ref{critical12}, we have
\begin{align*}
  M_{4^*,a}&=\inf\limits_{u\in S_a}\max \limits_{\tau \in \mathbb{R}}\mathcal{J}_{4^*}(\tau*u)\\&
  =\inf\limits_{u\in S_a}\Big[\frac{1}{2}\Big(\frac{\|\Delta u\|_2^2}{\|u\|_{4^*}^{4^*}}\Big)^{\frac{2}{4^*-2}}\| \Delta u\|_2^2-\frac{1}{4^*}\Big(\frac{\|\Delta u\|_2^2}{\|u\|_{4^*}^{4^*}}\Big)^{\frac{\cdot4^*}{4^*-2}}\|u\|_{4^*}^{4^*}\Big]\\
  &=\inf\limits_{u\in S_a}\frac{2}{N}\Big(\frac{\|\Delta u\|_2^2}{\|u\|_{4^*}^{2}}\Big)^{\frac{4^*}{4^*-2}}=\inf\limits_{u\in H^2(\mathbb{R}^N)\backslash \{0\}}\frac{2}{N}\Big(\frac{\|\Delta u\|_2^2}{\|u\|_{4^*}^{2}}\Big)^{\frac{N}{4}},
\end{align*}
that is, $M_{4^*,a}$ is equivalent to the minimization of the Sobolev quotient in $H^2(\mathbb{R}^N)\backslash \{0\}$. By density of $H^2(\mathbb{R}^N)$ in $D^{2,2}(\mathbb{R}^N)$, we infer that $M_{4^*,a}=\frac{2}{N}S^{\frac{N}{4}}$, and $M_{4^*,a}$ is achieved if and only if the functions $U_{\varepsilon,0}$ defined in \eqref{deU} stay in $L^2(\mathbb{R}^N)$, namely, if and only if $N\geq9$.

By Lemma \ref{natural}, we know $\mathcal{P}_{4^*,a}$ is a natural constraint, thus by the Lagrange multiplies rule, there exists $\lambda\in \mathbb{R}$ such that
\begin{equation*}
  \Delta^2 U_{\varepsilon,0}=\lambda U_{\varepsilon,0}+|U_{\varepsilon,0}|^{4^*-2}U_{\varepsilon,0},\quad x\in \mathbb{R}^N.
\end{equation*}
By using $P_{4^*}(U_{\varepsilon,0})=0$, we obtain
$\lambda=0$.
\qed
\section{The case $\mu\neq0$}
In this section, we deal with the case $\mu\neq0$ and prove Theorems \ref{th2}-\ref{th8}.
\subsection{The case $N\geq2$, $2<q<p<\bar{p}$ and $\mu>0$}

With a similar argument of $(i)$ in Theorem \ref{th1}, we have the following lemmas.
\begin{lemma}\label{nega2}
For $2<q<p<\bar{p}$, $\mu>0$, $\mathcal{J}_{p,q}$ is coercive on $S_a$ and $-\infty<m_{p,q,a}<0$.
\end{lemma}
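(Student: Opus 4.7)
The plan is to mimic the single-nonlinearity template of Lemma \ref{nega1,}, replacing one Gagliardo--Nirenberg estimate by two. First, for coercivity and the lower bound $m_{p,q,a}>-\infty$, I would apply Lemma \ref{GN} separately to the $L^p$ and $L^q$ terms to get, for every $u\in S_a$,
\begin{equation*}
\mathcal{J}_{p,q}(u)\geq \frac{1}{2}\|\Delta u\|_2^2-\frac{B_{N,p}\,a^{p(1-\gamma_p)}}{p}\|\Delta u\|_2^{p\gamma_p}-\frac{\mu B_{N,q}\,a^{q(1-\gamma_q)}}{q}\|\Delta u\|_2^{q\gamma_q}.
\end{equation*}
Since $2<q<p<\bar p=2+\frac{8}{N}$, both exponents $p\gamma_p=\frac{N(p-2)}{4}$ and $q\gamma_q=\frac{N(q-2)}{4}$ are strictly less than $2$, so the quadratic leading term dominates as $\|\Delta u\|_2\to\infty$ and the right-hand side is bounded below and tends to $+\infty$. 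The mass constraint $\|u\|_2=a$ makes the prefactors uniform in $u$. Hence $\mathcal{J}_{p,q}$ is coercive on $S_a$ and $m_{p,q,a}>-\infty$.

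For the strict upper bound $m_{p,q,a}<0$, I would fix any $u\in S_a$ (so that $\tau*u\in S_a$ for all $\tau\in\mathbb{R}$) and examine
\begin{equation*}
\Psi_u(\tau)=\mathcal{J}_{p,q}(\tau*u)=\frac{e^{4\tau}}{2}\|\Delta u\|_2^2-\frac{e^{\frac{N(p-2)}{2}\tau}}{p}\|u\|_p^p-\frac{\mu\,e^{\frac{N(q-2)}{2}\tau}}{q}\|u\|_q^q
\end{equation*}
as $\tau\to-\infty$. Because $\mu>0$ and $\frac{N(q-2)}{2}<\frac{N(p-2)}{2}<4$, the slowest-decaying term is the negative $q$-contribution, so $\Psi_u(\tau)\to 0^-$. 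Choosing $\tau$ sufficiently negative gives $m_{p,q,a}\leq \mathcal{J}_{p,q}(\tau*u)<0$, completing the proof.

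No real obstacle is expected: the argument is the direct two-term extension of Lemma \ref{nega1,}, and the hypothesis $\mu>0$ is exactly what ensures that the dominant asymptotic term as $\tau\to-\infty$ has the sign needed to push $\Psi_u$ strictly below zero.
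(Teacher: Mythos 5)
Your proof is correct and matches the paper's intent: the paper itself merely states that Lemma \ref{nega2} follows ``with a similar argument'' to Lemma \ref{nega1,}, and your two-term Gagliardo--Nirenberg estimate for coercivity together with the $\tau\to-\infty$ analysis (where the $q$-term, having the smallest decay exponent, dominates and is negative since $\mu>0$) is exactly that extension.
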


\begin{lemma}\label{strict2}
For $2<q<p<\bar{p}$, $\mu>0$, let $a_1,a_2>0$ be such that $a_1^2+a_2^2=a^2$, then $m_{p,q,a}<m_{p,q,a_1}+m_{p,q,a_2}$.
\end{lemma}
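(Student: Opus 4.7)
The plan is to mirror the scaling argument of Lemma \ref{strict} but handle the extra nonlinearity carefully, exploiting that $\mu>0$ and that $2<q<p<\bar p$. The key preliminary step is to show that for any $c>0$ and $\theta>1$ one has the strict inequality $m_{p,q,\theta c}<\theta^2 m_{p,q,c}$; once this is available, the two cases $a_1=a_2$ and $a_1>a_2$ go through exactly as in Lemma \ref{strict}.

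To prove the preliminary inequality, I would pick a minimizing sequence $\{u_n\}\subset S_c$ for $m_{p,q,c}$ (which exists by Lemma \ref{nega2}) and use $\theta u_n\in S_{\theta c}$ as a test configuration. A direct computation gives
\begin{equation*}
\mathcal{J}_{p,q}(\theta u_n)=\theta^2\mathcal{J}_{p,q}(u_n)-(\theta^p-\theta^2)\frac{\|u_n\|_p^p}{p}-\mu(\theta^q-\theta^2)\frac{\|u_n\|_q^q}{q}.
\end{equation*}
Because $\theta>1$ and $p,q>2$, the two coefficients $\theta^p-\theta^2$ and $\theta^q-\theta^2$ are strictly positive; since $\mu>0$, both subtracted terms have the correct sign. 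The remaining issue is to rule out that the subtracted contribution vanishes in the limit. Here I would use that $m_{p,q,c}<0$ (again Lemma \ref{nega2}): for $n$ large enough $\mathcal{J}_{p,q}(u_n)\le\tfrac12 m_{p,q,c}<0$, which combined with $\tfrac12\|\Delta u_n\|_2^2\ge 0$ forces
\begin{equation*}
\frac{\|u_n\|_p^p}{p}+\mu\frac{\|u_n\|_q^q}{q}\ge -\tfrac12 m_{p,q,c}>0.
\end{equation*}
Passing to the limit then yields $m_{p,q,\theta c}\le \theta^2 m_{p,q,c}+\tfrac12(\theta^q-\theta^2)m_{p,q,c}<\theta^2 m_{p,q,c}$, as desired.

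Assuming without loss of generality that $a_1\ge a_2$, the conclusion now follows in two lines. If $a_1=a_2$, then $a=\sqrt{2}\,a_1$ and the strict scaling inequality with $\theta=\sqrt{2}$ gives $m_{p,q,a}<2m_{p,q,a_1}=m_{p,q,a_1}+m_{p,q,a_2}$. If $a_1>a_2$, applying the scaling inequality first with $\theta=a/a_1>1$ based on $c=a_1$, and then with $\theta=a_1/a_2>1$ based on $c=a_2$ (which yields $(a_2/a_1)^2 m_{p,q,a_1}<m_{p,q,a_2}$), I obtain
\begin{equation*}
m_{p,q,a}<\Big(\tfrac{a}{a_1}\Big)^2 m_{p,q,a_1}=m_{p,q,a_1}+\tfrac{a_2^2}{a_1^2}m_{p,q,a_1}<m_{p,q,a_1}+m_{p,q,a_2}.
\end{equation*}

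The only delicate point is the strictness of $m_{p,q,\theta c}<\theta^2 m_{p,q,c}$: one has to guarantee that $\|u_n\|_p^p$ and $\|u_n\|_q^q$ do not both decay to zero along the minimizing sequence. The negativity of $m_{p,q,c}$ together with the positive sign of the $\mu$-term (hence the hypothesis $\mu>0$) is precisely what makes this step work; for $\mu<0$ the corresponding coefficient would switch sign and this strategy would have to be modified.
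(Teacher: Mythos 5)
Your proof is correct and follows essentially the same strategy the paper uses for Lemma~\ref{strict} (which the paper says Lemma~\ref{strict2} follows by a "similar argument"): test $\theta u_n$ along a minimizing sequence, use $m_{p,q,c}<0$ to keep the subtracted nonlinear terms bounded away from zero, conclude $m_{p,q,\theta c}<\theta^2 m_{p,q,c}$, and finish with the two cases $a_1=a_2$ and $a_1>a_2$. The only cosmetic difference is that you obtain strictness via a direct quantitative lower bound $\frac{\|u_n\|_p^p}{p}+\mu\frac{\|u_n\|_q^q}{q}\geq -\frac{1}{2}m_{p,q,c}$, whereas the paper phrases the same point as a contradiction ("if equality held, $\|u_n\|_p\to 0$ forces $m_{p,c}\geq 0$").
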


\begin{corollary}\label{corr2}
For $2<q<p<\bar{p}$, $\mu>0$, $m_{p,q,a}$ is strictly decreasing in $a>0$.
\end{corollary}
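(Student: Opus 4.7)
The plan is to reduce the strict monotonicity directly to the preceding two lemmas, in exactly the same packaging used for Corollary \ref{corr} in the pure-power case $\mu = 0$. Given any $0 < a_1 < a$, I would set $a_2 := \sqrt{a^2 - a_1^2} > 0$, so that $a_1^2 + a_2^2 = a^2$ with both $a_1, a_2 > 0$. Lemma \ref{strict2} then yields $m_{p,q,a} < m_{p,q,a_1} + m_{p,q,a_2}$, while Lemma \ref{nega2} gives $m_{p,q,a_2} < 0$; combining these, $m_{p,q,a} < m_{p,q,a_1}$. Since $a_1 \in (0,a)$ was arbitrary, strict monotonicity on $(0,\infty)$ follows.

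Because the two key ingredients — the strict subadditivity and the strict negativity of $m_{p,q,\cdot}$ — are already in hand, there is no genuine obstacle at the level of the corollary itself. All the real work is absorbed into Lemmas \ref{nega2} and \ref{strict2}: the former uses the coercivity of $\mathcal{J}_{p,q}$ on $S_a$ coming from the Gagliardo--Nirenberg bound in Lemma \ref{GN} (valid since $p,q < \bar{p}$ force the $L^p$- and $L^q$-terms to have $\|\Delta\cdot\|_2$-exponents strictly less than $2$), together with the test $\mathcal{J}_{p,q}(\tau*u) \to 0^-$ as $\tau \to -\infty$ to show $-\infty < m_{p,q,a} < 0$; the latter follows from the one-parameter scaling $u \mapsto \theta u$ for $\theta > 1$, using $p,q > 2$ to get the strict inequality $m_{p,q,\theta c} < \theta^2 m_{p,q,c}$, exactly as in Lemma \ref{strict}.
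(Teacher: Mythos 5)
Your argument is exactly the one the paper intends: just as Corollary \ref{corr} follows from Lemmas \ref{nega1,} and \ref{strict}, Corollary \ref{corr2} follows by the same decomposition $a_2 = \sqrt{a^2 - a_1^2}$, combining the strict subadditivity of Lemma \ref{strict2} with the strict negativity $m_{p,q,a_2}<0$ from Lemma \ref{nega2}. The paper leaves this step implicit, and your write-up fills it in correctly.
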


\textbf{Proof of Theorem \ref{th2}:} By Lemma \ref{nega2} and Corollary \ref{corr2}, we can adopt a similar way as the proof of $(i)$ in Theorem \ref{th1} to complete the proof.
Since $q<p<\frac{2N}{(N-4)^+}$, $\mu>0$ and $u\neq0$, using \eqref{pohoim}, we have
\begin{equation*}
  \lambda a^2=\frac{\mu[q(N-4)-2N]}{4q}\|u\|_q^q+\frac{p(N-4)-2N}{4p}\|u\|_p^p,
\end{equation*}
so $\lambda<0$.
\qed

\subsection{The case $N\geq2$, $2<q<p=\bar{p}$ and $\mu\neq0$}

\begin{lemma}\label{nega3}
For $2<q<p=\bar{p}$ and $0<a<\bar{a}$, if $\mu>0$, then $-\infty<m_{p,q,a}<0$; If $\mu<0$, then $m_{p,q,a}=0$.
\end{lemma}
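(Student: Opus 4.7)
The plan is to split the proof according to the sign of $\mu$ and to exploit the fact that $p=\bar p$ is exactly the $L^2$-critical exponent, so that $\bar p\gamma_{\bar p}=2$ and Lemma~\ref{GN} converts the $L^{\bar p}$-term into a multiple of $\|\Delta u\|_2^2$ whose coefficient is controlled by the ratio $(a/\bar a)^{8/N}<1$. The key input is the identity $B_{N,\bar p}=\bar p/(2\bar a^{8/N})$ established in \eqref{B}, which gives $\frac{1}{\bar p}\|u\|_{\bar p}^{\bar p}\le \tfrac{1}{2}(a/\bar a)^{8/N}\|\Delta u\|_2^2$ for every $u\in S_a$.

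\emph{Case $\mu>0$.} For the lower bound, using the above inequality together with another application of Lemma~\ref{GN} to $\|u\|_q^q$, I obtain $\mathcal{J}_{p,q}(u)\ge \tfrac{1}{2}\bigl[1-(a/\bar a)^{8/N}\bigr]\|\Delta u\|_2^2 - \tfrac{\mu}{q}B_{N,q}a^{q(1-\gamma_q)}\|\Delta u\|_2^{q\gamma_q}$. Since $a<\bar a$ the bracket is strictly positive, and since $q<\bar p$ implies $q\gamma_q<2$, the right-hand side is bounded below as a function of $\|\Delta u\|_2$, hence $m_{p,q,a}>-\infty$ and in fact $\mathcal{J}_{p,q}$ is coercive on $S_a$. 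For the strict upper bound, I use the scaling $\tau\ast u$: for any fixed $u\in S_a$, $\mathcal{J}_{p,q}(\tau\ast u)=\tfrac{e^{4\tau}}{2}\|\Delta u\|_2^2-\tfrac{e^{4\tau}}{\bar p}\|u\|_{\bar p}^{\bar p}-\tfrac{\mu e^{N(q-2)\tau/2}}{q}\|u\|_q^q$. Because $q<\bar p$ gives $N(q-2)/2<4$, as $\tau\to-\infty$ the negative $q$-term dominates the $O(e^{4\tau})$ contribution from the quadratic and $L^{\bar p}$ terms, so $\mathcal{J}_{p,q}(\tau\ast u)\to 0^-$ and consequently $m_{p,q,a}<0$.

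\emph{Case $\mu<0$.} Now $-\tfrac{\mu}{q}\|u\|_q^q\ge 0$, so the same Gagliardo–Nirenberg control on the $L^{\bar p}$ piece immediately yields $\mathcal{J}_{p,q}(u)\ge \tfrac{1}{2}\bigl[1-(a/\bar a)^{8/N}\bigr]\|\Delta u\|_2^2\ge 0$, giving $m_{p,q,a}\ge 0$. For the matching bound, I again invoke the scaling: for any $u\in S_a$, all three terms in $\mathcal{J}_{p,q}(\tau\ast u)$ tend to $0$ as $\tau\to-\infty$ (both scaling exponents $4$ and $N(q-2)/2$ are positive), so $\limsup_{\tau\to-\infty}\mathcal{J}_{p,q}(\tau\ast u)=0$, and therefore $m_{p,q,a}=0$.

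The argument is mostly a careful bookkeeping of scaling exponents rather than a hard estimate; the only subtlety worth flagging is that in the $\mu>0$ case one must verify that $\mathcal{J}_{p,q}(\tau\ast u)$ approaches $0$ from below, which is precisely what the strict inequality $N(q-2)/2<4$ guarantees. The threshold $a<\bar a$ enters the proof exclusively through the coercivity coefficient $1-(a/\bar a)^{8/N}$, explaining why both conclusions fail in general at $a=\bar a$.
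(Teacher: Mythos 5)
Your proof is correct and follows essentially the same route as the paper: Gagliardo--Nirenberg combined with the identity $B_{N,\bar p}=\bar p/(2\bar a^{8/N})$ to absorb the $L^{\bar p}$ term into the quadratic term, coercivity from $q\gamma_q<2$ when $\mu>0$, nonnegativity when $\mu<0$, and the scaling $\tau*u$ with $\tau\to-\infty$ to pin down the value of the infimum. (Incidentally, your version correctly retains the factor $\tfrac12$ in the coercivity coefficient $\tfrac12[1-(a/\bar a)^{8/N}]$, which the paper's display drops as a typographical slip.)
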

\begin{proof}
By Lemma \ref{GN} and \eqref{B}, for any $u\in S_a$, we get
\begin{equation*}
  \mathcal{J}_{p,q}(u)\geq \Big(1-(\frac{a}{\bar{a}})^{\frac{8}{N}}\Big)\|\Delta u\|_2^2-\frac{\mu B_{N,q}a^{q-\frac{N(q-2)}{4}}}{q}\|\Delta u\|_2^{\frac{N(q-2)}{4}}.
\end{equation*}
Since $q<\bar{p}$, $\frac{N(q-2)}{4}<2$, if $\mu>0$, we know $\mathcal{J}_{p,q}$ is coercive on $S_a$, which implies that $m_{p,q,a}>-\infty$. If $\mu<0$, then it holds that
$m_{p,q,a}\geq0$.

On the other hand, for any $u\in S_a$ and $\tau\rightarrow-\infty$, if $\mu>0$, we have
\begin{equation*}
  \mathcal{J}_{p,q}(\tau*u)=\frac{e^{4\tau}}{2}\int_{\mathbb{R}^N}|\Delta u|^2dx-\frac{e^{4\tau}}{p}\int_{\mathbb{R}^N}| u|^pdx-\frac{\mu e^{\frac{N(q-2)}{2}\tau}}{q}\int_{\mathbb{R}^N}| u|^qdx\rightarrow 0^-.
\end{equation*}
Hence, $m_{p,q,a}\leq \mathcal{J}_{p,q}(\tau*u)<0$. While, if $\mu<0$, $\mathcal{J}_{p,q}(\tau*u)\rightarrow 0^+$, thus $m_{p,q,a}=0$.
\end{proof}
Follow a similar argument of $(i)$ in Theorem \ref{th1}, we have
\begin{lemma}\label{strict3}
For $2<q<p=\bar{p}$, $\mu>0$, let $a_1,a_2>0$ be such that $a_1^2+a_2^2=a^2<\bar{a}^2$, then $m_{p,q,a}<m_{p,q,a_1}+m_{p,q,a_2}$.
\end{lemma}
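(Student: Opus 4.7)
The plan is to mirror the scaling argument used in Lemmas \ref{strict} and \ref{strict2}, but to track carefully that the relevant scaled mass stays below the threshold $\bar{a}$ at which the sign property $m_{p,q,\cdot}<0$ breaks down. Concretely, for $c\in(0,\bar{a})$ the first step is to establish the pointwise scaling inequality
\begin{equation*}
m_{p,q,\theta c}<\theta^2\, m_{p,q,c}\quad\text{for every }\theta>1\text{ with }\theta c<\bar{a}.
\end{equation*}

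To prove this, I take a minimizing sequence $\{u_n\}\subset S_c$ for $m_{p,q,c}$ and evaluate $\mathcal{J}_{p,q}$ at the rescaled function $\theta u_n\in S_{\theta c}$. Since $p=\bar{p}>2$ and $q>2$, for $\theta>1$ one has $\theta^{p}>\theta^2$ and $\theta^q>\theta^2$, so
\begin{equation*}
\mathcal{J}_{p,q}(\theta u_n)=\frac{\theta^2}{2}\|\Delta u_n\|_2^2-\frac{\theta^{p}}{p}\|u_n\|_{p}^{p}-\frac{\mu\theta^q}{q}\|u_n\|_q^q\leq \theta^2\mathcal{J}_{p,q}(u_n),
\end{equation*}
with strict inequality unless $\|u_n\|_{p}+\|u_n\|_q\to 0$. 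The latter cannot occur: by Lemma \ref{nega3}, $m_{p,q,c}<0$ since $\mu>0$ and $c<\bar{a}$, whereas if $\|u_n\|_{p},\|u_n\|_q\to 0$ then $\mathcal{J}_{p,q}(u_n)\geq \tfrac12\|\Delta u_n\|_2^2+o(1)\geq o(1)$, forcing $m_{p,q,c}\geq 0$, a contradiction. Passing to the limit then yields the claimed strict scaling inequality.

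With this in hand, the conclusion follows by the same bookkeeping as in Lemma \ref{strict2}. Assume without loss of generality $a_1\geq a_2$. If $a_1=a_2$ then $a=\sqrt{2}\,a_1$ and $\theta=\sqrt{2}$ is admissible since $\sqrt{2}\,a_1=a<\bar{a}$, so $m_{p,q,a}<2m_{p,q,a_1}=m_{p,q,a_1}+m_{p,q,a_2}$. If $a_1>a_2$, applying the scaling inequality with $c=a_1$ and $\theta=a/a_1>1$ (admissible because $a<\bar{a}$) gives
\begin{equation*}
m_{p,q,a}<\left(\frac{a}{a_1}\right)^{\!2}\! m_{p,q,a_1}=m_{p,q,a_1}+\frac{a_2^2}{a_1^2}\,m_{p,q,a_1},
\end{equation*}
and a second application with $c=a_2$, $\theta=a_1/a_2$ (admissible since $a_1<a<\bar{a}$) yields $m_{p,q,a_1}<(a_1/a_2)^2\, m_{p,q,a_2}$, i.e.\ $\tfrac{a_2^2}{a_1^2}m_{p,q,a_1}<m_{p,q,a_2}$. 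Chaining the two inequalities gives the desired strict subadditivity.

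The only genuine subtlety is verifying that every invocation of the scaling inequality respects the constraint $\theta c<\bar{a}$; this is where the hypothesis $a^2<\bar{a}^2$ is used decisively, since it forces $a_1,a_2<\bar{a}$ and $a<\bar{a}$, making all the required dilations legal. Once that bookkeeping is in place the argument is algebraically identical to the subcritical case, so I expect no further obstacle.
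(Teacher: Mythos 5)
Your proposal is correct and follows the same scaling argument the paper invokes (the paper itself gives no explicit proof of Lemma \ref{strict3}, merely remarking that it ``follows a similar argument'' to the earlier subcritical case, i.e.\ Lemma \ref{strict}). You have correctly identified the one place where care is needed — each application of the scaling inequality $m_{p,q,\theta c}<\theta^2 m_{p,q,c}$ requires $c<\bar a$ so that Lemma \ref{nega3} provides the negativity $m_{p,q,c}<0$ needed to rule out $\|u_n\|_p,\|u_n\|_q\to 0$ — and you verified that in every invocation ($c=a_1$ with $\theta=a/a_1$, and $c=a_2$ with $\theta=a_1/a_2$) the base mass satisfies this. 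One small stylistic remark: the per-$n$ inequality $\mathcal{J}_{p,q}(\theta u_n)\leq\theta^2\mathcal{J}_{p,q}(u_n)$ is in fact strict for every $n$ (since $u_n\in S_c$ is nonzero); what must be ruled out is that the gap $\frac{\theta^p-\theta^2}{p}\|u_n\|_p^p+\frac{\mu(\theta^q-\theta^2)}{q}\|u_n\|_q^q$ vanishes \emph{in the limit}, which is exactly your contradiction with $m_{p,q,c}<0$. Your phrasing compresses this but the argument is sound.
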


\begin{corollary}\label{corr3}
For $2<q<p=\bar{p}$, $\mu>0$, $m_{p,q,a}$ is strictly decreasing in $0<a<\bar{a}$.
\end{corollary}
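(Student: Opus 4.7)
The plan is to deduce the strict monotonicity directly from the strict subadditivity in Lemma \ref{strict3} combined with the strict negativity of $m_{p,q,a}$ in Lemma \ref{nega3}. This is the same template used for Corollary \ref{corr} and Corollary \ref{corr2}, so the argument should be short and mostly bookkeeping.

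Fix any $0 < a < b < \bar{a}$. The first step is to choose an auxiliary mass $c > 0$ satisfying $a^{2}+c^{2}=b^{2}$. Since $b < \bar{a}$, we automatically get $c < b < \bar{a}$, which places both $a$ and $c$ inside the regime $(0,\bar{a})$ where Lemma \ref{strict3} and Lemma \ref{nega3} apply.

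Next, I would apply Lemma \ref{strict3} with the decomposition $a^{2}+c^{2}=b^{2}$ to obtain the strict subadditivity inequality
\begin{equation*}
  m_{p,q,b} \;<\; m_{p,q,a} + m_{p,q,c}.
\end{equation*}
Then invoke Lemma \ref{nega3}, which for $\mu>0$ and $0 < c < \bar{a}$ guarantees $m_{p,q,c} < 0$. Combining these two facts gives
\begin{equation*}
  m_{p,q,b} \;<\; m_{p,q,a} + m_{p,q,c} \;<\; m_{p,q,a},
\end{equation*}
which is exactly the required strict monotonicity on $(0,\bar{a})$.

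There is no real obstacle here; the essential work was already absorbed into Lemma \ref{strict3} (the scaling argument $m_{p,q,\theta c}<\theta^{2}m_{p,q,c}$ in the subadditivity proof) and Lemma \ref{nega3} (negativity of the infimum in the subcritical mass regime). The only thing worth double-checking is that the auxiliary mass $c$ stays below $\bar{a}$, which is immediate from $c^{2} = b^{2} - a^{2} < b^{2} < \bar{a}^{2}$, and that the hypothesis $a^{2}+c^{2} < \bar{a}^{2}$ of Lemma \ref{strict3} is met, which is exactly $b < \bar{a}$.
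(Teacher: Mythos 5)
Your argument is correct and is exactly the one the paper intends: combine the strict subadditivity from Lemma \ref{strict3} (applied to the decomposition $a^2+c^2=b^2<\bar{a}^2$) with the negativity $m_{p,q,c}<0$ from Lemma \ref{nega3}, mirroring how Corollary \ref{corr} is deduced from Lemmas \ref{nega1,} and \ref{strict}. Nothing to change.
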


\textbf{Proof of Theorem \ref{th3}:} For $(a)$ of $(i)$, the proof is similar to the proof of $(i)$ in Theorem \ref{th1}, so we omit it here. 

For $(b)$ of $(i)$, from Lemma \ref{nega3}, we have $m_{p,q,a}=0$. Assume by contradiction that \eqref{question} has a solution $u$, then
by Lemma \ref{poho}, we have
\begin{equation*}
 \|\Delta u\|_2^2=\frac{2}{\bar{p}}\|u\|_{\bar{p}}^{\bar{p}}+\mu \gamma_q\|u\|_q^q.
\end{equation*}
Recall that $m_{p,a}=0$ from $(a)$ of $(ii)$ in Theorem \ref{th1}, hence
\begin{equation*}
  0>\mu \gamma_q\|u\|_q^q=\|\Delta u\|_2^2-\frac{2}{\bar{p}}\|u\|_{\bar{p}}^{\bar{p}}=2\mathcal{J}_{p}(u)\geq 2m_{p,a}=0
\end{equation*}
which is a contradiction.

For $(a)$ of $(ii)$, according to $(b)$ of $(ii)$ in Theorem \ref{th1}, there exists $Q_{N,\bar{p}}\in S_{\bar{a}}$ such that $\mathcal{J}_{p}(Q_{N,\bar{p}})=m_{p,a}=0$. Hence, for any $\tau\in \mathbb{R}$,
\begin{equation*}
  \mathcal{J}_{p,q}(\tau*Q_{N,\bar{p}})=e^{4\tau}\mathcal{J}_{p}(Q_{N,\bar{p}})-\frac{\mu e^{\frac{N(q-2)}{2}\tau}}{q}\int_{\mathbb{R}^N}| Q_{N,\bar{p}}|^{q}dx=-\frac{\mu e^{\frac{N(q-2)}{2}\tau}}{q}\int_{\mathbb{R}^N}| Q_{N,\bar{p}}|^{q}dx.
\end{equation*}
Since $\mu>0$, we have $\mathcal{J}_{p,q}(\tau*Q_{N,\bar{p}})\rightarrow-\infty$ as $\tau\rightarrow\infty$, so $m_{p,q,a}=-\infty$.

For $(b)$ of $(ii)$, from Lemma \ref{nega3}, it's easy to see
$m_{p,q,a}=0$. 
The rest of the proof is similar to the one of $(b)$ of $(i)$, we omit it.

For $(iii)$, 
let $u_a=\frac{a}{\bar{a}}Q_{N,\bar{p}}$, from the proof of $(c)$ of $(ii)$ in Theorem \ref{th1}, we have $u_a\in S_a$ and $\mathcal{J}_{p}(u_a)<0$.
Thus, for any $\tau\rightarrow+\infty$, by $q<\bar{p}$, $\frac{N(q-2)}{2}<4$, we obtain
\begin{equation*}
  \mathcal{J}_{p,q}(\tau*u_a)=e^{4\tau}\mathcal{J}_{p}(u_a)-\frac{\mu e^{\frac{N(q-2)}{2}\tau}}{q}\int_{\mathbb{R}^N}| u_a|^{q}dx\rightarrow-\infty,
\end{equation*}
which implies that $m_{p,q,a}=-\infty$.
\qed

\subsection{The case $N\geq2$, $2<q<\bar{p}<p<\frac{2N}{(N-4)^+}$ and  $\mu>0$}\label{mountain}

By Lemma \ref{GN}, for any $u\in S_a$, we get
\begin{equation*}
  \mathcal{J}_{p,q}(u)\geq \frac{1}{2}\|\Delta u\|_2^2-\frac{B_{N,p}a^{p-\frac{N(p-2)}{4}}}{p}\|\Delta u\|_2^{\frac{N(p-2)}{4}}-\frac{B_{N,q}a^{q-\frac{N(q-2)}{4}}}{q}\|\Delta u\|_2^{\frac{N(q-2)}{4}}.
\end{equation*}
To understand the geometry of the functional $ \mathcal{J}_{p,q}|_{S_a}$, we consider the function $h:\mathbb{R}^+\rightarrow \mathbb{R}$
\begin{equation*}
  h(t):=\frac{1}{2}t-\frac{B_{N,p}a^{p-\frac{N(p-2)}{4}}}{p}t^{\frac{N(p-2)}{8}}-\frac{\mu B_{N,q}a^{q-\frac{N(q-2)}{4}}}{q}t^{\frac{N(q-2)}{8}}.
\end{equation*}
Since $\mu>0$ and $\frac{N(q-2)}{8}<1<\frac{N(p-2)}{8}$, we have $\lim\limits_{t\rightarrow0^+}h(t)=0^-$ and $\lim\limits_{t\rightarrow\infty}h(t)=-\infty$.

\begin{lemma}\label{sx1}
Let $\mu, a>0$ satisfies \eqref{con1}, then the function $h$ has a local strict minimum at negative level and a global strict maximum at positive level. Moreover, there exists $0<R_0<R_1$, both depending on $\mu$ and $a$, such that $h(R_0)=h(R_1)=0$ and $h(t)>0$ if and only if $t\in (R_0,R_1)$.
\end{lemma}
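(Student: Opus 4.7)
The plan is to treat this as a one-variable calculus problem, reading off the shape of $h$ from its asymptotics and critical-point structure, and then identifying the threshold $\max h > 0$ with the quantitative hypothesis \eqref{con1}. Set $\alpha := N(p-2)/8$ and $\beta := N(q-2)/8$, so that $q < \bar{p} < p$ gives $\beta < 1 < \alpha$, and write $h(t) = \tfrac12 t - A t^{\alpha} - B t^{\beta}$ with $A = B_{N,p}a^{p(1-\gamma_p)}/p > 0$ and $B = \mu B_{N,q}a^{q(1-\gamma_q)}/q > 0$. From $\beta < 1$ one reads off $h(t) \to 0^-$ as $t \to 0^+$, and from $\alpha > 1$ one has $h(t) \to -\infty$ as $t \to \infty$.

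Next I would analyse $h'$. A direct computation gives
\[ h''(t) = B\beta(1-\beta)t^{\beta-2} - A\alpha(\alpha-1)t^{\alpha-2}, \]
which, after dividing by the positive factor $t^{\beta-2}$, becomes a strictly decreasing function of $t$; hence $h''$ has a unique positive zero $t_c$, is positive on $(0,t_c)$ and negative on $(t_c,\infty)$. Consequently $h'$ is strictly increasing then strictly decreasing, and since $h'(0^+) = -\infty$ and $h'(\infty) = -\infty$ (the latter using $\alpha>1$), $h'$ has at most two zeros; it has exactly two, yielding one local minimum and one global maximum for $h$, precisely when $\max_{t>0} h(t) > 0$.

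To pin down this threshold, introduce $\varphi(t) := h(t)/t = \tfrac12 - A t^{\alpha-1} - B t^{\beta-1}$, so that $\max h > 0$ iff $\max \varphi > 0$. Since $\varphi \to -\infty$ at both endpoints, $\varphi$ attains its maximum at a unique critical point
\[ t^{\ast} = \left(\frac{B(1-\beta)}{A(\alpha-1)}\right)^{1/(\alpha-\beta)}, \]
and the relation $A(t^{\ast})^{\alpha-1} = \tfrac{B(1-\beta)}{\alpha-1}(t^{\ast})^{\beta-1}$ gives $\varphi(t^{\ast}) = \tfrac12 - B\,\tfrac{\alpha-\beta}{\alpha-1}(t^{\ast})^{\beta-1}$. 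Substituting back the expressions for $A$, $B$ and using the identities $\alpha-1 = N(p-\bar{p})/8$, $1-\beta = N(\bar{p}-q)/8$, $\alpha-\beta = N(p-q)/8$, the inequality $\varphi(t^{\ast}) > 0$ reduces, after tracking the $a$-exponents via $\gamma(p,q)$ defined in \eqref{gamma}, to precisely the condition \eqref{con1}.

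Finally, under \eqref{con1} we have $h'(t_c) > 0$, so $h'$ has two positive zeros $s_1 \in (0,t_c)$ and $s_2 \in (t_c,\infty)$, at which $h$ admits a local strict minimum and a strict global maximum respectively. Since $h$ is strictly decreasing on $(0,s_1)$ with $h(0^+) = 0^-$, one gets $h(s_1) < 0$; while $\max h = h(s_2) > 0$. The intermediate value theorem then produces exactly one zero $R_0 \in (s_1,s_2)$ and exactly one zero $R_1 \in (s_2,\infty)$, with $h > 0$ on $(R_0,R_1)$ and $h < 0$ on $(0,R_0) \cup (R_1,\infty)$. The only nontrivial point is the algebraic identification of $\varphi(t^{\ast}) > 0$ with \eqref{con1}; all other steps are routine monotonicity arguments.
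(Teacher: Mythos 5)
Your argument is correct and follows essentially the same route as the paper: reduce everything to a one‑variable calculus analysis of $h$, replace $h>0$ by the positivity of an auxiliary function obtained by removing a monomial factor, and identify the positivity threshold at the unique maximizer of that auxiliary function with condition \eqref{con1}. The only (minor) technical variations are that you establish the two‑critical‑point structure of $h$ via a sign analysis of $h''$ rather than the paper's direct factoring of $h'(t)=0$ into a two‑term equation, and you normalize by $t$ where the paper factors out $t^{N(q-2)/8}$; both normalizations lead to the same algebra. Two small remarks: the sentence claiming $h'$ has exactly two zeros ``precisely when $\max h>0$'' is overstated (only the implication $\max h>0\Rightarrow$ two zeros is true, and only that direction is actually used), and you defer the algebraic verification that $\varphi(t^\ast)>0$ reduces to \eqref{con1}, which the paper carries out explicitly by computing $\bar t$ and $\varphi(\bar t)$; that computation does go through as you indicate.
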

\begin{proof}
Since
\begin{equation*}
  h(t)=t^{\frac{N(q-2)}{8}}\Big[\frac{1}{2}t^{\frac{N(\bar{p}-q)}{8}}-\frac{B_{N,p}a^{p-\frac{N(p-2)}{4}}}{p}t^{\frac{N(p-q)}{8}}-\frac{\mu B_{N,q}a^{q-\frac{N(q-2)}{4}}}{q}\Big]
\end{equation*}
for $t>0$, we have $h(t)>0$ if and only if
\begin{equation*}
  \varphi(t)>\frac{\mu B_{N,q}a^{q-\frac{N(q-2)}{4}}}{q},\quad \text{with}\quad \varphi(t):=\frac{1}{2}t^{\frac{N(\bar{p}-q)}{8}}-\frac{B_{N,p}a^{p-\frac{N(p-2)}{4}}}{p}t^{\frac{N(p-q)}{8}}.
\end{equation*}
It is easy to check that $\varphi(t)$ has a unique critical point on $(0,\infty)$, which is a global maximum point at positive level in
\begin{equation*}
  \bar{t}:=\Big(\frac{p(\bar{p}-q)}{2B_{N,p}(p-q)}\Big)^{\frac{8}{N(p-\bar{p})}}a^{\frac{4p-p\bar{p}-4}{p-\bar{p}}},
\end{equation*}
and the maximum level is
\begin{equation*}
  \varphi(\bar{t})=\frac{p-\bar{p}}{2(p-q)}\Big(\frac{p(\bar{p}-q)}{2B_{N,p}(p-q)}\Big)^{\frac{\bar{p}-q}{p-\bar{p}}}a^{\frac{N(4p-p\bar{p}-4)(\bar{p}-q)}{8(p-\bar{p})}}.
\end{equation*}
Therefore, $h(t)>0$ on an open internal $(R_0,R_1)$ if and only if $\varphi(\bar{t})>\frac{\mu B_{N,q}a^{q-\frac{N(q-2)}{4}}}{q}$, which is guaranteed by \eqref{con1}. It follows immediately that $h$ has a global strict maximum at positive level in $(R_0,R_1)$. Moreover, by $\lim\limits_{t\rightarrow0^+}h(t)=0^-$, there exists a local strict minimum at negative level in $(0,R_0)$. The fact that $h$ has no other critical points can be verified by $h'(t)=0$ if and only if
\begin{equation*}
  \psi(t)=\frac{\mu N B_{N,q}(q-2)a^{q-\frac{N(q-2)}{4}}}{4q}>0,\quad \text{with}\quad\psi(t):=t^{\frac{N(\bar{p}-p)}{8}}-\frac{NB_{N,p}(p-2)a^{p-\frac{N(p-2)}{4}}}{4p}t^{\frac{N(p-q)}{8}}.
\end{equation*}
Since $0<\frac{N(\bar{p}-q)}{8}<\frac{N(p-q)}{8}$, the above equation has at most two solutions. This ends the proof.
\end{proof}

\begin{lemma}\label{sx2}
$\mathcal{P}_{p,q,a}^0=\emptyset$ and $\mathcal{P}_{p,q,a}$ is a smooth manifold of codimension $2$ in $H^2(\mathbb{R}^N)$.
\end{lemma}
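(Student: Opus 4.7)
The lemma has two parts; I treat them in order. The first, $\mathcal{P}_{p,q,a}^0=\emptyset$, is the heart of the argument and relies on comparing the fiber map $\Psi_u$ of a hypothetical degenerate element with the envelope function $h$ introduced before Lemma \ref{sx1}. Once the first part is known, the manifold assertion is a routine application of the implicit function theorem to the map $(G,P_{p,q})$ with $G(u):=\|u\|_2^2-a^2$.

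\textbf{Step 1 ($\mathcal{P}_{p,q,a}^0=\emptyset$).} Suppose for contradiction that $u\in\mathcal{P}_{p,q,a}^0$, and set $A:=\|\Delta u\|_2^2$, $B:=\|u\|_p^p$, $C:=\|u\|_q^q$. A direct computation shows $\Psi_u'(\tau)=2e^{4\tau}[A-g(\tau)]$, where
\[
g(\tau):=\gamma_p B\,e^{(2p\gamma_p-4)\tau}+\mu\gamma_q C\,e^{(2q\gamma_q-4)\tau}.
\]
Since $p>\bar{p}$, $q<\bar{p}$, and $\mu>0$, we have $2p\gamma_p-4>0>2q\gamma_q-4$ and $B,C>0$, so $g$ is strictly convex on $\mathbb{R}$, tends to $+\infty$ as $\tau\to\pm\infty$, and attains a unique global minimum at some $\tau_*\in\mathbb{R}$. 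The membership $u\in\mathcal{P}_{p,q,a}$ gives $A=g(0)$, while the defining identity of $\mathcal{P}_{p,q,a}^0$ rewrites as $g'(0)=0$. Together these force $\tau_*=0$ and $A=\min g$, so $A-g(\tau)\le 0$ for all $\tau\in\mathbb{R}$ with equality only at $\tau=0$. Hence $\Psi_u$ is non-increasing on $\mathbb{R}$, and letting $\tau\to-\infty$ yields
\[
\sup_{\tau\in\mathbb{R}}\Psi_u(\tau)=\lim_{\tau\to-\infty}\Psi_u(\tau)=0.
\]
On the other hand, applying the Gagliardo--Nirenberg inequality to $v=\tau*u\in S_a$ gives $\mathcal{J}_{p,q}(\tau*u)\ge h(e^{4\tau}A)$ for every $\tau\in\mathbb{R}$, and as $\tau$ varies $e^{4\tau}A$ sweeps out $(0,\infty)$. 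Therefore
\[
\sup_{\tau\in\mathbb{R}}\Psi_u(\tau)=\sup_{\tau\in\mathbb{R}}\mathcal{J}_{p,q}(\tau*u)\ge\sup_{t>0}h(t)>0
\]
by Lemma \ref{sx1} (using \eqref{con1}), which contradicts the previous display. Hence $\mathcal{P}_{p,q,a}^0=\emptyset$.

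\textbf{Step 2 (manifold structure).} View $\mathcal{P}_{p,q,a}$ as the zero set of $F=(G,P_{p,q}):H^2(\mathbb{R}^N)\to\mathbb{R}^2$. The implicit function theorem furnishes a smooth submanifold of codimension $2$ provided $G'(u)$ and $P_{p,q}'(u)$ are linearly independent in $H^{-2}(\mathbb{R}^N)$ at each $u\in\mathcal{P}_{p,q,a}$. Suppose $\alpha G'(u)+\beta P_{p,q}'(u)=0$. If $\beta=0$ then $2\alpha u=0$, and $\|u\|_2=a>0$ forces $\alpha=0$. If $\beta\neq 0$, dividing by $2\beta$ shows that $u$ weakly solves
\[
\Delta^2 u=\lambda u+\tfrac{p\gamma_p}{2}|u|^{p-2}u+\tfrac{\mu q\gamma_q}{2}|u|^{q-2}u,\qquad \lambda:=-\tfrac{\alpha}{\beta}.
\]
Eliminating $\lambda$ between the Pohozaev identity of Lemma \ref{poho} and the equation tested against $u$ delivers $\|\Delta u\|_2^2=\tfrac{p\gamma_p^2}{2}\|u\|_p^p+\tfrac{\mu q\gamma_q^2}{2}\|u\|_q^q$, i.e.\ $u\in\mathcal{P}_{p,q,a}^0$. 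This contradicts Step 1, so $F'(u)$ is surjective and $\mathcal{P}_{p,q,a}$ is a smooth codimension-$2$ submanifold of $H^2(\mathbb{R}^N)$.

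The delicate step is Step 1. Trying to combine the two defining equalities of $\mathcal{P}_{p,q,a}^0$ with two independent applications of Gagliardo--Nirenberg only produces a bound on $\mu a^{\gamma(p,q)}$ strictly weaker than \eqref{con1} and therefore fails to give a contradiction. The geometric route above circumvents this by exploiting the sharp single envelope $h$ from Lemma \ref{sx1}, together with the rigidity that a degenerate critical point of $\Psi_u$ is necessarily the unique critical point and an inflection, forcing $\Psi_u$ to be globally monotone with vanishing supremum.
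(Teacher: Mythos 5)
Your proof is correct, and Step 1 takes a genuinely different route from the paper. The paper applies Gagliardo--Nirenberg twice to the two equalities $P_{p,q}(u)=0$ and $\Psi_u''(0)=0$, obtaining both a lower bound and an upper bound on $\|\Delta u\|_2$ whose compatibility forces
\[
\mu a^{\gamma(p,q)}\geq\Big(\frac{\bar p-q}{B_{N,p}\gamma_p(p-q)}\Big)^{\frac{\bar p-q}{p-\bar p}}\frac{p-\bar p}{B_{N,q}\gamma_q(p-q)};
\]
it then observes that $\frac{q\gamma_q}{2}\big(\frac{p\gamma_p}{2}\big)^{\frac{\bar p-q}{p-\bar p}}\leq 1$, which upgrades this to $\mu a^{\gamma(p,q)}\geq$ the right-hand side of \eqref{con1}, contradicting \eqref{con1}. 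You instead argue geometrically: a degenerate critical point of $\Psi_u$ is its unique critical point and an inflection point, so $\Psi_u$ is globally non-increasing with $\sup\Psi_u=\lim_{\tau\to-\infty}\Psi_u(\tau)=0$, while the envelope bound $\Psi_u(\tau)\geq h(e^{4\tau}\|\Delta u\|_2^2)$ together with Lemma \ref{sx1} forces $\sup\Psi_u\geq\max_{t>0}h(t)>0$. Both arguments are valid; yours is conceptually cleaner because it makes the role of the envelope $h$ and condition \eqref{con1} transparent, while the paper's is a more elementary direct computation. Step 2 is essentially the paper's argument in slightly different phrasing (the paper shows $dP(u)[\varphi]\neq 0$ for some $\varphi\in T_uS_a$, which is equivalent to your linear-independence statement).

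One correction to your closing remark: the double Gagliardo--Nirenberg computation does \emph{not} fail --- it is precisely the paper's proof, and the resulting lower bound is at least as strong as \eqref{con1}. The observation you are missing is the elementary inequality $\frac{q\gamma_q}{2}\big(\frac{p\gamma_p}{2}\big)^{\frac{\bar p-q}{p-\bar p}}\leq 1$. Writing $x:=\frac{p\gamma_p}{2}>1$ and $y:=\frac{q\gamma_q}{2}\in(0,1)$, one checks $\frac{\bar p-q}{p-\bar p}=\frac{1-y}{x-1}$, so the claim $x^{\frac{1-y}{x-1}}y\leq 1$ is equivalent to $\frac{\log x}{x-1}\leq\frac{\log(1/y)}{1-y}$, which holds because $t\mapsto\frac{\log t}{t-1}$ is (strictly) decreasing on $(0,\infty)$ and $x>1>y$. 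With this observation the algebraic route closes, and it would actually be worth retaining in your write-up rather than dismissing it.
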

\begin{proof}
Assume that $u\in \mathcal{P}_{p,q,a}^0$, then by $P_{p,q}(u)=0$ and $\Psi_u''(0)=0$, we deduce that
\begin{equation*}
  (p\gamma_p-2)\gamma_p\|u\|_p^p=(2-q\gamma_q)\mu\gamma_q\|u\|_q^q.
\end{equation*}
Using $P_{p,q}(u)=0$ again, we have
\begin{equation*}
  \|\Delta u\|_2^2=\frac{p\gamma_p-q\gamma_q}{2-q\gamma_q}\gamma_p\|u\|_p^p\leq \frac{p\gamma_p-q\gamma_q}{2-q\gamma_q}\gamma_pB_{N,p}a^{p(1-\gamma_p)}\|\Delta u\|_2^{p\gamma_p}
\end{equation*}
and
\begin{equation*}
  \|\Delta u\|_2^2=\frac{p\gamma_p-q\gamma_q}{p\gamma_p-2}\mu\gamma_q\|u\|_q^q\leq \frac{p\gamma_p-q\gamma_q}{p\gamma_p-2}\mu\gamma_qB_{N,q}a^{q(1-\gamma_q)}\|\Delta u\|_2^{q\gamma_q}.
\end{equation*}
Hence
\begin{equation*}
  \Big(\frac{2-q\gamma_q}{B_{N,p}\gamma_p(p\gamma_p-q\gamma_q)}\Big)^{\frac{1}{p\gamma_p-2}}a^{\frac{p(\gamma_p-1)}{p\gamma_p-2}}\leq \Big(\frac{p\gamma_p-2}{B_{N,q}\gamma_q(p\gamma_p-q\gamma_q)}\Big)^{\frac{1}{q\gamma_q-2}}\Big(\mu a^{q(1-\gamma_q)}\Big)^{\frac{1}{2-q\gamma_q}},
\end{equation*}
that is
\begin{equation*}
  \mu a^{\gamma(p,q)}\geq\Big(\frac{\bar{p}-q}{B_{N,p}\gamma_p(p-q)}\Big)^{\frac{\bar{p}-q}{p-\bar{p}}}\Big(\frac{p-\bar{p}}{B_{N,q}\gamma_q(p-q)}\Big),
\end{equation*}
where $\gamma(p,q)$ is defined in \eqref{gamma}.
Since
\begin{equation*}
  \frac{q\gamma_q}{2}\Big(\frac{p\gamma_p}{2}\Big)^{\frac{\bar{p}-q}{p-\bar{p}}}\leq1,
\end{equation*}
we obtain
\begin{equation*}
  \mu a^{\gamma(p,q)}\geq\Big(\frac{p(\bar{p}-q)}{2B_{N,p}(p-q)}\Big)^{\frac{\bar{p}-q}{p-\bar{p}}}\Big(\frac{q(p-\bar{p})}{2B_{N,q}(p-q)}\Big)
\end{equation*}
which is contradict to \eqref{con1}. This prove that $\mathcal{P}_{p,q,a}^0=\emptyset$.

Next, we check that $\mathcal{P}_{p,q,a}$ is a smooth manifold of codimension $2$ in $H^2(\mathbb{R}^N)$. Noted that $\mathcal{P}_{p,q,a}=\{u\in H^2(\mathbb{R}^N):G(u)=0,\,P(u)=0\}$ for $G(u)=\|u\|_2^2-a^2$, with $P$ and $G$ of class $C^1$ in $H^2(\mathbb{R}^N)$. Thus, we have to show that the differential $(dG(u),dP(u)):H^2(\mathbb{R}^N)\rightarrow \mathbb{R}^2$ is surjective for any $u\in \mathcal{P}_{p,q,a}$. To this end, we prove that for any $u\in \mathcal{P}_{p,q,a}$, there exists $\varphi\in T_uS_a$ such that $dP(u)[\varphi]\neq0$. Once that the existence of $\varphi$ is established, the system
\begin{align*}
\begin{aligned}
  \begin{split}
  \left\{
  \begin{array}{ll}
   dG(u)[\alpha \varphi+\beta u]=x \\
    dP(u)[\alpha \varphi+\beta u]=y
    \end{array}
    \right.
    \Leftrightarrow
    \left\{
  \begin{array}{ll}
   \beta a^2=x \\
    \alpha dP(u)[\varphi]+\beta dP(u)[u]=y
    \end{array}
    \right.
  \end{split}
  \end{aligned}
  \end{align*}
is solvable with respect to $\alpha,\beta$ for any $(x,y)\in \mathbb{R}^2$, and hence the surjectivity is proved.

Now, suppose by contradiction that:
\begin{equation*}
  \text{There exists $u\in \mathcal{P}_{p,q,a}$ such that $ dP(u)[\varphi]=0$ for any  $\varphi\in T_uS_a$.}
\end{equation*}
Then $u$ is a constraint critical point for the functional $P_{p,q}$ on $S_a$, and hence by the Lagrange multiplies rule, there exists $\nu\in \mathbb{R}$ such that
\begin{equation*}
  P_{p,q}'=\lambda u,\quad x\in \mathbb{R}^N,
\end{equation*}
i.e.,
\begin{equation*}
  \Delta^2u=\lambda u+\frac{\mu q\gamma_q}{2}|u|^{q-2}u+\frac{p\gamma_p}{2}|u|^{p-2}u,\quad x\in \mathbb{R}^N.
\end{equation*}
By Lemma \ref{poho}, we have
\begin{equation*}
  \|\Delta u\|_2^2=\frac{\mu q\gamma_q^2}{2}\|u\|_q^{q}+\frac{p\gamma_p^2}{2}\|u\|_p^{p},
\end{equation*}
which means $u\in \mathcal{P}_{p,q,a}^0$, a contradiction.
\end{proof}
\begin{lemma}\label{sx3}
For $u\in S_a$, the function $\Psi_u$ has exactly two critical points $s_u<t_u\in \mathbb{R}$ and two zeros $c_u<d_u\in \mathbb{R}$, with $s_u<c_u<t_u<d_u$. Moreover:
\\(i) $s_u*u\in \mathcal{P}_{p,q,a}^+$ and $t_u*u\in \mathcal{P}_{p,q,a}^-$. If $\tau*u\in \mathcal{P}_{p,q,a}$, then either $\tau=s_u$ or $\tau=t_u$.\\
(ii) For any $\tau\leq c_u$, $\|\Delta (\tau*u)\|_2^2\leq R_0$, and
\begin{equation*}
  \mathcal{J}_{p,q}(s_u*u)=\min\Big\{\mathcal{J}_{p,q}(\tau*u):\tau\in \mathbb{R} \quad\text{and}\quad \|\Delta (\tau*u)\|_2^2< R_0\Big\}<0.
\end{equation*}
(iii) We have \begin{equation*}
  \mathcal{J}_{p,q}(t_u*u)=\max\Big\{\mathcal{J}_{p,q}(\tau*u):\tau\in \mathbb{R} \Big\}>0,
\end{equation*}
and $\Psi_u$ is strictly decreasing and concave on $(t_u,\infty)$.\\
(iv)
The maps $u\in S_a\mapsto s_u\in \mathbb{R}$ and $u\in S_a\mapsto t_u\in \mathbb{R}$ are of class $C^1$.
\end{lemma}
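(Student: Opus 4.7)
The plan is to reduce the fiber map to a one-dimensional problem via the substitution $s = e^{4\tau}$: writing
\[
\Psi_u(\tau) = g(s) := \frac{s}{2}\|\Delta u\|_2^2 - \frac{s^{\alpha}}{p}\|u\|_p^p - \frac{\mu s^{\beta}}{q}\|u\|_q^q,
\]
with $\alpha := \frac{N(p-2)}{8} > 1$ and $\beta := \frac{N(q-2)}{8} \in (0,1)$, one has $g(0^+) = 0^-$ (the $s^{\beta}$ term dominates) and $g(\infty) = -\infty$. Two tools drive the argument throughout: the identity $\Psi_u'(\tau) = 2P_{p,q}(\tau*u)$, so that critical points of $\Psi_u$ are exactly the $\tau$ with $\tau*u\in\mathcal{P}_{p,q,a}$ and their nondegeneracy is captured by $\mathcal{P}_{p,q,a}^\pm$; and the Gagliardo--Nirenberg lower bound $\Psi_u(\tau) \geq h(e^{4\tau}\|\Delta u\|_2^2)$, which transfers the information about $h$ from Lemma \ref{sx1} to $\Psi_u$.

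I would first establish (i). A direct computation shows $g''$ changes sign exactly once, from positive near $0$ to negative at infinity, since $\alpha>1>\beta$; hence $g'$ has a unique maximum, and combined with $g'(0^+)=g'(\infty)=-\infty$, $g$ has either no critical point or exactly two. Since Lemma \ref{sx1} gives $h>0$ on $(R_0,R_1)$, choosing $\tau$ so that $e^{4\tau}\|\Delta u\|_2^2\in(R_0,R_1)$ yields $\Psi_u(\tau)>0$, ruling out the first option. Therefore $g$ admits a local minimum at $s_u$ (with $g(s_u)<0$, since $g$ decreases near $0^+$) and a global maximum at $t_u>s_u$ with $g(t_u)>0$. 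Lemma \ref{sx2} ($\mathcal{P}_{p,q,a}^0 = \emptyset$) then forces $s_u*u\in\mathcal{P}_{p,q,a}^+$ and $t_u*u\in\mathcal{P}_{p,q,a}^-$, and no other $\tau$ can satisfy $\tau*u\in\mathcal{P}_{p,q,a}$. Applying the intermediate value theorem to the three monotone pieces of $g$ yields the two zeros $c_u<d_u$ with $s_u<c_u<t_u<d_u$.

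For (ii), if $\tau\leq c_u$ then $\Psi_u(\tau)\leq 0$, hence $h(e^{4\tau}\|\Delta u\|_2^2)\leq 0$, so Lemma \ref{sx1} places this argument outside $(R_0,R_1)$. Since $e^{4\tau}\|\Delta u\|_2^2\to 0<R_0$ as $\tau\to-\infty$, continuity forces $e^{4\tau}\|\Delta u\|_2^2\leq R_0$ on all of $(-\infty,c_u]$. With $\tau^*:=\frac{1}{4}\log(R_0/\|\Delta u\|_2^2)$, the admissible set $\{\tau:\|\Delta(\tau*u)\|_2^2<R_0\}$ equals $(-\infty,\tau^*)$, and its boundary values are $\Psi_u(-\infty)=0$ and $\Psi_u(\tau^*)\geq h(R_0)=0$; since $s_u<c_u\leq\tau^*$ is the unique interior local minimum with $\Psi_u(s_u)<0$, the infimum is attained at $s_u$.

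For (iii), strict monotone decrease of $\Psi_u$ on $(t_u,\infty)$ is immediate from the absence of further critical points and $\Psi_u(\infty)=-\infty$. Concavity is the main technical point: setting $X(\tau):=\frac{\alpha'}{p}e^{(\alpha'-4)\tau}\|u\|_p^p$ and $Y(\tau):=\frac{\mu\beta'}{q}e^{(\beta'-4)\tau}\|u\|_q^q$ with $\alpha':=\frac{N(p-2)}{2}>4$ and $\beta':=\frac{N(q-2)}{2}<4$, the identity $\Psi_u'(t_u)=0$ rewrites as $X(t_u)+Y(t_u)=2\|\Delta u\|_2^2$ and the strict inequality $\Psi_u''(t_u)<0$ (from $t_u*u\in\mathcal{P}_{p,q,a}^-$ and Lemma \ref{sx2}) rewrites as $(\alpha'-4)X(t_u)>(4-\beta')Y(t_u)$. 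Differentiating $\alpha' X(\tau)+\beta' Y(\tau)$ and using $\alpha'>\beta'>0$ together with the monotonicities $X\uparrow$, $Y\downarrow$ in $\tau$, one checks this quantity is strictly increasing on $[t_u,\infty)$; combined with its value exceeding $8\|\Delta u\|_2^2$ already at $t_u$, this yields $\Psi_u''(\tau)<0$ for all $\tau\geq t_u$. Finally, (iv) follows from the implicit function theorem applied to $F(u,\tau):=\Psi_u'(\tau)$, whose partial derivative $\partial_\tau F$ is nonzero at both $(u,s_u)$ and $(u,t_u)$ by Lemma \ref{sx2}. The principal obstacle I expect is the concavity argument in (iii); the remaining steps reduce to bookkeeping around the one-dimensional function $g$.
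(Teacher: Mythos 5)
Your proof is correct and follows essentially the same approach as the paper: analyze the one-parameter fiber map via the lower bound $\Psi_u(\tau)\geq h(e^{4\tau}\|\Delta u\|_2^2)$, count critical points by a single-sign-change argument, invoke $\mathcal{P}^0_{p,q,a}=\emptyset$ to pin down their types, and use the implicit function theorem for the $C^1$ regularity of $u\mapsto s_u,t_u$. The one place you go beyond the paper is the concavity in (iii), which the paper asserts without justification; your direct computation is valid but more involved than it needs to be, since in your own substitution one has $\Psi_u''(\tau)=16e^{4\tau}\bigl(sg''(s)+g'(s)\bigr)$ with $s=e^{4\tau}$, and both $g'(s)$ and $g''(s)$ are strictly negative for $s$ past the global maximum because $g''$ changes sign exactly once, before $t_u$.
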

\begin{proof}
Recall that
\begin{equation*}
  \Psi_u(\tau)= \mathcal{J}_{p,q}(\tau*u)\geq h(\|\Delta (\tau*u)\|_2^2)=h(e^{4\tau}\|\Delta u\|_2^2),
\end{equation*}
thus
$\Psi_u$ is positive on $(C(R_0),C(R_1))$, where
\begin{equation*}
C(R_0):=\frac{1}{4}\ln \frac{R_0}{\|\Delta u\|_2^2},\quad C(R_0):=\frac{1}{4}\ln \frac{R_1}{\|\Delta u\|_2^2}.
\end{equation*}
It is clear that $\lim\limits_{\tau\rightarrow-\infty}\Psi_u(\tau)=0^-$ and $\lim\limits_{\tau\rightarrow\infty}\Psi_u(\tau)=-\infty$, thus $\Psi_u$ has at least two critical points. In fact, $\Psi_u'(\tau)=0$ is and only if
\begin{equation*}
  \varphi(\tau)=\mu \gamma_q \|u\|^q>0,\quad \text{with}\quad\varphi(\tau):=e^{\frac{N(\bar{p}-q)}{2}\tau}\| \Delta u\|_2^2-\gamma_p e^{\frac{N(p-q)}{2}\tau}\|u\|_p^p.
\end{equation*}
By $0<\frac{N(\bar{p}-q)\tau}{2}<\frac{N(p-q)\tau}{2}$, we know the above equation has at most two solutions. Thus $\Psi_u$ has exactly two critical points $s_u<t_u$,
 with $s_u$ a local minimum point on $(0,C(R_0))$ at negative level, and $t_u$ a global maximum point at positive level.

 By Lemma \ref{onP}, we know $\tau*u\in \mathcal{P}_{p,q,a}$ implies $\tau=s_u$ or $\tau=t_u$.
From $\Psi_u''(s_u)\geq0$, $\Psi_u''(t_u)\leq0$, and $\mathcal{P}_{p,q,a}^0=\emptyset$, we have $s_u*u\in \mathcal{P}_{p,q,a}^+$ and $t_u*u\in \mathcal{P}_{p,q,a}^-$. Moreover, $\Psi_u$ is strictly decreasing and concave on $(t_u,\infty)$.
For any $\tau\leq c_u\leq C(R_0)$, $\|\Delta (\tau*u)\|_2^2=e^{4\tau}\|\Delta u\|_2^2\leq R_0$. It remains to show that $u\in S_a\mapsto s_u\in \mathbb{R}$ and $u\in S_a\mapsto t_u\in \mathbb{R}$ are of class $C^1$. To this end, we apply the implicit function theorem on the $C^1$ function $\Phi(u,\tau):=\Psi_u'(\tau)$. Then $\Phi(u,s_u)=0$, $\partial_\tau \Phi(u,s_u)=\Psi_u''(s_u)>0$. Since $\mathcal{P}_{p,q,a}^0=\emptyset$, it is not possible to pass with continuity from $\mathcal{P}_{p,q,a}^+$ to $\mathcal{P}_{p,q,a}^-$, then $u\in S_a\mapsto s_u\in \mathbb{R}$ is of class $C^1$. The same arguments proves that $u\in S_a\mapsto t_u\in \mathbb{R}$ is of class $C^1$.
\end{proof}

For any $k>0$, set
\begin{equation*}
  \mathcal{A}_k:=\Big\{u\in S_a:\|\Delta u\|_2^2<k\},
\end{equation*}
and
\begin{equation}\label{jubu}
c_{p,q,a}:=\inf\limits_{u\in \mathcal{A}_{R_0}}\mathcal{J}_{p,q}.
\end{equation}

\begin{lemma}\label{sx4}
$ \mathcal{P}_{p,q,a}^+\subset \mathcal{A}_{R_0}$, and $\sup\limits_{u\in \mathcal{P}_{p,q,a}^+}\mathcal{J}_{p,q}(u)\leq0\leq \inf\limits_{u\in \mathcal{P}_{p,q,a}^-}\mathcal{J}_{p,q}(u)$.
\end{lemma}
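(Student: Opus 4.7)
The proof is essentially a direct reading of Lemma \ref{sx3} once we identify an element of $\mathcal{P}_{p,q,a}^{\pm}$ with a critical point of the corresponding fiber map $\Psi_u$ at $\tau=0$.

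My plan is to fix $v \in \mathcal{P}_{p,q,a}^{+}$ and first argue that $0$ must coincide with the minimum $s_v$ in the notation of Lemma \ref{sx3}. Indeed, Lemma \ref{onP} tells us that since $v \in \mathcal{P}_{p,q,a}$, the point $\tau=0$ is a critical point of $\Psi_v$, and the definition of $\mathcal{P}_{p,q,a}^{+}$ gives $\Psi_v''(0)>0$. By Lemma \ref{sx3}, $\Psi_v$ has exactly two critical points $s_v < t_v$, with $s_v$ a local minimum and $t_v$ a strict global maximum (so $\Psi_v''(t_v) \leq 0$). Therefore $0 = s_v$ and $v = s_v*v$.

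Next, I would apply Lemma \ref{sx3}(ii) directly: the minimum characterization states that $\mathcal{J}_{p,q}(s_v*v) < 0$ and, more importantly, the minimization is taken over $\{\tau\in\mathbb{R}:\|\Delta(\tau*v)\|_2^2 < R_0\}$, so in particular $s_v*v$ lies in this set. Since $s_v=0$, this gives both $\|\Delta v\|_2^2 < R_0$ (hence $v \in \mathcal{A}_{R_0}$) and $\mathcal{J}_{p,q}(v) < 0$. Taking the supremum over all such $v$ yields $\sup_{u\in\mathcal{P}_{p,q,a}^{+}}\mathcal{J}_{p,q}(u) \leq 0$.

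For the inequality on $\mathcal{P}_{p,q,a}^{-}$, I would run the analogous argument with a sign change: if $v \in \mathcal{P}_{p,q,a}^{-}$, then $\tau = 0$ is a critical point of $\Psi_v$ with $\Psi_v''(0)<0$, which by Lemma \ref{sx3}(i) forces $0 = t_v$. Lemma \ref{sx3}(iii) then gives $\mathcal{J}_{p,q}(v) = \mathcal{J}_{p,q}(t_v*v) > 0$, and taking the infimum produces $\inf_{u\in\mathcal{P}_{p,q,a}^{-}}\mathcal{J}_{p,q}(u) \geq 0$.

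The argument is really a bookkeeping exercise; there is no genuine obstacle because the analytic content has been packaged into Lemma \ref{sx1} (the sign structure of $h$) and Lemma \ref{sx3} (the two-critical-point structure of $\Psi_u$ and the identification of the local minimum with the region $\{\|\Delta\cdot\|_2^2 < R_0\}$). The one thing to be careful about is correctly matching $\tau=0$ with $s_v$ versus $t_v$ based on the sign of $\Psi_v''(0)$; this matching, together with the fact that $\mathcal{P}_{p,q,a}^{0}=\emptyset$ proved in Lemma \ref{sx2}, prevents any ambiguity.
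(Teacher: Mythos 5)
Your proof is correct and takes essentially the same route as the paper. The paper observes that for any $u\in S_a$, Lemma \ref{sx3} gives $s_u<c_u\leq C(R_0)$ so $\|\Delta(s_u*u)\|_2^2<R_0$, and concludes from the surjectivity of $u\mapsto s_u*u$ onto $\mathcal{P}_{p,q,a}^+$ (which is implicit, via taking $u=v$ so $s_v=0$); you make this identification explicit by matching $\tau=0$ with $s_v$ or $t_v$ via the sign of $\Psi_v''(0)$ and the emptiness of $\mathcal{P}_{p,q,a}^0$, which is exactly the content of the paper's short "using Lemma \ref{sx3} again." The only small spot to tighten is the phrase "so in particular $s_v*v$ lies in this set": this needs the chain $s_v<c_v$ together with Lemma \ref{sx3}(ii) and the strict monotonicity of $\tau\mapsto\|\Delta(\tau*v)\|_2^2$ to yield $\|\Delta(s_v*v)\|_2^2<\|\Delta(c_v*v)\|_2^2\leq R_0$, rather than reading it off from the minimum characterization alone.
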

\begin{proof}
For any fixed $u\in S_a$, by Lemma \ref{sx3}, we have $s_u<c_u\leq C(R_0)$, so
\begin{equation*}
  \|\Delta (s_u*u)\|_2^2=e^{4s_u}\|\Delta u\|_2^2<R_0.
\end{equation*}
By the arbitrariness of $u\in S_a$ and $s_u*u\in \mathcal{P}_{p,q,a}^+$, we have $\mathcal{P}_{p,q,a}^+\subset \mathcal{A}_{R_0}$. Using Lemma \ref{sx3} again, we immediately know $\sup\limits_{u\in \mathcal{P}_{p,q,a}^+}\mathcal{J}_{p,q}(u)\leq0\leq \inf\limits_{u\in \mathcal{P}_{p,q,a}^-}\mathcal{J}_{p,q}(u)$.
\end{proof}

\begin{lemma}\label{sx5}
It results that $-\infty<c_{p,q,a}<0$ and
\begin{equation*}
  c_{p,q,a}=\inf\limits_{u\in \mathcal{P}_{p,q,a}^+}\mathcal{J}_{p,q}(u)=M_{p,q,a}\quad \text{and}\quad c_{p,q,a}<\inf\limits_{u\in \overline{\mathcal{A}_{R_0}}\backslash \mathcal{A}_{R_0-\rho}}\mathcal{J}_{p,q}(u)
\end{equation*}
for $\rho>0$ small enough.
\end{lemma}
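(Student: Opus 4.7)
The plan has four components, corresponding to the four claims packed into the lemma. Throughout I will rely on the pointwise inequality $\mathcal{J}_{p,q}(u)\ge h(\|\Delta u\|_2^2)$ for $u\in S_a$ (via Gagliardo--Nirenberg), the structure of $h$ from Lemma \ref{sx1}, and the fiber analysis in Lemmas \ref{sx3}--\ref{sx4}.

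First, for the finiteness $-\infty<c_{p,q,a}<0$: for $u\in\mathcal{A}_{R_0}$ we have $\|\Delta u\|_2^2\in[0,R_0]$, so $\mathcal{J}_{p,q}(u)\ge h(\|\Delta u\|_2^2)\ge\min_{t\in[0,R_0]}h(t)>-\infty$ by continuity of $h$ on a compact interval. For the upper bound, pick any $u\in S_a$. Lemma \ref{sx3}(ii) gives a point $s_u*u$ with $\mathcal{J}_{p,q}(s_u*u)<0$, and Lemma \ref{sx4} places $s_u*u\in\mathcal{P}_{p,q,a}^+\subset\mathcal{A}_{R_0}$, so $c_{p,q,a}\le\mathcal{J}_{p,q}(s_u*u)<0$.

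Next, for the identification $c_{p,q,a}=\inf_{\mathcal{P}_{p,q,a}^+}\mathcal{J}_{p,q}=M_{p,q,a}$: the inclusion $\mathcal{P}_{p,q,a}^+\subset\mathcal{A}_{R_0}$ yields $c_{p,q,a}\le\inf_{\mathcal{P}_{p,q,a}^+}\mathcal{J}_{p,q}$. Conversely, any $u\in\mathcal{A}_{R_0}$ has $\|\Delta u\|_2^2<R_0$; by Lemma \ref{sx3}(ii), $s_u*u$ realizes the minimum of $\tau\mapsto\mathcal{J}_{p,q}(\tau*u)$ over the set where $\|\Delta(\tau*u)\|_2^2<R_0$, a set that contains $\tau=0$, so $\mathcal{J}_{p,q}(s_u*u)\le\mathcal{J}_{p,q}(u)$; taking infima gives the reverse inequality. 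For the identification with $M_{p,q,a}$, combine $\mathcal{P}_{p,q,a}^0=\emptyset$ (Lemma \ref{sx2}) with the level separation from Lemma \ref{sx4}: $\sup_{\mathcal{P}_{p,q,a}^+}\mathcal{J}_{p,q}\le 0\le\inf_{\mathcal{P}_{p,q,a}^-}\mathcal{J}_{p,q}$. Since we just showed $c_{p,q,a}<0$, the infimum over $\mathcal{P}_{p,q,a}=\mathcal{P}_{p,q,a}^+\cup\mathcal{P}_{p,q,a}^-$ is necessarily attained along the $+$ component.

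Finally, for the annular separation $c_{p,q,a}<\inf_{\overline{\mathcal{A}_{R_0}}\setminus\mathcal{A}_{R_0-\rho}}\mathcal{J}_{p,q}$: for $u$ in this set, $\|\Delta u\|_2^2\in[R_0-\rho,R_0]$, so $\mathcal{J}_{p,q}(u)\ge h(\|\Delta u\|_2^2)\ge\min_{t\in[R_0-\rho,R_0]}h(t)$. Since $h(R_0)=0>c_{p,q,a}$ and $h$ is continuous, for $\rho>0$ small enough the minimum of $h$ on $[R_0-\rho,R_0]$ exceeds $c_{p,q,a}/2>c_{p,q,a}$, yielding the strict inequality uniformly. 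The only subtlety here is choosing $\rho$ after $c_{p,q,a}$ is already known to be strictly negative, which is why the previous steps must be performed first.

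The proof is essentially a bookkeeping exercise assembling facts from Lemmas \ref{sx1}--\ref{sx4}; the potentially delicate step is the last one, where one must use strict negativity of $c_{p,q,a}$ together with continuity of $h$ near $R_0$ to produce the uniform gap. No new analytic input beyond what has already been proved is required.
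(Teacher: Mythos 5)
Your proposal is correct and follows essentially the same line of reasoning as the paper: finiteness via $\mathcal{J}_{p,q}\ge h(\|\Delta\cdot\|_2^2)$ on the compact interval $[0,R_0]$, negativity via the interior local minimum of $\Psi_u$, the identification $c_{p,q,a}=\inf_{\mathcal{P}_{p,q,a}^+}\mathcal{J}_{p,q}=M_{p,q,a}$ via the inclusion $\mathcal{P}_{p,q,a}^+\subset\mathcal{A}_{R_0}$ together with the projection $u\mapsto s_u*u$ and the sign separation of Lemma~\ref{sx4}, and the annular gap from continuity of $h$ with $h(R_0)=0>c_{p,q,a}$. The only cosmetic deviation is that you invoke $s_u*u$ directly to get $c_{p,q,a}<0$, whereas the paper takes $\tau*u$ with $\tau\ll-1$; both yield the same conclusion.
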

\begin{proof}
For $u\in \mathcal{A}_{R_0}$, we have
\begin{equation*}
  \mathcal{J}_{p,q}(u)\geq h(\|\Delta u\|_2^2)\geq \min\limits_{t\in [0,R_0]}h(t)>-\infty,
\end{equation*}
so $c_{p,q,a}>-\infty$. Moreover, for any $\tau<<-1$ small enough, $\|\Delta (\tau*u)\|_2^2<R_0$ and $\mathcal{J}_{p,q}(\tau*u)<0$, hence $c_{p,q,a}<0$.

From lemmas \ref{sx2} and \ref{sx4}, it's easy to get $\inf\limits_{u\in \mathcal{P}_{p,q,a}^+}\mathcal{J}_{p,q}(u)=M_{p,q,a}$. Next, we prove $c_{p,q,a}=\inf\limits_{u\in \mathcal{P}_{p,q,a}^+}\mathcal{J}_{p,q}(u)$. By Lemma \ref{sx4}, we know $ \mathcal{P}_{p,q,a}^+\subset \mathcal{A}_{R_0}$, thus $c_{p,q,a}\leq\inf\limits_{u\in \mathcal{P}_{p,q,a}^+}\mathcal{J}_{p,q}(u)$. On the other hand, for any $u\in \mathcal{A}_{R_0}$, we have $s_u*u\in \mathcal{P}_{p,q,a}^+$ and
\begin{equation*}
  \mathcal{J}_{p,q}(s_u*u)=\min\Big\{\mathcal{J}_{p,q}(\tau*u):\tau\in \mathbb{R} \quad\text{and}\quad \|\Delta (\tau*u)\|_2^2< R_0\Big\}\leq \mathcal{J}_{p,q}(u).
\end{equation*}
By the arbitrariness of $u\in \mathcal{A}_{R_0}$, we obtain $\inf\limits_{u\in \mathcal{P}_{p,q,a}^+}\mathcal{J}_{p,q}(u)\leq \mathcal{J}_{p,q}(s_u*u)\leq c_{p,q,a}$.

By the continuity of $h$, there exists $\rho>0$ such that $h(t)\geq \frac{c_{p,q,a}}{2}$ for any $t\in [R_0-\rho,R_0]$. Therefore, for any $u\in \overline{\mathcal{A}_{R_0}}\backslash \mathcal{A}_{R_0-\rho}$, we have
\begin{equation*}
\mathcal{J}_{p,q}(u)\geq h(\|\Delta u\|_2^2)\geq \frac{c_{p,q,a}}{2}>c_{p,q,a}.
\end{equation*}
\end{proof}

\begin{theorem}\label{local}
$c_{p,q,a}$ can be achieved by some $\hat{u}\in S_{a,r}$, and $\hat{u}$ is a ground state solution of \eqref{question} with some $\hat{\lambda}<0$.
\end{theorem}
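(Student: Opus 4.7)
The plan is to produce a minimizer for $c_{p,q,a}$ on $\mathcal{A}_{R_0}\cap S_a$ by combining Fourier rearrangement with the gap estimate of Lemma \ref{sx5}, then harvest strong $H^2$-convergence through Lemma \ref{com1}.

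First, I would take a minimizing sequence $\{u_n\}\subset \mathcal{A}_{R_0}$ for $c_{p,q,a}$. Since $p,q\in 2\mathbb{N}$, Lemma \ref{Four} gives Fourier rearrangements $u_n^\sharp\in S_{a,r}$ with $\|\Delta u_n^\sharp\|_2\leq \|\Delta u_n\|_2$, $\|u_n^\sharp\|_p\geq \|u_n\|_p$, $\|u_n^\sharp\|_q\geq \|u_n\|_q$, so $\{u_n^\sharp\}\subset \mathcal{A}_{R_0}\cap S_{a,r}$ is still minimizing. By the strict inequality $c_{p,q,a}<\inf_{\overline{\mathcal{A}_{R_0}}\setminus \mathcal{A}_{R_0-\rho}}\mathcal{J}_{p,q}$ of Lemma \ref{sx5}, for $n$ large one has $u_n^\sharp\in \mathcal{A}_{R_0-\rho}$: the infimum is attained strictly inside $\mathcal{A}_{R_0}$, uniformly away from the wall $\|\Delta u\|_2^2=R_0$.

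Next, I would apply Ekeland's variational principle on the complete metric subspace $\overline{\mathcal{A}_{R_0}}\cap S_{a,r}$ of $H^2_{rad}(\mathbb{R}^N)$ to extract $\{v_n\}\subset \mathcal{A}_{R_0-\rho/2}\cap S_{a,r}$ satisfying $\mathcal{J}_{p,q}(v_n)\to c_{p,q,a}$ and $\mathcal{J}_{p,q}(w)\geq \mathcal{J}_{p,q}(v_n)-\eps_n\|w-v_n\|$ for every $w\in \overline{\mathcal{A}_{R_0}}\cap S_{a,r}$, with $\eps_n\to 0$. Since each $v_n$ is strictly interior, all radial tangent directions along $S_{a,r}$ at $v_n$ are admissible, so $\|\mathcal{J}_{p,q}'|_{S_{a,r}}(v_n)\|_\ast\to 0$; by the $O(N)$-invariance of $\mathcal{J}_{p,q}$ this also equals the norm of the gradient along $S_a$, hence $\{v_n\}$ is a Palais--Smale sequence for $\mathcal{J}_{p,q}|_{S_a}$ at level $c_{p,q,a}$. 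To obtain the Pohožaev condition, I would plug $w=\tau*v_n$ into the Ekeland inequality: for $|\tau|$ small, $\tau*v_n\in \overline{\mathcal{A}_{R_0}}\cap S_{a,r}$ by the previous step, and $\|\tau*v_n-v_n\|=O(|\tau|)$ via Lemma \ref{biancon}, so differentiating in $\tau$ at $0$ and using $P_{p,q}(v_n)=\tfrac12\Psi_{v_n}'(0)$ yields $P_{p,q}(v_n)\to 0$.

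Finally, since $c_{p,q,a}<0$, Lemma \ref{com1} applies and gives $v_n\to \hat{u}$ in $H^2(\mathbb{R}^N)$ with $\hat{u}\in S_{a,r}$ a radial solution of \eqref{question} for some $\hat{\lambda}<0$, and $\mathcal{J}_{p,q}(\hat{u})=c_{p,q,a}$. Ground-state status follows because any critical point of $\mathcal{J}_{p,q}|_{S_a}$ lies on $\mathcal{P}_{p,q,a}$ by Lemma \ref{poho}, while Lemma \ref{sx5} together with the sign information of Lemma \ref{sx4} shows $c_{p,q,a}=M_{p,q,a}=\inf_{\mathcal{P}_{p,q,a}}\mathcal{J}_{p,q}$. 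The main obstacle I anticipate is cleanly securing $P_{p,q}(v_n)\to 0$: it hinges on the quantitative gap in Lemma \ref{sx5}, without which the dilated perturbation $\tau*v_n$ could not be guaranteed to remain admissible in $\overline{\mathcal{A}_{R_0}}$ for $\tau$ in a uniform neighborhood of $0$; once this two-sided derivative test in $\tau$ goes through, the compactness from Lemma \ref{com1} is routine.
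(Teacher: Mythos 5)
Your overall scaffolding (Fourier rearrangement, Ekeland, the gap estimate in Lemma \ref{sx5}, and the compactness Lemma \ref{com1}) matches the paper's intent, but you deviate in the crucial step that extracts the Poho\u{z}aev condition $P_{p,q}(v_n)\to 0$, and that step as you wrote it has a genuine gap. You plug $w=\tau*v_n$ into the Ekeland inequality and assert $\|\tau*v_n-v_n\|=O(|\tau|)$ ``via Lemma \ref{biancon}.'' Lemma \ref{biancon} gives only continuity of $(\tau,u)\mapsto \tau*u$, not Lipschitz continuity in $\tau$. In fact the infinitesimal generator of the dilation group is $\frac{N}{2}\,\mathrm{Id}+x\cdot\nabla$, which is an unbounded operator on $H^2(\mathbb{R}^N)$; for a general $v\in H^2$ the quantity $x\cdot\nabla v$ need not belong to $L^2$, and therefore $\liminf_{\tau\to 0}\|\tau*v_n-v_n\|/|\tau|$ may be infinite. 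Without a uniform bound of the form $\|\tau*v_n-v_n\|\leq C|\tau|$ the two one-sided limits you take produce no estimate on $\Psi_{v_n}'(0)$, and the chain $P_{p,q}(v_n)=\tfrac12\Psi_{v_n}'(0)\to 0$ does not follow.

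The paper circumvents exactly this difficulty by first projecting the rearranged minimizing sequence onto $\mathcal{P}_{p,q,a}^+$ via the $C^1$ map $u\mapsto s_u$ of Lemma \ref{sx3}: setting $\omega_n:=s_{v_n^\sharp}*v_n^\sharp$ produces a new minimizing sequence in $S_{a,r}\cap\mathcal{P}_{p,q,a}^+$ with $P_{p,q}(\omega_n)=0$ \emph{exactly} and $\|\Delta\omega_n\|_2^2<R_0-\rho$ by Lemma \ref{sx5}. Only then does one invoke Ekeland to get a Palais--Smale sequence $\{u_n\}$ with $\|u_n-\omega_n\|\to 0$; since $P_{p,q}$ is a continuous (indeed locally uniformly continuous) functional of $\|\Delta u\|_2$, $\|u\|_p$, $\|u\|_q$ and the $\omega_n$ are bounded, the closeness transfers $P_{p,q}(u_n)\to 0$ without any Lipschitz estimate for the dilation flow. (The paper then applies Lemma \ref{com2} rather than Lemma \ref{com1}, since the Ekeland sequence lies in $S_a$ and is only close to a radial sequence; your choice of doing Ekeland directly on $S_{a,r}$ and using Lemma \ref{com1} is equally valid.) Your proof would be repaired by inserting this projection step --- replace the raw rearranged sequence by $s_{v_n^\sharp}*v_n^\sharp$ before Ekeland --- after which the rest of your argument, including the identification $c_{p,q,a}=M_{p,q,a}$ from Lemma \ref{sx5} to confirm ground-state status, goes through.

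Two smaller remarks: the side claim that ``by $O(N)$-invariance this also equals the norm of the gradient along $S_a$'' is not needed (Lemma \ref{com1} is stated for $\mathcal{J}_{p,q}|_{S_{a,r}}$), and the identity $c_{p,q,a}=M_{p,q,a}$ is already part of Lemma \ref{sx5} itself, so you do not need to invoke Lemma \ref{sx4} separately to reach it.
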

\begin{proof}
Let us consider a minimizing sequence $\{v_n\}$ for $c_{p,q,a}$. By Lemma \ref{Four}, if $2<q<\bar{p}<p<\frac{2N}{(N-4)^+}$ and $p,q\in 2\mathbb{N}$, then $\{v_n^\sharp\}\in \mathcal{A}_{R_0}$ and
\begin{equation*}
  \mathcal{J}_{p,q}(v_n^\sharp)\leq \mathcal{J}_{p,q}(v_n),
\end{equation*}
which means $\{v_n^\sharp\}$ is a radial minimizing sequence for $c_{p,q,a}$. By Lemma \ref{sx3}, there exists a sequence $\{s_{v_n^\sharp}\}\subset \mathbb{R}$ such that $\{s_{v_n^\sharp}*v_n^\sharp\} \subset \mathcal{P}_{p,q,a}^+\subset \mathcal{A}_{R_0}$ and
\begin{equation*}
  \mathcal{J}_{p,q}(s_{v_n^\sharp}*v_n^\sharp)=\min\Big\{\mathcal{J}_{p,q}(\tau*v_n^\sharp):\tau\in \mathbb{R} \quad\text{and}\quad \|\Delta (\tau*v_n^\sharp)\|_2^2< R_0\Big\}\leq\mathcal{J}_{p,q}(v_n^\sharp),
\end{equation*}
in this way we obtain a new radial minimizing sequence $\{\omega_n:=s_{v_n^\sharp}*v_n^\sharp\}$ for $c_{p,q,a}$ with $\{\omega_n\}\subset S_{a,r}\cap \mathcal{P}_{p,q,a}^+$. By Lemma \ref{sx5}, $\|\Delta \omega_n\|_2^2<R_0-\rho$ for any $n\in \mathbb{N}$. Then by the Ekeland's variational principle, we can find a new minimizing sequence $\{u_n \}$ for $c_{p,q,a}$ with $\|u_n-\omega_n\|\rightarrow0$ as $n\rightarrow\infty$, which is also a Palais-Smale sequence for $\mathcal{J}_{p,q}|_{S_a}$. The condition $\|u_n-\omega_n\|\rightarrow0$ and the boundedness of $\{\omega_n\}$ imply $P_{p,q}(u_n)\rightarrow 0$ as $n\rightarrow\infty$. Hence, $\{u_n\}$ satisfies all the conditions of Lemma \ref{com2}. As a consequence, there exists $\hat{u}\in H^2_{rad}(\mathbb{R}^N)$ such that, up to a subsequence, $u_n\rightarrow \hat{u}$ in $H^2(\mathbb{R}^N)$ as $n\rightarrow\infty$ and $\hat{u}$ is a radial solution of \eqref{question} with some $\hat{\lambda}<0$. 
Moreover, by Lemma \ref{sx5}, we know $\hat{u}$ is a ground state solution.
\end{proof}

Now, we focus on the existence of the second critical point of mountain pass type for $\mathcal{J}_{p,q}|_{S_a}$. To construct a minimax structure, we need the following  lemmas.

\begin{lemma}\label{sx6}
Suppose that $\mathcal{J}_{p,q}(u)<c_{p,q,a}$, then the value $t_u$ defined by Lemma \ref{sx3} is negative.
\end{lemma}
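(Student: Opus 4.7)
The plan is to argue by contradiction: assume $t_u \geq 0$ and derive a contradiction with the hypothesis $\mathcal{J}_{p,q}(u) < c_{p,q,a}$. The two ingredients I will rely on are the full shape of $\Psi_u$ established in Lemma \ref{sx3} and the identity $c_{p,q,a} = \inf_{u \in \mathcal{P}_{p,q,a}^+} \mathcal{J}_{p,q}(u)$ from Lemma \ref{sx5}.

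First I would record two consequences of Lemma \ref{sx3}: on the interval $(-\infty,t_u]$ the function $\Psi_u$ decreases from $0^-$ down to the local (hence global on this interval) minimum $\Psi_u(s_u) < 0$ and then increases from $\Psi_u(s_u)$ up to $\Psi_u(t_u) > 0$. In particular, for every $\tau \in (-\infty,t_u]$ one has $\Psi_u(\tau) \geq \Psi_u(s_u)$. Next, since $s_u * u \in \mathcal{P}_{p,q,a}^+$ by Lemma \ref{sx3}(i), Lemma \ref{sx5} gives
\begin{equation*}
\Psi_u(s_u) \;=\; \mathcal{J}_{p,q}(s_u * u) \;\geq\; \inf_{v \in \mathcal{P}_{p,q,a}^+} \mathcal{J}_{p,q}(v) \;=\; c_{p,q,a}.
\end{equation*}

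Now, if $t_u \geq 0$, then $0 \in (-\infty,t_u]$, so combining the two observations above,
\begin{equation*}
\mathcal{J}_{p,q}(u) \;=\; \Psi_u(0) \;\geq\; \Psi_u(s_u) \;\geq\; c_{p,q,a},
\end{equation*}
contradicting the assumption $\mathcal{J}_{p,q}(u) < c_{p,q,a}$. Hence $t_u < 0$, as desired.

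This argument is essentially bookkeeping once Lemma \ref{sx3} and Lemma \ref{sx5} are in hand; there is no real obstacle. The only point worth being careful about is using the correct monotonicity intervals of $\Psi_u$ (and the sign of the asymptotic value $\Psi_u(-\infty) = 0^-$) to guarantee that $\Psi_u(s_u)$ is indeed a lower bound for $\Psi_u$ on the whole half-line $(-\infty,t_u]$ and not merely on a neighborhood of $s_u$.
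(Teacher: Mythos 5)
Your proof is correct and follows essentially the same approach as the paper's: both reduce to showing $\Psi_u(0)\geq \Psi_u(s_u)\geq c_{p,q,a}$ whenever $t_u\geq 0$, using the shape of $\Psi_u$ from Lemma \ref{sx3} and the identification of $c_{p,q,a}$ from Lemma \ref{sx5}. The paper organizes the argument as a case split on the signs of the zeros $c_u,d_u$ (obtaining $\mathcal{J}_{p,q}(s_u*u)\geq c_{p,q,a}$ via Lemma \ref{sx3}(ii) and the definition of $c_{p,q,a}$ as $\inf_{\mathcal{A}_{R_0}}\mathcal{J}_{p,q}$), whereas you argue directly with the monotonicity of $\Psi_u$ on $(-\infty,t_u]$ and the identity $c_{p,q,a}=\inf_{\mathcal{P}_{p,q,a}^+}\mathcal{J}_{p,q}$; this is a cleaner presentation of the same underlying idea.
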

\begin{proof}
Since $t_u<d_u$, if $d_u\leq0$, then $t_u<0$. If $c_u\leq0<d_u$,  then $\mathcal{J}_{p,q}(u)=\Psi_u(0)\geq0$, which is an absurd, since $c_{p,q,a}<0$. Thus $c_u>0$ and
\begin{equation*}
  c_{p,q,a}>\mathcal{J}_{p,q}(u)=\Psi_u(0)\geq \inf\limits_{s<c_u} \Psi_u(s)\geq \min\Big\{\mathcal{J}_{p,q}(\tau*u):\tau\in \mathbb{R}\,\,\text{and}\,\, \|\Delta u\|_2^2<R_0\Big\}=\mathcal{J}_{p,q}(s_u*u)\geq c_{p,q,a},
\end{equation*}
which is again a contradiction.
\end{proof}

\begin{lemma}\label{sx7}
It results that $\tilde{\sigma}_{p,q,a}:=\inf\limits_{u\in \mathcal{P}_{p,q,a}^{-}}\mathcal{J}_{p,q}(u)>0$.
\end{lemma}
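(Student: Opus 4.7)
The strategy is to combine the maximum characterization of $t_u$ from Lemma \ref{sx3} with the pointwise bound $\mathcal{J}_{p,q}(\tau*u)\ge h(\|\Delta(\tau*u)\|_2^2)$, exploiting that $h$ itself is bounded below by a positive constant on an appropriate range guaranteed by Lemma \ref{sx1}.

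First, I would observe that any $u\in\mathcal{P}_{p,q,a}^-$ satisfies $t_u=0$, so by part (iii) of Lemma \ref{sx3} we have
\[
\mathcal{J}_{p,q}(u)=\Psi_u(0)=\max_{\tau\in\mathbb{R}}\Psi_u(\tau).
\]
Second, using the Gagliardo-Nirenberg inequality from Lemma \ref{GN} applied to $\tau*u$, together with $\|\tau*u\|_2=\|u\|_2=a$ and the identity $\|\Delta(\tau*u)\|_2^{p\gamma_p}=(e^{4\tau}\|\Delta u\|_2^2)^{N(p-2)/8}$, I would check termwise that
\[
\Psi_u(\tau)\ge \tfrac12 e^{4\tau}\|\Delta u\|_2^2-\tfrac{B_{N,p}a^{p(1-\gamma_p)}}{p}(e^{4\tau}\|\Delta u\|_2^2)^{\frac{N(p-2)}{8}}-\tfrac{\mu B_{N,q}a^{q(1-\gamma_q)}}{q}(e^{4\tau}\|\Delta u\|_2^2)^{\frac{N(q-2)}{8}}=h\bigl(e^{4\tau}\|\Delta u\|_2^2\bigr),
\]
so $\Psi_u(\tau)\ge h(\|\Delta(\tau*u)\|_2^2)$ for every $\tau\in\mathbb{R}$.

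Third, since the map $\tau\mapsto e^{4\tau}\|\Delta u\|_2^2$ is a bijection from $\mathbb{R}$ onto $(0,\infty)$, taking the supremum over $\tau$ gives
\[
\mathcal{J}_{p,q}(u)=\max_{\tau\in\mathbb{R}}\Psi_u(\tau)\ge \max_{\tau\in\mathbb{R}}h(e^{4\tau}\|\Delta u\|_2^2)=\max_{s>0}h(s).
\]
Under hypothesis \eqref{con1}, Lemma \ref{sx1} yields a global strict maximum of $h$ at positive level; denote this value $M_h>0$. This constant is intrinsic to $h$ and independent of the choice of $u\in\mathcal{P}_{p,q,a}^-$, so $\mathcal{J}_{p,q}(u)\ge M_h$ for every such $u$, and taking the infimum produces $\tilde\sigma_{p,q,a}\ge M_h>0$.

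I do not expect a serious obstacle here: the only subtlety is verifying that the exponents $\tfrac{N(p-2)}{8}$ and $\tfrac{N(q-2)}{8}$ arising from the Gagliardo-Nirenberg bound match those appearing in the definition of $h$ (which is immediate from $p\gamma_p=\tfrac{N(p-2)}{4}$), and that the bijectivity $\tau\mapsto e^{4\tau}\|\Delta u\|_2^2$ onto $(0,\infty)$ is available for any fixed $u\ne 0$, which is clear since $u\in S_a$ forces $\|\Delta u\|_2>0$ via $u\not\equiv 0$.
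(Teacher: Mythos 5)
Your proof is correct and follows essentially the same route as the paper: both reduce the problem to the pointwise bound $\Psi_u(\tau)\geq h(e^{4\tau}\|\Delta u\|_2^2)$, use the maximum characterization $\mathcal{J}_{p,q}(u)=\max_\tau\Psi_u(\tau)$ for $u\in\mathcal{P}_{p,q,a}^-$ from Lemma \ref{sx3}, and invoke the positivity of $\max_{s>0}h(s)$ from Lemma \ref{sx1}. The only cosmetic difference is that the paper picks the specific $\tau_u$ with $e^{4\tau_u}\|\Delta u\|_2^2=t_{\max}$ while you take the supremum over all $\tau$ and note bijectivity; these are equivalent.
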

\begin{proof}
Let $t_{max}$ denote the strict maximum of the function $h$ at positive level in Lemma \ref{sx1}. For any $u\in \mathcal{P}_{p,q,a}^-$, there exists $\tau_u\in \mathbb{R}$ such that
$\|\Delta (\tau_u*u)\|_2^2=t_{max}$. From Lemma \ref{sx3}, we have
\begin{equation*}
  \mathcal{J}_{p,q}(u)=\Psi_u(0)=\max\limits_{\tau\in \mathbb{R}}\Psi_u(\tau)\geq \Psi_u(\tau_u)=\mathcal{J}_{p,q}(\tau_u*u)\geq h(\|\Delta (\tau_u*u)\|_2^2)=h(t_{max})>0.
\end{equation*}
By the arbitrariness of $u\in \mathcal{P}_{p,q,a}^-$, we obtain $\inf\limits_{u\in \mathcal{P}_{p,q,a}^{-}}\mathcal{J}_{p,q}(u)\geq h(t_{max})>0$.
\end{proof}

Denoting by $\mathcal{J}_{p,q}^{c}:=\Big\{u\in S_{a,r}:\mathcal{J}_{p,q}(u)\leq c\Big\}$, $\mathcal{P}_{p,q,a,r}^{+}:=\mathcal{P}_{p,q,a}^{+}\cap H^2_{rad}(\mathbb{R}^N)$, $\mathcal{P}_{p,q,a,r}^{-}:=\mathcal{P}_{p,q,a}^{-}\cap H^2_{rad}(\mathbb{R}^N)$, $\tilde{\sigma}_{p,q,a,r}:=\inf\limits_{u\in \mathcal{P}_{p,q,a,r}^{-}}\mathcal{J}_{p,q}(u)>0$, $-\infty<c_{p,q,a,r}:=\inf\limits_{u\in \mathcal{A}_{R_0,r}}\mathcal{J}_{p,q}(u)<0$, where
\begin{equation*}
  \mathcal{A}_{R_0,r}:=\Big\{u\in S_{a,r}:\|\Delta u\|_2^2<R_0\}.
\end{equation*}
We introduce the minimax class
\begin{equation*}
  \Gamma:=\Big\{\gamma\in C([0,1],S_{a,r}):\gamma(0)\in \mathcal{P}_{p,q,a,r}^+,\gamma(1)\in \mathcal{J}_{p,q}^{2c_{p,q,a,r}}\Big\},
\end{equation*}
then $\Gamma\neq\emptyset$. Indeed, for any fixed $u\in S_{a,r}$, by Lemmas \ref{biancon} and \ref{sx3}, we have $s_u*u\in \mathcal{P}_{p,q,a,r}^+$, $\mathcal{J}_{p,q}(\tilde{\tau}*u)<2c_{p,q,a,r}$ for $\tilde{\tau}>>1$ large enough, and $\tau\mapsto \tau*u$ is continuous for any $\tau\in \mathbb{R}$, thus for any $t\in[0,1]$, $((1-t)s_u+t\tilde{\tau})*u\in \Gamma$. We define the minimax value
\begin{equation}\label{shanlu1}
  \sigma_{p,q,a,r}:=\inf\limits_{\gamma\in \Gamma}\max\limits_{t\in [0,1]}\mathcal{J}_{p,q}(\gamma(t)).
\end{equation}
\begin{theorem}\label{mp}
$\sigma_{p,q,a,r}>0$ can be achieved by some $\tilde{u}\in S_{a,r}$, and $\tilde{u}$ is a solution of \eqref{question} with some $\tilde{\lambda}<0$.
\end{theorem}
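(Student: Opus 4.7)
The plan is to realize $\sigma_{p,q,a,r}$ as a genuine mountain-pass critical level of $\mathcal{J}_{p,q}|_{S_{a,r}}$, extract a Palais--Smale sequence that in addition satisfies $P_{p,q}(v_n)\to 0$, and then pass to a strong limit via Lemma \ref{com1}.

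First I would verify that $\sigma_{p,q,a,r}>0$ by an intermediate-value argument showing that every $\gamma\in\Gamma$ crosses $\mathcal{P}_{p,q,a,r}^-$. Since $\gamma(0)\in\mathcal{P}_{p,q,a,r}^+$, the global maximum point $t_{\gamma(0)}$ of $\Psi_{\gamma(0)}$ from Lemma \ref{sx3} is strictly positive. For the other endpoint, Lemma \ref{Four} together with $p,q\in 2\mathbb{N}$ yields $c_{p,q,a,r}=c_{p,q,a}$, hence $\mathcal{J}_{p,q}(\gamma(1))\leq 2c_{p,q,a,r}<c_{p,q,a}$ and Lemma \ref{sx6} gives $t_{\gamma(1)}<0$. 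The $C^1$-continuity of $u\mapsto t_u$ from Lemma \ref{sx3}(iv) then produces $t^*\in(0,1)$ with $t_{\gamma(t^*)}=0$, i.e.\ $\gamma(t^*)\in\mathcal{P}_{p,q,a,r}^-$. Combined with the radial version of Lemma \ref{sx7} this yields $\sigma_{p,q,a,r}\geq\tilde{\sigma}_{p,q,a,r}>0$; the reverse inequality follows by testing $\Gamma$ with the scaled path $\tau\mapsto\tau*u$ for $u\in\mathcal{P}_{p,q,a,r}^-$, so in fact $\sigma_{p,q,a,r}=\tilde{\sigma}_{p,q,a,r}$.

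To produce a Palais--Smale sequence carrying the extra decay $P_{p,q}(v_n)\to 0$, I would switch to the scaling-augmented functional $\tilde{\mathcal{J}}(\tau,u):=\mathcal{J}_{p,q}(\tau*u)$ on the Finsler manifold $X=\mathbb{R}\times S_{a,r}$, and apply Ghoussoub's Lemma \ref{Ghouss} to the lifted homotopy stable family $\tilde{\Gamma}:=\{\tilde\gamma(t)=(0,\gamma(t))\,:\,\gamma\in\Gamma\}$ with extended boundary $B$ consisting of all endpoints $\tilde\gamma(0),\tilde\gamma(1)$ and with closed set $F=\{0\}\times\mathcal{P}_{p,q,a,r}^-$. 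The inequality $\sup_{B}\tilde{\mathcal{J}}\leq\sigma_{p,q,a,r}\leq\inf_{F}\tilde{\mathcal{J}}$ is immediate from the preceding step, and the crossing $\tilde\gamma([0,1])\cap F\neq\emptyset$ was just established. Lemma \ref{Ghouss} yields $(\tau_n,u_n)\in X$ with $\tilde{\mathcal{J}}(\tau_n,u_n)\to\sigma_{p,q,a,r}$, $\partial_\tau\tilde{\mathcal{J}}(\tau_n,u_n)\to 0$ and $\|d_u\tilde{\mathcal{J}}(\tau_n,u_n)\|_{T^*_{u_n}S_{a,r}}\to 0$. Setting $v_n:=\tau_n*u_n$, the $\tau$-derivative equation is precisely $P_{p,q}(v_n)\to 0$, while the $u$-derivative condition together with Lemma \ref{biancon} shows that $\{v_n\}\subset S_{a,r}$ is a PS sequence for $\mathcal{J}_{p,q}|_{S_{a,r}}$ at level $\sigma_{p,q,a,r}$.

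Finally, since $\sigma_{p,q,a,r}>0$ and all other hypotheses of Lemma \ref{com1} hold ($\mu>0$, $2<q<p<\frac{2N}{(N-4)^+}$, $P_{p,q}(v_n)\to 0$), up to a subsequence $v_n\to\tilde u$ strongly in $H^2(\mathbb{R}^N)$ with $\tilde u\in S_{a,r}$ a radial solution of \eqref{question} for some $\tilde\lambda<0$; by continuity $\mathcal{J}_{p,q}(\tilde u)=\sigma_{p,q,a,r}$, so $\sigma_{p,q,a,r}$ is achieved. The hard part will be the simultaneous control $P_{p,q}(v_n)\to 0$: an ordinary $S_{a,r}$-PS sequence carries no a priori Pohozaev information, and it is the augmented functional $\tilde{\mathcal{J}}$, together with Ghoussoub's framework and the choice $F=\{0\}\times\mathcal{P}_{p,q,a,r}^-$, that supplies the missing dilation derivative. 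The remaining bookkeeping (homotopy stability of $\tilde{\Gamma}$ and the identification of the lifted min-max value with $\sigma_{p,q,a,r}$ via the homeomorphism $(\tau,u)\mapsto\tau*u$) is routine.
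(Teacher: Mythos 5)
Your proposal reaches the correct conclusion and shares the same macro-structure as the paper's proof (prove $\sigma_{p,q,a,r}=\tilde\sigma_{p,q,a,r}>0$ via the crossing argument, manufacture a Palais--Smale sequence with $P_{p,q}\to 0$, then apply Lemma \ref{com1}), but the mechanism you use to obtain the Pohozaev decay is genuinely different from the paper's. You employ the Jeanjean-style augmented functional $\tilde{\mathcal{J}}(\tau,u)=\mathcal{J}_{p,q}(\tau*u)$ on $\mathbb{R}\times S_{a,r}$ and recover $P_{p,q}(v_n)\to 0$ from the smallness of the $\tau$-gradient, since $\partial_\tau\tilde{\mathcal{J}}(\tau,u)=\Psi_u'(\tau)=2P_{p,q}(\tau*u)$. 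The paper instead applies Ghoussoub's Lemma \ref{Ghouss} directly on $S_{a,r}$ with $F=\mathcal{P}_{p,q,a,r}^-$ and reads off the Pohozaev decay from conclusion (iii) of that lemma, $\operatorname{dist}(u_n,F)\to 0$, which already yields $P_{p,q}(u_n)\to 0$ once boundedness is in hand. The paper's route is shorter and avoids the bookkeeping you defer: homotopy stability of the lifted family, the Finsler structure on $\mathbb{R}\times S_{a,r}$, and, most importantly, the transfer of the $u$-gradient bound $\|d_u\tilde{\mathcal{J}}(\tau_n,u_n)\|\to 0$ into $\|d(\mathcal{J}_{p,q}|_{S_{a,r}})(v_n)\|\to 0$. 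That last step is not free: the differential of $u\mapsto\tau_n*u$ has operator norm depending on $e^{\tau_n}$, so you need $\{\tau_n\}$ bounded. You have implicitly chosen $F=\{0\}\times\mathcal{P}_{p,q,a,r}^-$ precisely so that the $\operatorname{dist}$ conclusion of Lemma \ref{Ghouss} forces $\tau_n\to 0$; make this explicit, since it is exactly here that the augmented-functional route does essential work. One point where you sharpen the paper: you correctly observe (via Lemma \ref{Four} and $p,q\in 2\mathbb{N}$) that $c_{p,q,a,r}=c_{p,q,a}$, which is needed to invoke Lemma \ref{sx6} (stated with $c_{p,q,a}$, not $c_{p,q,a,r}$) at the endpoint $\gamma(1)$; the paper uses this silently.
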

\begin{proof}
Set $\mathcal{F}:=\Gamma$, $A:=\gamma([0,1])$, $F:=\mathcal{P}_{p,q,a,r}^-$ and $B:=\mathcal{P}_{p,q,a,r}^+\cup \mathcal{J}_{p,q}^{2c_{p,q,a,r}}$. By $\mathcal{P}_{p,q,a,r}^0=\emptyset$, we know $B$ is a closed subset of $S_{a,r}$. First, we prove that $\mathcal{F}$ is a homotopy stable family with extended boundary $B$. In fact, for any $\gamma\in \Gamma$ and any $\eta\in C([0,1]\times S_{a,r},S_{a,r})$ satisfying $\eta(t,u)=u$ for $(t,u)\in (\{0\}\times S_{a,r})\cup ([0,1]\times B)$, we have $\eta(\{1\},\gamma(0))=\gamma(0)\in \mathcal{P}_{p,q,a,r}^+$ and $\eta(\{1\},\gamma(1))=\gamma(1)\in \mathcal{J}_{p,q}^{2c_{p,q,a,r}}$. Hence, $\eta(\{1\},\gamma(t))\in \Gamma$, that is $\mathcal{F}$ is a homotopy stable family with extended boundary $B$.

Next we prove
\begin{equation*}
  A\cap F\backslash B\neq\emptyset, \quad\text{for any $A\in \mathcal{F}$}
\end{equation*}
and
\begin{equation*}
  \sup\limits_{u\in B}\mathcal{J}_{p,q}(u)\leq c\leq \inf\limits_{u\in F}\mathcal{J}_{p,q}(u) .
\end{equation*}
As we all know, $A\cap F\backslash B=A\cap (F\backslash B)$, by Lemmas \ref{sx4}, \ref{sx5} and \ref{sx7}, we have $F\cap B=\emptyset$, thus for any $\gamma\in \Gamma$,
\begin{equation}\label{set}
  A\cap F\backslash B=A\cap (F\backslash B)=A\cap F=\gamma([0,1])\cap \mathcal{P}_{p,q,a,r}^-.
\end{equation}
Since $\gamma(0)\in \mathcal{P}_{p,q,a,r}^+$, we have $s_{\gamma(0)}=0$ and hence $t_{\gamma(0)}>s_{\gamma(0)}=0$. On the other hand, by $\mathcal{J}_{p,q}(\gamma(1))\leq 2c_{p,q,a,r}<c_{p,q,a,r}$ and Lemma \ref{sx6}, we obtain $t_{\gamma(1)}<0$. The continuity of $t_{u}$ and $\gamma$ implies that there exists $\tau_\gamma\in (0,1)$ such that $t_{\gamma(\tau_\gamma)}=0$. Using Lemma \ref{sx3}, we have $\gamma(\tau_\gamma)\in \mathcal{P}_{p,q,a,r}^-$. Thus $A\cap F\backslash B\neq\emptyset$.

By \eqref{set}, for any $\gamma\in \Gamma$, we obtain
\begin{equation*}
  \max\limits_{t\in[0,1]}\mathcal{J}_{p,q}(\gamma(t))\geq \inf\limits_{u\in \mathcal{P}_{p,q,a,r}^-}\mathcal{J}_{p,q}(u),
\end{equation*}
thus $\sigma_{p,q,a,r}\geq \tilde{\sigma}_{p,q,a,r}$. On the other hand, for any $u\in \mathcal{P}_{p,q,a,r}^{-}$ and $t\in[0,1]$, $((1-t)s_u+t\tilde{\tau})*u\in \Gamma$. By Lemma \ref{sx3},
\begin{equation*}
  \mathcal{J}_{p,q}(u)=\max\limits_{\tau\in \mathbb{R}}\mathcal{J}_{p,q}(\tau*u)\geq \max\limits_{t\in[0,1]}\mathcal{J}_{p,q}(((1-t)s_u+t\tilde{\tau})*u)\geq \sigma_{p,q,a,r},
\end{equation*}
which implies that $\tilde{\sigma}_{p,q,a,r}\geq \sigma_{p,q,a,r}$. Thus $\sigma_{p,q,a,r}= \tilde{\sigma}_{p,q,a,r}>0$. By Lemmas \ref{sx4}, \ref{sx5} and \ref{sx7}, we obtain
\begin{equation*}
  \sup\limits_{u\in \mathcal{P}_{p,q,a,r}^+\cup \mathcal{J}_{p,q}^{2c_{p,q,a,r}} }\mathcal{J}_{p,q}(u)< \sigma_{p,q,a,r} =\tilde{\sigma}_{p,q,a,r} .
\end{equation*}

From the above arguments, using Lemma \ref{Ghouss}, we can obtain a Palais-Smale sequence $\{u_n\}\subset S_{a,r}$ for $\mathcal{J}_{p,q}|_{S_{a,r}}$ at level $\sigma_{p,q,a,r}>0$ and $\lim\limits_{n\rightarrow\infty}dist (u_n,\mathcal{P}_{p,q,a,r}^-)=0$, i.e., $\lim\limits_{n\rightarrow\infty}P(u_n)=0$. By Lemma \ref{com1}, we deduce that, there exists $\tilde{u}\in H^2_{rad}(\mathbb{R}^N)$ such that, up to a subsequence, $u_n\rightarrow \hat{u}$ in $H^2(\mathbb{R}^N)$ as $n\rightarrow\infty$ and $\tilde{u}$ is a radial solution of \eqref{question} with some $\tilde{\lambda}<0$.
\end{proof}

\textbf{Proof of Theorem \ref{th4}:} Theorem \ref{th4} follows from Theorems \ref{local} and \ref{mp}.
\qed

\subsection{The case $N\geq2$, $2<q\leq \bar{p}<p<\frac{2N}{(N-4)^+}$ and $\mu<0$}

\begin{lemma}\label{xiaoyu1}
$\mathcal{P}_{p,q,a}^0=\emptyset$ and $\mathcal{P}_{p,q,a}$ is a smooth manifold of codimension $2$ in $H^2(\mathbb{R}^N)$.
\end{lemma}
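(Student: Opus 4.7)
The plan is to imitate the proof of Lemma \ref{sx2}, exploiting the fact that with $\mu<0$ and $p>\bar{p}\geq q$ the sign obstruction becomes immediate rather than requiring the fine smallness condition \eqref{con1}. First I would assume by contradiction that there exists $u\in\mathcal{P}_{p,q,a}^{0}$, so that simultaneously
\begin{equation*}
\|\Delta u\|_{2}^{2}=\gamma_{p}\|u\|_{p}^{p}+\mu\gamma_{q}\|u\|_{q}^{q}\quad\text{and}\quad 2\|\Delta u\|_{2}^{2}=p\gamma_{p}^{2}\|u\|_{p}^{p}+\mu q\gamma_{q}^{2}\|u\|_{q}^{q}.
\end{equation*}
Eliminating $\|\Delta u\|_{2}^{2}$ between the two equalities gives
\begin{equation*}
\gamma_{p}(p\gamma_{p}-2)\|u\|_{p}^{p}=\mu\gamma_{q}(2-q\gamma_{q})\|u\|_{q}^{q}.
\end{equation*}

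The sign analysis then settles the first claim in one shot. Because $p>\bar{p}$, we have $p\gamma_{p}>2$ and the left-hand side is strictly positive (note $u\in S_{a}$ forces $\|u\|_{p}>0$). On the right-hand side, if $q<\bar{p}$ then $2-q\gamma_{q}>0$, so with $\mu<0$ the right-hand side is strictly negative, a contradiction. If instead $q=\bar{p}$, then $q\gamma_{q}=2$ and the right-hand side vanishes, forcing $\|u\|_{p}=0$, again impossible. Hence $\mathcal{P}_{p,q,a}^{0}=\emptyset$.

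For the manifold statement, I would reuse verbatim the implicit-function/Lagrange-multiplier argument already carried out at the end of the proof of Lemma \ref{sx2}: since $G(u):=\|u\|_{2}^{2}-a^{2}$ and $P_{p,q}$ are both $C^{1}$ on $H^{2}(\mathbb{R}^{N})$, it suffices to verify that the differential $(dG(u),dP_{p,q}(u)):H^{2}(\mathbb{R}^{N})\to\mathbb{R}^{2}$ is surjective at every $u\in\mathcal{P}_{p,q,a}$. Assuming otherwise, one finds $u\in S_{a}$ that is a constrained critical point of $P_{p,q}$ on $S_{a}$, so by the Lagrange multiplier rule $\Delta^{2}u=\lambda u+\tfrac{p\gamma_{p}}{2}|u|^{p-2}u+\tfrac{\mu q\gamma_{q}}{2}|u|^{q-2}u$; the Pohožaev identity of Lemma \ref{poho} applied to this equation yields $2\|\Delta u\|_{2}^{2}=p\gamma_{p}^{2}\|u\|_{p}^{p}+\mu q\gamma_{q}^{2}\|u\|_{q}^{q}$, i.e.\ $u\in\mathcal{P}_{p,q,a}^{0}=\emptyset$, a contradiction.

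The only place that is genuinely different from Lemma \ref{sx2} is the sign argument in the first paragraph, which is in fact the easy part here: the smallness assumption \eqref{con2} plays no role in ruling out $\mathcal{P}_{p,q,a}^{0}$ because the sign of $\mu<0$ is already incompatible with $p>\bar{p}\geq q$. I do not anticipate any real obstacle; the borderline case $q=\bar{p}$ is the only subtlety, and it is handled by observing that $2-q\gamma_{q}=0$ collapses the right-hand side and forces $\|u\|_{p}=0$.
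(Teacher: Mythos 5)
Your proof is correct and follows essentially the same route as the paper: eliminate $\|\Delta u\|_2^2$ from the two defining identities, observe that the resulting equality $\gamma_p(p\gamma_p-2)\|u\|_p^p=\mu\gamma_q(2-q\gamma_q)\|u\|_q^q$ has a nonnegative left side and nonpositive right side when $\mu<0$ and $q\leq\bar p<p$, force $\|u\|_p=0$, and contradict $u\in S_a$; the manifold statement is then obtained by the same Lagrange-multiplier/Pohožaev argument as in Lemma \ref{sx2}. The only cosmetic difference is that you split into the cases $q<\bar p$ and $q=\bar p$, whereas the paper treats them uniformly via non-strict inequalities, but the content is identical.
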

\begin{proof}
If there exists $u\in \mathcal{P}_{p,q,a}^0$, then
\begin{equation*}
 (p\gamma_p-2)\gamma_p\|u\|_p^p= (2-q\gamma_q)\mu\gamma_q\|u\|_q^{q}.
\end{equation*}
By $2<q\leq \bar{p}<p<\frac{2N}{(N-4)^+}$, we have $q\gamma_q\leq2<p\gamma_p$, this together with $\mu<0$ yields that $\|u\|_p=\|u\|_q=0$, which is contradict to $u\in S_a$. The rest of the proof is similar to the one of Lemma \ref{sx2}, we omit it.
\end{proof}

\begin{lemma}\label{xiaoyu2}
For any $u\in S_a$, the function $\Psi_u$ has a unique critical point $t_u\in \mathbb{R}$ and $t_u$ is a strict maximum point at positive level. Moreover:
\\(i)  $t_u*u\in \mathcal{P}_{p,q,a}=\mathcal{P}_{p,q,a}^-$.
\\(ii) $\Psi_u$ is strictly decreasing and concave on $(t_u,\infty)$.
\\(iii)
The map $u\in S_a\mapsto t_u\in \mathbb{R}$ is of class $C^1$.
\\(iv) $P_{p,q}(u)<0$ if and only if $t_u<0$.
\end{lemma}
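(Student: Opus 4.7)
The plan is to imitate the strategy of Lemma \ref{sx3} (and Lemma \ref{lxf11}), but the geometry of $\Psi_u$ becomes substantially simpler here because the subcritical term comes with coefficient $-\mu>0$ while the supercritical term produces a single global concavity effect. Concretely, I start from the identity $\Psi_u'(\tau)=2P_{p,q}(\tau*u)$, factor out $e^{4\tau}$, and set $\alpha:=\tfrac{N(p-2)-8}{2}>0$ (since $p>\bar p$) and $\beta:=\tfrac{N(q-2)-8}{2}\leq 0$ (since $q\leq\bar p$). Then $\Psi_u'(\tau)=0$ is equivalent to
\[
g(\tau):=\|\Delta u\|_2^2-\gamma_p\|u\|_p^p\,e^{\alpha\tau}+|\mu|\gamma_q\|u\|_q^q\,e^{\beta\tau}=0.
\]
Differentiation gives $g'(\tau)=-\alpha\gamma_p\|u\|_p^p e^{\alpha\tau}+\beta|\mu|\gamma_q\|u\|_q^q e^{\beta\tau}<0$ strictly, since the first contribution is strictly negative ($\alpha>0$) and the second is $\leq 0$ ($\beta\leq 0$, $\mu<0$). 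Combined with $\lim_{\tau\to-\infty}g(\tau)>0$ (which equals $+\infty$ if $q<\bar p$ and $\|\Delta u\|_2^2+|\mu|\gamma_q\|u\|_q^q$ if $q=\bar p$) and $\lim_{\tau\to+\infty}g(\tau)=-\infty$, this produces a unique $t_u\in\mathbb{R}$ with $g(t_u)=0$. Hence $\Psi_u$ has a unique critical point; $\Psi_u'>0$ on $(-\infty,t_u)$ and $\Psi_u'<0$ on $(t_u,\infty)$ makes $t_u$ a strict maximum, and $\Psi_u(\tau)\to 0^+$ as $\tau\to-\infty$ forces $\Psi_u(t_u)>0$.

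The itemised claims then fall out in order. For (i), $t_u*u\in\mathcal{P}_{p,q,a}$ is immediate from Lemma \ref{onP}, while $\Psi_u''(t_u)\leq 0$ together with $\mathcal{P}_{p,q,a}^0=\emptyset$ (Lemma \ref{xiaoyu1}) upgrades this to $t_u*u\in\mathcal{P}_{p,q,a}^-$; conversely, any $v\in\mathcal{P}_{p,q,a}$ satisfies $\Psi_v'(0)=0$, so $t_v=0$ by uniqueness, whence $\mathcal{P}_{p,q,a}=\mathcal{P}_{p,q,a}^-$. For (ii), I use the algebraic identity
\[
\Psi_u''(\tau)-4\Psi_u'(\tau)=-\tfrac{P(P-4)}{p}\|u\|_p^p e^{P\tau}+\tfrac{\mu Q(4-Q)}{q}\|u\|_q^q e^{Q\tau},\qquad P:=\tfrac{N(p-2)}{2},\ Q:=\tfrac{N(q-2)}{2}.
\]
Because $P>4$ the first term on the right is strictly negative, and because $Q\leq 4$ with $\mu<0$ the second is $\leq 0$; therefore $\Psi_u''(\tau)<4\Psi_u'(\tau)$ for every $\tau\in\mathbb{R}$. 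Restricting to $(t_u,\infty)$, where $\Psi_u'(\tau)<0$, yields $\Psi_u''(\tau)<0$, giving both strict decreasingness and strict concavity on that interval. For (iii), I apply the implicit function theorem to $\Phi(u,\tau):=\Psi_u'(\tau)$, which is of class $C^1$ on $S_a\times\mathbb{R}$, at the point $(u,t_u)$: since $\partial_\tau\Phi(u,t_u)=\Psi_u''(t_u)<0\neq 0$, the map $u\mapsto t_u$ is $C^1$ on $S_a$. Finally (iv) is automatic because $P_{p,q}(u)=\tfrac{1}{2}\Psi_u'(0)$ and $\Psi_u'$ is positive exactly on $(-\infty,t_u)$ and negative exactly on $(t_u,\infty)$.

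The only moderately delicate point is the uniform treatment of the borderline case $q=\bar p$, where $\beta=0$ and $Q=4$: the second term in $g'$ vanishes and the second term in the identity for $\Psi_u''-4\Psi_u'$ also vanishes. Nevertheless, the surviving $\alpha>0$ contribution keeps $g$ strictly decreasing, and the surviving $P(P-4)/p>0$ coefficient keeps the inequality $\Psi_u''<4\Psi_u'$ strict. Thus no case distinction is necessary, and the proof goes through with a single unified argument.
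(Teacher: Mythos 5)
Your proof is correct and follows essentially the paper's strategy, with two small improvements worth noting. The paper isolates the critical-point equation by dividing $\Psi_u'(\tau)$ by $e^{\frac{N(q-2)}{2}\tau}$, producing a two-exponential function $\varphi$ set equal to the negative constant $\mu\gamma_q\|u\|_q^q$, and then reads uniqueness off the shape of $\varphi$; you divide by $e^{4\tau}$ instead, and your auxiliary function $g$ is strictly decreasing outright, so uniqueness of $t_u$ is immediate without any discussion of the graph of $\varphi$. More substantively, the paper closes out items (ii)--(iii) with ``the rest of the proof is similar to Lemma \ref{sx3},'' but that lemma's proof in turn merely asserts the strict decrease and concavity on $(t_u,\infty)$; your identity
\[
\Psi_u''(\tau)-4\Psi_u'(\tau)=-\tfrac{P(P-4)}{p}\|u\|_p^p e^{P\tau}+\tfrac{\mu Q(4-Q)}{q}\|u\|_q^q e^{Q\tau}<0
\]
genuinely fills that gap. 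It also yields $\Psi_u''(t_u)<0$ directly upon substituting $\tau=t_u$ (where $\Psi_u'(t_u)=0$), so you could in fact bypass Lemma \ref{xiaoyu1} both for the $\mathcal{P}_{p,q,a}^-$ membership in (i) and for the nondegeneracy needed in (iii). Your unified treatment of the borderline $q=\bar p$ is also correct: only the $\alpha>0$ term is needed for strict monotonicity of $g$ and only the $P(P-4)>0$ term for the strict inequality above.
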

\begin{proof}
Noticed that, since $\mu<0$, we have $\lim\limits_{\tau\rightarrow-\infty}\Psi_u(\tau)=0^+$ and $\lim\limits_{\tau\rightarrow\infty}\Psi_u(\tau)=-\infty$. Moreover, for any $u\in S_a$, $\Psi_u'(\tau)=0$ is and only if
\begin{equation*}
  \varphi(\tau)=\mu \gamma_q \|u\|^q<0,\quad \text{with}\quad\varphi(\tau):=e^{\frac{N(\bar{p}-q)}{2}\tau}\| \Delta u\|_2^2-\gamma_p e^{\frac{N(p-q)}{2}\tau}\|u\|_p^p.
\end{equation*}
By $0\leq\frac{N(\bar{p}-q)\tau}{2}<\frac{N(p-q)\tau}{2}$, we know the above equation has at most one solution. Thus $\Psi_u$ has a unique critical point $t_u\in \mathbb{R}$ and $t_u$ is a strict maximum point at positive level. If $u\in \mathcal{P}_{p,q,a}$, then $t_u=0$ and $\Psi''_u(0)\leq0$, by $\mathcal{P}_{p,q,a}^0=\emptyset$, we deduce $\Psi''_u(0)<0$, $u\in \mathcal{P}_{p,q,a}^-$, so that $\mathcal{P}_{p,q,a}=\mathcal{P}_{p,q,a}^-$. Since $\Psi_u'(t)<0$ if and only if $t> t_u$, we know $P_{p,q}(u)=\frac{1}{2}\Psi_u'(0)<0$ if and only if $t_u<0$. The rest of the proof is similar to the proof of Lemma \ref{sx3}, so we omit it.
\end{proof}

\begin{lemma}\label{xiaoyu3}
It results that $M_{p,q,a}>0$.
\end{lemma}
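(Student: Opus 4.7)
The plan is to combine the Pohožaev constraint $P_{p,q}(u)=0$ with the sign condition $\mu<0$ to get a uniform positive lower bound on $\|\Delta u\|_2$ for $u\in\mathcal{P}_{p,q,a}$, and then rewrite $\mathcal{J}_{p,q}$ on $\mathcal{P}_{p,q,a}$ so that both of its surviving terms have the right sign.

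First, I would exploit the constraint. For $u\in\mathcal{P}_{p,q,a}$, we have $\|\Delta u\|_2^2=\gamma_p\|u\|_p^p+\mu\gamma_q\|u\|_q^q$. Since $\mu<0$ and $\gamma_q>0$, this yields $\|\Delta u\|_2^2\le \gamma_p\|u\|_p^p$. Applying the Gagliardo–Nirenberg inequality (Lemma \ref{GN}),
\begin{equation*}
\|\Delta u\|_2^2\le \gamma_p B_{N,p}\,a^{p(1-\gamma_p)}\|\Delta u\|_2^{p\gamma_p}.
\end{equation*}
Because $p>\bar{p}$, we have $p\gamma_p>2$, so this inequality gives
\begin{equation*}
\|\Delta u\|_2\ \ge\ \Bigl(\gamma_p B_{N,p}\,a^{p(1-\gamma_p)}\Bigr)^{-\frac{1}{p\gamma_p-2}}=:\delta>0,
\end{equation*}
a uniform positive lower bound on $\mathcal{P}_{p,q,a}$. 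Feeding this back into $\gamma_p\|u\|_p^p\ge\|\Delta u\|_2^2\ge\delta^2$ gives $\|u\|_p^p\ge \delta^2/\gamma_p$.

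Next, I would use the constraint to eliminate $\|\Delta u\|_2^2$ from $\mathcal{J}_{p,q}$. A direct substitution gives
\begin{equation*}
\mathcal{J}_{p,q}(u)=\Bigl(\tfrac{\gamma_p}{2}-\tfrac{1}{p}\Bigr)\|u\|_p^p+\mu\Bigl(\tfrac{\gamma_q}{2}-\tfrac{1}{q}\Bigr)\|u\|_q^q.
\end{equation*}
Since $p>\bar{p}$, the first coefficient $\tfrac{\gamma_p}{2}-\tfrac{1}{p}=\tfrac{N(p-\bar{p})}{8p}$ is strictly positive; since $q\le\bar{p}$, the coefficient $\tfrac{\gamma_q}{2}-\tfrac{1}{q}\le 0$, and combined with $\mu<0$ the whole second term is nonnegative. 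Therefore
\begin{equation*}
\mathcal{J}_{p,q}(u)\ \ge\ \Bigl(\tfrac{\gamma_p}{2}-\tfrac{1}{p}\Bigr)\|u\|_p^p\ \ge\ \Bigl(\tfrac{\gamma_p}{2}-\tfrac{1}{p}\Bigr)\frac{\delta^2}{\gamma_p}>0.
\end{equation*}
Taking the infimum over $u\in\mathcal{P}_{p,q,a}$ gives $M_{p,q,a}>0$.

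There is no serious obstacle here; the argument is essentially mechanical once one notices that $\mu<0$ turns the $q$-term into a helper rather than an enemy in both the Pohožaev-constrained lower bound for $\|\Delta u\|_2$ and the reformulated energy. The one point worth stressing in the write-up is that the two sign conditions $\mu<0$ and $q\le\bar p<p$ conspire so that every term has the sign needed; if $q>\bar p$ the argument would break in the energy step, and if $\mu>0$ it would break in the Gagliardo–Nirenberg step.
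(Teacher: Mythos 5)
Your proof is correct and follows essentially the same route as the paper: both derive the uniform lower bound $\|\Delta u\|_2\geq\delta>0$ on $\mathcal{P}_{p,q,a}$ via $\mu<0$ and Gagliardo--Nirenberg, then rewrite $\mathcal{J}_{p,q}$ using the Poho\u{z}aev constraint so that the $q$-term has a favorable sign. The only (cosmetic) difference is that you eliminate $\|\Delta u\|_2^2$ and lower-bound $\|u\|_p^p\geq\delta^2/\gamma_p$, whereas the paper eliminates $\|u\|_p^p$ and lower-bounds $\|\Delta u\|_2^2\geq\delta^2$; the two give the identical final estimate $\bigl(\tfrac{1}{2}-\tfrac{1}{p\gamma_p}\bigr)\delta^2>0$.
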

\begin{proof}
If $u\in \mathcal{P}_{p,q,a}$, then by $\mu<0$ and Lemmas \ref{GN} and \ref{poho}, we have
\begin{equation*}
  \|\Delta u\|_2^2=\mu\gamma_q\|u\|_q^{q}+\gamma_p\|u\|_p^{p}\leq \gamma_p\|u\|_p^{p}\leq \gamma_p B_{N,p}a^{p-\frac{N(p-2)}{4}}\|\Delta u\|_2^{\frac{N(p-2)}{4}}.
\end{equation*}
Since $p>\bar{p}$, $\frac{N(p-2)}{4}>2$, there exists $\delta>0$ such that $\inf\limits_{u\in \mathcal{P}_{p,q,a}}\|\Delta u\|_2\geq \delta>0$. Using Lemma \ref{poho} again, we obtain
\begin{equation*}
  \mathcal{J}_{p,q}(u)=\Big(\frac{1}{2}-\frac{1}{p\gamma_p}\Big)\int_{\mathbb{R}^N}|\Delta u|^2dx+\Big(\frac{\gamma_q}{p\gamma_p}-\frac{1}{q}\Big)\mu\int_{\mathbb{R}^N}|u|^qdx\geq \Big(\frac{1}{2}-\frac{1}{p\gamma_p}\Big)\delta^2>0.
\end{equation*}
\end{proof}
\begin{lemma}\label{xiaoyu4}
There exists $k_a>0$ small enough such that
\begin{equation*}
  0<\sup\limits_{u\in \overline{\mathcal{A}_{k_a}}}\mathcal{J}_{p,q}(u)<M_{p,q,a},\quad\text{and}\quad u\in \overline{\mathcal{A}_{k_a}}\Rightarrow \mathcal{J}_{p,q}(u),\,P_{p,q}(u)>0.
\end{equation*}
\end{lemma}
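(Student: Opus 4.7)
The plan is to use the Gagliardo--Nirenberg inequality to control the subcritical $L^p$- and $L^q$-terms by powers of $\|\Delta u\|_2$, and then to exploit the sign $\mu<0$ so that both $\mathcal{J}_{p,q}$ and $P_{p,q}$ are manifestly positive on a small $\mathcal{A}_{k_a}$, while $\sup_{\overline{\mathcal{A}_{k_a}}}\mathcal{J}_{p,q}$ can be forced below the positive threshold $M_{p,q,a}$ from Lemma \ref{xiaoyu3}.

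Step 1 (positivity on $\overline{\mathcal{A}_{k_a}}$). For every $u\in S_a$, Lemma \ref{GN} gives
\begin{align*}
  \|u\|_p^p \leq B_{N,p}\,a^{p(1-\gamma_p)}\|\Delta u\|_2^{p\gamma_p}, \qquad \|u\|_q^q \leq B_{N,q}\,a^{q(1-\gamma_q)}\|\Delta u\|_2^{q\gamma_q}.
\end{align*}
Since $\mu<0$, the terms $-\tfrac{\mu}{q}\|u\|_q^q$ in $\mathcal{J}_{p,q}(u)$ and $-\mu\gamma_q\|u\|_q^q$ in $P_{p,q}(u)$ are nonnegative, so dropping them and using the Gagliardo--Nirenberg bound on $\|u\|_p^p$ yields
\begin{align*}
  \mathcal{J}_{p,q}(u) &\geq \frac{1}{2}\|\Delta u\|_2^2 - \frac{B_{N,p}\,a^{p(1-\gamma_p)}}{p}\|\Delta u\|_2^{p\gamma_p},\\
  P_{p,q}(u) &\geq \|\Delta u\|_2^2 - \gamma_p B_{N,p}\,a^{p(1-\gamma_p)}\|\Delta u\|_2^{p\gamma_p}.
\end{align*}
As $p>\bar p$ forces $p\gamma_p>2$, each right-hand side is of the form $c_1 t - c_2 t^{p\gamma_p/2}$ in $t=\|\Delta u\|_2^2$, hence strictly positive for all sufficiently small $t>0$. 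I fix $k_0>0$ so that both lower bounds are positive on $\overline{\mathcal{A}_{k_0}}$.

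Step 2 (upper bound on the supremum). For the upper bound on $\mathcal{J}_{p,q}$, I discard the nonpositive $L^p$-contribution and estimate the (positive) $L^q$-contribution by Gagliardo--Nirenberg:
\begin{align*}
  \mathcal{J}_{p,q}(u) \leq \frac{1}{2}\|\Delta u\|_2^2 + \frac{|\mu|}{q}B_{N,q}\,a^{q(1-\gamma_q)}\|\Delta u\|_2^{q\gamma_q}, \qquad u\in S_a.
\end{align*}
Because $q\gamma_q>0$, this right-hand side is a continuous, monotone increasing function of $\|\Delta u\|_2^2$ that vanishes as $\|\Delta u\|_2\to 0$. Since Lemma \ref{xiaoyu3} gives $M_{p,q,a}>0$, I shrink $k_0$ to some $k_a\in(0,k_0]$ for which the upper estimate above is strictly less than $M_{p,q,a}$ whenever $\|\Delta u\|_2^2\leq k_a$. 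That such a $u$ actually exists (so the set $\mathcal{A}_{k_a}$ is nonempty and the supremum is genuinely positive) follows by dilating any $u_0\in S_a$ through $\tau\ast u_0$ with $\tau\ll 0$, using $\|\Delta(\tau\ast u_0)\|_2^2=e^{4\tau}\|\Delta u_0\|_2^2\to 0$. Combined with Step 1, this produces the desired chain $0<\sup_{\overline{\mathcal{A}_{k_a}}}\mathcal{J}_{p,q}<M_{p,q,a}$ together with $\mathcal{J}_{p,q},\,P_{p,q}>0$ on $\overline{\mathcal{A}_{k_a}}$.

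Main difficulty. There is no genuine obstacle here; the argument reduces to a careful calibration of $k_a$ against the Gagliardo--Nirenberg constants. The only conceptual point is that the sign of $\mu$ is used twice in opposite ways: the nonnegative term $|\mu|\gamma_q\|u\|_q^q$ gives $P_{p,q}$ its manifest positivity near the origin, whereas in the supremum estimate the very same $L^q$-contribution works against us and must be tamed by the fact that $q\gamma_q>0$.
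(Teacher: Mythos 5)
Your proof is correct and follows essentially the same approach as the paper: drop the favorable $L^q$-term (since $\mu<0$) and apply Gagliardo--Nirenberg to bound $\mathcal{J}_{p,q}$ and $P_{p,q}$ from below by $c_1 t - c_2 t^{p\gamma_p/2}$ with $p\gamma_p>2$, then shrink $k_a$ so these are positive and the supremum falls below $M_{p,q,a}>0$. The only difference is cosmetic: the paper omits writing out the explicit upper bound $\mathcal{J}_{p,q}(u)\leq \frac12\|\Delta u\|_2^2 + \frac{|\mu|}{q}B_{N,q}a^{q(1-\gamma_q)}\|\Delta u\|_2^{q\gamma_q}$ needed for $\sup<M_{p,q,a}$, which you spell out.
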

\begin{proof}
By lemma \ref{GN}, for any $u\in S_a$, we have
\begin{equation*}
  \mathcal{J}_{p,q}(u)\geq \frac{1}{2}\|\Delta u\|_2^2-\frac{B_{N,p}a^{p-\frac{N(p-2)}{4}}}{p}\|\Delta u\|_2^{\frac{N(p-2)}{4}},
\end{equation*}
\begin{equation*}
  P_{p,q}(u)\geq \|\Delta u\|_2^2-\gamma_pB_{N,p}a^{p-\frac{N(p-2)}{4}}\|\Delta u\|_2^{\frac{N(p-2)}{4}}.
\end{equation*}
Since $\frac{N(p-2)}{4}>2$, choosing $k_a>0$ small enough, and by Lemma \ref{xiaoyu3}, we complete the proof.
\end{proof}

Denoting by $\mathcal{J}_{p,q}^{c}:=\Big\{u\in S_{a,r}:\mathcal{J}_{p,q}(u)\leq c\Big\}$, $\mathcal{P}_{p,q,a,r}:=\mathcal{P}_{p,q,a}\cap  H^2_{rad}(\mathbb{R}^N)$, $M_{p,q,a,r}:=\inf\limits_{u\in \mathcal{P}_{p,q,a,r}}\mathcal{J}_{p,q}(u)>0$, and
\begin{equation*}
  \mathcal{A}_{k_a,r}:=\Big\{u\in S_{a,r}:\|\Delta u\|_2^2<k_a\}.
\end{equation*}
We introduce the minimax class
\begin{equation*}
  \Gamma:=\Big\{\gamma\in C([0,1],S_{a,r}):\gamma(0)\in \overline{\mathcal{A}_{k_a,r}},\gamma(1)\in \mathcal{J}_{p,q}^{0}\Big\},
\end{equation*}
then $\Gamma\neq\emptyset$. Indeed, for any fixed $u\in S_{a,r}$, there exists $\tau_1<<-1$ small enough and $\tau_2>>1$ large enough, such that $\tau_1*u\in \overline{\mathcal{A}_{k_a,r}}$, $\mathcal{J}_{p,q}(\tau_2*u)<0$, and $\tau\mapsto \tau*u$ is continuous for any $\tau\in \mathbb{R}$, thus for any $t\in[0,1]$, $((1-t)\tau_1+t\tau_2)*u\in \Gamma$. We define the minimax value
\begin{equation*}\label{shanlu2}
  \varsigma_{p,q,a,r}:=\inf\limits_{\gamma\in \Gamma}\max\limits_{t\in [0,1]}\mathcal{J}_{p,q}(\gamma(t)).
\end{equation*}
\begin{theorem}\label{xiaoyump}
$\varsigma_{p,q,a,r}>0$ can be achieved by some $\hat{u}\in S_{a,r}$, and $\hat{u}$ is a radial ground state solution of \eqref{question} with some $\hat{\lambda}<0$.
\end{theorem}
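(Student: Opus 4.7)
The argument follows the pattern of Theorem \ref{mp}, but is simplified by the fact that for $\mu<0$ the fiber map $\Psi_u$ has a unique critical point (Lemma \ref{xiaoyu2}) and $\mathcal{P}_{p,q,a}=\mathcal{P}_{p,q,a}^-$. The strategy is to identify $\varsigma_{p,q,a,r}$ with the Poho\u{z}aev infimum $M_{p,q,a,r}:=\inf_{u\in\mathcal{P}_{p,q,a,r}}\mathcal{J}_{p,q}(u)$, then apply Ghoussoub's minimax principle (Lemma \ref{Ghouss}) to extract a Palais--Smale sequence whose Poho\u{z}aev functional vanishes in the limit, and finally invoke Lemma \ref{com1} for strong compactness.

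First I would prove $\varsigma_{p,q,a,r}=M_{p,q,a,r}$. For the inequality $\leq$, given $u\in\mathcal{P}_{p,q,a,r}$ one has $t_u=0$, so picking $\tau_1\ll-1$ with $\tau_1*u\in\overline{\mathcal{A}_{k_a,r}}$ (possible since $\|\Delta(\tau_1*u)\|_2^2=e^{4\tau_1}\|\Delta u\|_2^2\to 0$) and $\tau_2\gg 1$ with $\mathcal{J}_{p,q}(\tau_2*u)\leq 0$, the affine path $t\mapsto((1-t)\tau_1+t\tau_2)*u$ lies in $\Gamma$ with maximum energy $\mathcal{J}_{p,q}(u)$. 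For the reverse, along any $\gamma\in\Gamma$ Lemma \ref{xiaoyu4} gives $P_{p,q}(\gamma(0))>0$ so $t_{\gamma(0)}>0$ by Lemma \ref{xiaoyu2}(iv); since $\Psi_{\gamma(1)}$ is strictly positive on $(-\infty,t_{\gamma(1)}]$ (monotone ascent from $0^+$ up to the positive maximum), the condition $\mathcal{J}_{p,q}(\gamma(1))\leq 0$ forces $t_{\gamma(1)}<0$; continuity of $u\mapsto t_u$ then produces $s^*\in(0,1)$ with $\gamma(s^*)\in\mathcal{P}_{p,q,a,r}$, whence $\max_t\mathcal{J}_{p,q}(\gamma(t))\geq M_{p,q,a,r}$. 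Combined with Lemma \ref{xiaoyu3} this yields $\varsigma_{p,q,a,r}=M_{p,q,a,r}>0$.

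Next I would apply Lemma \ref{Ghouss} with $\mathcal{F}=\Gamma$, extended boundary $B=\overline{\mathcal{A}_{k_a,r}}\cup\mathcal{J}_{p,q}^0$, and dual set $F=\mathcal{P}_{p,q,a,r}$. Homotopy stability is immediate, the intersection condition $A\cap F\setminus B\neq\emptyset$ follows from the crossing argument together with $F\cap B=\emptyset$ (points of $\overline{\mathcal{A}_{k_a,r}}$ satisfy $P_{p,q}>0$, points of $\mathcal{J}_{p,q}^0$ have $\mathcal{J}_{p,q}\leq 0<M_{p,q,a,r}$, while $F$ sits on $\{P_{p,q}=0\}$ at energy $\geq M_{p,q,a,r}$), and the separation $\sup_B\mathcal{J}_{p,q}<M_{p,q,a,r}$ comes from Lemma \ref{xiaoyu4}. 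The principle delivers $\{u_n\}\subset S_{a,r}\setminus B$ with $\mathcal{J}_{p,q}(u_n)\to\varsigma_{p,q,a,r}$, $\|\mathcal{J}_{p,q}'|_{S_{a,r}}(u_n)\|_{*}\to 0$ and $\mathrm{dist}(u_n,\mathcal{P}_{p,q,a,r})\to 0$; the last forces $P_{p,q}(u_n)\to 0$. Since $c=\varsigma_{p,q,a,r}>0\neq 0$ and \eqref{con2} holds, Lemma \ref{com1} applies and yields, up to a subsequence, $u_n\to\hat{u}$ strongly in $H^2(\mathbb{R}^N)$ with $\hat{u}\in H^2_{rad}(\mathbb{R}^N)$ solving \eqref{question} for some $\hat{\lambda}<0$; passing to the limit gives $\hat{u}\in\mathcal{P}_{p,q,a,r}$ and $\mathcal{J}_{p,q}(\hat{u})=M_{p,q,a,r}>0$. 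To upgrade $\hat{u}$ to a ground state, I would note that any nontrivial solution lies in $\mathcal{P}_{p,q,a}$ by Lemma \ref{poho}, while the same path construction performed on $S_a$ (without radial restriction) identifies the corresponding non-radial minimax value with $M_{p,q,a}$; since the dilation $\tau*u$ preserves radiality, the radial and non-radial Poho\u{z}aev infima must coincide, so $\hat{u}$ attains $M_{p,q,a}$ and is a ground state.

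The main obstacle is the compactness step contained in Lemma \ref{com1}: without an even-exponent assumption we cannot invoke the Fourier rearrangement of Lemma \ref{Four}, so the only inputs are radiality of $\{u_n\}$, $P_{p,q}(u_n)\to 0$, and $c>0$; delivering strong convergence requires the sign $\mu<0$ together with \eqref{con2} to secure a uniform lower bound on $\|\Delta u_n\|_2$ (ruling out vanishing) and to extract $\hat{\lambda}<0$ through the Poho\u{z}aev identity balanced against the negative subcritical term. A secondary subtlety is the identification $M_{p,q,a}=M_{p,q,a,r}$ used in the ground-state upgrade, which I expect to reduce to the $\tau*u$-invariance of the minimax construction together with the uniqueness of $t_u$ from Lemma \ref{xiaoyu2}.
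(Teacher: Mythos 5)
Your proof follows the paper's argument step for step: identify $\varsigma_{p,q,a,r}$ with $M_{p,q,a,r}=\inf_{\mathcal{P}_{p,q,a,r}}\mathcal{J}_{p,q}$ via the crossing argument (Lemmas \ref{xiaoyu2}--\ref{xiaoyu4}) and the dilation paths $t\mapsto((1-t)\tau_1+t\tau_2)*u$, verify the hypotheses of Ghoussoub's minimax principle with $B=\overline{\mathcal{A}_{k_a,r}}\cup\mathcal{J}_{p,q}^0$ and $F=\mathcal{P}_{p,q,a,r}$ to produce a radial Palais--Smale sequence with $P_{p,q}(u_n)\to 0$, and close with the compactness Lemma \ref{com1}. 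The core of your argument is the same as the paper's.

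The one step in your write-up that does not hold up is the ground-state upgrade at the end. You argue that because $\tau*u$ preserves radiality, the radial and non-radial Poho\u{z}aev infima $M_{p,q,a,r}$ and $M_{p,q,a}$ ``must coincide.'' That inference is a non sequitur: the fact that $\mathcal{P}_{p,q,a,r}$ is invariant under the dilation flow says nothing about whether $M_{p,q,a,r}\le M_{p,q,a}$, since the infimum over the full manifold is taken over a strictly larger set. In the cases where the paper does equate radial and non-radial minimization it does so through the Fourier rearrangement of Lemma \ref{Four}, which requires $p,q\in 2\mathbb{N}$; Theorem \ref{th5} imposes no such evenness hypothesis, so that tool is not available here. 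It should be said that the paper's own proof of Theorem \ref{xiaoyump} also does not justify the ground-state property --- it simply asserts it in the last sentence --- so your gap mirrors one left implicit in the paper, but the argument you offer to fill it is incorrect as stated and should be flagged rather than used.
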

\begin{proof}
Set $\mathcal{F}:=\Gamma$, $A:=\gamma([0,1])$, $F:=\mathcal{P}_{p,q,a,r}$ and $B:=\overline{\mathcal{A}_{k_a,r}}\cup \mathcal{J}_{p,q}^{0}$. First, we prove that $\mathcal{F}$ is a homotopy stable family with extended boundary $B$. In fact, for any $\gamma\in \Gamma$ and any $\eta\in C([0,1]\times S_{a,r},S_{a,r})$ satisfying $\eta(t,u)=u$ for $(t,u)\in (\{0\}\times S_{a,r})\cup ([0,1]\times B)$, we have $\eta(\{1\},\gamma(0))=\gamma(0)\in \overline{\mathcal{A}_{k_a,r}}$ and $\eta(\{1\},\gamma(1))=\gamma(1)\in \mathcal{J}_{p,q}^{0}$. Hence, $\eta(\{1\},\gamma(t))\in \Gamma$, that is $\mathcal{F}$ is a homotopy stable family with extended boundary $B$.

Next we prove
\begin{equation*}
  A\cap F\backslash B\neq\emptyset, \quad\text{for any $A\in \mathcal{F}$}
\end{equation*}
and
\begin{equation*}
  \sup\limits_{u\in B}\mathcal{J}_{p,q}(u)\leq c\leq \inf\limits_{u\in F}\mathcal{J}_{p,q}(u) .
\end{equation*}
As we all know, $A\cap F\backslash B=A\cap (F\backslash B)$, by Lemmas \ref{xiaoyu3} and \ref{xiaoyu4}, we have $F\cap B=\emptyset$, thus for any $\gamma\in \Gamma$,
\begin{equation}\label{xiaoyuset}
  A\cap F\backslash B=A\cap (F\backslash B)=A\cap F=\gamma([0,1])\cap \mathcal{P}_{p,q,a,r}.
\end{equation}
Since $\gamma(0)\in \overline{\mathcal{A}_{k_a,r}}$, from Lemma \ref{xiaoyu4}, we have $P_{p,q}({\gamma(0)})>0$.
On the other hand, by $\mathcal{J}_{p,q}(\gamma(1))<0$ and Lemma \ref{xiaoyu2}, we know $t_{\gamma(1)}<0$, thus $P_{p,q}(\gamma(1))<0$. By the continuity of $P_{p,q}(\gamma(t))$, there exists $\tau_\gamma\in (0,1)$ such that $P_{p,q}(\gamma(\tau_\gamma))=0$, namely, $\gamma([0,1])\cap \mathcal{P}_{p,q,a,r}\neq\emptyset$.

By \eqref{xiaoyuset}, for any $\gamma\in \Gamma$, we obtain
\begin{equation*}
  \max\limits_{t\in[0,1]}\mathcal{J}_{p,q}(\gamma(t))\geq \inf\limits_{u\in \mathcal{P}_{p,q,a,r}}\mathcal{J}_{p,q}(u),
\end{equation*}
thus $\varsigma_{p,q,a,r}\geq M_{p,q,a,r}$. On the other hand, for any $u\in \mathcal{P}_{p,q,a,r}$ and any $t\in[0,1]$, $((1-t)\tau_1+t\tau_2)*u\in \Gamma$. By Lemma \ref{xiaoyu2},
\begin{equation*}
  \mathcal{J}_{p,q}(u)=\max\limits_{\tau\in \mathbb{R}}\mathcal{J}_{p,q}(\tau*u)\geq \max\limits_{t\in[0,1]}\mathcal{J}_{p,q}(((1-t)\tau_1+t\tau_2)*u)\geq \varsigma_{p,q,a,r},
\end{equation*}
which implies that $M_{p,q,a,r}\geq \varsigma_{p,q,a,r}$. Thus $\varsigma_{p,q,a,r}= M_{p,q,a,r}>0$. By Lemma \ref{xiaoyu4}, we obtain
\begin{equation*}
\sup\limits_{u\in \overline{\mathcal{A}_{k,r}}\cup \mathcal{J}_{p,q}^{0}}\mathcal{J}_{p,q}(u)<\varsigma_{p,q,a,r}=M_{p,q,a,r}.
\end{equation*}

From the above arguments, using Lemma \ref{Ghouss}, we can obtain a Palais-Smale sequence $\{u_n\}\subset S_{a,r}$ for $\mathcal{J}_{p,q}|_{S_{a,r}}$ at level $\varsigma_{p,q,a,r}>0$ and $\lim\limits_{n\rightarrow\infty}dist (u_n,\mathcal{P}_{p,q,a,r})=0$, i.e., $\lim\limits_{n\rightarrow\infty}P(u_n)=0$. By Lemma \ref{com1}, we deduce that, there exists $\hat{u}\in H^2_{rad}(\mathbb{R}^N)$ such that, up to a subsequence, $u_n\rightarrow \hat{u}$ in $H^2(\mathbb{R}^N)$ as $n\rightarrow\infty$ and $\hat{u}$ is a radial ground state solution of \eqref{question} with some $\hat{\lambda}<0$.
\end{proof}

\textbf{Proof of Theorem \ref{th5}:} Theorem \ref{th5} follows from Theorem \ref{xiaoyump}.
\qed

\subsection{The case $N\geq7$, $2<q<\bar{p}<p=4^*$ and $\mu>0$}\label{crisx}

With similar arguments of Section \ref{mountain}, we have the following facts.

\begin{lemma}\label{crisx1}
$\mathcal{P}_{4^*,q,a}^0=\emptyset$ and $\mathcal{P}_{4^*,q,a}$ is a smooth manifold of codimension $2$ in $H^2(\mathbb{R}^N)$.
\end{lemma}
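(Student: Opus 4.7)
The plan is to mirror the strategy of Lemma \ref{sx2}, with careful attention to the fact that in the critical regime $p = 4^*$ one has $\gamma_{4^*} = 1$ and hence $p\gamma_p = 4^*$, so the arguments that previously combined Gagliardo--Nirenberg inequalities for both nonlinearities now require the Sobolev inequality (Lemma \ref{Sob}) to handle the critical term.

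For the emptiness of $\mathcal{P}_{4^*,q,a}^{0}$, I would argue by contradiction. If $u\in\mathcal{P}_{4^*,q,a}^{0}$, then $P_{4^*,q}(u)=0$ and $\Psi_u''(0)=0$ give the linear system
\begin{align*}
\|\Delta u\|_2^2 &= \|u\|_{4^*}^{4^*} + \mu\gamma_q\|u\|_q^q,\\
2\|\Delta u\|_2^2 &= 4^*\|u\|_{4^*}^{4^*} + \mu q\gamma_q^2\|u\|_q^q,
\end{align*}
from which I eliminate the two terms on the right to obtain
\begin{equation*}
\|u\|_{4^*}^{4^*} = \frac{2-q\gamma_q}{4^*-q\gamma_q}\|\Delta u\|_2^2,\qquad
\|\Delta u\|_2^2 = \frac{\mu\gamma_q(4^*-q\gamma_q)}{4^*-2}\|u\|_q^q.
\end{equation*}
Both coefficients are positive because $q<\bar p$ implies $q\gamma_q<2<4^*$. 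Applying Lemma \ref{Sob} to the first identity produces a lower bound $\|\Delta u\|_2\ge C_1$ in terms of $S$, while applying Lemma \ref{GN} to the $L^q$ term in the second produces an upper bound $\|\Delta u\|_2^{2-q\gamma_q}\le C_2\,\mu\, B_{N,q}a^{q(1-\gamma_q)}$. Combining these two bounds and simplifying, the resulting inequality takes exactly the form $\mu a^{\gamma(4^*,q)} \geq (\text{explicit constant})$ that is ruled out by the smallness assumption \eqref{con1} (with $p=4^*$, using $\lim_{p\to 4^*}B_{N,p}^{1/p}=S^{-1/2}$ from Lemma \ref{limits} to identify the constant with the one in \eqref{con1}); this yields the desired contradiction.

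Once $\mathcal{P}_{4^*,q,a}^{0}=\emptyset$ is established, the manifold statement is then standard: $\mathcal{P}_{4^*,q,a}=\{u\in H^2:G(u)=0,\,P_{4^*,q}(u)=0\}$ with $G,P_{4^*,q}\in C^1$, so I only need the differential $(dG(u),dP_{4^*,q}(u))$ to be surjective onto $\mathbb{R}^2$ for every $u\in\mathcal{P}_{4^*,q,a}$. Exactly as in Lemma \ref{sx2}, this reduces to finding $\varphi\in T_uS_a$ with $dP_{4^*,q}(u)[\varphi]\neq 0$; otherwise the Lagrange multiplier rule applied to $P_{4^*,q}$ on $S_a$, together with the Pohozaev identity of Lemma \ref{poho}, would force $u\in\mathcal{P}_{4^*,q,a}^{0}$, which we just excluded.

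The main obstacle is pinning down the algebra in the first step so that the combined bound really does contradict \eqref{con1} in the critical case; the identity $\gamma_{4^*}=1$ and the substitution of $S^{-1/2}$ for the ``limiting'' Gagliardo--Nirenberg constant at $p=4^*$ (via Lemma \ref{limits}) are what make the analogy with the subcritical Lemma \ref{sx2} work, and one must verify that the exponents $\tfrac{\bar p-q}{p-\bar p}$ and $\gamma(p,q)$ specialize correctly at $p=4^*$ to recover precisely the right-hand side of \eqref{con1}. Once that bookkeeping is done, the rest of the argument is a direct transcription of the subcritical proof.
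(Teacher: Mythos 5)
Your proposal is correct and is precisely the adaptation of Lemma \ref{sx2} that the paper intends but leaves implicit (Lemma \ref{crisx1} is stated without proof, prefaced only by ``With similar arguments of Section \ref{mountain}''). The two modifications you identify --- replacing the Gagliardo--Nirenberg bound on the critical term with the Sobolev inequality, and using Lemma \ref{limits} to read $B_{N,4^*}=S^{-4^*/2}$ (together with $\gamma_{4^*}=1$, so $p(1-\gamma_p)=0$ and $\gamma(4^*,q)=q(1-\gamma_q)$) when specializing \eqref{con1} --- are exactly what is needed, and the manifold step carries over verbatim.
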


\begin{lemma}\label{crisx2}
For any $u\in S_a$, the function $\Psi_u$ has exactly two critical points $s_u<t_u\in \mathbb{R}$ and two zeros $c_u<d_u\in \mathbb{R}$, with $s_u<c_u<t_u<d_u$. Moreover, there exist $0<R_0<R_1$ such that
\\(i) $s_u*u\in \mathcal{P}_{4^*,q,a}^+$ and $t_u*u\in \mathcal{P}_{4^*,q,a}^-$. If $\tau*u\in \mathcal{P}_{4^*,q,a}$, then either $\tau=s_u$ or $\tau=t_u$.\\
(ii) For any $\tau\leq c_u$, $\|\Delta (\tau*u)\|_2^2\leq R_0$, and
\begin{equation*}
  \mathcal{J}_{4^*,q}(s_u*u)=\min\Big\{\mathcal{J}_{4^*,q}(\tau*u):\tau\in \mathbb{R} \quad\text{and}\quad \|\Delta (\tau*u)\|_2^2< R_0\Big\}<0.
\end{equation*}
(iii) We have \begin{equation*}
  \mathcal{J}_{4^*,q}(t_u*u)=\max\Big\{\mathcal{J}_{4^*,q}(\tau*u):\tau\in \mathbb{R} \Big\}>0,
\end{equation*}
and $\Psi_u$ is strictly decreasing and concave on $(t_u,\infty)$.\\
(iv)
The maps $u\in S_a\mapsto s_u\in \mathbb{R}$ and $u\in S_a\mapsto t_u\in \mathbb{R}$ are of class $C^1$.
\end{lemma}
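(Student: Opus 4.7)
The argument mirrors the proof of Lemmas~\ref{sx1} and \ref{sx3} from the subcritical setting, with the Sobolev inequality (Lemma~\ref{Sob}) replacing the Gagliardo--Nirenberg inequality in the term of critical growth. Since $\gamma_p=1$ and $p-\tfrac{N(p-2)}{4}=0$ when $p=4^*$, the Sobolev inequality yields the pointwise lower bound $\mathcal{J}_{4^*,q}(u)\geq \tilde h(\|\Delta u\|_2^2)$ on $S_a$, where
\[
\tilde h(t):=\frac{1}{2}\,t-\frac{S^{-4^*/2}}{4^*}\,t^{4^*/2}-\frac{\mu B_{N,q}\,a^{q-\frac{N(q-2)}{4}}}{q}\,t^{\frac{N(q-2)}{8}},\qquad t>0.
\]
Because $\tfrac{N(q-2)}{8}<1<\tfrac{4^*}{2}$, the elementary analysis of Lemma~\ref{sx1} applies verbatim: $\tilde h$ has a strict local minimum at a negative level and a strict global maximum at a positive level, and it admits exactly two positive zeros $0<R_0<R_1$ with $\tilde h>0$ on $(R_0,R_1)$ and $\tilde h<0$ on $(0,R_0)$. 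The key algebraic check is that under \eqref{con3}--\eqref{con1} the maximum of $\varphi(t):=\tfrac{1}{2}t^{N(\bar p-q)/8}-\tfrac{S^{-4^*/2}}{4^*}t^{N(p-q)/8}$ strictly exceeds $\tfrac{\mu B_{N,q}\,a^{q-N(q-2)/4}}{q}$, which is the critical counterpart of the inequality appearing in Lemma~\ref{sx1}.

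Next I would analyze $\Psi_u(\tau)$ directly. Since $0<\tfrac{N(q-2)}{2}<4<\tfrac{N(p-2)}{2}$, the slowest-decaying term near $-\infty$ is the negative $q$-term, so $\Psi_u(\tau)\to 0^-$ as $\tau\to-\infty$; the fastest-growing term near $+\infty$ is the negative $p$-term, so $\Psi_u(\tau)\to-\infty$ as $\tau\to+\infty$; and the bound $\Psi_u(\tau)\geq \tilde h(e^{4\tau}\|\Delta u\|_2^2)$ together with $\tilde h>0$ on $(R_0,R_1)$ forces $\Psi_u$ to be strictly positive on a nonempty open interval. Hence $\Psi_u$ admits at least one strict local minimum at negative level and at least one strict local maximum at positive level. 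To obtain uniqueness, I rewrite $\Psi_u'(\tau)=0$ (after dividing by $e^{N(q-2)\tau/2}$) as
\[
2\|\Delta u\|_2^2\,e^{\alpha\tau}-\tfrac{N(p-2)}{2p}\|u\|_p^p\,e^{\beta\tau}=\tfrac{\mu N(q-2)}{2q}\|u\|_q^q,
\]
with $\alpha:=N(\bar p-q)/2$, $\beta:=N(p-q)/2$ and $0<\alpha<\beta$. The left-hand side, of the form $Ae^{\alpha\tau}-Be^{\beta\tau}$ with $A,B>0$, has a unique interior maximum, so the equation has at most two solutions. Consequently $\Psi_u$ has exactly two critical points $s_u<t_u$, and by the intermediate value theorem exactly two zeros $c_u<d_u$ arranged as $s_u<c_u<t_u<d_u$.

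Assertion (i) then follows by combining $\Psi_u''(s_u)\geq 0$, $\Psi_u''(t_u)\leq 0$ with Lemma~\ref{crisx1}, which gives $\mathcal{P}_{4^*,q,a}^0=\emptyset$ and upgrades these to strict inequalities; uniqueness of critical points of $\Psi_u$ forces $\tau*u\in\mathcal{P}_{4^*,q,a}\Rightarrow\tau\in\{s_u,t_u\}$. For (ii), the relation $\|\Delta(\tau*u)\|_2^2=e^{4\tau}\|\Delta u\|_2^2\leq R_0$ holds precisely for $\tau\leq c_u$, and $s_u$ is the unique critical point of $\Psi_u$ in this region with $\Psi_u(s_u)<0$ by $\tilde h<0$ on $(0,R_0)$. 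Part (iii) holds because $t_u$ is the unique critical point beyond $s_u$ and a direct sign check using the explicit exponential structure of $\Psi_u''$ gives concavity and strict decrease on $(t_u,\infty)$. Finally, (iv) is a standard application of the implicit function theorem to $\Phi(u,\tau):=\Psi_u'(\tau)$ at $(u,s_u)$ and $(u,t_u)$, using the nondegeneracy $\Psi_u''(s_u)>0$, $\Psi_u''(t_u)<0$ together with the fact that $\mathcal{P}_{4^*,q,a}^0=\emptyset$ prevents any continuous passage between $\mathcal{P}_{4^*,q,a}^+$ and $\mathcal{P}_{4^*,q,a}^-$.

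The main obstacle is the first step, i.e.\ producing the critical analog of Lemma~\ref{sx1}. Unlike the subcritical case, the constant $B_{N,p}$ is now replaced by $S^{-4^*/2}$, and one must verify that under the twin assumptions \eqref{con3}--\eqref{con1} the maximum of $\varphi$ still strictly dominates the constant $\mu B_{N,q}a^{q-N(q-2)/4}/q$. This reduces to a careful exponent and constant bookkeeping that connects the constants $A$, $B$ in \eqref{con3} with the subcritical bound \eqref{con1}; once secured, the remainder is essentially a transcription of the arguments of Lemmas~\ref{sx1} and \ref{sx3}.
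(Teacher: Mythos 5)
Your outline is correct and matches the paper's intended argument: the paper itself offers no separate proof of this lemma, merely the sentence that it follows ``with similar arguments'' to those of Section~4.3, and what you have written is precisely that transcription. Two small remarks. First, the ``key algebraic check'' you flag as the main obstacle is in fact already secured by the hypothesis: when $p=4^*$ one has $p-\tfrac{N(p-2)}{4}=0$ and $\gamma(4^*,q)=q-\tfrac{N(q-2)}{4}$, so condition \eqref{con1}, read at $p=4^*$ with $B_{N,4^*}$ replaced by $S^{-4^*/2}$ (consistently with Lemma~\ref{limits}), is exactly the inequality $\varphi(\bar t)>\mu B_{N,q}a^{q-N(q-2)/4}/q$ that your $\tilde h$ needs; the separate condition \eqref{con3} is not required for this lemma but comes from \cite{MC} to guarantee the existence of the local minimizer used later. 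So there is no bookkeeping gap to close here. Second, your reduction of $\Psi_u'(\tau)=0$ to $Ae^{\alpha\tau}-Be^{\beta\tau}=C$ with $0<\alpha<\beta$, and the observation that the left side has a unique interior maximum, is the cleanest way to get ``at most two critical points''; combined with the sign pattern $0^-\to(+)\to-\infty$ forced by $\tilde h>0$ on $(R_0,R_1)$ this pins down exactly two and the ordering $s_u<c_u<t_u<d_u$, just as in Lemma~\ref{sx3}. The appeals to Lemma~\ref{crisx1} for $\mathcal{P}_{4^*,q,a}^0=\emptyset$ and to the implicit function theorem for (iv) are standard and correct.
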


\begin{lemma}\label{crisx3}
It results that $M_{4^*,q,a}=\inf\limits_{u\in \mathcal{P}_{4^*,q,a}^{+}}\mathcal{J}_{4^*,q}(u)<0$ and $\tilde{\sigma}_{4^*,q,a}:=\inf\limits_{u\in \mathcal{P}_{4^*,q,a}^{-}}\mathcal{J}_{4^*,q}(u)>0$.
\end{lemma}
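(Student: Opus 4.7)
The plan is to imitate the arguments of Lemmas \ref{sx5} and \ref{sx7}, with the single change that the Gagliardo--Nirenberg control of the $p$-term is replaced by the Sobolev inequality (Lemma \ref{Sob}). First I would set up the critical analogue of the function $h$ from Lemma \ref{sx1}, namely
\[
h(t):=\tfrac{1}{2}t-\tfrac{1}{4^{*}}S^{-\frac{4^{*}}{2}}t^{\frac{4^{*}}{2}}-\tfrac{\mu B_{N,q}a^{q(1-\gamma_{q})}}{q}t^{\frac{q\gamma_{q}}{2}},
\]
and verify, using assumption \eqref{con3}, that $h$ admits a local strict minimum at a negative level on an interval $(0,R_{0})$ and a strict global maximum at a positive level on an interval $(R_{0},R_{1})$, where $0<R_{0}<R_{1}$ are the two positive zeros of $h$. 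The shape of $h$ is formally identical to Lemma \ref{sx1} because $q\gamma_{q}/2<1<4^{*}/2=N/(N-4)$, and \eqref{con3} is precisely the smallness threshold that keeps the analogue of $\max \varphi(t)$ above the $\mu$-dependent constant.

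Second, I would introduce the local minimization value $c_{4^{*},q,a}:=\inf_{u\in \mathcal{A}_{R_{0}}}\mathcal{J}_{4^{*},q}(u)$ with $\mathcal{A}_{R_{0}}=\{u\in S_{a}:\|\Delta u\|_{2}^{2}<R_{0}\}$. The pointwise bound $\mathcal{J}_{4^{*},q}(u)\geq h(\|\Delta u\|_{2}^{2})$ gives $c_{4^{*},q,a}\geq \min_{[0,R_{0}]}h>-\infty$, while $\mathcal{J}_{4^{*},q}(\tau*u)\to 0^{-}$ as $\tau\to -\infty$ for any fixed $u\in S_{a}$ shows $c_{4^{*},q,a}<0$. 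Using part (ii) of Lemma \ref{crisx2}, each $u\in\mathcal{A}_{R_{0}}$ can be projected to $s_{u}*u\in\mathcal{P}_{4^{*},q,a}^{+}\subset \mathcal{A}_{R_{0}}$ with $\mathcal{J}_{4^{*},q}(s_{u}*u)\leq \mathcal{J}_{4^{*},q}(u)$, which yields $c_{4^{*},q,a}=\inf_{\mathcal{P}_{4^{*},q,a}^{+}}\mathcal{J}_{4^{*},q}$. To pass from this infimum to $M_{4^{*},q,a}$ I would use Lemma \ref{crisx2}(iii): on $\mathcal{P}_{4^{*},q,a}^{-}$ one has $\mathcal{J}_{4^{*},q}(u)=\max_{\tau\in\mathbb{R}}\Psi_{u}(\tau)>0$, so the infimum over $\mathcal{P}_{4^{*},q,a}=\mathcal{P}_{4^{*},q,a}^{+}\cup\mathcal{P}_{4^{*},q,a}^{-}$ is realized on $\mathcal{P}_{4^{*},q,a}^{+}$.

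Third, for the positivity of $\tilde{\sigma}_{4^{*},q,a}$, let $t_{\max}\in(R_{0},R_{1})$ be the maximum point of $h$. For any $u\in \mathcal{P}_{4^{*},q,a}^{-}$ I would choose $\tau_{u}\in \mathbb{R}$ with $\|\Delta(\tau_{u}*u)\|_{2}^{2}=e^{4\tau_{u}}\|\Delta u\|_{2}^{2}=t_{\max}$ and apply Lemma \ref{crisx2}(iii) to write
\[
\mathcal{J}_{4^{*},q}(u)=\max_{\tau\in\mathbb{R}}\Psi_{u}(\tau)\geq \Psi_{u}(\tau_{u})\geq h(t_{\max})>0,
\]
and then take the infimum over $\mathcal{P}_{4^{*},q,a}^{-}$.

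The only non-routine point is the careful analysis of the critical function $h$ above: once one checks that condition \eqref{con3} forces the two-zero graph (local min at negative level, strict max at positive level) exactly as in Lemma \ref{sx1}, every other step is a verbatim transcription of the subcritical proofs. I therefore expect the book-keeping identification of \eqref{con3} with the threshold $\max\varphi(t)>\mu B_{N,q}a^{q(1-\gamma_{q})}/q$ to be the main (but essentially mechanical) obstacle.
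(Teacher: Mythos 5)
Your plan is correct and matches the route the paper itself intends: the text before Lemma \ref{crisx1} explicitly says these three lemmas follow ``with similar arguments of Section \ref{mountain}'', and your proposal is precisely the transcription of Lemmas \ref{sx1}, \ref{sx4}, \ref{sx5} and \ref{sx7} to $p=4^*$, with $B_{N,p}\,\|\Delta u\|_2^{N(p-2)/4}$ replaced by $S^{-4^*/2}\|\Delta u\|_2^{4^*}$ (consistently with Lemma~\ref{limits}, since $B_{N,4^*}^{1/4^*}=S^{-1/2}$ and $\gamma_{4^*}=1$ makes the mass power disappear). The only slight imprecision is the attribution of the two-zero structure of $h$ to \eqref{con3}: the direct analogue of Lemma~\ref{sx1} would invoke \eqref{con1} specialized to $p=4^*$ (equivalently with $B_{N,p}$ replaced by $S^{-4^*/2}$), while \eqref{con3} is the threshold from \cite{MC}; since Theorem~\ref{th8} assumes both, this causes no gap, but worth noting if you intend to minimize hypotheses.
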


\begin{lemma}\label{crisx4}
For $N\geq5$ and $2<p<4^*$, $\limsup\limits_{p \rightarrow4^*}\tilde{\sigma}_{p,q,a}\leq \tilde{\sigma}_{4^*,q,a}$.
\end{lemma}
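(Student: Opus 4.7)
The plan is to show that any $u \in \mathcal{P}_{4^*,q,a}^-$ supplies, via rescaling, an admissible competitor for $\tilde{\sigma}_{p,q,a}$ whose energy converges to $\mathcal{J}_{4^*,q}(u)$ as $p \to 4^*$. Fix $\varepsilon>0$ and pick $u\in\mathcal{P}_{4^*,q,a}^-$ with $\mathcal{J}_{4^*,q}(u)\leq \tilde{\sigma}_{4^*,q,a}+\varepsilon$. By the analog of Lemma \ref{crisx2}(iii) at level $p=4^*$, which says the fiber map for the critical problem attains its global maximum precisely at the point of $\mathcal{P}_{4^*,q,a}^-$, we have
\[
\max_{\tau\in\mathbb{R}}\mathcal{J}_{4^*,q}(\tau*u)=\mathcal{J}_{4^*,q}(u).
\]
For each $p\in(\bar p,4^*)$, Lemma \ref{crisx2} applied to $\mathcal{J}_{p,q}$ gives $t_p\in\mathbb{R}$ with $t_p*u\in\mathcal{P}_{p,q,a}^-$ and $\mathcal{J}_{p,q}(t_p*u)=\max_\tau\mathcal{J}_{p,q}(\tau*u)$. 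Consequently $\tilde{\sigma}_{p,q,a}\leq \mathcal{J}_{p,q}(t_p*u)$.

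Next I would analyze the explicit fiber maps
\[
\Phi_p(\tau):=\mathcal{J}_{p,q}(\tau*u)=\frac{e^{4\tau}}{2}\|\Delta u\|_2^2-\frac{e^{N(p-2)\tau/2}}{p}\|u\|_p^p-\frac{\mu\,e^{N(q-2)\tau/2}}{q}\|u\|_q^q.
\]
Using the bound $|u|^p\leq |u|^2+|u|^{4^*}$ together with the Lebesgue dominated convergence theorem (as in the proof of Lemma \ref{limits}), $\|u\|_p\to\|u\|_{4^*}$; combined with $N(p-2)/2\to 4$, one obtains $\Phi_p\to \Phi_{4^*}$ pointwise in $\tau$, and the convergence is uniform on every compact subset of $\mathbb{R}$.

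The main obstacle is to ensure that the maximizers $\{t_p\}$ stay in a fixed compact interval as $p\to 4^*$, so that the convergence of the $\Phi_p$ transfers to a convergence of their maxima. For this I would fix $T>0$ with $\Phi_{4^*}(\tau)\leq \Phi_{4^*}(0)-1$ for $|\tau|\geq T$, then exploit the uniform control of the three exponents $4$, $N(p-2)/2\to 4$, $N(q-2)/2$ to prove that for $p$ sufficiently close to $4^*$, $\Phi_p(\tau)\leq \Phi_{4^*}(0)-\tfrac12$ for $|\tau|\geq T$ as well, while $\Phi_p(0)\to\Phi_{4^*}(0)$. This forces $t_p\in[-T,T]$ for all $p$ close to $4^*$. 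Along a subsequence $t_p\to t^\star\in[-T,T]$, uniform convergence on compacts gives $\Phi_p(t_p)\to\Phi_{4^*}(t^\star)\leq \max_\tau\Phi_{4^*}(\tau)=\mathcal{J}_{4^*,q}(u)$.

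Combining,
\[
\limsup_{p\to 4^*}\tilde{\sigma}_{p,q,a}\leq \limsup_{p\to 4^*}\Phi_p(t_p)\leq \mathcal{J}_{4^*,q}(u)\leq \tilde{\sigma}_{4^*,q,a}+\varepsilon,
\]
and letting $\varepsilon\to 0$ yields the conclusion. The only nontrivial part is the uniform tail estimate on $\Phi_p$ needed to keep $\{t_p\}$ bounded; once that is in place, everything else reduces to the dominated convergence of $\|u\|_p\to\|u\|_{4^*}$ and the continuity of the elementary exponentials in $\tau$ and $p$.
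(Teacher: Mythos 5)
Your proposal follows the same overall strategy as the paper: project a competitor $u$ onto $\mathcal{P}_{p,q,a}^-$ via the unique maximum point of the fiber map, show the scaling parameters $t_p$ remain bounded as $p\to 4^*$, and pass to the limit in the fiber map. The tactical difference is in how boundedness of $\{t_p\}$ is established. The paper extracts explicit two-sided bounds directly from the Pohozaev constraint $e^{4t_p}\|\Delta u\|_2^2=\gamma_p e^{N(p-2)t_p/2}\|u\|_p^p+\mu\gamma_q e^{N(q-2)t_p/2}\|u\|_q^q$ by dropping one nonnegative term at a time, giving
\[
\Big(\tfrac{\mu\gamma_q\|u\|_q^q}{\|\Delta u\|_2^2}\Big)^{\frac{2}{8-N(q-2)}}\le e^{t_p}\le\Big(\tfrac{\|\Delta u\|_2^2}{\gamma_p\|u\|_p^p}\Big)^{\frac{2}{N(p-2)-8}},
\]
with both bounds converging as $p\to 4^*$. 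This replaces your tail-decay argument and sidesteps the need for uniform estimates on $\Phi_p$ far from $\tau=0$. (The paper also works with arbitrary $u\in S_a$ and invokes $\inf_{u\in S_a}\max_\tau\mathcal{J}_{4^*,q}(\tau*u)=\tilde{\sigma}_{4^*,q,a}$ at the end, while your $\varepsilon$-near-minimizer in $\mathcal{P}_{4^*,q,a}^-$ accomplishes the same thing.) Two small slips to fix in your write-up: the middle exponent converges to $N(p-2)/2\to 2\cdot 4^*=\tfrac{4N}{N-4}$, not $4$; and the threshold ``$\Phi_{4^*}(\tau)\le\Phi_{4^*}(0)-1$ for $|\tau|\ge T$'' cannot hold when $\Phi_{4^*}(0)<1$, since $\Phi_{4^*}(\tau)\to 0^-$ as $\tau\to-\infty$ --- use instead $\Phi_{4^*}(0)-\delta$ for any fixed $\delta\in(0,\Phi_{4^*}(0))$, which exists because $\Phi_{4^*}$ has a strict global maximum at $\tau=0$ at a positive level.
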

\begin{proof}
For any $u\in S_a$, by Lemma \ref{sx3}, there exists a unique $t_{u}\in \mathbb{R}$ such that $t_{u}*u\in \mathcal{P}_{p,q,a}^-$. To emphasize the influence of $p,q$, we denote by $t_{p,q,u}$. Then
\begin{equation*}
  e^{4t_{p,q,u}}\|\Delta u\|_2^2=\gamma_p e^{\frac{N(p-2)}{2}t_{p,q,u}}\|u\|_p^p+\mu \gamma_qe^{\frac{N(q-2)}{2}t_{p,q,u}}\|u\|_q^q
\end{equation*}
and
\begin{equation*}
  2e^{4t_{p,q,u}}\|\Delta u\|_2^2<p\gamma_p^2 e^{\frac{N(p-2)}{2}t_{p,q,u}}\|u\|_p^p+\mu q\gamma_q^2e^{\frac{N(q-2)}{2}t_{p,q,u}}\|u\|_q^q.
\end{equation*}
Thus
\begin{equation*}
  \Big(\frac{\mu \gamma_q \|u\|_q^q}{\|\Delta u\|_2^2}\Big)^{\frac{2}{8-N(q-2)}}\leq e^{t_{p,q,u}}\leq \Big(\frac{\|\Delta u\|_2^2}{\gamma_p \|u\|_p^p}\Big)^{\frac{2}{N(p-2)-8}}.
\end{equation*}
By the Lebesgue dominated convergence theorem, we have $\lim\limits_{p\rightarrow 4^*}\|u\|_p=\|u\|_{4^*}$. Thus $e^{t_{p,q,u}}$ is bounded, then ${t_{p,q,u}}$ is bounded. Up to a subsequence, we assume that $t_{p,q,u}\rightarrow t_{q,u}$ as $p \rightarrow4^*$.

From the above inequalities, we have
\begin{equation*}
  e^{4t_{q,u}}\|\Delta u\|_2^2=e^{2\cdot4^*t_{q,u}}\|u\|_{4^*}^{4^*}+\mu \gamma_qe^{\frac{N(q-2)}{2}t_{q,u}}\|u\|_q^q
\end{equation*}
and
\begin{equation*}
  2e^{4t_{q,u}}\|\Delta u\|_2^2\leq 4^*e^{2\cdot4^*t_{q,u}}\|u\|_{4^*}^{4^*}+\mu q\gamma_q^2e^{\frac{N(q-2)}{2}t_{q,u}}\|u\|_q^q,
\end{equation*}
this together with $\mathcal{P}_{4^*,q,a}^0=\emptyset$ yields that $t_{q,u}*u\in \mathcal{P}_{4^*,q,a}^-$. So
\begin{equation*}
  \limsup\limits_{p \rightarrow4^*}\tilde{\sigma}_{p,q,a}\leq \limsup\limits_{p \rightarrow4^*}\mathcal{J}_{p,q}(t_{p,q,u}*u)=\mathcal{J}_{4^*,q}(t_{q,u}*u).
\end{equation*}
By the arbitrariness of $u\in S_a$, we derive the conclusion.
\end{proof}

\begin{lemma}\label{crisx5}
If $N\geq7$, $2<q<\bar{p}<p=4^*$, $\mu>0$. Moreover, if $N=7$, we further assume that $q>\frac{8}{3}$. Then
$\tilde{\sigma}_{4^*,q,a}<M_{4^*,q,a}+\frac{2}{N}S^{\frac{N}{4}}$.
\end{lemma}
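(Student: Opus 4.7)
I would prove the inequality by exhibiting an explicit element $u_\varepsilon\in S_{a,r}$ with
\[
\max_{\tau\in\mathbb R}\mathcal J_{4^*,q}(\tau*u_\varepsilon)\ <\ M_{4^*,q,a}+\tfrac{2}{N}S^{N/4},
\]
which, via Lemma \ref{crisx2}(iii) and the relation $t_{u_\varepsilon}*u_\varepsilon\in\mathcal P_{4^*,q,a}^-$, immediately bounds $\tilde\sigma_{4^*,q,a}$ from above by the right-hand side. The natural candidate is a Brezis--Nirenberg type combination of two pieces: a radial minimizer $\hat u\in S_{a,r}\cap\mathcal P_{4^*,q,a}^+$ attaining $M_{4^*,q,a}$ (its existence is obtained by adapting Theorem \ref{local} to the critical exponent, as in \cite{MC}) and a cut-off Aubin--Talenti instanton $v_\varepsilon:=\eta\,U_{\varepsilon,0}$ based on \eqref{deU}, with $\eta\in C_c^\infty(\mathbb R^N)$ a radial cutoff equal to $1$ near the origin. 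I would then set $w_\varepsilon:=\hat u+v_\varepsilon$ and $u_\varepsilon:=(a/\|w_\varepsilon\|_2)\,w_\varepsilon\in S_{a,r}$.

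\textbf{Decomposition.} The key step is to expand $\mathcal J_{4^*,q}(\tau*u_\varepsilon)$ around the decoupled sum $\mathcal J_{4^*,q}(\hat u)+\mathcal J_{4^*}(v_\varepsilon)$. Using the standard biharmonic Brezis--Nirenberg asymptotics (cf.\ \cite{NSY,BFV}) one has $\|\Delta v_\varepsilon\|_2^2=S^{N/4}+O(\varepsilon^{N-4})$ and $\|v_\varepsilon\|_{4^*}^{4^*}=S^{N/4}+O(\varepsilon^N)$, so the Sobolev piece contributes $\tfrac{2}{N}S^{N/4}+o(1)$ at its $\tau$-maximum, while the $\hat u$-piece contributes $M_{4^*,q,a}$. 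The interaction terms
\[
\int\Delta\hat u\,\Delta v_\varepsilon\,dx,\quad \int\hat u\,v_\varepsilon\,dx,\quad \int\bigl(|\hat u+v_\varepsilon|^{4^*}-|\hat u|^{4^*}-|v_\varepsilon|^{4^*}\bigr)\,dx
\]
are controlled by the boundedness and decay of $\hat u$ and are strictly lower order. After also absorbing the normalization correction $(a/\|w_\varepsilon\|_2)^2=1+O(\|v_\varepsilon\|_2^2)$ and optimizing in $\tau$ about the value which maximizes the $\hat u$-part alone, the expansion takes the schematic form
\[
\max_{\tau}\mathcal J_{4^*,q}(\tau*u_\varepsilon)\ \le\ M_{4^*,q,a}+\tfrac{2}{N}S^{N/4}+C_1\|v_\varepsilon\|_2^2\,-\,\mu C_2\|v_\varepsilon\|_q^q\,+\,\text{lower order},
\]
with $C_1,C_2>0$ independent of $\varepsilon$; the minus sign in front of $\|v_\varepsilon\|_q^q$ is precisely the contribution of the attractive mass-subcritical nonlinearity $\mu>0$ and is what creates the room for the strict inequality.

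\textbf{Main obstacle: the dimension hypothesis.} The hard part is showing that the $L^q$ gain dominates the $L^2$ penalty for $\varepsilon$ small, and this is exactly what forces $N\ge 7$ with the extra restriction $q>8/3$ when $N=7$. The known asymptotics for the truncated bubble give
\[
\|v_\varepsilon\|_2^2=\begin{cases}O(\varepsilon^{N-4}),&5\le N\le 7,\\ O(\varepsilon^4|\log\varepsilon|),&N=8,\\ O(\varepsilon^4),&N\ge 9,\end{cases}
\qquad
\|v_\varepsilon\|_q^q=\begin{cases}O(\varepsilon^{q(N-4)/2}),&q<\tfrac{N}{N-4},\\ O(\varepsilon^{N-q(N-4)/2}),&q>\tfrac{N}{N-4}.\end{cases}
\]
For $N\ge 8$, the inequality $N-q(N-4)/2<4$ holds for every admissible $q\in(2,\bar p)$ (since $q>2\ge N/(N-4)$), so the gain wins automatically. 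For $N=7$ one has $\|v_\varepsilon\|_2^2=O(\varepsilon^3)$ and $\|v_\varepsilon\|_q^q=O(\varepsilon^{7-3q/2})$ when $q>7/3=N/(N-4)$, so the requirement $7-3q/2<3$ becomes exactly $q>8/3$, matching the hypothesis; for $N\le 6$ no admissible $q$ works. Carrying out this sharp comparison, together with the routine but tedious bookkeeping of the interaction integrals and of the shift $\tau_\varepsilon=\tau_0+O(\|v_\varepsilon\|_2^2)$ of the maximizing $\tau$, is the main technical burden and completes the argument.
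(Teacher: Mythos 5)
Your overall architecture (perturb the critical minimizer by a truncated Aubin--Talenti bubble, renormalize back to $S_a$, and estimate the max along the fiber) is the same as the paper's, and your analysis of why the dimension restriction $N\ge 7$ and the extra condition $q>8/3$ at $N=7$ appear is correct: the $L^2$ penalty $\|v_\varepsilon\|_2^2$ and the $L^q$ gain $\|v_\varepsilon\|_q^q$ have exactly the competing orders you describe. But there is a genuine gap in the claim that the interaction integrals ``are controlled by the boundedness and decay of $\hat u$ and are strictly lower order.'' The cross term
\[
\int_{\mathbb R^N}\Delta\hat u\,\Delta v_\varepsilon\,dx
\]
is of order $\varepsilon^{(N-4)/2}$, and in every admissible case this is \emph{not} lower order than the $L^q$ gain $\varepsilon^{\,N-q(N-4)/2}$. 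Indeed, $\varepsilon^{(N-4)/2}$ dominates $\varepsilon^{\,N-q(N-4)/2}$ precisely when $q<(N+4)/(N-4)$, and $(N+4)/(N-4)\ge\bar p$ whenever $N\ge 8$, while for $N=7$ one has $\bar p=22/7<11/3=(N+4)/(N-4)$; so the cross term is never negligible. If you simply bound it by $C\varepsilon^{(N-4)/2}$, the upper estimate becomes $M_{4^*,q,a}+\tfrac2N S^{N/4}+C\varepsilon^{(N-4)/2}-C\varepsilon^{\,N-q(N-4)/2}$, which is \emph{not} below the threshold for small $\varepsilon$, and the argument collapses.

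The mechanism the paper uses to kill this term is the Euler--Lagrange equation for the minimizer: since $\Delta^2 u^+ = \lambda^+u^+ + \mu(u^+)^{q-1} + (u^+)^{4^*-1}$, one writes
\[
\int\Delta u^+\,\Delta v_\varepsilon\,dx=\lambda^+\!\int u^+v_\varepsilon\,dx+\mu\!\int (u^+)^{q-1}v_\varepsilon\,dx+\int (u^+)^{4^*-1}v_\varepsilon\,dx,
\]
and then the $(u^+)^{4^*-1}$ and $(u^+)^{q-1}$ cross terms cancel exactly against the cross terms produced by the elementary inequality $(a+b)^r\ge a^r+b^r+ra^{r-1}b$ applied to $\|V\|_{4^*}^{4^*}$ and $\|V\|_q^q$. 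Furthermore, the paper's renormalization is not multiplication by $a/\|w_\varepsilon\|_2$ but the dilation $W_{\varepsilon,l}=(\|V_{\varepsilon,l}\|_2/a)^{(N-4)/4}V_{\varepsilon,l}((\|V_{\varepsilon,l}\|_2/a)^{1/2}\,\cdot\,)$, which leaves $\|\Delta\cdot\|_2$ and $\|\cdot\|_{4^*}$ unchanged and rescales only $\|\cdot\|_q$; the resulting Taylor correction of $\|W\|_q^q$ produces a term that cancels the remaining $\lambda^+\int u^+v_\varepsilon$ piece via the identity $\lambda^+a^2=\mu(\gamma_q-1)\|u^+\|_q^q$. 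Neither of these cancellations is available with a plain multiplicative normalization, so the ``routine bookkeeping'' you defer to cannot close as described. If you rewrite the cross term using the equation for $\hat u$ and adopt the dilation-type renormalization, your plan becomes essentially the paper's proof.

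Two smaller points: your schematic $+C_1\|v_\varepsilon\|_2^2$ term is sign-sensitive (it arises from the $L^q$-renormalization correction, not from the energy directly), so the sign needs to be verified rather than assumed; and the paper avoids optimizing over $\tau$ altogether by instead tuning the bubble amplitude $l_\varepsilon$ so that $W_{\varepsilon,l_\varepsilon}\in\mathcal P_{4^*,q,a}^-$, which gives $\tilde\sigma_{4^*,q,a}\le\mathcal J_{4^*,q}(W_{\varepsilon,l_\varepsilon})$ directly and is cleaner than tracking the shift of the maximizing $\tau$.
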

\begin{proof}
We adopt some ideas of \cite[Proposition 1.16]{JL0} and \cite[Lemma 3.1]{WW} to complete my proof.
By \cite{MC} and $\mathcal{J}_{4^*,q}$ is even, we know $M_{4^*,q,a}<0$ can achieved by some nonnegative $u^+\in S_a$.  Moreover, by \eqref{nega}, there exists $\lambda^+<0$ such that
\begin{equation}\label{zheng}
  \Delta^2u^+=\lambda^+ u^++\mu(u^+)^{q-1}+(u^+)^{4^*-1},\quad \text {in $\mathbb{R}^N$.}
\end{equation}
For any $\varepsilon>0$, we define
\begin{equation*}
  u_\varepsilon(x)=\varphi(x)U_{\varepsilon,0}(x)=[N(N-4)(N^2-4)]^{\frac{N-4}{8}}\frac{\varphi(x)\varepsilon^{\frac{N-4}{2}}}{(\varepsilon^2+|x|^2)^{\frac{N-4}{2}}},
\end{equation*}
where $\varphi \in C_0^{\infty}(\mathbb{R}^N)$ is a cut-off function such that $0\leq \varphi(x)\leq 1$ for $x\in \mathbb{R}^N$, $\varphi(x)=1$ for $|x|\leq 1$ and $\varphi(x)=0$ for $|x|\geq 2$.
From \cite{BGP}, \cite{CHMS} and \cite{GDW}, when $N\geq5$, we have
\begin{equation*}
  \|\Delta u_\varepsilon\|_2^2=S^{\frac{N}{4}}+O(\varepsilon^{N-4}),
\end{equation*}
\begin{equation*}
  \|u_\varepsilon\|_{4^*}^{4^*}=S^{\frac{N}{4}}+O(\varepsilon^{N}),
\end{equation*}
\begin{align*}
\begin{aligned}
  \begin{split}
  \|u_\varepsilon\|_2^2=\left\{
  \begin{array}{ll}
   C\varepsilon^4+o(\varepsilon^{4}), \quad &\text{if $N>8$},\\
   C\varepsilon^4|\log \varepsilon|+O(\varepsilon^{4}), \quad &\text{if $N=8$},\\
   C\varepsilon^{N-4}+o(\varepsilon^{N-4}), \quad &\text{if $5\leq N\leq 7$},
    \end{array}
    \right.
  \end{split}
  \end{aligned}
  \end{align*}
and
\begin{align*}
\begin{aligned}
  \begin{split}
  \|u_\varepsilon\|_q^q=\left\{
  \begin{array}{ll}
   C\varepsilon^{N-\frac{q(N-4)}{2}}+o(\varepsilon^{N-\frac{q(N-4)}{2}}), \quad &\text{if $q>\frac{N}{N-4}$},\\
   C\varepsilon^{\frac{N}{2}}|\log \varepsilon|+O(\varepsilon^{\frac{N}{2}}), \quad &\text{if $q=\frac{N}{N-4}$},\\
   C\varepsilon^{\frac{q(N-4)}{2}}+o(\varepsilon^{\frac{q(N-4)}{2}}), \quad &\text{if $q<\frac{N}{N-4}$}.
    \end{array}
    \right.
  \end{split}
  \end{aligned}
  \end{align*}

Let $V_{\varepsilon,l}:=u^++lu_\varepsilon$ and
  \begin{equation*}
    W_{\varepsilon,l}:=\Big(\frac{\|V_{\varepsilon,l}\|_2}{a}\Big)^{\frac{N-4}{4}}V_{\varepsilon,l}\Big((\frac{\|V_{\varepsilon,l}\|_2}{a})^{\frac{1}{2}}x\Big).
  \end{equation*}
Then we have
  \begin{equation*}
    \| W_{\varepsilon,l}\|_2=a,\quad \|\Delta W_{\varepsilon,l}\|_2=\|\Delta V_{\varepsilon,l}\|_2,\quad \|W_{\varepsilon,l}\|_{4^*}=\|V_{\varepsilon,l}\|_{4^*},
  \end{equation*}
and
  \begin{equation*}
    \|W_{\varepsilon,l}\|_q=(\frac{\|V_{\varepsilon,l}\|_2}{a})^{\gamma_q-1}\|V_{\varepsilon,l}\|_q.
  \end{equation*}
By Lemma \ref{crisx2}, there exists a unique $t_{\varepsilon,l}\in \mathbb{R}$ such that $t_{\varepsilon,l}*W_{\varepsilon,l}\in \mathcal{P}_{4^*,q,a}^-$, that is
\begin{equation*}
  e^{4t_{\varepsilon,l}}\|\Delta W_{\varepsilon,l} \|_2^2=e^{2\cdot 4^*t_{\varepsilon,l}}\|W_{\varepsilon,l}\|_{4^*}^{4^*}+\mu \gamma_qe^{\frac{N(q-2)}{2}t_{\varepsilon,l}}\|W_{\varepsilon,l}\|_q^q.
\end{equation*}
Thus
\begin{equation*}
  e^{t_{\varepsilon,l}}\leq \Big(\frac{\|\Delta W_{\varepsilon,l} \|_2^2}{\|W_{\varepsilon,l}\|_{4^*}^{4^*}}\Big)^{\frac{1}{2(4^*-2)}}=\Big(\frac{\|\Delta V_{\varepsilon,l} \|_2^2}{\|V_{\varepsilon,l}\|_{4^*}^{4^*}}\Big)^{\frac{1}{2(4^*-2)}},
\end{equation*}
which implies that $t_{\varepsilon,l}\rightarrow -\infty$ as $l\rightarrow\infty$.
Since $u^+\in \mathcal{P}_{4^*,q,a}^+$, by Lemma \ref{crisx2}, we know $t_{\varepsilon,0}>0$ and $t_{\varepsilon,l}$ is continuous for $l$. Therefore, there exists $0<l_\varepsilon<\infty$ such that $t_{\varepsilon,l_\varepsilon}=0$, namely, $W_{\varepsilon,l_\varepsilon}\in \mathcal{P}_{4^*,q,a}^-$. It follows that
\begin{align}\label{ya}
  \tilde{\sigma}_{4^*,q,a}\leq \mathcal{J}_{4^*,q}(W_{\varepsilon,l_\varepsilon})&=\frac{1}{2} \|\Delta W_{\varepsilon,l_\varepsilon}\|_2^2-\frac{\mu}{q} \| W_{\varepsilon,l_\varepsilon}\|_q^q-\frac{1}{4^*} \| W_{\varepsilon,l_\varepsilon}\|_{4^*}^{4^*}\nonumber \\
  &=\frac{1}{2} \|\Delta V_{\varepsilon,l_\varepsilon}\|_2^2-\frac{\mu }{q} (\frac{\|V_{\varepsilon,l_\varepsilon}\|_2}{a})^{q\gamma_q-q}\| V_{\varepsilon,l_\varepsilon}\|_q^q-\frac{1}{4^*} \| V_{\varepsilon,l_\varepsilon}\|_{4^*}^{4^*}.
\end{align}
If $\liminf\limits_{\varepsilon\rightarrow0}l_\varepsilon=0$, then
\begin{equation*}
  \tilde{\sigma}_{4^*,q,a}\leq \liminf\limits_{\varepsilon\rightarrow0}\mathcal{J}_{4^*,q}(W_{\varepsilon,l_\varepsilon})=\mathcal{J}_{4^*,q}(u^+)=M_{4^*,q,a},
\end{equation*}
which is contradict to Lemma \ref{crisx3}. Moreover, $t_{\varepsilon,l}\rightarrow -\infty$ as $l\rightarrow\infty$ implies that $\limsup\limits_{\varepsilon\rightarrow0}l_\varepsilon<\infty$. Thus there exist $t_2>t_1>0$ independent of $\varepsilon$ such that $l_\varepsilon\in [t_1,t_2]$.
Testing \eqref{zheng} with $u_\varepsilon$, we have
\begin{equation*}
  \int_{\mathbb{R}^N}\Delta u^+ \Delta u_\varepsilon dx= \int_{\mathbb{R}^N}\lambda^+ u^+u_\varepsilon dx+ \int_{\mathbb{R}^N}\mu(u^+)^{q-1}u_\varepsilon dx+ \int_{\mathbb{R}^N}(u^+)^{4^*-1}u_\varepsilon dx,
\end{equation*}
which implies that
\begin{align}\label{yaw}
  \|\Delta V_{\varepsilon,l_\varepsilon}\|_2^2&=\|\Delta u^+\|_2^2+l_\varepsilon^2\|\Delta u_\varepsilon\|_2^2+2l_\varepsilon\int_{\mathbb{R}^N}\Delta u^+ \Delta u_\varepsilon dx\nonumber \\
  &=\|\Delta u^+\|_2^2+l_\varepsilon^2\|\Delta u_\varepsilon\|_2^2+2l_\varepsilon\int_{\mathbb{R}^N}\lambda^+ u^+u_\varepsilon dx+ 2l_\varepsilon\int_{\mathbb{R}^N}\mu(u^+)^{q-1}u_\varepsilon dx+ 2l_\varepsilon\int_{\mathbb{R}^N}(u^+)^{4^*-1}u_\varepsilon dx.
\end{align}
For any $a,b\geq0$, we have the following elementary inequality
\begin{equation*}
  (a+b)^q\geq a^q+b^q+qa^{q-1}b, \quad\text {for any $q\geq2$}.
\end{equation*}
Then we can obtain
\begin{equation*}
  \| V_{\varepsilon,l_\varepsilon}\|_q^q\geq \| u^+\|_q^q+l_\varepsilon^q\| u_\varepsilon\|_q^q+ql_\varepsilon\int_{\mathbb{R}^N}(u^+)^{q-1}u_\varepsilon dx,
\end{equation*}
and
\begin{equation}\label{ya1}
  \| V_{\varepsilon,l_\varepsilon}\|_{4^*}^{4^*}\geq \| u^+\|_{4^*}^{4^*}+l_\varepsilon^{4^*}\| u_\varepsilon\|_{4^*}^{4^*}+{4^*}l_\varepsilon\int_{\mathbb{R}^N}(u^+)^{{4^*}-1}u_\varepsilon dx.
\end{equation}

(i) If $N>8$, then
\begin{equation*}
  \| V_{\varepsilon,l_\varepsilon}\|_2^2=a^2+2l_\varepsilon \int_{\mathbb{R}^N}u^+u_\varepsilon dx+C\varepsilon^4.
\end{equation*}
Using the Taylor expansion, we have
\begin{equation*}
  (\frac{\|V_{\varepsilon,l_\varepsilon}\|_2}{a})^{q\gamma_q-q}=1+\frac{(q\gamma_q-q)l_\varepsilon}{a^2}\int_{\mathbb{R}^N}u^+u_\varepsilon dx-C\varepsilon^4.
\end{equation*}
Since $l_\varepsilon\in [t_1,t_2]$, $\frac{N}{N-4}<2<q<\frac{2N}{N-4}$, by the H\"{o}lder and Sobolev inequalities, we obtain
\begin{equation*}
  \frac{(q\gamma_q-q)l_\varepsilon}{a^2}\int_{\mathbb{R}^N}u^+u_\varepsilon dx\leq \frac{(q-q\gamma_q)t_2}{a^2}\|u^+\|_2\|u_\varepsilon\|_2\leq C\varepsilon^4
\end{equation*}
and
\begin{equation*}
  ql_\varepsilon\int_{\mathbb{R}^N}(u^+)^{q-1}u_\varepsilon dx\leq qt_2 \|u^+\|^{q-1}\|u_\varepsilon\|_q\leq C\varepsilon^{\frac{2N-q(N-4)}{2q}}.
\end{equation*}
Hence,
\begin{align}\label{ya3}
  (\frac{\|V_{\varepsilon,l_\varepsilon}\|_2}{a})^{q\gamma_q-q}\| V_{\varepsilon,l_\varepsilon}\|_q^q\geq& \Big(1+\frac{(q\gamma_q-q)l_\varepsilon}{a^2}\int_{\mathbb{R}^N}u^+u_\varepsilon dx-C\varepsilon^4\Big)\Big(\| u^+\|_q^q+l_\varepsilon^q\| u_\varepsilon\|_q^q+ql_\varepsilon\int_{\mathbb{R}^N}(u^+)^{q-1}u_\varepsilon dx\Big)\nonumber \\
  =&\| u^+\|_q^q+l_\varepsilon^q\| u_\varepsilon\|_q^q+ql_\varepsilon\int_{\mathbb{R}^N}(u^+)^{q-1}u_\varepsilon dx+\frac{(q\gamma_q-q)l_\varepsilon}{a^2}\| u^+\|_q^q\int_{\mathbb{R}^N}u^+u_\varepsilon dx-C\| u^+\|_q^q\varepsilon^4\nonumber\\
  =&\| u^+\|_q^q+ql_\varepsilon\int_{\mathbb{R}^N}(u^+)^{q-1}u_\varepsilon dx+\frac{(q\gamma_q-q)l_\varepsilon}{a^2}\| u^+\|_q^q\int_{\mathbb{R}^N}u^+u_\varepsilon dx-C\varepsilon^4+C\varepsilon^{N-\frac{q(N-4)}{2}}.
\end{align}
From \eqref{ya}, \eqref{yaw}, \eqref{ya1} and \eqref{ya3}, using the fact that $\lambda^+ a^2=\mu(\gamma_q-1)\| u^+\|_q^q$, for $\varepsilon>0$ small enough, we can deduce that
\begin{align*}
  \tilde{\sigma}_{4^*,q,a}&\leq \mathcal{J}_{4^*,q}(u^+)+\frac{l_\varepsilon^2}{2}\|\Delta u_\varepsilon\|_2^2-\frac{l_\varepsilon^{4^*}}{4^*}\|u_\varepsilon\|_{4^*}^{4^*}-C\varepsilon^{N-\frac{q(N-4)}{2}}+C\varepsilon^4\\
  &=M_{4^*,q,a}+(\frac{l_\varepsilon^2}{2}-\frac{l_\varepsilon^{4^*}}{4^*})S^{\frac{N}{4}}-C\varepsilon^{N-\frac{q(N-4)}{2}}+C\varepsilon^4\\
  &\leq M_{4^*,q,a}+\frac{2}{N}S^{\frac{N}{4}}-C\varepsilon^{N-\frac{q(N-4)}{2}}+C\varepsilon^4\\
  &<M_{4^*,q,a}+\frac{2}{N}S^{\frac{N}{4}}.
\end{align*}
where we have used the fact that $N-\frac{q(N-4)}{2}<4$.

(ii) If $N=8$, then
\begin{equation*}
  \| V_{\varepsilon,l_\varepsilon}\|_2^2=a^2+2l_\varepsilon \int_{\mathbb{R}^N}u^+u_\varepsilon dx+C\varepsilon^4|\log \varepsilon|.
\end{equation*}
Using the Taylor expansion, we have
\begin{equation*}
  (\frac{\|V_{\varepsilon,l_\varepsilon}\|_2}{a})^{q\gamma_q-q}=1+\frac{(q\gamma_q-q)l_\varepsilon}{a^2}\int_{\mathbb{R}^N}u^+u_\varepsilon dx-C\varepsilon^4|\log \varepsilon|.
\end{equation*}
Since $l_\varepsilon\in [t_1,t_2]$, $\frac{N}{N-4}=2<q<\frac{2N}{N-4}$, by the H\"{o}lder and Sobolev inequalities, we obtain
\begin{equation*}
  \frac{(q\gamma_q-q)l_\varepsilon}{a^2}\int_{\mathbb{R}^N}u^+u_\varepsilon dx\leq \frac{(q-q\gamma_q)t_2}{a^2}\|u^+\|_2\|u_\varepsilon\|_2\leq C\varepsilon^4|\log \varepsilon|
\end{equation*}
and
\begin{equation*}
  ql_\varepsilon\int_{\mathbb{R}^N}(u^+)^{q-1}u_\varepsilon dx\leq qt_2 \|u^+\|_q^{q-1}\|u_\varepsilon\|_q\leq C\varepsilon^{\frac{2N-q(N-4)}{2q}}.
\end{equation*}
Hence,
\begin{align}\label{ya2}
  (\frac{\|V_{\varepsilon,l_\varepsilon}\|_2}{a})^{q\gamma_q-q}\| V_{\varepsilon,l_\varepsilon}\|_q^q\geq&
  \| u^+\|_q^q+ql_\varepsilon\int_{\mathbb{R}^N}(u^+)^{q-1}u_\varepsilon dx+\frac{(q\gamma_q-q)l_\varepsilon}{a^2}\| u^+\|_q^q\int_{\mathbb{R}^N}u^+u_\varepsilon dx-C\varepsilon^4|\log \varepsilon|+C\varepsilon^{8-2q}.
\end{align}
From \eqref{ya}, \eqref{yaw}, \eqref{ya1}, \eqref{ya2}, for $\varepsilon>0$ small enough, we can deduce that
\begin{align*}
  \tilde{\sigma}_{4^*,q,a}&\leq \mathcal{J}_{4^*,q}(u^+)+\frac{l_\varepsilon^2}{2}\|\Delta u_\varepsilon\|_2^2-\frac{l_\varepsilon^{4^*}}{4^*}\|u_\varepsilon\|_{4^*}^{4^*}-C\varepsilon^{8-2q}+C\varepsilon^4|\log \varepsilon|
  <M_{4^*,q,a}+\frac{2}{N}S^{\frac{N}{4}},
\end{align*}
where we have used the fact that $\lim\limits_{\varepsilon\rightarrow0}\frac{\varepsilon^{4-2q}}{|\log \varepsilon|}=\infty$.

(iii) If $N=7$ and $\frac{8}{3}<q<\frac{22}{7}$, then
\begin{equation*}
  \| V_{\varepsilon,l_\varepsilon}\|_2^2=a^2+2l_\varepsilon \int_{\mathbb{R}^N}u^+u_\varepsilon dx+C\varepsilon^{3}.
\end{equation*}
Using the Taylor expansion, we have
\begin{equation*}
  (\frac{\|V_{\varepsilon,l_\varepsilon}\|_2}{a})^{q\gamma_q-q}=1+\frac{(q\gamma_q-q)l_\varepsilon}{a^2}\int_{\mathbb{R}^N}u^+u_\varepsilon dx-C\varepsilon^{3}.
\end{equation*}
Since $l_\varepsilon\in [t_1,t_2]$, $\frac{N}{N-4}=\frac{7}{3}<q<\frac{2N}{N-4}$, by the H\"{o}lder and Sobolev inequalities, we obtain
\begin{equation*}
  \frac{(q\gamma_q-q)l_\varepsilon}{a^2}\int_{\mathbb{R}^N}u^+u_\varepsilon dx\leq \frac{(q-q\gamma_q)t_2}{a^2}\|u^+\|_2\|u_\varepsilon\|_2\leq C\varepsilon^3
\end{equation*}
and
\begin{equation*}
  ql_\varepsilon\int_{\mathbb{R}^N}(u^+)^{q-1}u_\varepsilon dx\leq qt_2 \|u^+\|_q^{q-1}\|u_\varepsilon\|_q\leq C\varepsilon^{\frac{2N-q(N-4)}{2q}}.
\end{equation*}
Hence,
\begin{align}\label{ya4}
  (\frac{\|V_{\varepsilon,l_\varepsilon}\|_2}{a})^{q\gamma_q-q}\| V_{\varepsilon,l_\varepsilon}\|_q^q\geq&\| u^+\|_q^q+ql_\varepsilon\int_{\mathbb{R}^N}(u^+)^{q-1}u_\varepsilon dx+\frac{(q\gamma_q-q)l_\varepsilon}{a^2}\| u^+\|_q^q\int_{\mathbb{R}^N}u^+u_\varepsilon dx-C\varepsilon^{3}+C\varepsilon^{7-\frac{3q}{2}}.
\end{align}
From \eqref{ya}, \eqref{yaw}, \eqref{ya1}, \eqref{ya4}, for $\varepsilon>0$ small enough, we can deduce that
\begin{align*}
  \tilde{\sigma}_{4^*,q,a}&\leq \mathcal{J}_{4^*,q}(u^+)+\frac{l_\varepsilon^2}{2}\|\Delta u_\varepsilon\|_2^2-\frac{l_\varepsilon^{4^*}}{4^*}\|u_\varepsilon\|_{4^*}^{4^*}-C\varepsilon^{7-\frac{3q}{2}}+C\varepsilon^{3}
  <M_{4^*,q,a}+\frac{2}{N}S^{\frac{N}{4}},
\end{align*}
since $7-\frac{3q}{2}<3$.
\end{proof}

For any $u\in \mathcal{P}_{4^*,q,a}^+$, then $u_b=\frac{b}{a}u\in S_b$ for any $b>0$. By Lemma \ref{crisx2}, there exists a unique $t_{u_b}\in \mathbb{R}$ such that $t_{u_b} *u_b\in \mathcal{P}_{4^*,q,b}^+$. For convenience, we denote by $t_b$. Clearly, $t_a=0$.
\begin{lemma}\label{crisx6}
For $a$ satisfies \eqref{con3} and  \eqref{con1}, $\partial _at_a$ exists and
\begin{equation*}
  \partial _at_a=\frac{4^*\|u\|_{4^*}^{4^*}+\mu q\gamma_q\|u\|_q^q -2\|\Delta u\|_2^2}{2a(2\|\Delta u\|_2^2-4^*\|u\|_{4^*}^{4^*}-\mu q\gamma_q^2\|u\|_q^q)}.
\end{equation*}
\end{lemma}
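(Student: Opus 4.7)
\textbf{Plan for Lemma \ref{crisx6}.} The strategy is to regard the defining equation $t_b*u_b \in \mathcal{P}_{4^*,q,b}$ as an implicit equation $F(t,b)=0$ in the two variables $t$ and $b$, and then apply the implicit function theorem. Concretely, using the scaling identities
\begin{equation*}
\|\Delta(\tau*v)\|_2^2=e^{4\tau}\|\Delta v\|_2^2,\quad \|\tau*v\|_r^r=e^{\frac{N(r-2)}{2}\tau}\|v\|_r^r,
\end{equation*}
together with $u_b=(b/a)u$, the relation $P_{4^*,q}(t*u_b)=0$ becomes
\begin{equation*}
F(t,b):=\frac{b^2}{a^2}e^{4t}\|\Delta u\|_2^2-\frac{b^{4^*}}{a^{4^*}}e^{2\cdot 4^*\,t}\|u\|_{4^*}^{4^*}-\mu\gamma_q\frac{b^q}{a^q}e^{\frac{N(q-2)}{2}t}\|u\|_q^q=0.
\end{equation*}
Since $u\in\mathcal{P}_{4^*,q,a}^+$, we have $F(0,a)=P_{4^*,q}(u)=0$, which gives the reference point.

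Next I would compute the two partial derivatives at $(t,b)=(0,a)$. A direct differentiation, using $\frac{N(q-2)}{2}=2q\gamma_q$ and $\gamma_{4^*}=1$, yields
\begin{equation*}
\partial_t F(0,a)=2\bigl(2\|\Delta u\|_2^2-4^*\|u\|_{4^*}^{4^*}-\mu q\gamma_q^2\|u\|_q^q\bigr)=2\Psi_u''(0),
\end{equation*}
and
\begin{equation*}
\partial_b F(0,a)=\tfrac{1}{a}\bigl(2\|\Delta u\|_2^2-4^*\|u\|_{4^*}^{4^*}-\mu q\gamma_q\|u\|_q^q\bigr).
\end{equation*}
Because $u\in\mathcal{P}_{4^*,q,a}^+$, the quantity $\Psi_u''(0)$ is strictly positive, so $\partial_t F(0,a)\neq0$. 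The implicit function theorem then produces a $C^1$ function $b\mapsto\tilde t(b)$ with $\tilde t(a)=0$ and $F(\tilde t(b),b)=0$ locally, and the classical formula gives
\begin{equation*}
\tilde t'(a)=-\frac{\partial_b F(0,a)}{\partial_t F(0,a)}=\frac{4^*\|u\|_{4^*}^{4^*}+\mu q\gamma_q\|u\|_q^q-2\|\Delta u\|_2^2}{2a\bigl(2\|\Delta u\|_2^2-4^*\|u\|_{4^*}^{4^*}-\mu q\gamma_q^2\|u\|_q^q\bigr)}.
\end{equation*}

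Finally I need to identify $\tilde t(b)$ with $t_b$ near $b=a$, which would yield $\partial_a t_a=\tilde t'(a)$ and hence the claimed identity. For this, I invoke the uniqueness statement in Lemma \ref{crisx2}: for each $b>0$ there is a unique $t\in\mathbb{R}$ with $t*u_b\in\mathcal{P}_{4^*,q,b}^+$, and this unique value varies $C^1$ in $u_b$ (hence in $b$). So $\tilde t(b)$ produced by IFT coincides with $t_b$ for $b$ close to $a$, and $\partial_a t_a$ exists with the stated value.

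The one point requiring care is ensuring the implicit function theorem genuinely applies, i.e.\ that $\partial_t F(0,a)\neq 0$. This is where hypotheses \eqref{con3} and \eqref{con1} enter: \eqref{con3} is used to guarantee the existence of the local minimizer $u^+\in\mathcal{P}_{4^*,q,a}^+$ (via \cite{MC}), and \eqref{con1} guarantees $\mathcal{P}_{4^*,q,a}^0=\emptyset$ (the analog of Lemma \ref{sx2}), so that membership in $\mathcal{P}_{4^*,q,a}^+$ strictly forces $\Psi_u''(0)>0$. Once this non-vanishing is secured, the remainder is just the computation of the two partial derivatives and algebraic simplification; the main obstacle is therefore the verification that the denominator does not vanish, but this is already built into the assumptions.
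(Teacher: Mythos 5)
Your proof is correct and follows essentially the same route as the paper: define the scalar function $\Phi(b,t)$ (your $F(t,b)$) encoding $P_{4^*,q}(t*u_b)=0$, verify $\partial_t\Phi(a,0)\neq 0$ because $u\in\mathcal{P}_{4^*,q,a}^+$ and $\mathcal{P}_{4^*,q,a}^0=\emptyset$, and apply the implicit function theorem. One cosmetic slip: you write $\partial_tF(0,a)=2\Psi_u''(0)$, whereas in fact $\Psi_u''(0)=4\bigl(2\|\Delta u\|_2^2-4^*\|u\|_{4^*}^{4^*}-\mu q\gamma_q^2\|u\|_q^q\bigr)$, so $\partial_tF(0,a)=\tfrac12\Psi_u''(0)$; this constant does not affect the sign or the final quotient, so the conclusion stands.
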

\begin{proof}
Since $t_b *u_b\in \mathcal{P}_{4^*,q,b}^+$, we have
\begin{equation*}
  (\frac{b}{a})^2e^{4t_{b}}\|\Delta u\|_2^2=(\frac{b}{a})^{4^*}e^{2\cdot4^*t_{b}}\|u\|_{4^*}^{4^*}+\mu \gamma_q (\frac{b}{a})^qe^{\frac{N(q-2)}{2}t_{b}}\|u\|_q^q.
\end{equation*}
Define
\begin{equation*}
  \Phi(b,t)=(\frac{b}{a})^2e^{4t_{}}\|\Delta u\|_2^2-(\frac{b}{a})^{4^*}e^{2\cdot4^*t_{}}\|u\|_{4^*}^{4^*}-\mu \gamma_q (\frac{b}{a})^qe^{\frac{N(q-2)}{2}t_{}}\|u\|_q^q.
\end{equation*}
Then by $u\in \mathcal{P}_{4^*,q,a}^+$ and $\mathcal{P}_{4^*,q,a}^0=\emptyset$, we have $\Phi(a,t_a)=\Phi(a,0)=0$ and
\begin{equation*}
\partial _t\Phi(a,t_a)=\partial _t\Phi(a,0)=4\|\Delta u\|_2^2-2\cdot 4^*\|u\|_{4^*}^{4^*}-2\mu q\gamma_q^2\|u\|_q^q=\frac{1}{2}\Psi''_u(0)>0.
\end{equation*}
By using the implicit function theorem, we know $\partial _at_a$ exists and
\begin{equation*}
 \partial _at_a=-\frac{\partial _b\Phi(b,t)}{\partial _{t}\Phi(b,t)}\Big|_{b=a,t=t_a}=\frac{4^*\|u\|_{4^*}^{4^*}+\mu q\gamma_q\|u\|_q^q -2\|\Delta u\|_2^2}{2a(2\|\Delta u\|_2^2-4^*\|u\|_{4^*}^{4^*}-\mu q\gamma_q^2\|u\|_q^q)}.
\end{equation*}
\end{proof}

\begin{lemma}\label{crisx7}
For $a$ satisfies \eqref{con3} and \eqref{con1}, $M_{4^*,q,a}$ is strictly decreasing in $a$.
\end{lemma}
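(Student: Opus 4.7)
The plan is to exhibit, for each $a>0$ satisfying both \eqref{con3} and \eqref{con1}, a one-parameter family of test functions at nearby mass $b$ whose energy is strictly below $M_{4^*,q,a}$. Since \eqref{con3} holds, the work of \cite{MC} (as invoked in the proof of Lemma \ref{crisx5}) produces a minimizer $u=u^+\in S_a\cap \mathcal{P}_{4^*,q,a}^+$ for $M_{4^*,q,a}$. For $b>0$ close to $a$, I set $u_b:=(b/a)u\in S_b$; Lemma \ref{crisx2} supplies a unique $t_b\in\mathbb{R}$ with $t_b*u_b\in \mathcal{P}_{4^*,q,b}^+$, and the implicit function argument behind Lemma \ref{crisx6} (applied along the path rather than only at $b=a$) ensures that $b\mapsto t_b$ is of class $C^1$, with $t_a=0$.

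Define $g(b):=\mathcal{J}_{4^*,q}(t_b*u_b)$. Because $t_b*u_b\in \mathcal{P}_{4^*,q,b}$ means $t_b$ is a critical point of the fiber map $\tau\mapsto \mathcal{J}_{4^*,q}(\tau*u_b)$, the chain rule kills the $t_b'(b)$-contribution and leaves only the partial derivative in $b$ with $t_b$ frozen. Differentiating the explicit formula
\begin{equation*}
\mathcal{J}_{4^*,q}(t*u_b)=\frac{e^{4t}}{2}\frac{b^2}{a^2}\|\Delta u\|_2^2-\frac{e^{2\cdot 4^*t}}{4^*}\frac{b^{4^*}}{a^{4^*}}\|u\|_{4^*}^{4^*}-\frac{\mu e^{N(q-2)t/2}}{q}\frac{b^q}{a^q}\|u\|_q^q
\end{equation*}
in $b$, factoring out $1/b$, and re-identifying the scaled norms of $t_b*u_b$, one finds
\begin{equation*}
g'(b)=\frac{1}{b}\bigl[\|\Delta(t_b*u_b)\|_2^2-\|t_b*u_b\|_{4^*}^{4^*}-\mu\|t_b*u_b\|_q^q\bigr].
\end{equation*}
Combining with $P_{4^*,q}(t_b*u_b)=0$, i.e.\ $\|\Delta(t_b*u_b)\|_2^2=\|t_b*u_b\|_{4^*}^{4^*}+\mu\gamma_q\|t_b*u_b\|_q^q$, this collapses to
\begin{equation*}
g'(b)=\frac{\mu(\gamma_q-1)}{b}\|t_b*u_b\|_q^q<0,
\end{equation*}
since $q<\bar{p}$ forces $\gamma_q<1$ while $\mu>0$ and $\|t_b*u_b\|_q>0$.

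Integrating $g'(b)<0$ on $[a_1,a_2]$ for arbitrary $a_1<a_2$ in the admissible range (taking $u$ to be the minimizer at level $a_1$) will yield $M_{4^*,q,a_2}\le g(a_2)<g(a_1)=\mathcal{J}_{4^*,q}(u)=M_{4^*,q,a_1}$, which is the claimed strict monotonicity. I expect the main technical obstacle to be not the derivative computation itself but the global extension of the $C^1$ selection $b\mapsto t_b$ along the whole interval $[a_1,a_2]$: this requires the two-critical-point geometry of $\Psi_{u_b}$ from Lemma \ref{crisx2} to persist at every $b$ in the interval, which in turn relies on $\mathcal{P}_{4^*,q,b}^0=\emptyset$ (Lemma \ref{crisx1}) together with the preservation of \eqref{con3} and \eqref{con1} along the path. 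Once these are secured, the implicit function theorem used in Lemma \ref{crisx6} provides the $C^1$ selection globally and the argument closes.
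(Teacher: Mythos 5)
Your proposal is correct and follows essentially the same route as the paper: take the minimizer $u^+\in\mathcal{P}_{4^*,q,a}^+$, scale to $u_b$, project via the implicit-function selection $b\mapsto t_b$, and differentiate along the Poho\u{z}aev manifold to obtain $g'(b)=\mu(\gamma_q-1)\|t_b*u_b\|_q^q/b<0$ (the paper computes the full derivative including the $\partial_a t_a$ term and then observes that its coefficient is $2P_{4^*,q}(u)=0$, which is the same cancellation you get via the chain rule). The only difference is that you try to integrate over the whole interval $[a_1,a_2]$ and correctly flag the global $C^1$-selection issue, whereas the paper simply concludes $M_{4^*,q,b}<M_{4^*,q,a}$ for $b>a$ close to $a$ and does not address the global extension.
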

\begin{proof}
Since
\begin{equation*}
  \mathcal{J}_{4^*,q}(t_b *u_b)=\frac{1}{2} (\frac{b}{a})^2e^{4t_{b}}\|\Delta u\|_2^2-\frac{1}{4^*}(\frac{b}{a})^{4^*}e^{2\cdot4^*t_{b}}\|u\|_{4^*}^{4^*}-\frac{\mu }{q} (\frac{b}{a})^qe^{\frac{N(q-2)}{2}t_{b}}\|u\|_q^q.
\end{equation*}
 For any $b>a$ close to $a$, by Lemma \ref{crisx6}, $u\in \mathcal{P}_{4^*,q,a}^+$, and $\gamma_q<1$, we have
\begin{equation*}
  \partial_b \mathcal{J}_{4^*,q}(t_b *u_b)|_{b=a}=\frac{\|\Delta u\|_2^2-\|u\|_{4^*}^{4^*}-\mu\|u\|_q^q}{a}+2(\|\Delta u\|_2^2-\|u\|_{4^*}^{4^*}-\mu\gamma_q\|u\|_q^q)\partial _at_a=\frac{\mu(\gamma_q-1)}{a}\|u\|_q^q<0,
\end{equation*}
which implies that $\mathcal{J}_{4^*,q}(t_b *u_b)<\mathcal{J}_{4^*,q}(u)$. Since $u^+\in \mathcal{P}_{4^*,q,a}^+$ and $\mathcal{J}_{4^*,q}(u^+)=M_{4^*,q,a}$, we have
\begin{equation*}
  M_{4^*,q,b}\leq \mathcal{J}_{4^*,q}(t_b *u_b)<\mathcal{J}_{4^*,q}(u^+)=M_{4^*,q,a}.
\end{equation*}
\end{proof}

Denoting by $\mathcal{J}_{4^*,q}^{c}:=\Big\{u\in S_{a,r}:\mathcal{J}_{4^*,q}(u)\leq c\Big\}$, $\mathcal{P}_{4^*,q,a,r}:=\mathcal{P}_{4^*,q,a}\cap H^2_{rad}(\mathbb{R}^N)$, $\mathcal{P}_{4^*,q,a,r}^{+}:=\mathcal{P}_{4^*,q,a}^{+}\cap H^2_{rad}(\mathbb{R}^N)$, $\mathcal{P}_{4^*,q,a,r}^{-}:=\mathcal{P}_{4^*,q,a}^{-}\cap H^2_{rad}(\mathbb{R}^N)$, $\tilde{\sigma}_{4^*,q,a,r}:=\inf\limits_{u\in \mathcal{P}_{4^*,q,a,r}^{-}}\mathcal{J}_{4^*,q}(u)$, $-\infty<c_{4^*,q,a,r}:=\inf\limits_{u\in \mathcal{A}_{R_0,r}}\mathcal{J}_{4^*,q}(u)<0$, where
\begin{equation*}
  \mathcal{A}_{R_0,r}:=\Big\{u\in S_{a,r}:\|\Delta u\|_2^2<R_0\}.
\end{equation*}
Let
\begin{equation}\label{M4^*}
M_{4^*,q,a,r}:=\inf\limits_{u\in \mathcal{P}_{4^*,q,a,r}}\mathcal{J}_{4^*,q}(u).
\end{equation}
We introduce the minimax class
\begin{equation*}
  \Gamma:=\Big\{\gamma\in C([0,1],S_{a,r}):\gamma(0)\in \mathcal{P}_{4^*,q,a,r}^+,\gamma(1)\in \mathcal{J}_{4^*,q}^{2c_{4^*,q,a,r}}\Big\},
\end{equation*}
and define the minimax value
\begin{equation}\label{shanlu2}
  \sigma_{4^*,q,a,r}:=\inf\limits_{\gamma\in \Gamma}\max\limits_{t\in [0,1]}\mathcal{J}_{4^*,q}(\gamma(t)).
\end{equation}
Then we have $\sigma_{4^*,q,a,r}=\tilde{\sigma}_{4^*,q,a,r}$.

\textbf{Proof of Theorem \ref{th8}:}
We use the Sobolev subcritical approximation method to complete my proof.
Let $q<\bar{p}<p_n<4^*$, and $\lim\limits_{n\rightarrow\infty}p_n=4^*$. By Theorem \ref{th4} and Lemma \ref{crisx4}, there exists a sequence $\{u_{p_n}^-\in \mathcal{\mathcal{P}}_{p_n,q,a,r}^-\}$ such that $\mathcal{J}_{p_n,q}(u_{p_n}^-)=\sigma_{p_n,q,a,r}=\tilde{\sigma}_{p_n,q,a,r}<\tilde{\sigma}_{4^*,q,a,r}+1$. Moreover, there exists a negative sequence $\{\lambda_{p_n}^-\}$ such that
\begin{equation}\label{fu}
  \Delta^2u_{p_n}^-=\lambda_{p_n}^- u_{p_n}^-+\mu|u_{p_n}^-|^{q-2}u_{p_n}^-+|u_{p_n}^-|^{p_n-2}u_{p_n}^-,\quad \text {in $\mathbb{R}^N$}.
\end{equation}
We divide the proof into six steps.

\textbf{Setp 1:} $\{u_{p_n}^-\}$ is bounded in $H^2(\mathbb{R}^N)$.

By the first equation of \eqref{bdd}, we know $\{u_{p_n}^-\}$ is bounded in $H^2(\mathbb{R}^N)$. Thus there exists $u\in H^2_{rad}(\mathbb{R}^N)$ such that, up to a subsequence, $u_{p_n}^-\rightharpoonup u$ in $H^2(\mathbb{R}^N)$, $u_{p_n}^-\rightarrow u$ in $L^p(\mathbb{R}^N)$ for $2<p<\frac{2N}{(N-4)^+}$ and a.e. in $\mathbb{R}^N$.
From  \eqref{pohoim} and \eqref{fu}, we have
\begin{equation}\label{nega1}
  \lambda_{p_n}^- a^2=\frac{\mu[q(N-4)-2N]}{4q}\|u_{p_n}^-\|_q^q+\frac{p_n(N-4)-2N}{4p_n}\|u_{p_n}^-\|_{p_n}^{p_n}
\end{equation}
Thus $\{\lambda_{p_n}^- \}$ is bounded in $\mathbb{R}$, and there exists $\lambda\in \mathbb{R}$ such that, up to a subsequence, $\lambda_{p_n}^- \rightarrow \lambda\leq0$ as $n\rightarrow\infty$.\\
\textbf{Setp 2:} $u$ is a weak solution of the equation
\begin{equation}\label{jie}
  \Delta^2u=\lambda u+\mu|u|^{q-2}u+|u|^{4^*-2}u,\quad \text {in $\mathbb{R}^N$}.
\end{equation}

 Since $|u_{p_n}^-|^{p_n-2}u_{p_n}^-\rightarrow |u|^{4^*-2}u$ a.e. in $\mathbb{R}^N$, and $|u_{p_n}^-|^{p_n-2}u_{p_n}^-$ is bounded in $L^{\frac{4^*}{4^*-1}}(\mathbb{R}^N)$, by Lemma \ref{weakcon}, we know $|u_{p_n}^-|^{p_n-2}u_{p_n}^-\rightharpoonup |u|^{4^*-2}u$ in $L^{\frac{4^*}{4^*-1}}(\mathbb{R}^N)$. This together with the density of $ C_0^\infty(\mathbb{R}^N)$ in $L^{4^*}(\mathbb{R}^N)$ implies that, for any $\psi\in C_0^\infty(\mathbb{R}^N)$,
\begin{equation*}
  \int_{\mathbb{R}^N}|u_{p_n}^-|^{p_n-2}u_{p_n}^-\psi dx\rightarrow \int_{\mathbb{R}^N}|u|^{4^*-2}u\psi dx,\quad \text{as $n\rightarrow\infty$}.
\end{equation*}
It follows that
\begin{align*}
  0&=\int_{\mathbb{R}^N}\Delta u_{p_n}^- \Delta \psi dx-\int_{\mathbb{R}^N}\lambda_{p_n}^- u_{p_n}^-\psi dx-\int_{\mathbb{R}^N}\mu|u_{p_n}^-|^{q-2}u_{p_n}^-\psi dx-\int_{\mathbb{R}^N}|u_{p_n}^-|^{p_n-2}u_{p_n}^-\psi dx\\
  &\rightarrow \int_{\mathbb{R}^N}\Delta u \Delta \psi dx-\int_{\mathbb{R}^N}\lambda u\psi dx-\int_{\mathbb{R}^N}\mu|u|^{q-2}u\psi dx-\int_{\mathbb{R}^N}|u|^{4^*-2}u\psi dx,\quad \text{as $n\rightarrow\infty$},
\end{align*}
which indicates that $u$ is a weak solution of \eqref{jie}.\\
\textbf{Setp 3:} $u_{p_n}^-\rightarrow u$ in $D^{2,2}(\mathbb{R}^N)$ as $n\rightarrow\infty$.

Let $v_{p_n}^-=u_{p_n}^--u\rightharpoonup0$ in $H^2(\mathbb{R}^N)$. By the Br\'{e}zis-Lieb lemma \cite{Wil}, we have
\begin{equation}\label{BL1}
  \|u_{p_n}^-\|_q^q=\|u\|_q^q+o_n(1),\quad \|u_{p_n}^-\|_{4^*}^{4^*}=\|v_{p_n}^-\|_{4^*}^{4^*}+\|u\|_{4^*}^{4^*}+o_n(1).
\end{equation}
Define
\begin{equation*}
  f_u(v):=\int_{\mathbb{R}^N} \Delta u  \Delta vdx, \quad \text{for any $v\in H^2(\mathbb{R}^N)$}.
\end{equation*}
Then $|f_u(v)|\leq \|u\|\|v\|$, which means that $f_u$ is a linear and continuous functional on $H^2(\mathbb{R}^N)$. So by the definition of  weak convergence, we have
\begin{equation*}
  \int_{\mathbb{R}^N}\Delta v_{p_n}^- \Delta u dx\rightarrow0,\quad \text{as $n\rightarrow\infty$}.
\end{equation*}
Thus
\begin{equation}\label{BL2}
  \|\Delta u_{p_n}^-\|_2^2= \|\Delta v_{p_n}^-\|_2^2+ \|\Delta u\|_2^2+o_n(1).
\end{equation}
Since $u_{p_n}^-\in \mathcal{\mathcal{P}}_{p_n,q,a,r}^-$ and $\lim\limits_{p\rightarrow 4^*}\|u\|_p=\|u\|_{4^*}$, we have
\begin{align}\label{BL3}
  \|\Delta u_{p_n}^-\|_2^2=\gamma_{p_n}\|u_{p_n}^-\|_{p_n}^{p_n}+\mu\gamma_q\|u_{p_n}^-\|_q^q=\|u_{p_n}^-\|_{4^*}^{4^*}+\mu\gamma_q\|u_{p_n}^-\|_q^q+o_n(1).
\end{align}
From \eqref{jie}-\eqref{BL3} and the definition of $S$, we deduce that
\begin{equation*}
  \|\Delta v_{p_n}^-\|_2^2=\|v_{p_n}^-\|_{4^*}^{4^*}+o_n(1)\leq S^{-\frac{4^*}{2}}\|\Delta v_{p_n}^-\|_2^{4^*}+o_n(1).
\end{equation*}
Assume that $\|\Delta v_{p_n}^-\|_2^2\rightarrow m$ as $n\rightarrow \infty$. Then from the above inequality, we know $m=0$ or $m\geq S^{\frac{N}{4}}$. If $m\geq S^{\frac{N}{4}}$, by \eqref{BL1}, \eqref{BL2}, Lemmas \ref{crisx3}, \ref{crisx4} and \ref{crisx7}, we obtain
\begin{align*}
  \tilde{\sigma}_{4^*,q,a,r}&\geq \limsup\limits_{n\rightarrow\infty}\tilde{\sigma}_{p_n,q,a,r}\\
  &=\limsup\limits_{n\rightarrow\infty}\Big[\Big(\frac{1}{2}-\frac{1}{p_n\gamma_{p_n}}\Big)\int_{\mathbb{R}^N}|\Delta u_{p_n}^-|^2dx-\mu \gamma_q\Big(\frac{1}{q\gamma_q}-\frac{1}{p_n\gamma_{p_n}}\Big)\int_{\mathbb{R}^N}|u_{p_n}^-|^qdx\Big]\\
  &=\limsup\limits_{n\rightarrow\infty}\Big[\Big(\frac{1}{2}-\frac{1}{4^*}\Big)\int_{\mathbb{R}^N}|\Delta u_{p_n}^-|^2dx-\mu \gamma_q\Big(\frac{1}{q\gamma_q}-\frac{1}{4^*}\Big)\int_{\mathbb{R}^N}|u_{p_n}^-|^qdx\Big]\\
  &=\limsup\limits_{n\rightarrow\infty}\Big(\frac{1}{2}-\frac{1}{4^*}\Big)\int_{\mathbb{R}^N}|\Delta v_{p_n}^-|^2dx+\Big[\Big(\frac{1}{2}-\frac{1}{4^*}\Big)\int_{\mathbb{R}^N}|\Delta u|^2dx
  -\mu \gamma_q\Big(\frac{1}{q\gamma_q}-\frac{1}{4^*}\Big)\int_{\mathbb{R}^N}|u|^qdx\Big]\\
  &=\limsup\limits_{n\rightarrow\infty}\Big(\frac{1}{2}-\frac{1}{4^*}\Big)\int_{\mathbb{R}^N}|\Delta v_{p_n}^-|^2dx+\mathcal{J}_{4^*,q}(u)\\
  &\geq \frac{2}{N}S^{\frac{N}{4}}+\min \Big\{0,M_{4^*,q,\|u\|,r}\Big\}\\&=\frac{2}{N}S^{\frac{N}{4}}+M_{4^*,q,\|u\|,r}\\&\geq \frac{2}{N}S^{\frac{N}{4}}+M_{4^*,q,a,r},
\end{align*}
which is contradict to Lemma \ref{crisx5}. Hence, $m=0$, namely, $u_{p_n}^-\rightarrow u$ in $D^{2,2}(\mathbb{R}^N)$ as $n\rightarrow\infty$.\\
\textbf{Setp 4:} $u\neq0$.

Since $u_{p_n}^-\in \mathcal{\mathcal{P}}_{p_n,q,a,r}^-$,
we have
\begin{align*}
  \|\Delta u_{p_n}^-\|_2^2=\gamma_{p_n}\|u_{p_n}^-\|_{p_n}^{p_n}+\mu\gamma_q\|u_{p_n}^-\|_q^q,
\end{align*}
and
\begin{align}\label{tomp}
  2\|\Delta u_{p_n}^-\|_2^2<p_n\gamma_{p_n}^2\|u_{p_n}^-\|_{p_n}^{p_n}+\mu q\gamma_q^2\|u_{p_n}^-\|_q^q.
\end{align}
From the above inequalities, by using Lemmas \ref{GN} and \ref{limits}, we have
\begin{equation*}
  (2-q\gamma_q)\|\Delta u_{p_n}^-\|_2^2<\gamma_{p_n}(p_n\gamma_{p_n}-q\gamma_q)\|u_{p_n}^-\|_{p_n}^{p_n}\leq \gamma_{p_n}(p_n\gamma_{p_n}-q\gamma_q)B_{N,p_n}a^{p_n(1-\gamma_{p_n})}\|\Delta u_{p_n}^-\|_2^{p_n\gamma_{p_n}},
\end{equation*}
thus
\begin{equation*}
 0< 2-q\gamma_q\leq (4^*-q\gamma_q)S^{-\frac{4^*}{2}}\|\Delta u\|_2^{4^*-2},
\end{equation*}
which implies $u\neq0$. Using \eqref{nega} and \eqref{jie}, we know $\lambda<0$.\\
\textbf{Setp 5:} $u_{p_n}^-\rightarrow u$ in $H^{2}(\mathbb{R}^N)$ as $n\rightarrow\infty$.

Multiply $v_{p_n}^-$ on both sides of \eqref{fu} and \eqref{jie} and subtract, we obtain
\begin{equation*}
  \int_{\mathbb{R}^N}|\Delta v_{p_n}^-|^2dx-\lambda\int_{\mathbb{R}^N}|v_{p_n}^-|^2dx=\int_{\mathbb{R}^N}(|u_{p_n}^-|^{p_n-2}u_{p_n}^--|u|^{4^*-2}u)v_{p_n}^-dx+\mu\int_{\mathbb{R}^N}(|u_{p_n}^-|^{q-2}u_{p_n}^--|u|^{q-2}u)v_{p_n}^-dx.
\end{equation*}
It's obvious that $|u_{p_n}^-|^{p_n-2}u_{p_n}^--|u|^{4^*-2}u \in L^{\frac{4^*}{4^*-1}}(\mathbb{R}^N)$, this together with $v_{p_n}^-\rightarrow0$ in $L^{4^*}(\mathbb{R}^N)$ yields that
\begin{equation*}
  \int_{\mathbb{R}^N}(|u_{p_n}^-|^{p_n-2}u_{p_n}^--|u|^{4^*-2}u)v_{p_n}^-dx\rightarrow0, \quad \text{as $n\rightarrow\infty$}.
\end{equation*}
Moreover, by $|u_{p_n}^-|^{q-2}u_{p_n}^--|u|^{q-2}u\rightarrow0$ a.e. in $\mathbb{R}^N$, and $|u_{p_n}^-|^{q-2}u_{p_n}^--|u|^{q-2}u$ is bounded in $L^{\frac{q}{q-1}}(\mathbb{R}^N)$, using Lemma \ref{weakcon}, we have $|u_{p_n}^-|^{q-2}u_{p_n}^--|u|^{q-2}u\rightharpoonup 0$ in $L^{\frac{q}{q-1}}(\mathbb{R}^N)$. This with $v_{p_n}^-
\in L^q(\mathbb{R}^N)$ yields that
\begin{equation*}
  \int_{\mathbb{R}^N}(|u_{p_n}^-|^{q-2}u_{p_n}^--|u|^{q-2}u)v_{p_n}^-dx\rightarrow0, \quad \text{as $n\rightarrow\infty$}.
\end{equation*}
From the above arguments, by $\lambda<0$, we immediately know $\|v_{p_n}^-\|_2\rightarrow0$, which means $u_{p_n}^-\rightarrow u$ in $L^2(\mathbb{R}^N)$ as $n\rightarrow\infty$. So $u_{p_n}^-\rightarrow u$ in $H^2(\mathbb{R}^N)$ as $n\rightarrow\infty$.\\
\textbf{Setp 6:} $\tilde{\sigma}_{4^*,q,a,r}$ can be achieved.

In fact, by \eqref{tomp} and $\mathcal{P}_{4^*,q,a,r}=\emptyset$, we have
\begin{align*}
  2\|\Delta u\|_2^2<4^*\|u\|_{4^*}^{4^*}+\mu q\gamma_q^2\|u\|_q^q,
\end{align*}
which means $u\in \mathcal{P}_{4^*,q,a,r}^-$. Thus
\begin{equation*}
  \tilde{\sigma}_{4^*,q,a,r}\leq \mathcal{J}_{4^*,q}(u)=\lim\limits_{n\rightarrow\infty} \mathcal{J}_{p_n,q}(u_{p_n}^-)=\lim\limits_{n\rightarrow\infty}\tilde{\sigma}_{p_n,q,a,r}\leq \tilde{\sigma}_{4^*,q,a,r}.
\end{equation*}
\qed

\end{document}